\mathchardef\@tempa \count255 }
\newtheorem{assumption}{Assumption}
\newtheorem{thm}{Theorem}[section]
\newtheorem{lem}[thm]{Lemma}
\def \a{{\alpha}}
\def \d{{\mathrm{d}}}
\newcommand{\Om}{\Omega}
\newcommand{\cF}{\mathcal{F}}
\newcommand{\PP}{\mathbb{P}}
\newcommand{\E}{\mathbb{E}}
\newcommand{\bean}{\begin{eqnarray*}}
	\newcommand{\eean}{\end{eqnarray*}}
\newcommand{\EE}{\mathbb{E}}
\newcounter{bean}
\newcommand{\benuma}{\setlength{\labelwidth}{.25in}
	
	\begin{list}
		{(\alph{bean})}{\usecounter{bean}}}
	\newcommand{\eenuma}{\end{list}}
\begin{document}
	
	\title[Limit distributions for SVE with fractional kernels]{Limit  error distributions of   Milstein scheme for  stochastic Volterra equations with singular kernels }
	\author[S. Liu]{Shanqi Liu}
	\address{School of Mathematical Science, Nanjing Normal University, Nanjing 210023, China}
	\email{shanqiliumath@126.com}
	
	\author[Y. Hu]{Yaozhong Hu}
	\address{Department of Mathematical and Statistical Sciences, University of Alberta, Edmonton T6G 2G1, Canada}
	\email{yaozhong@ualberta.ca}
	
	\author[H. Gao]{Hongjun Gao}
	\address{School of Mathematics, Southeast University, Nanjing 211189, China}
	\email{hjgao@seu.edu.cn (Corresponding author)}
	\vspace{-2cm}
	\maketitle
	\vspace{-1cm}
	\begin{abstract}
		For  stochastic Volterra equations driven by standard Brownian and with  singular  kernels    $K(u)=u^{H-\frac{1}{2}}/\Gamma(H+1/2), H\in (0,1/2)$, it is known that the   Milstein scheme has a convergence  rate of $n^{-2H}$.  In this paper, we show that this rate is optimal.  Moreover, we show that the error normalized by $n^{-2H}$   converge stably in law to the (nonzero) solution of a certain linear Volterra equation of random coefficients with the same fractional kernel.
		
		\medskip\noindent\textbf{Keywords.} stochastic Volterra integral equations, Milstein scheme,  optimal rate of convergence,  asymptotic normalized error,  central limit theorem. 
		\smallskip
		
		\noindent\textbf{AMS 2020 Subject Classifications.} 60H20;60F17.
	\end{abstract}

	\section{Introduction and main result}
	Let $(\Omega,\mathcal{F},(\mathcal{F}_t)_{t\ge0},\mathbb{P})$ to be a  complete filtered probability space $(\Omega,\mathcal{F},(\mathcal{F}_t)_{t\ge0},\mathbb{P})$, where $\mathcal{F}_t$ is a nondecreasing family of sub-$\sigma$ fields of $\mathcal{F}$ satisfying the usual conditions. 
	Motivated by an attempt to solve the fractional order 
	stochastic differential equation (e.g.\cite{kilbas}),   the following stochastic Volterra equation (SVE for short) on $d$-dimensional Euclidean space $\mathbb{R}^{d}$ has been studied recently:  
	\begin{align}\label{Volterra eq}
		X_{t}=X_0+\int_{0}^{t}K(t-s)b(X_{s})\d s+\int_{0}^{t}K(t-s)\sigma(X_{s})\d W_s,\quad 0\le t\le T\,, 
	\end{align}
	where $X_0\in\mathbb{R}^d,K(u)=u^{H-\frac{1}{2}}/\Gamma(H+1/2), H\in (0,1/2)$, $
	W=(W_t\,, t\ge 0)$
	is an $\mathrm{m}$-dimensional Brownian motion defined on  
	$(\Om, \cF, \PP)$, and the coefficients $b:\mathbb{R}^d\to\mathbb{R}^d$, $\sigma:\mathbb{R}^d\to\mathbb{R}^d\times \mathbb{R}^m$ are continuous satisfying some conditions that we are going to specify  late in this section  before we state our main result.  
	
	Since the explicit solution of SVEs with singular kernels  is  rarely known  one has to rely on numerical methods for simulations of these equations. Various time-discrete numerical approximation schemes for \eqref{Volterra eq} have been investigated  in recent years. 
	An elementary and yet  widely used numerical method for \eqref{Volterra eq} is the following Euler-Maruyama    scheme, studied by Zhang    \cite{ZX1},    Li et al.\cite{LHH2} and Richard et al.\cite{RTY}.  To describe this scheme concisely    let us   we consider only uniform partitions of the interval $[0,T]$: $\pi_n: 0=t_0<t_1<\cdots<t_{[nT]+1}=T\wedge \frac{[nT]+1}{n}$,  where $ t_i=\frac{i}{n},i=0,1,\cdots,[nT]$ (we shall consider this type of partitions throughout the paper). For every positive integer $n$, Euler-Maruyama    approximation $X ^{  e, n}$  of the  solution is given by 
	\begin{align}\label{Euler scheme}
		X ^{  e, n}_t  =&X_0+\int_{0}^{t}K(t-\frac{[ns]}{n})b(X ^{  e, n} _{\frac{[ns]}{n}})\d s+\int_{0}^{t}K(t-\frac{[ns]}{n})\sigma(X ^{  e, n}_{\frac{[ns]}{n}})\d W_s,
	\end{align}
	where $[a]$ denotes the largest integer which is less than or equal to $a$.  	      It was proved by \cite{LHH2} and \cite{RTY}
	that there exists a positive constant $C$, independent  of  $n$ such that 
	\begin{equation}
		\sup_{t\in [0,T]}\E[|X_t- X^{e,n}_t|^p]^{1/p}\leq C n^{-H}. \label{e.1.3} 
	\end{equation} 
	In other word, the  Euler-Maruyama    scheme   for  \eqref{Volterra eq} 
	has  a rate of convergence $H$. 
	A variant of the  Euler-Maruyama scheme \eqref{Euler scheme} is  the following discretized version of \eqref{Volterra eq}:
	\begin{align}\label{Euler type scheme}
		X^{em,n}_t=&X_0+\int_{0}^{t}K(t-s)b(X^{em,n}_{\frac{[ns]}{n}})\d s+\int_{0}^{t}K(t-s)\sigma(X^{em,n}_{\frac{[ns]}{n}})\d W_s\,. 
	\end{align}
	For this scheme it would also  expected that \eqref{e.1.3} still holds true. 
	In fact, it  does and we have  
	\begin{equation}
		\sup_{t\in [0,T]}\E[|X_t- X^{em,n}_t|^p]^{1/p}\leq C n^{-H}. \nonumber
	\end{equation}   
	Moreover,  it is proved both in   \cite{FU} and \cite{NS}, independently,  that 
	if we denote $V^n=n^{H}[X^{em,n}_t-X_t]$,
	then    as $n$ tends to infinity the process $V^n$ converges stably in law to the solution $V=(V^1,\cdots,V^d)^{\top}$ of  the following linear  SVE:
	\begin{align}
		V_t^i= \sum_{k=1}^{d}&\int_0^t K(t-s)  \partial_k b^i(X_s)V^k_s \mathrm{d} s+\sum_{j=1}^m \sum_{k=1}^d \int_0^t K(t-s) \partial_k \sigma_j^i(X_s)V^k_s\d W^j_s\nonumber \\
		& +\frac{1}{\Gamma(2H+2)\sin \pi H}\sum_{j=1}^m \sum_{k=1}^d\sum_{k=1}^m \int_0^t K(t-s) \partial_k \sigma_j^i\left(X_s\right)\sigma^k_l(X_{s})\d B^{l,j}_s,\nonumber
	\end{align}
	where $B$ is an $m^2$-dimensional Brownian motion independent of $W$. 
	%

	While the limit error distribution  of  the Euler-Maruyama approximation of  SVE is now known, 
	there is no  result on the  limit error distribution  of  the Milstein  approximation of  SVE.
	We wish to extend the results \cite{FU,NS} concerning the asymptotic error distributions of Euler scheme \eqref{Euler type scheme} to Milstein scheme. The Milstein scheme   for \eqref{Volterra eq} is  proposed in   \cite{LHH2,RTY}  as follows: 
	\begin{align}\label{Milstein scheme}
		X_t^n=&X_0+\int_{0}^{t}K(t-s)\cdot\Big(b(X^n_{\frac{[ns]}{n}})+\nabla b(X^n_{\frac{[ns]}{n}})\cdot  M_s^{n}\Big)\d s\nonumber\\&+\int_{0}^{t}K(t-s)\cdot\Big(\sigma(X^n_{\frac{[ns]}{n}})+\nabla \sigma(X^n_{\frac{[ns]}{n}})\cdot  M_s^{n}\Big)\d W_s,
	\end{align}
	where
	\begin{equation}
		M_s^n :=M_s^{1,n}+M_s^{2,n},
	\end{equation}
	with 
	\begin{align}
		M_s^{1,n} :=&\int_{0}^{\frac{[ns]}{n}}\Big(K(s-r)-K(\frac{[ns]}{n}-r)\Big)\sigma(X^n_{\frac{[nr]}{n}})\d W_r \,, \nonumber\\ M_s^{2,n}:=&\int_{\frac{[ns]}{n}}^{s}K(s-r)\sigma(X^n_{\frac{[nr]}{n}})\d W_r,\nonumber
	\end{align}
	and
	$$\nabla b(\cdot)\cdot M:=\big(\langle \nabla b^k(\cdot), M \rangle \big)_{1\leq k\leq d}\quad \text{and}\quad \nabla \sigma(\cdot)\cdot M:=\big(\langle \nabla \sigma^k_j(\cdot), M \rangle \big)_{1\leq k\leq d,1\leq j \leq m}.$$
	It has also  been  shown in   \cite{LHH2,RTY}  
	that the rate of convergence for this scheme is   $n^{-2H}$, a significant improvement of the Euler-Maruyama scheme. More precisely, it has been proved that 
	\begin{equation}
		\sup_{t\in [0,T]}\E[|X_t- X^{ n}_t|^p]^{1/p}\leq C n^{-2H}. \nonumber
	\end{equation} 
	It is interesting to know whether  this rate is sharp or not and if yes, what is the limit error 
	distribution.  
	The main objective of this paper is to answer these two questions. The  key point is to indentify the expression of limit Volterra equation. The main difficulty is to give the precise expression of the constant appear in this limit equation. We shall apply fractional stochastic calculus and limit theorem of the integral of fractional parts to overcome this difficulty.
	
	To introduce our main result of this paper, we  make the following assumptions about  the fractional kernel 
	and the coefficients in SVE \eqref{Volterra eq}.

	\begin{assumption}\label{assumption 2.2}
		For any $x,y\in\mathbb{R}^d$
		$$
		|b(x)-b(y)\|+\|\sigma(x)-\sigma(y)\|\leq L_1|x-y|,			
		$$
		where $L_1>0$ is a positive constant.
	\end{assumption}
	
	\begin{assumption}\label{assumption 2.3} \quad  The gradients  $\nabla b(\cdot)$ and $\nabla \sigma(\cdot)$ exist and for any $x,y\in\mathbb{R}^d$
		$$\|\nabla b(x)-\nabla b(y)\|\leq L_2|x-y|,$$
		$$\|\nabla \sigma(x)-\nabla \sigma(y)\|\leq L_2|x-y|,$$
		for some positive constant $L_2>0$.
	\end{assumption}
	
	Our main result is
	the following theorem. 
	\begin{thm}\label{t.main}  Assume that the derivatives  of the functions $b$ and $\sigma$ up to second orders are bounded (hence Assumptions \ref{assumption 2.2} and \ref{assumption 2.3} hold). 
		Let $\epsilon \in(0, H)$. Then the process $U^n=n^{2H}(X-X^n)$  converges stably in law in $\mathcal{C}_0^{H-\epsilon}$ 
	to the unique solution  process $U=\left(U^1, \ldots, U^d\right)^{\top}$ 
	 of the following  linear stochastic Volterra equation of random coefficients:  
		\begin{align}\label{limit Volterra eq}
			U_t^i=&\sum_{k=1}^{d}\int_0^t K(t-s) \partial_k b^i(X_s)U^k_s \mathrm{d} s+\sum_{j=1}^m \sum_{k=1}^d \int_0^t K(t-s) \partial_k \sigma_j^i(X_s)U^k_s\d W^j_s\nonumber \\
			& +
			{C_M}\sum_{j=1}^m \sum_{i=1}^{m}\sum_{i'=1}^{m}\sum_{l=1}^d\int_0^{t} K(t-s)\partial_k \sigma_j^i\left(X_s\right) \sigma_i^l\left(X_s\right)\partial_l\sigma^{k}_{i'}(X_{s}) \mathrm{d} B_s^{i,i',j}\,, 
			\\
			&\qquad\quad  t \in[0, T], i=1, \ldots, d,\nonumber
		\end{align}
		where $\mathcal{C}_0^{\lambda}$  denotes the set of all  $\mathbb{R}^d$-valued $\lambda$-H$\mathrm{\ddot{o}}$lder continuous functions on $[0,T]$ vanishing at $t=0$, and $B$ is an $m^3$-dimensional standard Brownian motion, independent of the original Brownian motion $W$, $C_M$ is a 
		certain constant specific to the Milstein scheme. 
	\end{thm}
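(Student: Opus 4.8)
The plan is to transplant the programme developed for the Euler--Maruyama scheme in \cite{FU,NS} to the Milstein scheme \eqref{Milstein scheme}, the essential new feature being that the correction term $M^n_s$ raises the order of the leading fluctuation from the second to the third Wiener chaos; this is already reflected in the statement, where $B$ is $m^3$-dimensional rather than $m^2$-dimensional. First I would subtract \eqref{Milstein scheme} from \eqref{Volterra eq} to obtain, for $U^n=n^{2H}(X-X^n)$, a stochastic Volterra equation
\begin{align}
U^n_t=n^{2H}\int_0^t K(t-s)\big[b(X_s)-b(X^n_{\frac{[ns]}{n}})-\nabla b(X^n_{\frac{[ns]}{n}})\cdot M^n_s\big]\,\d s+n^{2H}\int_0^t K(t-s)R^n_s\,\d W_s,\nonumber
\end{align}
with diffusion residual $R^n_s=\sigma(X_s)-\sigma(X^n_{\frac{[ns]}{n}})-\nabla\sigma(X^n_{\frac{[ns]}{n}})\cdot M^n_s$. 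Using $X_s-X^n_{\frac{[ns]}{n}}=n^{-2H}U^n_s+(X^n_s-X^n_{\frac{[ns]}{n}})$ and the fact that $M^n_s$ is, by construction, precisely the leading martingale part of the increment $X^n_s-X^n_{\frac{[ns]}{n}}$, the first-order Taylor expansion reproduces, through the term $n^{-2H}U^n_s$, the two linear integrals $\int K(t-s)\partial_k b^i(X_s)U^k_s\,\d s$ and $\int K(t-s)\partial_k\sigma^i_j(X_s)U^k_s\,\d W^j_s$ of \eqref{limit Volterra eq} (after sending $X^n_{\frac{[ns]}{n}}\to X_s$), so the whole problem reduces to identifying the stable limit of the remaining source term.

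Next I would expand $R^n_s$ to the critical order $n^{-2H}$, where two genuinely distinct contributions appear. The first is the \emph{second Milstein iterate}: writing the surviving part of $X^n_s-X^n_{\frac{[ns]}{n}}-M^n_s$ as $\int_{\frac{[ns]}{n}}^s K(s-r)\nabla\sigma(X^n_{\frac{[nr]}{n}})\cdot M^n_r\,\d W_r$ plus a kernel-difference integral over $[0,\frac{[ns]}{n}]$, and feeding this through $\nabla\sigma(X^n_{\frac{[ns]}{n}})$ and the outer $\int K(t-s)(\cdot)\,\d W_s$, yields a triple iterated It\^o integral with the contraction pattern $\partial\sigma\cdot\sigma\cdot\partial\sigma$ of \eqref{limit Volterra eq}. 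The second is the \emph{second-order Taylor remainder} $\tfrac12\nabla^2\sigma(X^n_{\frac{[ns]}{n}})\colon(X_s-X^n_{\frac{[ns]}{n}})^{\otimes 2}$, which, since $X_s-X^n_{\frac{[ns]}{n}}$ is of order $n^{-H}$, is \emph{also} of order $n^{-2H}$ and must be retained. A scaling check using $\int_0^{1/n}K(v)^2\,\d v\sim n^{-2H}$ confirms that both contributions are $O(1)$ after multiplication by $n^{2H}$, while the drift increments and the cubic Taylor remainders are of strictly smaller order and drop out.

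For the limiting distribution I would run a martingale central limit theorem in the spirit of \cite{FU,NS}. On each mesh interval the relevant quantities are third-order Wiener functionals of the Brownian increments over that interval; writing the normalized source as a sum of such functionals indexed by the mesh, the key structural point is that, conditionally on $W$, these sums are asymptotically Gaussian with a random variance that is a functional of $X$, while the fluctuation itself decorrelates from $W$ in the limit. This asymptotic independence is exactly the mechanism that produces the new $m^3$-dimensional Brownian motion $B$, indexed by the three Brownian components entering the triple integral. Concretely I would compute the limiting predictable quadratic variation of the normalized source and match it with the bracket of ${C_M}\int_0^{\cdot}K(\cdot-s)\,\partial_k\sigma^i_j(X_s)\sigma^l_i(X_s)\partial_l\sigma^k_{i'}(X_s)\,\d B^{i,i',j}_s$, thereby obtaining stable convergence.

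The hard part will be the exact identification of the limiting coefficient together with the constant $C_M$: one must determine precisely how the two critical-order contributions isolated above combine in the stable limit --- in particular how the second-order ($\nabla^2$) terms are accounted for --- and then evaluate the resulting constant, which reduces to a limit of the schematic form
\begin{align}
\lim_{n\to\infty}n^{4H}\int_{\frac{[ns]}{n}}^s K(s-r)^2\Big(\int_{\frac{[nr]}{n}}^r K(r-u)^2\,\d u\Big)\d r,\nonumber
\end{align}
evaluated after averaging the singular kernel against the fractional parts $ns-[ns]$; here the Riemann--Lebesgue/equidistribution type ``limit theorem for integrals of fractional parts'' converts the mesh sums into ordinary integrals and delivers the Beta-function factors, in the same way that $\tfrac{1}{\Gamma(2H+2)\sin\pi H}$ arises for the Euler scheme. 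Once $C_M$ is pinned down, I would close the argument by establishing tightness of $\{U^n\}$ in $\mathcal{C}_0^{H-\epsilon}$ through Kolmogorov-type moment bounds (available since $2H-1>-1$, so that $K\in L^2_{\mathrm{loc}}$), passing to the limit in the error equation, and invoking existence and uniqueness for the linear singular Volterra equation \eqref{limit Volterra eq} to conclude that every subsequential limit equals $U$, with stability inherited from the martingale central limit theorem.
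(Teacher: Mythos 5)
Your overall architecture coincides with the paper's: subtract the two equations, Taylor--expand so that the terms linear in $U^n$ reproduce the drift and diffusion parts of \eqref{limit Volterra eq}, isolate the martingale source $V^n_t=n^{2H}\int_0^t\big(X^n_s-X^n_{[ns]/n}-M^n_s\big)\,\d W_s$, show that $\langle V^n\rangle_t$ converges in $L^2$ to a process determined by $\sigma$ and $\nabla\sigma$ along $X$ while $\langle V^n,W\rangle\to0$, invoke the stable martingale limit theorem to manufacture the $m^3$-dimensional Brownian motion $B$ independent of $W$, and close with tightness in $\mathcal{C}_0^{H-\epsilon}$, a Kurtz--Protter passage to the limit in the error equation, and uniqueness for the linear singular Volterra equation. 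This is exactly the paper's Sections 3.1--3.4, so in spirit you are on the paper's route.

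The genuine gap is your treatment of the second-order Taylor term. You correctly observe that $\tfrac12\nabla^2\sigma(X^n_{[ns]/n})\colon\big(X^n_s-X^n_{[ns]/n}\big)^{\otimes2}$ is of the critical order $n^{-2H}$, you state that it ``must be retained'', and you then defer to ``the hard part'' the question of how these $\nabla^2$ contributions are accounted for. But the theorem asserts a limit equation containing \emph{no} second derivatives, so the proof cannot be completed without actually disposing of this term, and your scaling check alone cannot do it: after multiplication by $n^{2H}$ the term is $O(1)$ in every $L^p$, not $o(1)$, so one needs a structural (martingale/averaging) argument rather than an order count. The paper's decomposition routes these contributions into the remainders $\mathcal{R}^{n,2}_{b},\mathcal{R}^{n,2}_{\sigma}$ of \eqref{e.r.3}--\eqref{e.r.4} and disposes of them in Lemma \ref{remainder term}; in your plan the identification of the limit law is left open precisely at this point. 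A secondary incompleteness: your schematic constant $\lim_n n^{4H}\int_{[ns]/n}^{s}K(s-r)^2\big(\int_{[nr]/n}^{r}K(r-u)^2\,\d u\big)\d r$ captures only the ``local $\times$ local'' piece of the bracket. In the paper $C_M^2=C_G^{(1)}+C_G^{(2)}+C_G^{(3)}+C_G^{(4)}$ is assembled from four nonvanishing cross-products of the nonlocal kernel-difference component $M^{1,n}$ and the local component $M^{2,n}$ against the corresponding two pieces of the outer stochastic integral (Lemmas \ref{33}, \ref{36}, \ref{Q-33} and \ref{Q-36}), each evaluated via the fractional-part averaging result (Lemma \ref{limit distribution of stochastic integral}); a proof that only computes the local--local term would identify the wrong constant.
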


	Let us also mention that in the case of classical  stochastic differential equation, namely, when the Hurst parameter  $H=1/2$ the asymptotic behaviors of the
	normalized error process for Euler-Maruyama  scheme  have
	been studied in \cite{Ja2,KP0} and the references therein. The asymptotic behavior of the
	normalized error process for the continuous time Milstein scheme 
	was studied   in \cite{Yan}. We also mention that still in the absence of the Volterra kernel, when  the driven Brownian motion  is fractional Brownian motion, there are also a number of results on the rate of convergence and asymptotic error distributions of Euler-type
	numerical schemes, we refer   to \cite{DNT,FR,HLN,HLN2,LT} and the references therein.
	
	Here is how our paper is structured: In Section \ref{sec 2}, we collect and prove some technical lemmas. Finally in Section \ref{sec 3}, we prove the main result, including the uniqueness of the solution to \eqref{limit Volterra eq}.

	\section{Technical lemmas}
	\label{sec 2}

	The following lemmas will be used several times to evaluate integrals with the convolution kernel.
	\begin{lem}\cite{LHG1}\label{basic lemma} Let $K(u)=u^{H-1/2}/\Gamma(H+1/2)$.  Then 
			\[
			\left\{	\begin{split}
				&\int_{0}^{h}K(t)\d t=O(h^{H+1/2})\,, \quad \int_{0}^{T}\big(K(t+h)-K(t)\big)\d t=O(h^{H+1/2})\,,\\ 
				& \int_{0}^{h}K(t)^2\d t =O(h^{2H})\,, \quad \int_{0}^{T}\big(K(t+h)-K(t)\big)^2\d t =O(h^{2H})\,, 
			\end{split}\right. 
			\]
			where the notation $A=O(B)$ for two quantities  $A$ and $B$ means that there is a constant $C$ such that
			$A\le CB$. 
		
		Moreover, for any adapted $\mathbb{R}^{d}$-valued process $Y$ and $\mathbb{R}^{d\times m}$-valued process $Z$,  the following inequalities hold 
		
		(i) For $p\geq 2$ and $t\in [0,T]$,
		$$\E\Big[\Big|\int_{0}^{t}K(t-s)Y_s\d s\Big|^p\Big]\leq C\int_{0}^{t}K(t-s)\cdot \E[|Y_s|^p]\d s\,.$$
		
		(ii) For $p\geq 2$ and $t\in [0,T]$,
		$$\E\Big[\Big|\int_{0}^{t}K(t-s)Z_s\d W_s\Big|^p\Big]\leq C\int_{0}^{t}K(t-s)^2\cdot E[\|Z_s\|^p]\d s\,.$$
		
		(iii) For $p\geq 1, t\in [0,T]$ and $h\geq 0$ with $t+h\leq T$,
		$$\E\Big[\Big|\int_{0}^{t}(K(t+h-s)-K(t-s))Y_s\d s\Big|^p\Big]+\E\Big[\Big|\int_{t}^{t+h}K(t+h-s)Y_s\d s\Big|^p\Big]\leq C h^{(H+1/2)p}\sup_{t\in[0,T]}\E[|Y_t|^p]. $$
		
		(iv) For $p\geq 2, t\in [0,T]$ and $h\geq 0$ with $t+h\leq T$,
		$$\E\Big[\Big|\int_{0}^{t}(K(t+h-s)-K(t-s))Z_s\d W_s\Big|^p\Big]+E\Big[\Big|\int_{t}^{t+h}K(t+h-s)Z_s\d W_s\Big|^p\Big]\leq C h^{H p}\sup_{t\in[0,T]}\E[\|Z_t\|^p],$$
		where $C$ depends only on $K,p$ and $T$.
	\end{lem}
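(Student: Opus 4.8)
The plan is to establish the four kernel integral estimates first, by elementary computation and a scaling substitution, and then to deduce the four moment inequalities (i)--(iv) by feeding these kernel estimates into Jensen's inequality and the Burkholder--Davis--Gundy (BDG) inequality.

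For the deterministic bounds, the first and third are immediate: since $K(u)=u^{H-1/2}/\Gamma(H+1/2)$, one computes $\int_0^h K(t)\,\d t=h^{H+1/2}/\Gamma(H+3/2)$ and $\int_0^h K(t)^2\,\d t=h^{2H}/\big(2H\,\Gamma(H+1/2)^2\big)$, both finite precisely because $H\in(0,1/2)$ gives $H+1/2>0$ and $2H>0$. For the two increment estimates I would use the substitution $t=hu$. The signed first-order integral evaluates in closed form to $\Gamma(H+3/2)^{-1}\big[(T+h)^{H+1/2}-T^{H+1/2}-h^{H+1/2}\big]$; since $H+1/2<1$ the term $h^{H+1/2}$ dominates the smooth $O(h)$ difference, so the whole expression is $O(h^{H+1/2})$. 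Because $K$ is decreasing the integrand $K(t+h)-K(t)$ has constant sign, so the integral of its modulus equals the modulus of the integral, which is what the later steps actually require. For the squared increment, the substitution $t=hu$ extracts a factor $h^{2H}$ times $\int_0^{T/h}\big((u+1)^{H-1/2}-u^{H-1/2}\big)^2\,\d u$, and this last integral is bounded uniformly in $h$ because the integrand is comparable to $u^{2H-1}$ near $0$ (integrable, as $2H-1>-1$) and to $u^{2H-3}$ at infinity (integrable, as $2H-3<-1$); hence the bound $O(h^{2H})$.

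For inequality (i), I would regard $\d\mu(s)=K(t-s)\,\d s$ as a finite measure on $[0,t]$ with total mass $m_t=\int_0^t K(t-s)\,\d s\le\int_0^T K(u)\,\d u=:C_K<\infty$ and apply Jensen's inequality with the convex map $x\mapsto|x|^p$, giving $\big|\int_0^t Y_s\,\d\mu\big|^p\le m_t^{p-1}\int_0^t K(t-s)|Y_s|^p\,\d s$; taking expectations yields (i) with $C=C_K^{p-1}$. For (ii) I would first invoke BDG to bound the $p$-th moment by $C_p\,\E\big[\big(\int_0^t K(t-s)^2\|Z_s\|^2\,\d s\big)^{p/2}\big]$, then run the same Jensen argument, now with the finite measure $K(t-s)^2\,\d s$ (total mass $\le\int_0^T K(u)^2\,\d u=:\tilde C_K<\infty$) and the convex map $x\mapsto x^{p/2}$, which is legitimate because $p\ge2$; this delivers (ii). For (iii) and (iv) I would combine these two steps with the increment estimates through the change of variable $u=t-s$, which turns $\int_0^t\big(K(t+h-s)-K(t-s)\big)Y_s\,\d s$ into $\int_0^t\big(K(u+h)-K(u)\big)Y_{t-u}\,\d u$; the driving measure then has total mass $\int_0^T|K(u+h)-K(u)|\,\d u=O(h^{H+1/2})$, and the Jensen bound of (i) produces a term of order $\big(h^{H+1/2}\big)^p\sup_t\E[|Y_t|^p]$, while the boundary piece $\int_t^{t+h}K(t+h-s)Y_s\,\d s$ is treated identically after $u=t+h-s$, using $\int_0^h K(u)\,\d u=O(h^{H+1/2})$. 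Estimate (iv) is the exact stochastic analogue: after BDG the relevant masses are those of $|K(u+h)-K(u)|^2\,\d u$ and $K(u)^2\,\d u$, both $O(h^{2H})$, and the $p/2$-power in the Jensen step converts $h^{2H}$ into $\big(h^{2H}\big)^{p/2}=h^{Hp}$.

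The only genuinely analytic point in the whole argument is the uniform-in-$h$ boundedness of the scaled integral $\int_0^{T/h}\big((u+1)^{H-1/2}-u^{H-1/2}\big)^2\,\d u$ underlying the $O(h^{2H})$ squared-increment estimate, since its verification hinges on the two-sided integrability check at $0$ and at infinity and is what forces the constraint $H\in(0,1/2)$. Once that is in hand the remaining work is Jensen/BDG bookkeeping, where the one place requiring care is tracking that the square inside BDG converts the $h^{2H}$ mass of the squared increment into the rate $h^{Hp}$ of (iv), as opposed to the $h^{(H+1/2)p}$ of (iii); keeping those two accountings cleanly separated is the main, if shallow, obstacle.
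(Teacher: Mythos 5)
Your proof is correct. Note first that the paper itself does not prove this lemma: it is imported from \cite{LHG1} without proof, so the only in-paper comparison point is the proof of the strengthened Lemma \ref{basic lemma-2}, where the authors use Minkowski's integral inequality (plus BDG) rather than your Jensen argument. The two routes are close but not identical, and each buys something: Minkowski bounds $\E\big[\big|\int f_h Y\big|^p\big]$ directly by $\big(\int |f_h|\,\|Y_s\|_{L^p}\,\d s\big)^p$, which immediately yields the sup-weighted right-hand sides of (iii)--(iv) (and the refinement in Lemma \ref{basic lemma-2}) with no constant bookkeeping; your Jensen argument with the normalized measure $K(t-s)\,\d s$ (resp.\ $K(t-s)^2\,\d s$ after BDG) is instead the natural tool for (i)--(ii), because it produces the \emph{kernel-weighted} bounds $\int_0^t K(t-s)\,\E[|Y_s|^p]\,\d s$ and $\int_0^t K(t-s)^2\,\E[\|Z_s\|^p]\,\d s$, which Minkowski does not give in that pointwise form and which are exactly what the Gronwall--Volterra argument in Lemma \ref{bound of X n} consumes. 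Your handling of the kernel estimates is also sound: the closed-form evaluations, the constant-sign observation for $K(t+h)-K(t)$ (which reconciles the signed statement with the modulus actually needed), and the scaling $t=hu$ reducing the squared increment to the uniform bound on $\int_0^\infty\big((u+1)^{H-1/2}-u^{H-1/2}\big)^2\,\d u$ via the $u^{2H-1}$ behaviour at $0$ and $u^{2H-3}$ at infinity all check out; the latter is the same mean-value estimate the paper uses in \eqref{small estimate}. Two routine points you leave implicit and should state for completeness: the Fubini--Tonelli interchange after Jensen (harmless since the integrands are nonnegative, and the claimed inequalities are vacuous when the right-hand sides are infinite), and a localization by stopping times so that BDG applies to the possibly non-square-integrable adapted $Z$.
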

	We also need some estimates stronger than Lemma \ref{basic lemma}, $(iii)$ and $(iv)$.
	\begin{lem}\label{basic lemma-2}
		The following inequalities hold for any adapted $\mathbb{R}^{d}$-valued process $Y$ and $\mathbb{R}^{d\times m}$-valued process $Z$. 
		
		(i) For $p\geq 1, t\in [0,T]$ and $h\geq 0$ with $t+h\leq T$,
		\begin{align}
			&\E\Big[\Big|\int_{0}^{t}(K(t+h-s)-K(t-s))Y_s\d s\Big|^p\Big]+\E\Big[\Big|\int_{t}^{t+h}K(t+h-s)Y_s\d s\Big|^p\Big]\nonumber\\&\quad\leq C h^{(H+1/2)p}\Big[\sup_{s\in[0,t]}\E[|Y_s|^p]+\sup_{s\in[t,t+h]}\E[|Y_s|^p]\Big]\,.\nonumber
		\end{align}
		(ii)   For $p\geq 2, t\in [0,T]$ and $h\geq 0$ with $t+h\leq T$,
		\begin{align}
			&\E\Big[\Big|\int_{0}^{t}(K(t+h-s)-K(t-s))Z_s\d s\Big|^p\Big]+\E\Big[\Big|\int_{t}^{t+h}K(t+h-s)Z_s\d s\Big|^p\Big]\nonumber\\&\quad\leq C h^{Hp}\Big[\sup_{s\in[0,t]}\E[\|Z_s\|^p]+\sup_{s\in[t,t+h]}\E[\|Z_s\|^p]\Big],\nonumber
		\end{align}
		where $C$ depends only on $K,p$ and $T$.
	\end{lem}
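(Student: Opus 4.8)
The plan is to establish all four moment bounds by the same two-step mechanism, treating the two Lebesgue integrals of part (i) and the two It\^o integrals of part (ii) in parallel. (I read the $\mathrm{d}s$ in part (ii) as $\mathrm{d}W_s$, since this is the estimate that strengthens Lemma \ref{basic lemma}$(iv)$ and the bound $h^{Hp}$ would otherwise be inconsistent with a genuine Lebesgue integral; the very fact that part (ii) requires $p\ge2$ while part (i) only needs $p\ge1$ is the tell that it is stochastic.) The only gain over Lemma \ref{basic lemma}$(iii)$--$(iv)$ is that each integral sees only the values of $Y$ or $Z$ on its own domain, so I will retain $\sup_{s\in[0,t]}$ for the integral over $[0,t]$ and $\sup_{s\in[t,t+h]}$ for the integral over $[t,t+h]$, instead of bounding both by $\sup_{s\in[0,T]}$.

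For part (i), first integral, I would set $f(s)=K(t+h-s)-K(t-s)$ and apply H\"older's inequality with respect to the finite positive measure $|f(s)|\,\mathrm{d}s$ on $[0,t]$:
\[
\Big|\int_0^t f(s)Y_s\,\mathrm{d}s\Big|^p\le\Big(\int_0^t|f(s)|\,\mathrm{d}s\Big)^{p-1}\int_0^t|f(s)|\,|Y_s|^p\,\mathrm{d}s.
\]
Taking expectations and bounding $\E[|Y_s|^p]\le\sup_{s\in[0,t]}\E[|Y_s|^p]$ leaves the deterministic factor $\big(\int_0^t|f(s)|\,\mathrm{d}s\big)^p$. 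Since $H<1/2$ the kernel $K$ is decreasing, so $|f(s)|=K(t-s)-K(t+h-s)$ keeps a fixed sign; the change of variables $u=t-s$ then gives $\int_0^t|f(s)|\,\mathrm{d}s=\int_0^t|K(u+h)-K(u)|\,\mathrm{d}u\le\int_0^T|K(u+h)-K(u)|\,\mathrm{d}u=O(h^{H+1/2})$ by Lemma \ref{basic lemma}. The second integral is handled identically with $f(s)=K(t+h-s)$ on $[t,t+h]$, where $\int_t^{t+h}K(t+h-s)\,\mathrm{d}s=\int_0^h K(u)\,\mathrm{d}u=O(h^{H+1/2})$. Summing the two yields the claimed $h^{(H+1/2)p}$ bound.

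For part (ii) I would first apply the Burkholder--Davis--Gundy inequality to replace each $L^p$ moment of the stochastic integral by $\E\big[\big(\int g(s)\|Z_s\|^2\,\mathrm{d}s\big)^{p/2}\big]$, with $g(s)=|K(t+h-s)-K(t-s)|^2$ for the first integral and $g(s)=K(t+h-s)^2$ for the second. Here the hypothesis $p\ge2$ is essential: it lets me reuse the same H\"older step, now with exponent $p/2\ge1$ and measure $g(s)\,\mathrm{d}s$, to pull out $\sup\E[\|Z_s\|^p]$ over the relevant subinterval and leave the factor $\big(\int g(s)\,\mathrm{d}s\big)^{p/2}$. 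The two kernel integrals are then $\int_0^t|K(u+h)-K(u)|^2\,\mathrm{d}u\le\int_0^T|K(u+h)-K(u)|^2\,\mathrm{d}u=O(h^{2H})$ and $\int_0^h K(u)^2\,\mathrm{d}u=O(h^{2H})$, again from Lemma \ref{basic lemma}, so each contributes $(h^{2H})^{p/2}=h^{Hp}$.

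This is essentially the proof of Lemma \ref{basic lemma}$(iii)$--$(iv)$ with one extra bookkeeping observation, so I do not anticipate a genuine obstacle. The only points requiring care are the monotonicity of $K$ when passing to $|f(s)|$ (automatic since $K$ is decreasing for $H<1/2$), and the restriction $p\ge2$ in part (ii), which is exactly what makes the H\"older step after BDG legitimate.
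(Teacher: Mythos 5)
Your proof is correct and follows essentially the same route as the paper: split into the two integrals, reduce each $L^p$ moment to the deterministic kernel integral times the supremum of $\E[|Y_s|^p]$ (resp.\ $\E[\|Z_s\|^p]$) over the relevant subinterval, invoke the kernel estimates of Lemma \ref{basic lemma}, and for part (ii) precede this with the BDG inequality; you also correctly read the $\d s$ in part (ii) as $\d W_s$, which is what the paper's own proof does. The only cosmetic difference is that you use H\"older's inequality with respect to the weighted measure $|f(s)|\,\d s$ where the paper uses Minkowski's integral inequality --- these yield the identical bound here.
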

	\begin{proof}
		For $(i)$, by Minkowski's integral inequality we see 
		\begin{align}
			\E\Big[&\Big|\int_{0}^{t}(K(t+h-s)-K(t-s))Y_s\d s\Big|^p\Big]+\E\Big[\Big|\int_{t}^{t+h}K(t+h-s)Y_s\d s\Big|^p\Big]\nonumber\\&\leq \Big(\int_{0}^{t}\Big|K(t+h-s)-K(t-s)\Big|\cdot\E[|Y_s|^p]^{\frac{1}{p}}\d s\Big)^{p}+\Big(\int_{t}^{t+h}K(t+h-s)\cdot\E[|Y_s|^p]^{\frac{1}{p}}\d s\Big)^{p}\nonumber\\&\leq \sup_{s\in[0,t]}E[|Y_s|^p]\Big(\int_{0}^{t}\Big|K(t+h-s)-K(t-s)\Big|\d s\Big)^{p}+\sup_{s\in[t,t+h]}E[|Y_s|^p]\Big(\int_{t}^{t+h}K(t+h-s)\d s\Big)^{p}\nonumber\\&\leq C h^{(H+1/2)p}\Big[\sup_{s\in[0,t]}\E[|Y_s|^p]+\sup_{s\in[t,t+h]}\E[|Y_s|^p]\Big].\nonumber
		\end{align}
		For $(ii)$, take $p\geq 2$. By the Burkholder-Davis-Gundy (BDG)  inequality and Minkowski's inequality, we obtain
		\begin{align}
			\E\Big[&\Big|\int_{0}^{t}(K(t+h-s)-K(t-s))Z_s\d W_s\Big|^p\Big]+\E\Big[\Big|\int_{t}^{t+h}K(t+h-s)Z_s\d W_s\Big|^p\Big]\nonumber\\&\leq C\E\Big(\int_{0}^{t}\big|K(t+h-s)-K(t-s)\big|^2\cdot\|Z_s\|^2\d s\Big)^{\frac{p}{2}}+C\E\Big(\int_{t}^{t+h}K(t+h-s)^2\cdot\|Z_s\|^2\d s\Big)^{\frac{p}{2}}\nonumber\\&\leq C\Big(\int_{0}^{t}\big|K(t+h-s)-K(t-s)\big|^2\cdot \E[\|Z_s\|^p]^{\frac{2}{p}}\d s\Big)^{\frac{p}{2}}+C\Big(\int_{t}^{t+h}K(t+h-s)^2\cdot\E[\|Z_s\|^p]^{\frac{2}{p}}\d s\Big)^{\frac{p}{2}}\nonumber\\&\leq C \sup_{s\in[0,t]}\E[\|Z_s\|^p]\Big(\int_{0}^{t}|K(t+h-s)-K(t-s)|^2\d s\Big)^{\frac{p}{2}}+C \sup_{s\in[t,t+h]}\E[\|Z_s\|^p]\Big(\int_{t}^{t+h}K(t+h-s)^2\d s\Big)^{\frac{p}{2}}\nonumber\\&\leq C h^{H p}\Big[\sup_{s\in[0,t]}\E[\|Z_s\|^p]+\sup_{s\in[t,t+h]}\E[\|Z_t\|^p]\Big].\nonumber
		\end{align}
		This proves the lemma. 
	\end{proof}
	Let us recall the integrability and the continuity  of the solution $X_t$ proved in \cite{ALP}.
	\begin{lem}[\cite{ALP}]\label{bound of X}  Let $p\geq 1$. 
		Under   Assumption \ref{assumption 2.2},  we have 
		$$\sup_{t\in[0,T]}\E[|X_t|^p]\leq C,$$
		where $C$ is a constant  depending only  on $|X_0|,K,L_1,p$ and $T$.
	\end{lem}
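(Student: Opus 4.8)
The plan is to take $p$-th moments directly in the defining equation \eqref{Volterra eq}, reduce the problem to a scalar Volterra integral inequality with weakly singular kernels, and close it by a singular (fractional) Gronwall argument. Since $\E[|X_t|^p]\le(\E[|X_t|^2])^{p/2}$ for $1\le p\le 2$ by Jensen's inequality, it suffices to treat $p\ge 2$, which also lets me invoke Lemma \ref{basic lemma} (i) and (ii) without fuss. Setting $g(t):=\E[|X_t|^p]$ and applying $|a+b+c|^p\le 3^{p-1}(|a|^p+|b|^p+|c|^p)$ together with Lemma \ref{basic lemma} (i) for the drift term and Lemma \ref{basic lemma} (ii) for the stochastic term, I would obtain
\begin{align}
g(t)\le 3^{p-1}\Big[|X_0|^p+C\int_0^t K(t-s)\,\E[|b(X_s)|^p]\,\d s+C\int_0^t K(t-s)^2\,\E[\|\sigma(X_s)\|^p]\,\d s\Big].\nonumber
\end{align}
The Lipschitz Assumption \ref{assumption 2.2} gives the linear growth bounds $|b(x)|^p\le C(1+|x|^p)$ and $\|\sigma(x)\|^p\le C(1+|x|^p)$; inserting these and using that $\int_0^t K(t-s)\,\d s$ and $\int_0^t K(t-s)^2\,\d s$ are bounded on $[0,T]$ (by Lemma \ref{basic lemma}, valid since $H-\tfrac12>-1$ and $2H-1>-1$) yields
\begin{align}
g(t)\le C_1+C_2\int_0^t \big(K(t-s)+K(t-s)^2\big)\,g(s)\,\d s,\nonumber
\end{align}
where $C_1,C_2$ depend only on $|X_0|,K,L_1,p,T$.

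The second step is to solve this inequality. Both kernels are of weakly singular power type, $K(t-s)=(t-s)^{H-1/2}/\Gamma(H+1/2)$ and $K(t-s)^2=(t-s)^{2H-1}/\Gamma(H+1/2)^2$, i.e.\ of the form $c(t-s)^{\beta-1}$ with $\beta=H+\tfrac12>0$ and $\beta=2H>0$. The inequality is therefore exactly of the type handled by the generalized Gronwall (Henry-type) lemma for fractional kernels, whose application produces a Mittag-Leffler type bound and hence $\sup_{t\in[0,T]}g(t)\le C$ with $C$ depending only on the stated data. Alternatively, and more self-containedly, I would iterate the inequality: the $n$-fold convolution of an integrable power kernel $(t-s)^{\beta-1}$ with itself is again a power kernel whose exponent grows linearly in $n$, so after finitely many iterations the effective kernel is bounded on $[0,T]$, at which point the ordinary Gronwall lemma closes the estimate.

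The genuinely delicate point, and the place where the singular nature of $K$ must be tracked most carefully, is the a priori finiteness of $g$ needed before any Gronwall step can be invoked, since the integral inequality is vacuous if $g\equiv\infty$. I would handle this by a standard localization, replacing $X$ by the stopped process $X_{t\wedge\tau_N}$ with $\tau_N:=\inf\{t:|X_t|\ge N\}$, deriving the inequality above for the manifestly finite quantity $g_N(t):=\E[|X_{t\wedge\tau_N}|^p]$, extracting from the fractional Gronwall step a bound on $g_N$ uniform in $N$, and then letting $N\to\infty$ by Fatou's lemma. The continuity of $X$ that makes $\tau_N$ well behaved is part of the construction of the solution recalled from \cite{ALP}; once the uniform-in-$N$ estimate is in hand the conclusion is immediate. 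I expect this interplay between the localization and the singular Gronwall estimate to be the main obstacle, the ordinary-kernel version being routine.
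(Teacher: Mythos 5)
Your proposal is correct and follows essentially the same route the paper itself uses: the lemma is only cited from \cite{ALP}, but the paper's proof of the discrete analogue (Lemma \ref{bound of X n}) is exactly your argument --- reduction to $p\ge 2$, moment bounds via Lemma \ref{basic lemma} (i)--(ii), linear growth from the Lipschitz assumption, the combined kernel $\hat K=C(K+K^2)$, a Volterra-type Gronwall inequality closed by the boundedness of the resolvent (\cite[Theorem 2.2 and Lemma 2.1]{ZX2}), localization, and Fatou's lemma. One technical repair to your localization: you should work with the truncated quantity $\E\big[|X_t|^p\,\mathbb{I}_{\{t<\tau_N\}}\big]$ at \emph{deterministic} time $t$, inserting the indicator into the coefficients as the paper does, rather than with the literally stopped process $X_{t\wedge\tau_N}$; the stochastic convolution $\int_0^t K(t-s)\sigma(X_s)\,\d W_s$ is not a martingale in $t$, so stopping does not commute with the convolution, and evaluating at the random time produces the random kernel argument $K(t\wedge\tau_N-s)$, to which neither BDG nor Lemma \ref{basic lemma} (ii) applies directly. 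With that substitution your fractional-Gronwall/resolvent step (or the equivalent finite iteration of the power kernel) goes through verbatim and yields the uniform-in-$N$ bound needed for Fatou.
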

	\begin{lem}[\cite{ALP}]\label{continuous of X}
		Let $p>H^{-1}$. Then
		$$\E[|X_t-X_s|^p]\leq C|t-s|^{Hp}, \quad t,s\in [0,T]$$
		and $X$ admits a   $\mathrm{H\ddot{o}lder}$ continuous version  on $[0,T]$ of any order $\a<H $. Denoting this version again by $X$, one has
		$$\E\Big[\Big(\sup_{0\leq s\leq t\leq T}\frac{|X_t-X_s|}{|t-s|^{\a}}\Big)^p\Big]\leq C_{\a}$$
		for all $\a\in[0,H)$, where $C_{\a}$ is a constant. As a consequence, we can regard $X-X_0$ as a $\mathcal{C}^{\alpha}_{0}$ valued random variable for any $\alpha<H$.
	\end{lem}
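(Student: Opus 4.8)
The plan is to establish the two assertions of Lemma \ref{continuous of X} in turn: first the moment bound $\E[|X_t-X_s|^p]\le C|t-s|^{Hp}$ for $p>H^{-1}$, and then upgrade it via a Kolmogorov-type argument to the existence of a H\"older-continuous version with the stated uniform modulus estimate. The starting point is the equation \eqref{Volterra eq} itself, from which I would write, for $s<t$,
\begin{align}
X_t-X_s&=\int_s^t K(t-r)b(X_r)\d r+\int_0^s\big(K(t-r)-K(s-r)\big)b(X_r)\d r\nonumber\\
&\quad+\int_s^t K(t-r)\sigma(X_r)\d W_r+\int_0^s\big(K(t-r)-K(s-r)\big)\sigma(X_r)\d W_r.\nonumber
\end{align}
This is the natural decomposition into a ``near-diagonal'' piece (the integrals over $[s,t]$) and a ``kernel-difference'' piece (the integrals over $[0,s]$), and it is exactly the structure to which Lemma \ref{basic lemma} parts $(iii)$ and $(iv)$ apply with $h=t-s$.

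First I would bound the drift terms. Taking $L^p$ norms and applying Lemma \ref{basic lemma}$(iii)$ with $h=t-s$ controls the sum of the two drift integrals by $C(t-s)^{(H+1/2)p}\sup_r\E[|b(X_r)|^p]$; since $b$ is Lipschitz (Assumption \ref{assumption 2.2}) it has at most linear growth, so $\sup_r\E[|b(X_r)|^p]\le C(1+\sup_r\E[|X_r|^p])$, which is finite by Lemma \ref{bound of X}. The exponent $(H+1/2)p>Hp$, so the drift contribution is of smaller order than required and causes no trouble. Next I would bound the stochastic terms by Lemma \ref{basic lemma}$(iv)$ with the same $h=t-s$, giving $C(t-s)^{Hp}\sup_r\E[\|\sigma(X_r)\|^p]$; again linear growth of $\sigma$ and Lemma \ref{bound of X} make the supremum finite. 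Combining, $\E[|X_t-X_s|^p]\le C|t-s|^{Hp}$ for all $p\ge 2$ (and hence for $p>H^{-1}$, which forces $p>2$ since $H<1/2$). The dominant term is the stochastic one, whose exponent $Hp$ is precisely the sharp rate dictated by the $L^2$-singularity of the kernel near the diagonal.

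For the second assertion I would invoke the Kolmogorov--Chentsov continuity theorem: the bound $\E[|X_t-X_s|^p]\le C|t-s|^{Hp}$ with $Hp>1$ (guaranteed by $p>H^{-1}$) yields a modification of $X$ that is $\alpha$-H\"older continuous for every $\alpha<H-\tfrac1p$, and letting $p\to\infty$ gives every order $\alpha<H$. The quantitative moment bound on the H\"older seminorm, $\E\big[\big(\sup_{s<t}|X_t-X_s|/|t-s|^\alpha\big)^p\big]\le C_\alpha$, follows from the Garsia--Rodemich--Rumsey inequality applied to the same moment estimate, or equivalently from the quantitative form of Kolmogorov's theorem. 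The main (and only real) obstacle is making sure the moment estimate holds uniformly in $s,t$ with a constant independent of the pair, which it does because the bounds from Lemma \ref{basic lemma} and Lemma \ref{bound of X} are uniform in $t\in[0,T]$; everything else is a routine application of standard continuity criteria. The final sentence, that $X-X_0\in\mathcal C_0^\alpha$ almost surely, is then immediate since $X_0-X_0=0$ at $t=0$.
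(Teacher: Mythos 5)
Your proof is correct, and it is essentially the standard argument: the paper itself states this lemma without proof, citing \cite{ALP}, and the argument given there (and implicitly relied on here) is exactly your decomposition of $X_t-X_s$ into the near-diagonal integrals over $[s,t]$ and the kernel-difference integrals over $[0,s]$, bounded by the convolution estimates that this paper records as Lemma \ref{basic lemma}$(iii)$--$(iv)$, followed by Kolmogorov/Garsia--Rodemich--Rumsey. Your identification of the stochastic term with exponent $Hp$ as the dominant contribution, and the observation that $p>H^{-1}$ forces $p>2$ so that part $(iv)$ applies, are both right.

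One point deserves slightly more care than your phrase \emph{letting $p\to\infty$} suggests. The lemma asserts the seminorm bound $\E\big[\big(\sup_{s<t}|X_t-X_s|/|t-s|^{\alpha}\big)^p\big]\le C_\alpha$ for a \emph{fixed} $p$ and \emph{every} $\alpha\in[0,H)$, whereas GRR (or quantitative Kolmogorov) applied directly with exponent $p$ only reaches $\alpha<H-\tfrac1p$. The fix is the routine one you half-gesture at: since the moment bound $\E[|X_t-X_s|^q]\le C|t-s|^{Hq}$ holds for all $q\ge 2$ by Lemma \ref{bound of X}, apply GRR with $q$ large enough that $\alpha<H-\tfrac1q$, then pass from the $L^q$ bound on the seminorm to the $L^p$ bound by Jensen's inequality; also note that the modifications produced for different $q$ are indistinguishable, so a single version is $\alpha$-H\"older for all $\alpha<H$ simultaneously. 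With that sentence added, the proof is complete.
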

	Here we give the boundedness of $X^n$.
	\begin{lem}\label{bound of X n}
		Assume the derivatives of the functions $b$ and $\sigma$ are bounded (hence Assumption \ref{assumption 2.2} holds), then   for any $p\geq 1$  there exists a constant $C$ such that
		$$\sup_n \sup_{t\in [0,T]}\E[|X^n_t|^p]\leq C,$$
		where $C$ is a constant that only depends on $|X_0|,K,L_1,p,T$ and the derivatives of $b$ and $\sigma$.
	\end{lem}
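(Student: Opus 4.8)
The plan is to rewrite the Milstein scheme \eqref{Milstein scheme} as a stochastic Volterra equation with the same kernel $K$ but with the \emph{effective} adapted coefficients
\[
\tilde b^n_s := b(X^n_{[ns]/n}) + \nabla b(X^n_{[ns]/n})\cdot M^n_s, \qquad
\tilde\sigma^n_s := \sigma(X^n_{[ns]/n}) + \nabla \sigma(X^n_{[ns]/n})\cdot M^n_s,
\]
so that $X^n_t = X_0 + \int_0^t K(t-s)\tilde b^n_s\,\d s + \int_0^t K(t-s)\tilde\sigma^n_s\,\d W_s$. Since the derivatives of $b$ and $\sigma$ are bounded, $b$ and $\sigma$ have at most linear growth and $\nabla b,\nabla\sigma$ are bounded, whence $|\tilde b^n_s| + \|\tilde\sigma^n_s\| \le C(1 + |X^n_{[ns]/n}|) + C|M^n_s|$. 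The argument then reduces to controlling the correction $M^n_s$ and feeding the resulting bound into a Gronwall argument adapted to the singular convolution kernel.

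First I would estimate the correction $M^n_s = M^{1,n}_s + M^{2,n}_s$, assuming throughout $p\ge 2$ (the range $1\le p<2$ follows at the end from the $p=2$ bound by Jensen's inequality). Writing $t'=[ns]/n$ and $h=s-t'\le 1/n$, the two pieces $M^{1,n}_s$ and $M^{2,n}_s$ are exactly of the two forms on the left-hand side of Lemma \ref{basic lemma}(iv) (with $Z_r=\sigma(X^n_{[nr]/n})$), so that lemma gives
\[
\E\big[|M^n_s|^p\big]\le C\,h^{Hp}\sup_{r\le s}\E\big[\|\sigma(X^n_{[nr]/n})\|^p\big]\le C\,n^{-Hp}\Big(1+\sup_{r\le s}\E[|X^n_r|^p]\Big),
\]
where the last step uses $h\le 1/n$ and the linear growth of $\sigma$. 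Consequently, since $n^{-Hp}\le 1$, we get $\E[|\tilde b^n_s|^p]+\E[\|\tilde\sigma^n_s\|^p]\le C\big(1+\sup_{r\le s}\E[|X^n_r|^p]\big)$, and the apparent self-reference through $\sup_{r\le s}\E[|X^n_r|^p]$ is harmless precisely because of the gain $n^{-Hp}$.

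Next I would apply Lemma \ref{basic lemma}(i)–(ii) to the two integrals defining $X^n_t$ to obtain, with $f_n(\tau):=\sup_{t\le\tau}\E[|X^n_t|^p]$,
\[
\E[|X^n_t|^p] \le C + C\int_0^t\big(K(t-s)+K(t-s)^2\big)\,f_n(s)\,\d s, \qquad t\le\tau.
\]
The one nonroutine point is to convert this into an inequality for $f_n$ itself. Here I would use that $f_n$ is nondecreasing while $K+K^2$ is decreasing: writing the right-hand integral as $\int_0^t (K+K^2)(u)\,f_n(t-u)\,\d u$ shows that it is nondecreasing in $t$, so taking the supremum over $t\le\tau$ preserves the form and yields $f_n(\tau) \le C + C\int_0^\tau(K(\tau-s)+K(\tau-s)^2)\,f_n(s)\,\d s$. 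Since $K(u)+K(u)^2 \le C(1+u^{2H-1})$ on $[0,T]$ with $2H-1\in(-1,0)$, this is a linear integral inequality with a bounded-plus-weakly-singular kernel, and the generalized (fractional) Gronwall inequality gives $f_n(\tau)\le C$ for all $\tau\in[0,T]$ with $C$ independent of $n$, which is the claim.

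The only remaining care is that the Gronwall step presupposes $f_n(T)<\infty$. This holds for each fixed $n$ because the scheme is explicit grid-by-grid: on each interval $[t_i,t_{i+1})$ both $X^n_t$ and $M^n_s$ depend only on the already constructed grid values $X^n_{t_0},\dots,X^n_{t_i}$, which have finite $p$-th moments by induction, so Lemma \ref{basic lemma} bounds $\sup_{t\le T}\E[|X^n_t|^p]$ by a finite ($n$-dependent) constant; alternatively one localizes with the stopping times $\inf\{t:|X^n_t|\ge N\}$ and lets $N\to\infty$ by Fatou. I expect the main obstacle to be exactly the combination in the last two steps: extracting the right power $n^{-Hp}$ from $M^n_s$ so that the correction is absorbed, and then running a Gronwall argument simultaneously compatible with the pointwise-in-$t$ singular kernel and with the supremum defining $f_n$, where the monotonicity observation is what makes the passage to $f_n$ go through.
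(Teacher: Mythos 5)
Your proof is correct and follows essentially the same route as the paper: bound the correction $M^n_s$ by $Cn^{-Hp}$ times a supremum of moments up to time $s$, feed the linear-growth effective coefficients into the convolution estimates of Lemma \ref{basic lemma}(i)--(ii), pass to $f_n(\tau)=\sup_{t\le\tau}\E[|X^n_t|^p]$, and close with a Volterra-type Gronwall inequality after securing a priori finiteness (the paper uses exactly your stopping-time/Fatou variant). The only cosmetic point is that the supremum restricted to $[0,s]$ in your bound for $M^n_s$ is precisely what the paper's Lemma \ref{basic lemma-2} provides, so that refinement should be cited in place of Lemma \ref{basic lemma}(iv); your explicit monotonicity observation for passing from the pointwise inequality to the inequality for $f_n$ is a step the paper leaves implicit.
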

	\begin{proof}
		Let $\tau_m=\inf\{t\geq 0||X_t^n|\geq m\}\wedge T$ and observe that for $p>2$
		\begin{align}
			|X_t^n|^p\mathbb{I}_{\{t<\tau_m\}}&\leq \Big|X_0+\int_{0}^{t}K(t-s)\cdot\Big(b(X^n_{\frac{[ns]}{n}}\mathbb{I}_{\{s<\tau_m\}})+\nabla b(X^n_{\frac{[ns]}{n}}\mathbb{I}_{\{s<\tau_m\}})\cdot  M_s^{n}\mathbb{I}_{\{s<\tau_m\}}\Big)\d s\nonumber\\&\quad+\int_{0}^{t}K(t-s)\cdot\Big(\sigma(X^n_{\frac{[ns]}{n}}\mathbb{I}_{\{s<\tau_m\}})+\nabla \sigma(X^n_{\frac{[ns]}{n}}\mathbb{I}_{\{s<\tau_m\}})\cdot  M_s^{n}\mathbb{I}_{\{s<\tau_m\}}\Big)\d W_s\Big|^p\nonumber\\&\leq \Big|X_0+\int_{0}^{t}K(t-s)b(X^n_{\frac{[ns]}{n}}\mathbb{I}_{\{s<\tau_m\}})\d s+\int_{0}^{t}K(t-s)\sigma(X^n_{\frac{[ns]}{n}}\mathbb{I}_{\{s<\tau_m\}})\d W_s\nonumber\\&\quad+\int_{0}^{t}K(t-s)\nabla b(X^n_{\frac{[ns]}{n}}\mathbb{I}_{\{s<\tau_m\}})\int_{0}^{\frac{[ns]}{n}}\Big(K(s-r)-K(\frac{[ns]}{n}-r)\Big)\sigma(X^n_{\frac{[nr]}{n}}\mathbb{I}_{\{r<\tau_m\}})\d W_r\d s\nonumber\\&\quad+\int_{0}^{t}K(t-s)\nabla b(X^n_{\frac{[ns]}{n}}\mathbb{I}_{\{s<\tau_m\}})\int_{\frac{[ns]}{n}}^{s}K(s-r)\sigma(X^n_{\frac{[nr]}{n}}\mathbb{I}_{\{r<\tau_m\}})\d W_r\d s\nonumber\\&\quad+\int_{0}^{t}K(t-s)\nabla \sigma(X^n_{\frac{[ns]}{n}}\mathbb{I}_{\{s<\tau_m\}})\int_{0}^{\frac{[ns]}{n}}\Big(K(s-r)-K(\frac{[ns]}{n}-r)\Big)\sigma(X^n_{\frac{[nr]}{n}}\mathbb{I}_{\{r<\tau_m\}})\d W_r\d W_s\nonumber\\&\quad+\int_{0}^{t}K(t-s)\nabla \sigma(X^n_{\frac{[ns]}{n}}\mathbb{I}_{\{s<\tau_m\}})\int_{\frac{[ns]}{n}}^{s}K(s-r)\sigma(X^n_{\frac{[nr]}{n}}\mathbb{I}_{\{r<\tau_m\}})\d W_r\d W_s\Big|^p.\nonumber
		\end{align}
		Taking the expectation on both sides we have
		\begin{align}
			&\E[|X_t^n|^p\mathbb{I}_{\{t<\tau_m\}}]\nonumber\\&\leq 7^{p-1}|X_0|^p+7^{p-1}\E\Big[\Big|\int_{0}^{t}K(t-s)b(X^n_{\frac{[ns]}{n}}\mathbb{I}_{\{s<\tau_m\}})\d s\Big|^p\Big]+7^{p-1}\E\Big[\Big|\int_{0}^{t}K(t-s)\sigma(X^n_{\frac{[ns]}{n}}\mathbb{I}_{\{s<\tau_m\}})\d W_s\Big|^p\Big]\nonumber\\ &\quad+7^{p-1}\E\Big[\Big|\int_{0}^{t}K(t-s)\nabla b(X^n_{\frac{[ns]}{n}}\mathbb{I}_{\{s<\tau_m\}})\int_{0}^{\frac{[ns]}{n}}\Big(K(s-r)-K(\frac{[ns]}{n}-r)\Big)\sigma(X^n_{\frac{[nr]}{n}}\mathbb{I}_{\{r<\tau_m\}})\d W_r\d s\Big|^p\Big]\nonumber\\&\quad+7^{p-1}\E\Big[\Big|\int_{0}^{t}K(t-s)\nabla b(X^n_{\frac{[ns]}{n}}\mathbb{I}_{\{s<\tau_m\}})\int_{\frac{[ns]}{n}}^{s}K(s-r)\sigma(X^n_{\frac{[nr]}{n}}\mathbb{I}_{\{r<\tau_m\}})\d W_r\d s\Big|^p\Big]\nonumber\\&\quad+7^{p-1}\E\Big[\Big|\int_{0}^{t}K(t-s)\nabla \sigma(X^n_{\frac{[ns]}{n}}\mathbb{I}_{\{s<\tau_m\}})\int_{0}^{\frac{[ns]}{n}}\Big(K(s-r)-K(\frac{[ns]}{n}-r)\Big)\sigma(X^n_{\frac{[nr]}{n}}\mathbb{I}_{\{r<\tau_m\}})\d W_r\d W_s\Big|^p\Big]\nonumber\\&\quad+7^{p-1}\E\Big[\Big|\int_{0}^{t}K(t-s)\nabla \sigma(X^n_{\frac{[ns]}{n}}\mathbb{I}_{\{s<\tau_m\}})\int_{\frac{[ns]}{n}}^{s}K(s-r)\sigma(X^n_{\frac{[nr]}{n}}\mathbb{I}_{\{r<\tau_m\}})\d W_r\d W_s\Big|^p\Big].\nonumber
		\end{align}
		Since the derivatives of $b$ and $\sigma$ are bounded,  from Lemma \ref{basic lemma}, $(i)$ and $(ii)$, Lemma \ref{basic lemma-2}, Fubini's theorem 
		it follows  that
		\begin{align}
			\E[|X_t^n|^p\mathbb{I}_{\{t<\tau_m\}}]&\leq 7^{p-1}|X_0|^p+C\int_{0}^{t}K(t-s)\E[|b(X^n_{\frac{[ns]}{n}}\mathbb{I}_{\{s<\tau_m\}})|^p]\d s\nonumber\\&\quad+C\int_{0}^{t}K(t-s)^2\E[|\sigma(X^n_{\frac{[ns]}{n}}\mathbb{I}_{\{s<\tau_m\}})|^p]\d s\nonumber\\&\quad+Cn^{-(H+1/2)p}\int_{0}^{t}K(t-s)\sup_{r\in[0,s]}\E[|\sigma(X^n_{\frac{[nr]}{n}}\mathbb{I}_{\{r<\tau_m\}})|^p]\d s\nonumber\\&\quad+Cn^{-Hp}\int_{0}^{t}K(t-s)^2\sup_{r\in[0,s]}\E[|\sigma(X^n_{\frac{[nr]}{n}}\mathbb{I}_{\{r<\tau_m\}})|^p]\d s\nonumber\\&\leq C_1+C_2\int_{0}^{t}\hat{K}(t-s)\sup_{r\in[0,s]}\E[|X^n_r\mathbb{I}_{\{r<\tau_m\}}|^p]\d s\nonumber,
		\end{align}
		where $\hat{K}(t-s):=C\big(K(t-s)+K(t-s)^2\big)$. Putting $f_m(t)=\sup_{r\in[0,t]}\E[|X^n_r\mathbb{I}_{\{r<\tau_m\}}|^p]$, we have that
		$$f_m(t)\leq C_1+\int_{0}^{t}\hat{K}(t-s) f_m(s)\d s.$$
		Applying Volterra type Gronwall's inequality (\cite[Theorem 2.2]{ZX2}) we obtain
		$$f_m(t)\leq C_1+\int_{0}^{t}C_1\hat{R}(t-s)(t-s) \d s<\infty,$$
		where $\hat{R}(t-s)$ denotes the resolvent of $\hat{K}$ and the last inequality is based on the boundedness of $\hat{R}(t-s)(t-s)$ (\cite[Lemma 2.1]{ZX2}).
		
		By Fatou's lemma, we have
		\begin{align}
			\E[|X^n_t|^p]\leq \E\Big[\liminf_{m \rightarrow \infty}|X_t^n|^p\mathbb{I}_{\{t<\tau_m\}}\Big]\leq \liminf_{m \rightarrow \infty}\E\big[|X_t^n|^p\mathbb{I}_{\{t<\tau_m\}}\big]\leq \liminf_{m \rightarrow \infty}f_m(t)<\infty,\nonumber
		\end{align}
		for all $t\in [0,T]$, which completes the proof.
	\end{proof}
	By above lemma, we further give the boundness of $M^n$.
	\begin{lem}\label{bound of M n}
		Assume the derivatives of the functions $b$ and $\sigma$ are bounded (hence Assumption \ref{assumption 2.2} holds), then for any $p\geq 1$  there exists a constant $C$ such that
		$$\sup_{t\in [0,T]}\E[|M^n_t|^p]\leq Cn^{-Hp},$$
		where $C$ is a constant that only depends on $|X_0|,K,L_1,p,T$ and the derivatives of $b$ and $\sigma$.
	\end{lem}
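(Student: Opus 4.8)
The plan is to exploit the fact that the two integrals defining $M^{1,n}_s$ and $M^{2,n}_s$ are precisely of the two shapes appearing on the left-hand side of Lemma~\ref{basic lemma}(iv). Indeed, writing $t=\frac{[ns]}{n}$ and $h=s-\frac{[ns]}{n}$, one has
\[
M^{1,n}_s=\int_0^{t}\big(K(t+h-r)-K(t-r)\big)\sigma(X^n_{\frac{[nr]}{n}})\,\d W_r,\qquad
M^{2,n}_s=\int_{t}^{t+h}K(t+h-r)\sigma(X^n_{\frac{[nr]}{n}})\,\d W_r.
\]
Because we only use uniform partitions, $0\le h=s-\frac{[ns]}{n}\le \frac1n$, so the factor $h^{Hp}$ produced by Lemma~\ref{basic lemma}(iv) is bounded by $n^{-Hp}$.

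First I would treat the case $p\ge 2$. Applying Lemma~\ref{basic lemma}(iv) with $Z_r=\sigma(X^n_{\frac{[nr]}{n}})$ gives
\[
\E[|M^{1,n}_s|^p]+\E[|M^{2,n}_s|^p]\le C\,h^{Hp}\sup_{r\in[0,T]}\E\big[\|\sigma(X^n_{\frac{[nr]}{n}})\|^p\big]\le C\,n^{-Hp}\sup_{r\in[0,T]}\E\big[\|\sigma(X^n_{\frac{[nr]}{n}})\|^p\big].
\]
Since the derivatives of $\sigma$ are bounded, $\sigma$ has at most linear growth, $\|\sigma(x)\|\le C(1+|x|)$, so Lemma~\ref{bound of X n} yields $\sup_{r\in[0,T]}\E[\|\sigma(X^n_{\frac{[nr]}{n}})\|^p]\le C$ uniformly in $n$. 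Using $|M^n_s|^p\le 2^{p-1}\big(|M^{1,n}_s|^p+|M^{2,n}_s|^p\big)$ and then taking the supremum over $s\in[0,T]$ gives the assertion for $p\ge 2$.

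For $1\le p<2$ I would reduce to the case $p=2$ by Jensen's (equivalently H\"older's) inequality: $\E[|M^n_s|^p]\le\big(\E[|M^n_s|^2]\big)^{p/2}\le\big(Cn^{-2H}\big)^{p/2}=C\,n^{-Hp}$, which covers the remaining range of $p$.

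I do not expect a serious obstacle here: the lemma is essentially an immediate consequence of the kernel estimate Lemma~\ref{basic lemma}(iv) once the integrands are recognized to be of the stated form with increment $h\le 1/n$. The only points deserving attention are the \emph{uniformity in $n$} of the moment bound for $\sigma(X^n)$, which is exactly what Lemma~\ref{bound of X n} supplies, and the passage from $p\ge2$ to general $p\ge1$, handled by Jensen's inequality.
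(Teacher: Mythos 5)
Your proposal is correct and follows essentially the same route as the paper: both identify $M^{1,n}_s$ and $M^{2,n}_s$ as the two integral shapes in Lemma \ref{basic lemma}(iv) with increment $h=s-\frac{[ns]}{n}\le 1/n$, and both control $\sup_{r}\E[\|\sigma(X^n_{\frac{[nr]}{n}})\|^p]$ uniformly in $n$ via the linear growth of $\sigma$ and Lemma \ref{bound of X n}. Your explicit reduction of the case $1\le p<2$ to $p=2$ by Jensen's inequality is a small tidiness the paper leaves implicit.
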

	\begin{proof}
		By Lemma \ref{basic lemma}-(ii) and (iv),   Assumption \ref{assumption 2.2} and Lemma \ref{bound of X n}, we have for $p\geq2$
		\begin{align}
			\E[|M_t^n|^p]&\leq \E\Big[\Big|\int_{0}^{\frac{[nt]}{n}}\Big(K(t-r)-K(\frac{[nt]}{n}-r)\Big)\sigma(X^n_{\frac{[nr]}{n}})\d W_r\Big|^p\Big]\nonumber\\&\quad+\E\Big[\Big|\int_{\frac{[nt]}{n}}^{t}K(t-r)\sigma(X^n_{\frac{[nr]}{n}})\d W_r\Big|^p\Big]\nonumber\\&\leq Cn^{-Hp}\sup_{t\in [0,T]}\E[|\sigma(X^n_{\frac{[nt]}{n}})|^p]\leq Cn^{-Hp}.\nonumber
		\end{align}
		This gives the lemma. 
	\end{proof}
	We now state the continuity of $X^n$.
	\begin{lem}\label{continuous of X n}
		Let $p>H^{-1}$. Then
		$$\sup_n \E[|X^n_t-X^n_s|^p]\leq C|t-s|^{H p}, \quad t,s\in [0,T]$$
		and $X^n$ admits a   $\mathrm{H\ddot{o}lder}$ continuous version   of any order $\a<H$
		on $[0,T]$. Denoting this version again by $X^n$, one has
		$$\E\Big[\Big(\sup_{0\leq s\leq t\leq T}\frac{|X^n_t-X^n_s|}{|t-s|^{\a}}\Big)^p\Big]\leq C_{\a}$$
		for all $\a\in[0,H)$, where $C_{\a}$ is a constant. As a consequence, we can think of  $X^n$ as a $\mathcal{C}^{\alpha}_{0}$
	valued random variable for any $\alpha<H$.
	\end{lem}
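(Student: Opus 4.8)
The plan is to bound the $p$-th moment of the increment $X^n_t-X^n_s$ uniformly in $n$ by $C|t-s|^{Hp}$, exactly as one does for the exact solution in Lemma \ref{continuous of X}, and then to upgrade this to the H\"older statement through a Kolmogorov--Chentsov (equivalently, Garsia--Rodemich--Rumsey) argument. Since $H\in(0,1/2)$ forces $H^{-1}>2$, the hypothesis $p>H^{-1}$ already guarantees $p\ge 2$, so parts $(ii)$ and $(iv)$ of Lemma \ref{basic lemma}, which rely on the BDG inequality, are available throughout.

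First I would fix $s\le t$, set $h=t-s$, and rewrite the Milstein scheme \eqref{Milstein scheme} using the adapted drift integrand $F^n_u:=b(X^n_{\frac{[nu]}{n}})+\nabla b(X^n_{\frac{[nu]}{n}})\cdot M^n_u$ and the adapted diffusion integrand $G^n_u:=\sigma(X^n_{\frac{[nu]}{n}})+\nabla\sigma(X^n_{\frac{[nu]}{n}})\cdot M^n_u$. Both are adapted since $M^n_u$ is $\mathcal{F}_u$-measurable. The increment $X^n_t-X^n_s$ then decomposes, for both the $\d u$- and the $\d W_u$-integral, into a piece $\int_0^s\big(K(t-u)-K(s-u)\big)(\cdots)$ carrying the kernel difference on $[0,s]$ and a fresh piece $\int_s^t K(t-u)(\cdots)$ on $[s,t]$. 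Bounding the $p$-th moment of the four-term sum by the sum of the four $p$-th moments and applying Lemma \ref{basic lemma}$(iii)$ to the two drift pieces and Lemma \ref{basic lemma}$(iv)$ to the two stochastic pieces, all with increment $h$, yields
$$\E[|X^n_t-X^n_s|^p]\le C\,h^{(H+\frac12)p}\sup_{u\in[0,T]}\E[|F^n_u|^p]+C\,h^{Hp}\sup_{u\in[0,T]}\E[\|G^n_u\|^p].$$

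It then remains to control the two suprema uniformly in $n$. Boundedness of the derivatives gives the linear growth $|b(x)|+\|\sigma(x)\|\le C(1+|x|)$, while $\nabla b,\nabla\sigma$ are bounded, so $\E[|F^n_u|^p]\le C\big(1+\E[|X^n_{\frac{[nu]}{n}}|^p]\big)+C\,\E[|M^n_u|^p]$, and similarly for $G^n_u$. By Lemma \ref{bound of X n} the first term is bounded uniformly in $n$, and by Lemma \ref{bound of M n} we have $\E[|M^n_u|^p]\le Cn^{-Hp}\le C$; hence $\sup_n\sup_{u}\E[|F^n_u|^p]$ and $\sup_n\sup_{u}\E[\|G^n_u\|^p]$ are finite. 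Since $h=t-s\le T$ and $(H+\tfrac12)p\ge Hp$, we have $h^{(H+\frac12)p}\le T^{p/2}h^{Hp}$, so the displayed estimate collapses to $\E[|X^n_t-X^n_s|^p]\le C|t-s|^{Hp}$ with $C$ independent of $n$.

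Finally I would feed this $n$-uniform moment bound into the Garsia--Rodemich--Rumsey inequality. Because $p>H^{-1}$ we have $Hp-1>0$, so the bound produces a H\"older-continuous modification of $X^n$ of every order $\alpha<H-1/p$, together with an estimate $\E\big[\big(\sup_{s\ne t}|X^n_t-X^n_s|/|t-s|^{\alpha}\big)^p\big]\le C_\alpha$ whose constant inherits the $n$-uniformity of $C$. Given $\alpha<H$, choosing $p>\max\{H^{-1},(H-\alpha)^{-1}\}$ makes $\alpha<H-1/p$, which covers every order below $H$; and since $X^n_0=X_0$, subtracting $X_0$ realizes $X^n-X_0$ as a $\mathcal{C}^\alpha_0$-valued random variable. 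The only point demanding care is the uniformity in $n$: it hinges on the Milstein correction terms $\nabla b\cdot M^n$ and $\nabla\sigma\cdot M^n$ having $n$-uniformly bounded $p$-th moments, which is precisely the content of Lemma \ref{bound of M n}; once this is secured, the remainder is a routine repetition of the estimate used for the exact solution.
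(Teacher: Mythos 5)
Your proposal is correct and follows essentially the same route as the paper's proof: the same four-term decomposition of $X^n_t-X^n_s$ into kernel-difference and fresh pieces for drift and diffusion, the same application of Lemma \ref{basic lemma} (iii)--(iv) combined with Lemmas \ref{bound of X n} and \ref{bound of M n} to obtain the $n$-uniform moment bound, and the same passage to the H\"older statement via Kolmogorov's continuity criterion. Your write-up is in fact slightly more careful than the paper's (e.g.\ in spelling out the linear-growth control of the integrands and the reduction $h^{(H+1/2)p}\le T^{p/2}h^{Hp}$).
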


	\begin{proof}
		For any $0\leq s<t\leq T$, we first rewrite $X_t^n-X_s^n$ as
		\begin{align}
			&X_t^n-X_s^n\nonumber\\&=\int_{0}^{s}\big(K(t-u)-K(s-u)\big)\cdot\Big(b(X^n_{\frac{[nu]}{n}})+\nabla b(X^n_{\frac{[nu]}{n}})\cdot  M_u^{n}\Big)\d u\nonumber\\&\quad+\int_{0}^{s}\big(K(t-u)-K(s-u)\big)\cdot\Big(\sigma(X^n_{\frac{[nu]}{n}})+\nabla \sigma(X^n_{\frac{[nu]}{n}})\cdot  M_u^{n}\Big)\d W_u\nonumber\\&\quad+\int_{s}^{t}K(t-u)\cdot\Big(b(X^n_{\frac{[nu]}{n}})+\nabla b(X^n_{\frac{[nu]}{n}})\cdot  M_u^{n}\Big)\d u\nonumber\\&\quad+\int_{s}^{t}K(t-u)\cdot\Big(\sigma(X^n_{\frac{[nu]}{n}})+\nabla \sigma(X^n_{\frac{[nu]}{n}})\cdot  M_u^{n}\Big)\d W_u\nonumber\\&:=I^n_1(t,s)+I^n_2(t,s)+I^n_3(t,s)+I^n_4(t,s).\nonumber
		\end{align}
		By Lemma \ref{basic lemma}, (iii), the 
		boundedness of the derivatives of the functions $b$ and $\sigma$, Lemmas \ref{bound of X n} and   \ref{bound of M n}, we have
		\begin{align}
			&\E[|I^n_1(t,s)|^p+|I^n_3(t,s)|^p]\nonumber\\&\leq C |t-s|^{(H+1/2)p}\sup_{t\in [0,T]}\E\Big[\Big|b(X^n_{\frac{[nt]}{n}})+\nabla b(X^n_{\frac{[nt]}{n}})\cdot  M_t^{n}\Big|^p\Big]\nonumber\\&\leq C|t-s|^{(H+1/2)p}\sup_{t\in [0,T]}\E\Big[b(X^n_{\frac{[nt]}{n}})+M_t^{n}\Big]\leq C|t-s|^{Hp}.\nonumber
		\end{align}
		Similarly, 
		by Lemma \ref{basic lemma}, (iv),  the 
		boundedness of the derivatives of the functions $b$ and $\sigma$,  Lemmas \ref{bound of X n} and   \ref{bound of M n}, we have
		\begin{align}
			&\E[|I^n_2(t,s)|^p+|I^n_4(t,s)|^p]\nonumber\\&\leq C |t-s|^{Hp}\sup_{t\in [0,T]}\E\Big[\sigma(X^n_{\frac{[nt]}{n}})+\nabla b(X^n_{\frac{[nt]}{n}})\cdot  M_u^{n}\Big]\nonumber\\&\leq C|t-s|^{Hp}\sup_{t\in [0,T]}\E\Big[\sigma(X^n_{\frac{[nt]}{n}})+M_t^{n}\Big]\leq C|t-s|^{Hp}.\nonumber
		\end{align}
		Finally, the proof is complete by Kolmogorov's continuity criterion (e.g. Revuz and Yor \cite[Theorem I.2.1 ]{RY}).
	\end{proof}
	We can now state the convergence rate of Milstein scheme for $X-X^n$. The reader is referred to, for instance, \cite[Theorem 2.9]{LHH2}.
	\begin{lem}[\cite{LHH2}]\label{rate X-X n}
		Assume the derivatives  of the functions $b$ and $\sigma$ up to second orders   are bounded (hence Assumptions \ref{assumption 2.2} and \ref{assumption 2.3} hold), then for any $p\geq 1$
		$$\sup_{t\in [0,T]}\E[|X_t-X^n_t|^p]\leq C n^{-2H p},$$
		where $C$ is a positive constant which does not depend on $n$.
	\end{lem}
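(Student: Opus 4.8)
The plan is to derive a closed Volterra-type integral inequality for $g(t):=\E[|X_t-X^n_t|^p]$ and then invoke the Volterra Gronwall inequality (\cite[Theorem 2.2]{ZX2}) already used in Lemma \ref{bound of X n}. It suffices to treat $p\geq 2$, so that the BDG inequality and Lemma \ref{basic lemma}(ii),(iv) apply; the range $1\le p<2$ then follows by Jensen's inequality. Subtracting \eqref{Milstein scheme} from \eqref{Volterra eq}, I would write $e^n_t:=X_t-X^n_t$ as
\begin{align}
e^n_t=\int_0^t K(t-s)\,\beta^n_s\,\d s+\int_0^t K(t-s)\,\Sigma^n_s\,\d W_s,\nonumber
\end{align}
where $\beta^n_s:=b(X_s)-b(X^n_{\frac{[ns]}{n}})-\nabla b(X^n_{\frac{[ns]}{n}})\cdot M^n_s$ and $\Sigma^n_s:=\sigma(X_s)-\sigma(X^n_{\frac{[ns]}{n}})-\nabla\sigma(X^n_{\frac{[ns]}{n}})\cdot M^n_s$. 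By Lemma \ref{basic lemma}(i),(ii) this yields
\begin{align}
g(t)\leq C\int_0^t\big(K(t-s)+K(t-s)^2\big)\big(\E[|\beta^n_s|^p]+\E[\|\Sigma^n_s\|^p]\big)\,\d s,\nonumber
\end{align}
so everything reduces to bounding the two coefficient increments in $L^p$.

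For the diffusion coefficient I would split
\begin{align}
\Sigma^n_s=&\big[\sigma(X_s)-\sigma(X^n_s)\big]+\big[\sigma(X^n_s)-\sigma(X^n_{\frac{[ns]}{n}})-\nabla\sigma(X^n_{\frac{[ns]}{n}})\cdot(X^n_s-X^n_{\frac{[ns]}{n}})\big]\nonumber\\&+\nabla\sigma(X^n_{\frac{[ns]}{n}})\cdot\big(X^n_s-X^n_{\frac{[ns]}{n}}-M^n_s\big),\nonumber
\end{align}
and treat $\beta^n_s$ analogously. The first bracket is $\leq L_1|e^n_s|$ by the Lipschitz assumption. The second bracket is the second-order Taylor remainder; since the second derivatives of $\sigma$ are bounded it is $\leq C|X^n_s-X^n_{\frac{[ns]}{n}}|^2$, whose $L^p$ norm is $\leq Cn^{-2H}$ by Lemma \ref{continuous of X n} applied with exponent $2p$. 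In the third bracket the gradients are bounded, so the whole matter reduces to controlling $X^n_s-X^n_{\frac{[ns]}{n}}-M^n_s$ in $L^p$.

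This last quantity is the key estimate and the heart of the proof. Writing $X^n_s-X^n_{\frac{[ns]}{n}}$ out from \eqref{Milstein scheme} with endpoints $\frac{[ns]}{n}$ and $s$, the martingale contribution built from the frozen coefficient $\sigma(X^n_{\frac{[nr]}{n}})$ is \emph{exactly} $M^n_s=M^{1,n}_s+M^{2,n}_s$ by the very definition of $M^n$. Hence
\begin{align}
X^n_s-X^n_{\frac{[ns]}{n}}-M^n_s=A^n_s+D^n_s,\nonumber
\end{align}
where $A^n_s$ collects the drift terms (integrand $b+\nabla b\cdot M^n$) and $D^n_s$ collects the leftover martingale terms (integrand $\nabla\sigma(X^n_{\frac{[nr]}{n}})\cdot M^n_r$). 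With $h=s-\frac{[ns]}{n}\leq n^{-1}$, Lemma \ref{basic lemma}(iii) with the boundedness of $b,\nabla b$ and Lemma \ref{bound of M n} give $\E[|A^n_s|^p]^{1/p}\leq Cn^{-(H+1/2)}$, while Lemma \ref{basic lemma}(iv) with the boundedness of $\nabla\sigma$ and Lemma \ref{bound of M n} give $\E[|D^n_s|^p]^{1/p}\leq Ch^H\cdot n^{-H}\leq Cn^{-2H}$. Since $H<1/2$ forces $H+1/2>2H$, both are $O(n^{-2H})$. Combining the three brackets yields $\E[\|\Sigma^n_s\|^p]^{1/p}\leq C\big(\E[|e^n_s|^p]^{1/p}+n^{-2H}\big)$, and the same reasoning gives the identical bound for $\beta^n_s$.

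Substituting these into the inequality for $g$ and raising to the $p$-th power produces $g(t)\leq Cn^{-2Hp}+C\int_0^t\hat K(t-s)g(s)\,\d s$ with $\hat K=C(K+K^2)$ integrable, and the Volterra Gronwall inequality closes the estimate to $g(t)\leq Cn^{-2Hp}$ uniformly in $t\in[0,T]$. I expect the main obstacle to be precisely the key estimate of the preceding paragraph: one must recognize that the Milstein correction $M^n_s$ reproduces exactly the frozen-coefficient martingale increment, so that the leftover term $D^n_s$ carries an \emph{extra} factor $\|\nabla\sigma\cdot M^n\|=O(n^{-H})$ on top of the $h^H$ gained from the kernel through Lemma \ref{basic lemma}(iv). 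This doubling of the exponent is exactly what upgrades the Euler rate $n^{-H}$ to the Milstein rate $n^{-2H}$.
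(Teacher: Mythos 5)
The paper does not actually prove this lemma: it imports it wholesale from \cite[Theorem 2.9]{LHH2}, so there is no internal proof to compare against. Your argument is, as far as I can check, correct and complete in outline, and it is the natural route. The decomposition you identify as the heart of the matter --- writing $X^n_s-X^n_{\frac{[ns]}{n}}-M^n_s$ as a drift part of order $n^{-(H+1/2)}$ plus a leftover martingale part whose integrand carries the extra factor $\nabla\sigma\cdot M^n=O(n^{-H})$ --- is precisely the splitting the paper itself introduces later as $\mathcal{A}^{n,k}_{1,s}+\mathcal{A}^{n,k}_{2,s}$ in \eqref{different parts of Z}, and your bounds $O(n^{-(H+1/2)})$ and $O(n^{-2H})$ are the $L^p$ analogues of the paper's $L^2$ estimates \eqref{est of A-1} and \eqref{est of A-2}. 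The remaining ingredients (Lipschitz term absorbed into the Gronwall kernel, second-order Taylor remainder controlled by $\E[|X^n_s-X^n_{\frac{[ns]}{n}}|^{2p}]^{1/p}\le Cn^{-2H}$ via Lemma \ref{continuous of X n}, and the Volterra Gronwall inequality of \cite[Theorem 2.2]{ZX2}) are all legitimately available at this point with no circularity, since Lemmas \ref{bound of X n}--\ref{continuous of X n} do not rely on the present lemma. Two very minor points you should make explicit in a full write-up: (i) $b$ and $\sigma$ themselves are only of linear growth, so the uniform moment bound $\sup_u\E[|b(X^n_{\frac{[nu]}{n}})+\nabla b(X^n_{\frac{[nu]}{n}})\cdot M^n_u|^p]\le C$ needs Lemma \ref{bound of X n} and Lemma \ref{bound of M n}, not boundedness of the coefficients; and (ii) the a priori finiteness of $g(t)=\E[|X_t-X^n_t|^p]$, needed before Gronwall, follows from Lemmas \ref{bound of X} and \ref{bound of X n}. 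Neither is a gap in the idea.
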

	From these estimates, an application of the Garsia-Rodemich-Rumsey inequality gives the following result as in Section $4.3.2$ of Richard et al. \cite{RTY}.
	\begin{lem}\label{strong X-X n}
		Assume the derivatives of the functions $b$ and $\sigma$ are bounded (hence Assumption \ref{assumption 2.2} holds).  For all $p\geq 1$  and any $\epsilon>0$,   there exists a constant $C>0$ which does not depend on $n$ such that
		$$\E\Big[\sup_{t\in [0,T]}|X_t-X_t^n|^p\Big]\leq Cn^{-p(2H-\epsilon)}.$$
	\end{lem}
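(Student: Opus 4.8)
The plan is to apply the Garsia--Rodemich--Rumsey (GRR) inequality to the difference process $Y_t:=X_t-X^n_t$, after first \emph{interpolating} the two moment bounds already at our disposal. The crucial structural observation is that $Y_0=X_0-X^n_0=0$, so a bound on the H\"older seminorm of $Y$ will directly control $\sup_{t\in[0,T]}|Y_t|$ without any separate pointwise input.

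First I would fix a large exponent $q$ (to be chosen at the end, with $q\ge p$ and $q>H^{-1}$) and record two estimates. From the rate bound of Lemma \ref{rate X-X n} we have $\E[|Y_t|^q]^{1/q}\le C\,n^{-2H}$ uniformly in $t$, whence by the triangle inequality $\E[|Y_t-Y_s|^q]^{1/q}\le 2C\,n^{-2H}$. On the other hand, Lemmas \ref{continuous of X} and \ref{continuous of X n} give the H\"older-in-time increment estimate $\E[|Y_t-Y_s|^q]^{1/q}\le C\,|t-s|^{H}$. Combining the two through the elementary inequality $\min(a,b)\le a^{1-\theta}b^{\theta}$ (valid for $a,b\ge0$, $\theta\in[0,1]$) yields, for every $\theta\in[0,1]$,
$$
\E\big[|Y_t-Y_s|^q\big]\le C\,n^{-2H(1-\theta)q}\,|t-s|^{H\theta q}\,.
$$

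Next I would apply GRR with $\Psi(x)=x^q$ and $p(u)=u^{\alpha+2/q}$, which gives the pathwise bound $\sup_{0\le s<t\le T}|Y_t-Y_s|/|t-s|^{\alpha}\le C\,\Gamma^{1/q}$ with $\Gamma=\int_0^T\!\int_0^T|Y_t-Y_s|^q/|t-s|^{\alpha q+2}\,\d s\,\d t$. Writing $[Y]_\alpha$ for that seminorm, taking expectations and using Fubini together with the interpolated bound gives
$$
\E\big[[Y]_\alpha^{q}\big]\le C\int_0^T\!\!\int_0^T \frac{\E[|Y_t-Y_s|^q]}{|t-s|^{\alpha q+2}}\,\d s\,\d t\le C\,n^{-2H(1-\theta)q}\int_0^T\!\!\int_0^T |t-s|^{H\theta q-\alpha q-2}\,\d s\,\d t\,.
$$
The double integral is finite as soon as $H\theta q-\alpha q-2>-1$, i.e. $\alpha<H\theta-1/q$; provided $q$ is large enough that $H\theta>1/q$, such an $\alpha\in(0,H\theta-1/q)$ exists, and we obtain $\E[[Y]_\alpha^{q}]^{1/q}\le C\,n^{-2H(1-\theta)}$. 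Since $Y_0=0$ we have $\sup_{t\in[0,T]}|Y_t|\le T^{\alpha}[Y]_\alpha$, hence $\E[\sup_t|Y_t|^q]^{1/q}\le C\,n^{-2H(1-\theta)}$. Finally, given $p\ge1$ and $\epsilon>0$ I would choose $\theta$ so small that $2H\theta\le\epsilon$ (so $2H(1-\theta)\ge 2H-\epsilon$) and then $q\ge p$ large enough that $H\theta>1/q$; Jensen's inequality (using $q\ge p$) upgrades the $L^q$ bound to $\E[\sup_t|Y_t|^p]\le C\,n^{-p(2H-\epsilon)}$, which is the claim.

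The only delicate point, and the step I expect to require the most care, is the balancing of exponents in this last interpolation: to push the attained rate $n^{-2H(1-\theta)}$ arbitrarily close to the optimal $n^{-2H}$ one must let $\theta\to0$, which shrinks the admissible window $(0,H\theta-1/q)$ for the H\"older exponent and forces $q\to\infty$. One therefore has to verify that $q$ may always be taken large enough for a prescribed pair $(p,\epsilon)$ — which it can, precisely because the target bound for a given $p$ follows from the $L^q$ estimate for \emph{any} $q\ge p$. Everything else is the routine GRR computation, exactly as carried out in Section~4.3.2 of \cite{RTY}.
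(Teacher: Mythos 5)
Your proposal is correct and follows exactly the route the paper intends: the paper gives no written proof but simply invokes the Garsia--Rodemich--Rumsey inequality as in Section 4.3.2 of Richard et al., and your interpolation of the rate bound of Lemma \ref{rate X-X n} against the H\"older increment bounds of Lemmas \ref{continuous of X} and \ref{continuous of X n}, followed by the GRR estimate and the choice of $\theta$ small and $q\geq\max(p,H^{-1})$ large, is precisely that argument carried out in full. The exponent bookkeeping ($\alpha<H\theta-1/q$, $2H(1-\theta)\geq 2H-\epsilon$) is handled correctly, and the observation that $Y_0=0$ lets the seminorm control the supremum.
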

	The following lemma plays a  key role in  finding   the limit of   fractional integrals appeared in our future computations. One can refer to \cite[Lemma C.2]{FU}.
	\begin{lem}[\cite{FU}] \label{limit distribution of stochastic integral}
		Assume that $g\in L^2(0,1)$. Let $H^{(n)}$ and $H$ be stochastic processes on $[0,T]$ such that
		$$\E\Big[\int_{0}^{T}|H^{(n)}_s-H_s|^2\d s\Big]\to 0$$
		with $H$ being almost surely continuous. Then for all $t\in [0,T]$,
		$$\int_{0}^{t}H^{(n)}_s g(ns-[ns])\d s\xrightarrow[\text { in } L^2]{n \rightarrow \infty} \int_0^1 g(r) \mathrm{d} r \int_0^t H_s \mathrm{d} s.$$
	\end{lem}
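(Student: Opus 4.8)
The plan is to prove convergence in $L^2(\Omega)$ by splitting the error into a part coming from the replacement of $H^{(n)}$ by $H$ and a part that captures the oscillatory averaging of the rapidly varying periodic weight $g(ns-[ns])$ against the (continuous) limit process $H$. The single computational fact that drives everything is the periodicity identity: for $\varphi\in L^1(0,1)$ the substitution $r=ns-k$ gives
$$\int_{k/n}^{(k+1)/n}\varphi(ns-[ns])\,\d s=\frac1n\int_0^1\varphi(r)\,\d r.$$
Applying this with $\varphi=g^2$ over the complete subintervals contained in $[0,t]$, together with a leftover interval of length $<1/n$, yields the uniform bound $\sup_n\int_0^t g(ns-[ns])^2\,\d s\le C$ with $C$ depending only on $\|g\|_{L^2(0,1)}$ and $T$; this is the estimate I would use repeatedly.

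For the replacement part, I would apply the Cauchy–Schwarz inequality in $s$ and use that $\int_0^t g(ns-[ns])^2\,\d s$ is deterministic, so that
$$\E\Big[\Big|\int_0^t(H^{(n)}_s-H_s)\,g(ns-[ns])\,\d s\Big|^2\Big]\le\Big(\int_0^t g(ns-[ns])^2\,\d s\Big)\,\E\Big[\int_0^t|H^{(n)}_s-H_s|^2\,\d s\Big].$$
The first factor is bounded uniformly in $n$ by the paragraph above, and the second tends to $0$ by hypothesis, so this contribution vanishes and it remains to treat $\int_0^t H_s\,g(ns-[ns])\,\d s$.

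For the averaging part I would first establish convergence pathwise, for a fixed continuous trajectory $s\mapsto H_s(\omega)$ on $[0,t]$ with modulus of continuity $m_H$. Replacing $H_s$ by its left-endpoint value $H_{k/n}$ on each $[k/n,(k+1)/n]$ and using the periodicity identity with $\varphi=g$, the main term becomes $\big(\sum_k H_{k/n}\frac1n\big)\int_0^1 g$, a Riemann sum that converges to $\int_0^t H_s\,\d s\int_0^1 g(r)\,\d r$; the replacement error is controlled by $m_H(1/n)\,\|g\|_{L^1(0,1)}\,t$ (plus a boundary term of order $1/n$), which vanishes since $g\in L^2(0,1)\subset L^1(0,1)$ and $H$ is uniformly continuous on $[0,t]$. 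This gives almost sure convergence. To upgrade it to $L^2(\Omega)$ I would invoke dominated convergence: by Cauchy–Schwarz and the uniform bound on $\int_0^t g(ns-[ns])^2\,\d s$, both $\big|\int_0^t H_s\,g(ns-[ns])\,\d s\big|^2$ and $\big|\int_0^1 g\int_0^t H_s\,\d s\big|^2$ are dominated by $C\int_0^t|H_s|^2\,\d s$, a single integrable random variable independent of $n$ (its integrability follows from the hypothesis, since $H$ is the $L^2(\Omega\times[0,T])$ limit of $H^{(n)}$). Combining this with the replacement part through the triangle inequality in $L^2(\Omega)$ finishes the proof.

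I expect the averaging part to be the main obstacle: passing the rapidly oscillating periodic weight through a merely continuous integrand, while $g$ is only assumed to lie in $L^2$, requires the pathwise equidistribution argument above rather than a direct continuity estimate, and the subsequent upgrade from almost sure to $L^2$ convergence hinges on producing a uniform, $n$-independent integrable dominating function. The remaining steps are routine applications of Cauchy–Schwarz and the periodicity identity.
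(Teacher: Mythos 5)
The paper does not prove this lemma; it is quoted verbatim from \cite[Lemma C.2]{FU}, so there is no in-text argument to compare against. Judged on its own, your proof is correct and complete: the periodicity identity $\int_{k/n}^{(k+1)/n}\varphi(ns-[ns])\,\d s=\frac1n\int_0^1\varphi(r)\,\d r$ gives the uniform bound $\sup_n\int_0^t g(ns-[ns])^2\,\d s\le C$, Cauchy--Schwarz then kills the $H^{(n)}-H$ term using the hypothesis, the pathwise equidistribution argument (left-endpoint freezing plus a Riemann sum, with error $m_H(1/n)\|g\|_{L^1}t$ and an $O(1/n)$ boundary piece) handles the oscillatory weight against a continuous path, and dominated convergence with the $n$-independent dominating variable $C\int_0^t|H_s|^2\,\d s$ upgrades almost sure convergence to $L^2(\Omega)$. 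This is essentially the standard route for such weak-convergence-of-periodic-weights lemmas, and it is the argument one would expect in \cite{FU}.

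One small point worth making explicit: the hypothesis as stated only controls $H^{(n)}-H$ in $L^2(\Omega\times[0,T])$, so the integrability of $\int_0^t|H_s|^2\,\d s$ that your dominated-convergence step needs does not follow from the displayed hypothesis alone --- it requires the (implicit, and in the paper's applications always satisfied) assumption that the $H^{(n)}$ themselves lie in $L^2(\Omega\times[0,T])$, whence $H$ does too. Without some such integrability the conclusion itself would not be a statement about $L^2(\Omega)$ convergence, so this is a matter of stating a tacit hypothesis rather than a gap in the argument.
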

	Now let us introduce some notations:
	\begin{align}\label{basic types}
		\delta_{(n,u)}=u-\frac{[nu]}{n},\quad  G=\Gamma(H+1/2)^2, 
		\quad \mu(r,y)=(r+y)^{H-1/2}-r^{H-1/2}.
	\end{align}
	The following bounds and limits will also used in this sequel.
	\begin{lem}[\cite{FU}] \label{small time estimate}
		For $s\in [0,T]$, we have
		$$(i)\quad n^{2H}\int_{\frac{[ns]}{n}}^{s}\Big|K(s-u)\Big|^2\d u=\frac{1}{2HG}\delta_{(n,s)}^{2H}\leq C,$$
		and
		$$(ii)\quad n^{2H}\int_{0}^{\frac{[ns]}{n}}\Big|K(s-u)-K(\frac{[ns]}{n}-u)\Big|^2\d u\leq C,$$
		where $C$ does not depend on $n$.
	\end{lem}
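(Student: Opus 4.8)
The plan is to evaluate (i) exactly and to reduce (ii) to a single dimensionless integral by a scaling argument. Throughout write $t_n=\frac{[ns]}{n}$ and $\delta=\delta_{(n,s)}=s-t_n$, so that $0\le \delta<\frac1n$ and hence $n\delta<1$.

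For (i), since $K(s-u)^2=(s-u)^{2H-1}/G$, the substitution $v=s-u$ gives
$$\int_{t_n}^{s}K(s-u)^2\,\d u=\frac1G\int_0^{\delta}v^{2H-1}\,\d v=\frac{\delta^{2H}}{2HG}.$$
Multiplying by $n^{2H}$ produces the stated identity, and the uniform bound is immediate from $n^{2H}\delta^{2H}=(n\delta)^{2H}<1$.

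For (ii), I would first rewrite the integrand through $\mu$. Writing $s-u=(t_n-u)+\delta$ and substituting $r=t_n-u$ turns the difference of kernels into $K(s-u)-K(t_n-u)=\Gamma(H+\tfrac12)^{-1}\mu(r,\delta)$, whence
$$n^{2H}\int_0^{t_n}\big|K(s-u)-K(t_n-u)\big|^2\,\d u=\frac{n^{2H}}{G}\int_0^{t_n}\mu(r,\delta)^2\,\d r.$$
The key step is the scaling $r=\delta\rho$: the homogeneity $\mu(\delta\rho,\delta)=\delta^{H-1/2}\big[(\rho+1)^{H-1/2}-\rho^{H-1/2}\big]$ yields
$$\int_0^{t_n}\mu(r,\delta)^2\,\d r=\delta^{2H}\int_0^{t_n/\delta}\big[(\rho+1)^{H-1/2}-\rho^{H-1/2}\big]^2\,\d\rho.$$

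The main obstacle, and the only place where $H\in(0,\tfrac12)$ is used, is to show that the dimensionless integral $\int_0^{\infty}\big[(\rho+1)^{H-1/2}-\rho^{H-1/2}\big]^2\,\d\rho$ converges, uniformly in the upper limit $t_n/\delta$. At the lower endpoint the bounded factor $(\rho+1)^{H-1/2}$ is subdominant and the bracket behaves like $-\rho^{H-1/2}$, so the integrand is $O(\rho^{2H-1})$, integrable near $0$ precisely because $H>0$; at the upper endpoint a first-order Taylor expansion gives $(\rho+1)^{H-1/2}-\rho^{H-1/2}\sim(H-\tfrac12)\rho^{H-3/2}$, so the integrand is $O(\rho^{2H-3})$, integrable near $\infty$ because $H<1$. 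Denoting the finite value by $C_H$, I conclude
$$n^{2H}\int_0^{t_n}\big|K(s-u)-K(t_n-u)\big|^2\,\d u\le\frac{C_H}{G}\,(n\delta)^{2H}\le\frac{C_H}{G},$$
once more using $n\delta<1$, which gives the claimed constant $C$ independent of $n$ and $s$.
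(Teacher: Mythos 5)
Your proof is correct. The paper itself gives no argument for this lemma (it is imported from \cite{FU}), but your change of variables followed by the scaling $r=\delta\rho$ and the integrability of $\int_0^\infty\big[(\rho+1)^{H-1/2}-\rho^{H-1/2}\big]^2\d\rho$ is exactly the technique the paper uses to prove the neighbouring Lemmas \ref{2-small time estimate} and \ref{3-small time estimate}, so there is nothing genuinely different here. One small remark on (i): your computation correctly yields $n^{2H}\int_{[ns]/n}^{s}K(s-u)^2\,\d u=\frac{1}{2HG}\,(n\delta_{(n,s)})^{2H}$, so the displayed identity in the lemma is missing the factor $n^{2H}$ (equivalently, $\delta_{(n,s)}$ should be read as $n\delta_{(n,s)}=ns-[ns]$); the bound $\leq C$ is unaffected.
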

	\begin{lem}\label{2-small time estimate}
		For $s\in [0,T]$, we have
		$$\quad n^{2H}\int_{\frac{[ns]}{n}}^{s}\Big(K(s-u)-K(\frac{[ns]}{n}-u)\Big)^2\d u\leq C,$$
		where $C$ does not depend on $n$.
	\end{lem}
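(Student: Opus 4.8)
The plan is to reduce this integral, which runs over the single ``short'' subinterval $[\frac{[ns]}{n},s]$, to the quantity already estimated in Lemma~\ref{small time estimate}(i). First I would set $a=\frac{[ns]}{n}$ and $\delta=\delta_{(n,s)}=s-a$, so that the integration variable satisfies $u\in[a,a+\delta]$ with $0\le\delta<\frac1n$. The key observation is that on this interval the argument $a-u$ of the second kernel is non-positive; since $K(x)=x^{H-1/2}/\Gamma(H+1/2)$ is supported on $(0,\infty)$ and extended by $K(x)=0$ for $x\le0$ (as is implicit in the Volterra convolutions of \eqref{Volterra eq}), we have $K(a-u)=0$ for almost every $u\in[a,s]$. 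Hence the integrand collapses to $\big(K(s-u)-K(a-u)\big)^2=K(s-u)^2$ a.e., and
\begin{align}
	n^{2H}\int_{\frac{[ns]}{n}}^{s}\big(K(s-u)-K(\tfrac{[ns]}{n}-u)\big)^2\d u=n^{2H}\int_{\frac{[ns]}{n}}^{s}K(s-u)^2\d u.\nonumber
\end{align}
The right-hand side is exactly the expression controlled in Lemma~\ref{small time estimate}(i), which is bounded by a constant $C$ independent of $n$ and of $s\in[0,T]$, giving the claim.

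Should one instead wish to treat both kernels as genuinely singular, reading the second term as $K(u-\frac{[ns]}{n})$, the same bound follows from a scaling argument that I would carry out as a check. Substituting $u=a+\delta v$ with $v\in[0,1]$ gives $s-u=\delta(1-v)$ and $u-a=\delta v$, and by the homogeneity $K(\delta w)=\delta^{H-1/2}K(w)$ one obtains
\begin{align}
	n^{2H}\int_{\frac{[ns]}{n}}^{s}\big(K(s-u)-K(u-\tfrac{[ns]}{n})\big)^2\d u=(n\delta)^{2H}\int_{0}^{1}\big(K(1-v)-K(v)\big)^2\d v.\nonumber
\end{align}
Since $n\delta=ns-[ns]\in[0,1)$, the prefactor is at most one, and expanding the square shows the remaining integral is the finite constant $\frac{1}{HG}-\frac{2}{\Gamma(2H+1)}$, where I use $\int_0^1K(v)^2\d v=\frac{1}{2HG}$ and the Beta integral $\frac1G\int_0^1(1-v)^{H-1/2}v^{H-1/2}\d v=\mathrm{B}(H+\tfrac12,H+\tfrac12)/G=\frac{1}{\Gamma(2H+1)}$.

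The only delicate point in either route is the integrability of the squared kernel at the endpoints, where $K$ blows up like $v^{H-1/2}$ with $H-\frac12<0$; this is precisely what forces the positive exponent $2H$ and what makes the bound uniform in $s$ rather than merely finite for each fixed $s$. I therefore expect the main obstacle to be purely one of bookkeeping — fixing the convention for $K$ on the degenerate interval $[\frac{[ns]}{n},s]$ and confirming uniformity over $s\in[0,T]$ — rather than any new analytic estimate, since everything reduces to Lemma~\ref{small time estimate}(i) already established above.
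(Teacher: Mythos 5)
Your proof is correct, and its engine is the same as the paper's: the homogeneity $K(\delta w)=\delta^{H-1/2}K(w)$ converts the integral over the short interval into $(n\delta_{(n,s)})^{2H}$ times a fixed finite integral, and $n\delta_{(n,s)}=ns-[ns]\le 1$ kills the $n^{2H}$. Where you differ is in confronting the fact that $K(\frac{[ns]}{n}-u)$ has a negative argument on $(\frac{[ns]}{n},s]$. The paper's own proof substitutes $z=\frac{[ns]}{n}-u$ and asserts the integral equals $\frac{1}{G}\int_0^{\delta_{(n,s)}}|\mu(z,\delta_{(n,s)})|^2\,\d z$ with $\mu(z,\delta)=(z+\delta)^{H-1/2}-z^{H-1/2}$; taken literally this substitution maps $[\frac{[ns]}{n},s]$ onto $[-\delta_{(n,s)},0]$, not $[0,\delta_{(n,s)}]$, so the displayed identity \eqref{transfer of K-2} contains a sign/domain slip (it is the computation appropriate to the integral over $[0,\frac{[ns]}{n}]$ in Lemma~\ref{small time estimate}(ii), transplanted to the wrong interval). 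The conclusion survives because the scaling exponent is the same under any reasonable reading. Your two readings are both legitimate and you carry each out correctly: the Volterra convention $K\equiv 0$ on $(-\infty,0]$ collapses the statement to Lemma~\ref{small time estimate}(i) and is the convention actually consistent with how $f_{kjl}$ arises from splitting $\int_0^{[ns]/n}=\int_0^s-\int_{[ns]/n}^s$; the reflected reading gives your explicit constant $\frac{1}{HG}-\frac{2}{\Gamma(2H+1)}$, whose evaluation via the Beta integral checks out. In short: same scaling idea as the paper, but your treatment is the more careful one, and your first route is both shorter and the one the subsequent applications implicitly rely on.
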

	\begin{proof}
	First,  we make the change of variables $z=\frac{[ns]}{n}-u$ and then $r=z/\delta_{(n,s)}$. Since the integral $\int_{0}^{1}|\mu(r,1)|^2\d r<\infty$ by \cite{Mis} Theorem $1.3.1$ and Lemma $A.0.1$,    we have 
		\begin{align}\label{transfer of K-2}
			&n^{2H}\int_{\frac{[ns]}{n}}^{s}\Big(K(s-u)-K(\frac{[ns]}{n}-u)\Big)^2\d u\nonumber\\&\quad=\frac{n^{2H}}{G}\int_{0}^{\delta_{(n,s)}}\Big|\mu(z,\delta_{(n,s)})\Big|^2\d z=\frac{n^{2H}\delta_{(n,s)}^{2H}}{G}\int_{0}^{1}|\mu(r,1)|^2\d r\leq C,
		\end{align}
		where $C$ does not depend on $n$.
	\end{proof}
	\begin{lem}\label{small time estimate 2}
	For $u\in [0,T]$, we have
	$$\lim_{n\to\infty}\int_{-\infty}^{-[nu]}\Big((z+[nu]-nu)^{H-1/2}-([z]+[nu]-nu)^{H-1/2}\Big)^2\d z=0.$$
\end{lem}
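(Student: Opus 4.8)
The plan is to recognize the integrand as the squared discretization increment of the slowly varying function $x\mapsto x^{H-1/2}$ and then dominate it by an integrable tail. First I would record that for every $z\in\mathbb{R}$ the fractional part $\{z\}:=z-[z]$ satisfies $0\le\{z\}<1$, so the two arguments $z+[nu]-nu$ and $[z]+[nu]-nu$ differ by exactly $[z]-z=-\{z\}$, hence by strictly less than one in absolute value. On the range $z\le-[nu]$ both arguments are negative, so (as dictated by the definition of $K$, since $H-\tfrac12<0$) the powers are understood in absolute value; writing $\theta:=nu-[nu]\in[0,1)$, the integrand becomes $\big((-z+\theta)^{H-1/2}-(-[z]+\theta)^{H-1/2}\big)^2$, with both bases at least $-z\ge[nu]$ and differing by $\{z\}<1$.

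Next I would apply the mean value theorem to $f(x)=x^{H-1/2}$ on the interval joining the two bases. Since they differ by $\{z\}<1$ and the smaller of them, $-z+\theta$, still exceeds $-z$, I obtain the pointwise estimate
$$\big|(-z+\theta)^{H-1/2}-(-[z]+\theta)^{H-1/2}\big|\le \Big(\tfrac12-H\Big)(-z)^{H-3/2},$$
so that the whole integrand is dominated by $C(-z)^{2H-3}$ uniformly in $n$ and in $\theta$. Here it is essential that $-z\ge[nu]\ge1$ throughout the domain of integration, so the interval on which the mean value theorem is applied stays bounded away from the origin and no singularity of $f$ is encountered; the only mechanism that forces the integral to vanish is the combination of the receding upper endpoint $-[nu]\to-\infty$ with the integrability of $(-z)^{2H-3}$ at infinity.

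Finally I would integrate this bound. Because $2H-3<-1$,
$$\int_{-\infty}^{-[nu]}C(-z)^{2H-3}\d z=\int_{[nu]}^{\infty}Cy^{2H-3}\d y=\frac{C}{2-2H}\,[nu]^{2H-2},$$
and for $u>0$ one has $[nu]\to\infty$ while $2H-2<0$, so the right-hand side tends to $0$ as $n\to\infty$, which proves the lemma. (For $u=0$ the upper limit is fixed at $0$ and the statement is degenerate, but this case plays no role in the sequel.) I do not expect any deep obstacle in this argument: once the integrand is identified as a slowly varying increment, it reduces to a routine tail estimate. The only points requiring genuine care are the bookkeeping, namely verifying that $z$ and $[z]$ differ by less than one even for negative $z$, keeping the bases positive in accordance with the kernel $K$, and checking that the mean value theorem is applied away from the singularity of $f$ at the origin, all of which are controlled by the single constraint $-z\ge[nu]$.
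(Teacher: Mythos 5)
Your proposal is correct and follows essentially the same route as the paper: both dominate the squared increment by a constant multiple of $(-z)^{2H-3}$ (the paper via the integral representation $(z-1)^{H-1/2}-z^{H-1/2}=\frac{1}{H-1/2}\int_0^1(z-r)^{H-3/2}\,\d r$, you via the mean value theorem, which is the same estimate) and then conclude from the vanishing of the tail integral $\int_{[nu]}^{\infty}y^{2H-3}\,\d y$ as $[nu]\to\infty$. Your version is if anything slightly more careful about the sign conventions and the degenerate case $u=0$.
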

\begin{proof}
	Since 
	$$z-1\leq [z]\leq z,\quad -1\leq [nu]-nu\leq 0, $$
	we have
	\begin{align}
		(z+[nu]-nu)^{H-1/2}-([z]+[nu]-nu)^{H-1/2}\leq 0\nonumber
	\end{align}
	and
	\begin{align}\label{small estimate}
		&\Big((z+[nu]-nu)^{H-1/2}-([z]+[nu]-nu)^{H-1/2}\Big)^2\nonumber\\&\quad\leq \Big((z-1)^{H-1/2}-z^{H-1/2}\Big)^2\nonumber\\&\quad=\big(\frac{1}{H-1/2}\big)^2\Big(\int_{0}^{1}(z-u)^{H-3/2}\d u\Big)^2\leq \big(\frac{1}{H-1/2}\big)^2 (z-1)^{2H-3}.
	\end{align}
	Therefore, we obtain 
	\begin{align}
		0&\leq \int_{-\infty}^{-[nu]}\Big((z+[nu]-nu)^{H-1/2}-([z]+[nu]-nu)^{H-1/2}\Big)^2\d z\nonumber\\&\leq \int_{-\infty}^{-[nu]} \big(\frac{1}{H-1/2}\big)^2 (z-1)^{2H-3}\d z \to 0\quad \text{as $n\to\infty$}.\nonumber
	\end{align}
	Applying the dominated convergence theorem yields  the desired result.
\end{proof}
	\begin{lem}\label{3-small time estimate}
		For $u\in [0,T]$, we have
		$$\quad n^{2H}\int_{0}^{u}\Big(K(s-u)-K(\frac{[ns]}{n}-u)\Big)^2\d s\leq C,$$
		where $C$ is independent of    $n$.
	\end{lem}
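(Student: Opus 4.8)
The plan is to isolate the singularity of the kernel difference at $s=u$ by splitting the domain of integration at the natural scale $1/n$. For $s\in[0,u]$ the two relevant arguments $u-s$ and $u-\frac{[ns]}{n}$ are nonnegative and differ by exactly $\delta_{(n,s)}=s-\frac{[ns]}{n}\in[0,\frac1n]$, so the quantity to control is $\big(K(u-s)-K(u-\frac{[ns]}{n})\big)^2$. Since this difference is governed by the \emph{derivative} of $K$ when $s$ is bounded away from $u$, but only by the individual (singular) kernels when $s$ is close to $u$, I would write
\[
\int_0^u=\int_0^{(u-\frac1n)\vee 0}+\int_{(u-\frac1n)\vee 0}^u
\]
and estimate the two pieces separately.

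On the far region $u-s\ge\frac1n$ I would apply the mean value theorem to $x\mapsto x^{H-1/2}$. Because the two arguments differ by $\delta_{(n,s)}\le\frac1n$ and the intermediate point $\xi$ satisfies $\xi\ge u-s$ (so that $\xi^{H-3/2}\le (u-s)^{H-3/2}$, using $H-3/2<0$), one obtains
\[
\Big(K(u-s)-K(u-\tfrac{[ns]}{n})\Big)^2\le \frac{(H-1/2)^2}{G}\,(u-s)^{2H-3}\,\delta_{(n,s)}^2\le \frac{(H-1/2)^2}{G}\,(u-s)^{2H-3}n^{-2}.
\]
Multiplying by $n^{2H}$ and integrating, $\int_0^{u-1/n}(u-s)^{2H-3}\d s$ is of order $n^{2-2H}$ (the antiderivative $(u-s)^{2H-2}$ evaluated at $s=u-\frac1n$), so the prefactor $n^{2H}\cdot n^{-2}\cdot n^{2-2H}=1$ exactly compensates and the far contribution is bounded by a constant.

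On the near region $u-\frac1n\le s\le u$ the derivative bound fails, since $(u-s)^{2H-3}$ is not integrable at $s=u$; here I would instead bound crudely by $2K(u-s)^2+2K(u-\frac{[ns]}{n})^2$. Using $u-\frac{[ns]}{n}\ge u-s\ge0$ together with $2H-1<0$ gives $K(u-\frac{[ns]}{n})^2\le K(u-s)^2$, so it suffices to estimate
\[
n^{2H}\int_{u-1/n}^u K(u-s)^2\,\d s=\frac{n^{2H}}{G}\int_{u-1/n}^u(u-s)^{2H-1}\,\d s=\frac{1}{2HG},
\]
where convergence of the last integral uses $2H-1\in(-1,0)$; for $u<\frac1n$ the whole integral reduces to this near piece. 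Adding the two regions yields the uniform bound $C$. The main obstacle is precisely the non-integrable singularity $(u-s)^{2H-3}$ produced by the derivative estimate: the role of the $\frac1n$-split is that in each region a different power count makes $n^{2H}$ cancel against the length/singularity scale, and it is exactly the matching of these two cancellations that pins the normalization at $2H$.
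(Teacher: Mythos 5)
Your proof is correct, and it rests on the same two estimates as the paper's argument, but it reaches them by a different and more self-contained route. The paper substitutes $z=ns-[nu]$, which turns the left-hand side into $\frac{1}{G}\int_{-[nu]}^{nu-[nu]}\nu(z,nu-[nu])^2\,\d z$ with $\nu(z,r)=(z-r)^{H-1/2}-([z]-r)^{H-1/2}$, and then invokes the finiteness of the $n$-independent integral $\int_{-\infty}^{\infty}\nu(z,r)^2\,\d z$, justified by the tail bound \eqref{small estimate}. Your split of $[0,u]$ at $u-\tfrac1n$ is precisely the preimage under this substitution of splitting the $z$-axis into the region at distance at least one from the singular points and its complement: your mean-value bound $(u-s)^{2H-3}n^{-2}$ on the far region is the unscaled form of the paper's $(z-1)^{2H-3}$ estimate, while your computation $n^{2H}\int_{u-1/n}^{u}K(u-s)^2\,\d s=\frac{1}{2HG}$ supplies the local integrability of $\nu^2$ near its singularities (using $2H-1>-1$), a point the paper needs but does not spell out, since its citation of \eqref{small estimate} only controls the tail. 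Your version therefore makes the exact cancellation of the $n^{2H}$ normalization explicit, whereas the paper's is shorter because the rescaled integral is $n$-independent by construction. One cosmetic remark: the statement writes $K(s-u)$ and $K(\tfrac{[ns]}{n}-u)$ with nonpositive arguments for $s\in[0,u]$; you silently read these as $K(u-s)$ and $K(u-\tfrac{[ns]}{n})$, which is the only interpretation under which either proof makes sense and is consistent with the paper's own change of variables.
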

	\begin{proof}
		Using the change of variable, $z=ns-[nu]$,  we see 
		\begin{align}\label{tranfer of K}
			\int_{0}^{u}\Big(K(s-u)-K(\frac{[ns]}{n}-u)\Big)^2\d s=\frac{n^{-2H}}{G}\int_{-[nu]}^{nu-[nu]}\nu(z,nu-[nu])^2\d z,
		\end{align}
		where
		\begin{align}\label{def of nu}
			\nu(z,r):=(z-r)^{H-1/2}-([z]-r)^{H-1/2}.
		\end{align}
		By \eqref{small estimate} the integral $\int_{-\infty}^{\infty }\nu(z,nu-[nu])^2\d z$ is finite, proving the lemma. 
	\end{proof}
	\begin{lem}\cite{FU}\label{esti of A}
		For $v\leq s$, let
		$$A_n(v,s)=n^{2H}\int_{0}^{\frac{[nv]}{n}}\Big(K(s-u)-K(\frac{[ns]}{n}-u)\Big)\Big(K(v-u)-K(\frac{[nv]}{n}-u)\Big)\d u.$$  
		Then $\sup_{0\leq v\leq s\leq T}\sup_{n}|A_n(v,s)|<\infty$ and for $v<s$, $\lim_{n\to\infty} A_n(v,s)=0.$
	\end{lem}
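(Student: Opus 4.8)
The statement splits into a uniform bound and a pointwise limit, and the plan is to treat them separately. For the uniform bound, I would apply the Cauchy--Schwarz inequality to the integral defining $A_n(v,s)$, separating the factor built from $s$ from the one built from $v$:
\begin{align*}
|A_n(v,s)| &\le \Big(n^{2H}\int_0^{\frac{[nv]}{n}}\big(K(s-u)-K(\tfrac{[ns]}{n}-u)\big)^2\d u\Big)^{1/2}\\
&\quad\times\Big(n^{2H}\int_0^{\frac{[nv]}{n}}\big(K(v-u)-K(\tfrac{[nv]}{n}-u)\big)^2\d u\Big)^{1/2}.
\end{align*}
Since $v\le s$ forces $\frac{[nv]}{n}\le\frac{[ns]}{n}$ and the integrands are nonnegative, the first factor is dominated by the same integral taken over $[0,\frac{[ns]}{n}]$, which is bounded uniformly in $n$ by Lemma \ref{small time estimate}$(ii)$; the second factor is bounded by the same lemma applied with $v$ in place of $s$. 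This gives $\sup_{0\le v\le s\le T}\sup_n|A_n(v,s)|<\infty$.

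For the limit when $v<s$, the key observation is that the integration domain $[0,\frac{[nv]}{n}]$ stays bounded away from the singularity of the $s$-factor located at $u=\frac{[ns]}{n}$. First I would show the $s$-factor is uniformly small there: writing the difference as $(H-\tfrac12)\int_{\frac{[ns]}{n}-u}^{s-u}\xi^{H-3/2}\d\xi$ and bounding the (negative-power) integrand by its value at the left endpoint $\frac{[ns]}{n}-u\ge\frac{[ns]}{n}-\frac{[nv]}{n}$, which converges to $s-v>0$, yields $|K(s-u)-K(\frac{[ns]}{n}-u)|\le C\,\delta_{(n,s)}\le C/n$ uniformly for $u\in[0,\frac{[nv]}{n}]$ once $n$ is large. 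Next, after the change of variable $w=\frac{[nv]}{n}-u$ the $v$-factor becomes $K(w+\delta_{(n,v)})-K(w)$, so Lemma \ref{basic lemma} bounds its $L^1$-norm by $C\,\delta_{(n,v)}^{H+1/2}\le C\,n^{-(H+1/2)}$.

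Combining these two estimates gives
$$|A_n(v,s)|\le n^{2H}\cdot\frac{C}{n}\cdot Cn^{-(H+1/2)}=Cn^{H-3/2}\to 0,$$
since $H<1/2$. The main point requiring care is the uniform lower bound $\frac{[ns]}{n}-u\ge\frac{[ns]}{n}-\frac{[nv]}{n}$ on the argument of the $s$-factor, which is exactly what separates the case $v<s$ (where the gap $\frac{[ns]}{n}-\frac{[nv]}{n}$ tends to the positive number $s-v$, forcing the decay to zero) from the degenerate case $v=s$ (where the singularity sits at the right endpoint of the domain and only the uniform boundedness survives). The case $v=0$ is trivial, since the integration domain is then empty and $A_n(0,s)=0$.
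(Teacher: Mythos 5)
Your argument is correct. The paper itself gives no proof of this lemma---it is imported from \cite{FU}---so there is nothing internal to compare against; what you have produced is a valid, self-contained proof using only tools already stated in the paper. The uniform bound via Cauchy--Schwarz is exactly right: after enlarging the domain of the $s$-factor from $[0,\frac{[nv]}{n}]$ to $[0,\frac{[ns]}{n}]$, both factors are controlled by Lemma \ref{small time estimate}$(ii)$ with constants independent of $n$, $v$, $s$. For the pointwise limit, your sup--$L^1$ splitting is sound: the mean-value bound $|K(s-u)-K(\frac{[ns]}{n}-u)|\le C(s-[ns]/n)\,(\frac{[ns]}{n}-u)^{H-3/2}$ together with $\frac{[ns]}{n}-u\ge\frac{[ns]}{n}-\frac{[nv]}{n}\ge (s-v)/2$ for large $n$ gives the $O(1/n)$ sup bound, and the first display of Lemma \ref{basic lemma} gives the $O(n^{-(H+1/2)})$ bound on the $L^1$-norm of the $v$-factor, so $|A_n(v,s)|\le Cn^{2H}\cdot n^{-1}\cdot n^{-(H+1/2)}=Cn^{H-3/2}\to 0$ (indeed for any $H\in(0,1)$, so the restriction $H<1/2$ is not even needed at this step). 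You also correctly isolate the degenerate cases ($v=s$, where only boundedness survives, and small $v$ where the domain is empty). No gaps.
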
	
	\begin{lem}\label{esti of B}
		For $\frac{[ns]}{n}\leq v\leq s$, let
		$$B_n(v,s)=n^{2H}\int_{\frac{[ns]}{n}}^{v}K(s-u)K(v-u)\d u.$$  
		Then $\sup_{0\leq \frac{[ns]}{n}\leq v\leq s\leq T}\sup_{n}|B_n(v,s)|<\infty$ and for $v<s$, $\lim_{n\to\infty} B_n(v,s)=0.$
	\end{lem}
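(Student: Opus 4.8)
The plan is to prove both assertions by elementary pointwise estimates on the explicit kernel $K(u)=u^{H-1/2}/\Gamma(H+1/2)$, exploiting that on the integration region the two kernel factors have comparable arguments and that the exponent $H-\tfrac12$ is negative.

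First I would establish the uniform bound. On the region $\frac{[ns]}{n}\le u\le v\le s$ both arguments $s-u$ and $v-u$ are nonnegative and satisfy $v-u\le s-u$; since $H\in(0,1/2)$ makes the exponent $H-\tfrac12$ \emph{negative}, this yields $(s-u)^{H-1/2}\le (v-u)^{H-1/2}$, hence
\[
K(s-u)K(v-u)=\frac{(s-u)^{H-1/2}(v-u)^{H-1/2}}{G}\le \frac{(v-u)^{2H-1}}{G}.
\]
Because $2H-1>-1$, the function $(v-u)^{2H-1}$ is integrable up to $u=v$, and so
\[
0\le B_n(v,s)\le \frac{n^{2H}}{G}\int_{\frac{[ns]}{n}}^{v}(v-u)^{2H-1}\d u=\frac{n^{2H}}{2HG}\Big(v-\tfrac{[ns]}{n}\Big)^{2H}.
\]
The decisive step is then to absorb the prefactor $n^{2H}$: since $v\le s$ we have $v-\frac{[ns]}{n}\le \delta_{(n,s)}=s-\frac{[ns]}{n}<\tfrac1n$, so $n^{2H}\big(v-\frac{[ns]}{n}\big)^{2H}\le 1$, and therefore $|B_n(v,s)|\le \frac{1}{2HG}$ uniformly over all admissible $v,s$ and all $n$, which settles the first claim.

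For the limit I would argue that the constraint $\frac{[ns]}{n}\le v\le s$ forces $v$ and $s$ to lie in the same partition cell $\big[\frac{[ns]}{n},\frac{[ns]}{n}+\tfrac1n\big)$, so that $B_n(v,s)$ is just the (scaled) same-cell overlap of the two kernels. For a \emph{fixed} pair $v<s$ this overlap disappears for large $n$: choosing any $n>\frac{1}{s-v}$ gives $\frac{[ns]}{n}>s-\tfrac1n>v$, so the interval $\big[\frac{[ns]}{n},v\big]$ is empty and $B_n(v,s)=0$; hence $\lim_{n\to\infty}B_n(v,s)=0$. I expect the only genuinely delicate point — in contrast to the true cancellation argument needed in Lemma~\ref{esti of A}, where the domain $[0,\frac{[nv]}{n}]$ does not degenerate — to be the correct reading of this limiting regime, namely that distinct fixed times eventually fall into different cells; once that is recognized the limit is immediate, and the only technical care required elsewhere is the use of the negativity of $H-\tfrac12$ and of $n\delta_{(n,s)}<1$ in the bound above.
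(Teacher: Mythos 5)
Your proof is correct, but it proceeds differently from the paper's on both halves. For the uniform bound the paper applies the Cauchy--Schwarz inequality and then invokes Lemma~\ref{small time estimate} to control $n^{2H}\int_{[ns]/n}^{v}K(v-u)^2\,\d u$ and $n^{2H}\int_{[ns]/n}^{s}K(s-u)^2\,\d u$; you instead use the pointwise domination $K(s-u)K(v-u)\le (v-u)^{2H-1}/G$ coming from the negativity of $H-\tfrac12$, which is entirely self-contained, avoids the auxiliary lemma, and even produces the explicit constant $\tfrac{1}{2HG}$ via $n\,\delta_{(n,s)}<1$. For the limit the paper changes variables $v-u=z/n$ to write $B_n(v,s)=\tfrac1G\int_0^{nv-[ns]}(ns-nv+z)^{H-1/2}z^{H-1/2}\,\d z$ and lets the factor $(ns-nv+z)^{H-1/2}$ tend to zero; you observe instead that for fixed $v<s$ the domain $[\tfrac{[ns]}{n},v]$ is empty once $n>\tfrac1{s-v}$, so $B_n(v,s)$ vanishes identically for large $n$. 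Your observation is sharper (eventual vanishing rather than mere convergence) and is consistent with how the lemma is actually invoked in Lemma~\ref{Q-1,2}, where the regime $\tfrac{[ns]}{n}<v<s$ is, for fixed $v<s$, eventually vacuous; the paper's change-of-variable computation is the more robust route in that it would still yield quantitative decay along sequences $v_n\uparrow s$ staying in the same partition cell, but for the statement as written both arguments suffice.
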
	
	\begin{proof}
		By the Cauchy-Schwarz inequality and Lemma \ref{small time estimate} we have
		\begin{align}
			B_n(v,s)&\leq n^{2H}\Big(\int_{\frac{[ns]}{n}}^{v}K(s-u)^2\d u\Big)^{1/2}\Big(\int_{\frac{[ns]}{n}}^{v}K(v-u)^2\d u\Big)^{1/2}\nonumber\\&\leq n^{2H}\Big(\int_{\frac{[ns]}{n}}^{s}K(s-u)^2\d u\Big)^{1/2}\Big(\int_{\frac{[ns]}{n}}^{v}K(v-u)^2\d u\Big)^{1/2}\leq C.\nonumber
		\end{align}
		Especially, for $v<s$, by the change of variable $v-u=\frac{z}{n}$, we have
		\begin{align}
			0\leq B_n(v,s)&=\frac{1}{G}\int_{0}^{nv-[ns]}(ns-nv+z)^{H-1/2}z^{H-1/2}\d z\nonumber\\&\leq  C\int_{0}^{1}(ns-nv+z)^{H-1/2}z^{H-1/2}\d z\to 0,\nonumber
		\end{align}
		because of $(ns-nv+z)^{H-1/2}\to 0.$
	\end{proof}
	For the proof of the next two lemmas, one can find in   \cite[pp 5086-5087 and pp 5089-5090]{FU}.
	\begin{lem}\cite{FU}\label{esti of D 1}
		For any $i,i'=1,\cdots,m;l,l'=1,\cdots,d$
		$$  \lim_{n\to\infty}\sup_{t\in[0,T]}\Big\|\int_{0}^{t}D^{i, i',l,l'}_{1,u}\d u\Big\|_{L^2}=0,$$
		where $D^{i, i',l,l'}_{1,u}$ is defined in \eqref{def of D 1}.
	\end{lem}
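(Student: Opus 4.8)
The plan is to reduce the $L^2$ norm of the time integral $\int_{0}^{t}D^{i,i',l,l'}_{1,u}\,\d u$ to a deterministic double integral of products of kernel differences, and then to invoke the domination and pointwise vanishing supplied by Lemma \ref{esti of A}. First I would expand $\big\|\int_{0}^{t}D^{i,i',l,l'}_{1,u}\,\d u\big\|_{L^2}^2$ by Fubini's theorem as $\int_{0}^{t}\int_{0}^{t}\E\big[D^{i,i',l,l'}_{1,u}D^{i,i',l,l'}_{1,v}\big]\,\d u\,\d v$ over the square $[0,t]^2$. Since each factor $D_{1,u}$ is built (via \eqref{def of D 1}) from the \emph{far} stochastic integral $M_u^{1,n}$, whose integrand carries the kernel difference $K(u-r)-K(\frac{[nu]}{n}-r)$ supported on $[0,\frac{[nu]}{n}]$, the inner expectation—computed through the It\^o isometry together with the boundedness of $\sigma$ and its derivatives and the uniform moment bounds of Lemmas \ref{bound of X n} and \ref{bound of M n}—produces exactly the kernel product defining $A_n(v,s)$. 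Thus the integrand of the double integral is controlled, up to a fixed constant, by $|A_n(u\wedge v,\,u\vee v)|$.

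Second, I would apply the dominated convergence theorem on the finite square $[0,T]^2$. Lemma \ref{esti of A} provides the uniform majorant $\sup_n\sup_{0\le v\le s\le T}|A_n(v,s)|<\infty$, which is integrable over $[0,T]^2$, while the pointwise statement $\lim_{n\to\infty}A_n(v,s)=0$ for $v<s$ forces the integrand to vanish almost everywhere (the diagonal $u=v$ carrying zero Lebesgue measure). Hence the double integral tends to $0$. Because the integration region $[0,t]^2$ is contained in $[0,T]^2$ for every $t$ and the majorant does not depend on $t$, this convergence is uniform in $t\in[0,T]$, which yields $\sup_{t\in[0,T]}\big\|\int_{0}^{t}D^{i,i',l,l'}_{1,u}\,\d u\big\|_{L^2}\to 0$ as claimed.

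The main obstacle I anticipate is the bookkeeping in the first step: matching the covariance $\E[D_{1,u}D_{1,v}]$—which, after conditioning and using the martingale structure of the Wiener integrals defining $M^{1,n}$, is a sum of products of kernel differences evaluated at the two times $u$ and $v$—precisely to the quantity $A_n(v,s)$, while keeping the frozen coefficients $\sigma^l_i(X^n_{\frac{[nr]}{n}})$ under control. Care is needed so that the cross terms indexed by the distinct Brownian components $i,i'$ collapse correctly (only the matching components survive the isometry), and so that the uniformity in $t$ is not lost when several $A_n$-type factors are estimated at once; it is here that the explicit boundedness estimates of Section \ref{sec 2} rather than the mere convergence of $A_n$ do the real work.
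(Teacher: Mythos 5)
Your argument is correct, and it is essentially the argument the paper relies on: the paper itself defers the proof of this lemma to \cite[pp.~5086--5087]{FU}, but the identical covariance computation is carried out explicitly for the structurally analogous terms $\mathcal{O}^n_1$ and $\mathcal{Q}^n_1$ in Lemmas \ref{1,2} and \ref{Q-1,2} --- Fubini, then the It\^o isometry for the outer integrals to reduce $\E[D^{i,i',l,l'}_{1,u}D^{i,i',l,l'}_{1,v}]$ to $n^{2H}$ times an integral of the product of kernel differences against a uniformly bounded expectation, and finally Lemma \ref{esti of A} with dominated convergence over $[0,T]^2$ (the diagonal being null). The bookkeeping step you flag does close: after the outer isometry the deterministic kernel factors exit the expectation, and the remaining factor $\big|\E\big[\big(\int_0^\rho g_{li}(s,w)\,\d W^i_w\big)\big(\int_0^\rho g_{li}(v,w)\,\d W^i_w\big)\,\sigma^{l'}_{i'}(X^n_{\frac{[n\rho]}{n}})^2\big]\big|\le Cn^{-2H}$ by Cauchy--Schwarz, Lemma \ref{small time estimate}\,(ii) and Lemma \ref{bound of X n}, which converts the prefactor $n^{4H}$ into exactly the $n^{2H}$ appearing in the definition of $A_n(u\wedge v,u\vee v)$.
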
	
	\begin{lem}\cite{FU}\label{esti of D 2}
		For any $i,i'=1,\cdots,m;l,l'=1,\cdots,d$
		$$  \lim_{n\to\infty}\sup_{u\in[0,T]}\|\int_{0}^{t}D^{i, i',l,l'}_{2,u}\d u\|_{L^2}=0,$$
		where $D^{i ,i',l,l'}_{2,u}$ is  defined in \eqref{def of D 2}.
	\end{lem}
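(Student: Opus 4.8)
The plan is to run the same dominated-convergence argument that underlies Lemma~\ref{esti of D 1}, but with the near-diagonal covariance kernel $B_n$ of Lemma~\ref{esti of B} playing the role that $A_n$ plays there. Since the term $D^{i,i',l,l'}_{2,u}$ of \eqref{def of D 2} is built from the diagonal Milstein correction $M^{2,n}_u=\int_{[nu]/n}^{u}K(u-r)\sigma(X^n_{[nr]/n})\,\d W_r$, the object $\int_0^t D^{i,i',l,l'}_{2,u}\,\d u$ is (a finite sum of) iterated stochastic integrals whose outer coefficients are the bounded maps $\nabla\sigma$ and $\sigma$ evaluated along $X^n$. The first step is therefore to compute the second moment by Fubini and the It\^o isometry,
\[
\Big\|\int_0^t D^{i,i',l,l'}_{2,u}\,\d u\Big\|_{L^2}^2=\int_0^t\!\!\int_0^t \E\big[\langle D^{i,i',l,l'}_{2,v},\,D^{i,i',l,l'}_{2,s}\rangle\big]\,\d v\,\d s,
\]
and to identify the inner expectation. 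Pairing the innermost Wiener differentials and factoring out the bounded coefficients (uniformly in $n$, by Assumption~\ref{assumption 2.2} and Lemma~\ref{bound of X n}), the surviving kernel part reduces to $B_n(v,s)$, up to lower-order pieces already controlled by Lemma~\ref{small time estimate} and Lemma~\ref{2-small time estimate}.

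Next I would feed in the two conclusions of Lemma~\ref{esti of B}. The uniform bound $\sup_n\sup_{0\le [ns]/n\le v\le s\le T}|B_n(v,s)|\le C$ supplies a constant — hence integrable on $[0,T]^2$ — dominating function for the double integrand, while the pointwise statement $B_n(v,s)\to 0$ for every $v<s$ gives convergence of the integrand to $0$ off the diagonal $\{v=s\}$. Because $\{v=s\}$ is Lebesgue-null in $[0,T]^2$, the integrand tends to $0$ almost everywhere, so the dominated convergence theorem forces the double integral to vanish as $n\to\infty$.

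Uniformity in $t$ is then automatic: the dominating bound is independent of $t$, and the map $t\mapsto\int_0^t\!\int_0^t(\cdots)\,\d v\,\d s$ is nonnegative and nondecreasing, hence bounded above by its value at $t=T$, which we have just shown converges to $0$. This yields $\sup_{t\in[0,T]}\big\|\int_0^t D^{i,i',l,l'}_{2,u}\,\d u\big\|_{L^2}\to 0$, as claimed.

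The delicate point — and the only place where the argument is more than bookkeeping — is the second-moment expansion. One must verify that the cross-terms arising from the two pieces $M^{1,n}$ and $M^{2,n}$ of the Milstein correction, together with the mismatch between the evaluation points $[nu]/n$ and $u$, either cancel or contribute only quantities dominated by Lemmas~\ref{esti of A} and~\ref{small time estimate}, and that after these reductions the genuinely surviving covariance kernel is $B_n$, so that its off-diagonal decay can be exploited. Once the pairing of the Wiener differentials is organized correctly, the boundedness-plus-pointwise-vanishing input from Lemma~\ref{esti of B} does the rest, exactly as in the $A_n$-based proof of Lemma~\ref{esti of D 1} in \cite{FU}.
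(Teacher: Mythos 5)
Your proposal is correct and follows essentially the route the paper itself takes for the parallel quantities (e.g.\ $\mathcal{Q}^n_1$ in Lemma \ref{Q-1,2}): expand the second moment by Fubini, kill the covariance $\E[D^{i,i',l,l'}_{2,v}D^{i,i',l,l'}_{2,s}]$ for $v\le \frac{[ns]}{n}$ by conditioning on $\mathcal{F}_{\frac{[ns]}{n}}$, identify the surviving kernel on the strip $\frac{[ns]}{n}<v<s$ with (the square of) $B_n(v,s)$, and conclude by dominated convergence, with uniformity in $t$ from monotonicity. Two minor corrections: $D^{i,i',l,l'}_{2,u}$ as defined in \eqref{def of D 2} contains only the diagonal piece built from $M^{2,n}$, so the $M^{1,n}$--$M^{2,n}$ cross-terms you single out as the delicate point simply do not arise here; and since the overlap strip has Lebesgue measure $O(1/n)$, the uniform bound $\sup_{n}\sup_u\|D^{i,i',l,l'}_{2,u}\|_{L^2}<\infty$ (from Lemma \ref{small time estimate} and the moment bounds on $F^{i,i',l,l'}_u$) already closes the argument, so the pointwise decay of $B_n$ is not actually needed.
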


	\section{Proof of the main result}\label{sec 3} 
	After the preparation we are going to present the proof of  the theorem, which  is sophisticated and we divide the proof into several subsections.  
	\subsection{Decomposition of  normalized error process $U^n$}
	Combining \eqref{Volterra eq} and \eqref{Milstein scheme}, we can rewrite $U_{\cdot}^{n}:=n^{2H}(X_{\cdot}-X^n_{\cdot})=\{U^{n,1},\cdots,U^{n,d}\}^{\top}$ as
	\begin{align}\label{dcp of U n}
		U_{t}^n&=n^{2H}\Big[\int_{0}^{t}K(t-s)b(X_{s})\d s-\int_{0}^{t}K(t-s)\cdot\Big(b(X^n_{\frac{[ns]}{n}})+\nabla b(X^n_{\frac{[ns]}{n}})\cdot  M_s^{n}\Big)\d s\nonumber\\&\quad+\int_{0}^{t}K(t-s)\sigma(X_{s})\d W_s-\int_{0}^{t}K(t-s)\cdot\Big(\sigma(X^n_{\frac{[ns]}{n}})+\nabla \sigma(X^n_{\frac{[ns]}{n}})\cdot  M_s^{n}\Big)\d W_s\Big]\nonumber\\&=\int_{0}^{t}K(t-s)\nabla b(X^n_{s})U^n_s\d s+\int_{0}^{t}K(t-s)\nabla \sigma(X^n_{s})U^n_s\d W_s+\mathcal{R}^{n,1}_{b}+\mathcal{R}^{n,1}_{\sigma}\nonumber\\&\quad+n^{2H}\int_{0}^{t}K(t-s)\cdot\Big(b(X^n_s)-b(X^n_{\frac{[ns]}{n}})-\nabla b(X^n_{\frac{[ns]}{n}})\cdot  M_s^{n}\Big)\d s\nonumber\\&\quad+n^{2H}\int_{0}^{t}K(t-s)\cdot\Big(\sigma(X^n_s)-\sigma(X^n_{\frac{[ns]}{n}})-\nabla \sigma(X^n_{\frac{[ns]}{n}})\cdot  M_s^{n}\Big)\d W_s\nonumber\\&=\int_{0}^{t}K(t-s)\nabla b(X^n_{s})U^n_s\d s+\int_{0}^{t}K(t-s)\nabla \sigma(X^n_{s})U^n_s\d W_s+\mathcal{R}^{n,1}_{b}+\mathcal{R}^{n,1}_{\sigma}\nonumber\\&\quad+n^{2H}\int_{0}^{t}K(t-s)\cdot\nabla b(X^n_{\frac{[ns]}{n}})\Big(X^n_s-X^n_{\frac{[ns]}{n}}-M_s^{n}\Big)\d s+\mathcal{R}^{n,2}_{b}\nonumber\\&\quad+n^{2H}\int_{0}^{t}K(t-s)\cdot\nabla \sigma(X^n_{\frac{[ns]}{n}})\Big(X^n_s-X^n_{\frac{[ns]}{n}}-M_s^{n}\Big)\d W_s+\mathcal{R}^{n,2}_{\sigma}\nonumber\\&=\int_{0}^{t}K(t-s)\nabla b(X^n_{s})U^n_s\d s+\int_{0}^{t}K(t-s)\nabla \sigma(X^n_{s})U^n_s\d W_s\nonumber\\&\quad+n^{2H}\int_{0}^{t}K(t-s)\cdot\nabla b(X^n_{\frac{[ns]}{n}})\Big(X^n_s-X^n_{\frac{[ns]}{n}}-M_s^{n}\Big)\d s\nonumber\\&\quad+\int_{0}^{t}K(t-s)\cdot\nabla \sigma(X^n_{\frac{[ns]}{n}})\d V^n_s+\mathcal{R}^{n},
	\end{align}
	where $\mathcal{R}^{n,1}_{b}$, $\mathcal{R}^{n,2}_{b}$, $\mathcal{R}^{n,1}_{\sigma}$ and $\mathcal{R}^{n,2}_{\sigma}$ 
	are the Taylor's remainder terms, given in more details by \eqref{e.r.1}-\eqref{e.r.4} when we take care of them,  and $V^n$ and $\mathcal{R}^{n}$ are defined by
	\begin{align}
		V^n_t&:=n^{2H}\int_{0}^{t}\big(X^n_s-X^n_{\frac{[ns]}{n}}-M_s^{n}\big)\d W_s,\nonumber\\\mathcal{R}^{n}&:=\mathcal{R}^{n,1}_{b}+\mathcal{R}^{n,2}_{b}+\mathcal{R}^{n,1}_{\sigma}+\mathcal{R}^{n,2}_{\sigma}.\nonumber
	\end{align}
	\subsection{The limit of the quadratic variation of $V^n$}
	Denote
	\begin{align} 
		M_u^{1,n,l}:=&\sum_{i=1}^{m}\int_{0}^{\frac{[nu]}{n}}\Big(K(u-v)-K(\frac{[nu]}{n}-v)\Big)\sigma^l_i(X^n_{\frac{[nv]}{n}})\d W^i_v\label{M u 1}\,,  \\ 
		M_u^{2,n,l}:=&\sum_{i=1}^{m}\sigma^l_i(X^n_{\frac{[nu]}{n}})\int_{\frac{[nu]}{n}}^{u}K(u-v)\d W^i_v \label{M u 2}\,, 
	\end{align} 
	and 
	\begin{align}\label{def of M u}
		M_u^{n,l}:=M_u^{1,n,l}+M_u^{2,n,l}\,. 
	\end{align} 
	We write  $V^n_{\cdot}=\{V^{n,k,j}\}$    as
	\begin{align}\label{def of V}
		V^{n,k,j}_{\cdot}=n^{2H}\int_{0}^{\cdot}\big[X^n_s-X^n_{\frac{[ns]}{n}}-M_s^{n}\big]^{k}\d W_s^j, \quad 1\leq k\leq d, 1\leq j\leq m.
	\end{align}
	
	We first find  the limit of covariation $\langle V^{n,k_1,j}, V^{n,k_2,j} \rangle_t$. Recall the expression of the solution $X^n$ of \eqref{Milstein scheme}, we can rewrite $\big[X^n_s-X^n_{\frac{[ns]}{n}}-M_s^{n}\big]^{k}$ as follows:
	\begin{align}\label{different parts of Z}
		&\big[X^n_s-X^n_{\frac{[ns]}{n}}-M_s^{n}\big]^{k}\nonumber\\
		&=\int_{0}^{\frac{[ns]}{n}}\Big(K(s-u)-K(\frac{[ns]}{n}-u)\Big)b^k(X^n_{\frac{[nu]}{n}})\d u+b^k(X^n_\frac{[ns]}{n})\int_{\frac{[ns]}{n}}^{s}K(s-u)\d u\nonumber\\
		&\quad+\sum_{l=1}^{d}\int_{0}^{\frac{[ns]}{n}}\Big(K(s-u)-K(\frac{[ns]}{n}-u)\Big)\partial_l b^k(X^n_{\frac{[nu]}{n}})M_u^{n,l}\d u\nonumber\\
		&\quad+ \sum_{l=1}^{d}\partial_l b^k(X^n_\frac{[ns]}{n})\int_{\frac{[ns]}{n}}^{s}K(s-u)M_u^{n,l}\d u\nonumber\\
		&\quad+\sum_{j=1}^{m}\sum_{l=1}^{d}\int_{0}^{\frac{[ns]}{n}}\Big(K(s-u)-K(\frac{[ns]}{n}-u)\Big)\partial_l \sigma^k_j(X^n_{\frac{[nu]}{n}})M_u^{n,l}\d W_u^j\nonumber\\
		&\quad+\sum_{j=1}^{m}\sum_{l=1}^{d}\partial_l \sigma^{k}_j(X^n_\frac{[ns]}{n})\int_{\frac{[ns]}{n}}^{s}K(s-u)M_u^{n,l}\d W_u^j\nonumber\\
		& :=\mathcal{A}^{n,k}_{1,s}+\mathcal{A}^{n,k}_{2,s},
	\end{align}
	where $\mathcal{A}^{n,k}_{1,s}$ and $\mathcal{A}^{n,k}_{2,s}$ denote the sums of the first  four and the last  two terms, respectively.     
	We have
	\begin{align}\label{cov of V}
		&\langle V^{n,k_1,j}, V^{n,k_2,j} \rangle_t\nonumber\\
		&=n^{4H}\int_{0}^{t}\big[X^n_s-X^n_{\frac{[ns]}{n}}-M_s^{n}\big]^{k_1}\big[X^n_s-X^n_{\frac{[ns]}{n}}-M_s^{n}\big]^{k_2}\d s\nonumber\\
		&=      \int_{0}^{t}n^{4H}\mathcal{A}^{n,k_1}_{1,s}\mathcal{A}^{n,k_2}_{1,s}\d s 
		+ \int_{0}^{t}n^{4H}\mathcal{A}^{n,k_1}_{1,s}\mathcal{A}^{n,k_2}_{2,s}\d s\nonumber\\
		&\quad+\int_{0}^{t}n^{4H}\mathcal{A}^{n,k_1}_{2,s}\mathcal{A}^{n,k_2}_{1,s}\d s+\int_{0}^{t}n^{4H}\mathcal{A}^{n,k_1}_{2,s}\mathcal{A}^{n,k_2}_{2,s}\d s.
	\end{align} 
	\begin{lem}\label{est of A-1,2}
		For any $k=1,\cdots,d,$
		\begin{itemize}
			\item[(i)] 
			$ \lim_{n\to\infty}\sup_{s\in[0,T]}\|n^{2H}\mathcal{A}^{n,k}_{1,s}\|_{L^2}=0 $.
			\item[(ii)] $\sup_{n \ge 1} \sup_{s\in[0,T]}\|n^{2H}\mathcal{A}^{n,k}_{2,s}\|_{L^2}<\infty.$ 
			\item[(iii)]  For any $k_1,k_2$ and $t\in [0,T]$  we have as ${n \rightarrow \infty}$
			\begin{align}
				n^{4H}&\int_{0}^{t}\mathcal{A}^{n,k_1}_{2,s}\mathcal{A}^{n,k_2}_{2,s}ds
				\stackrel{L^2}{\rightarrow }  C_M^2\sum_{i,j=1}^{m}\sum_{l,l'=1}^{d}\int_{0}^{t}\sigma^l_i(X_{s})\sigma^{l'}_{i}(X_{s})\partial_l\sigma^{k_1}_j(X_{s})\partial_{l'}\sigma^{k_2}_{j}(X_{s})\d s. \nonumber
			\end{align}
		\end{itemize} 	
	\end{lem}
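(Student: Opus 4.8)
The three parts have different flavors: (i) and (ii) are moment estimates, whereas (iii) is the analytic core. For (i) I would bound the four terms constituting $\mathcal{A}^{n,k}_{1,s}$ in \eqref{different parts of Z} separately. The two pure drift terms are controlled by Lemma \ref{basic lemma} (its first two kernel bounds and part (iii)) with increment $h=\delta_{(n,s)}\le 1/n$: their $L^2$ norms are $O(n^{-(H+1/2)})$, so after multiplication by $n^{2H}$ they are $O(n^{H-1/2})\to 0$ because $H<1/2$. The two terms carrying $M^{n,l}_u$ pick up an extra $O(n^{-H})$ from Lemma \ref{bound of M n}, hence are $O(n^{-(2H+1/2)})=o(n^{-2H})$. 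All bounds being uniform in $s$, this gives (i). For (ii), both terms of $\mathcal{A}^{n,k}_{2,s}$ are stochastic integrals, and the It\^o isometry, the boundedness of $\nabla\sigma$, the bound $\sup_u\E[|M^{n,l}_u|^2]\le Cn^{-2H}$ of Lemma \ref{bound of M n}, and the small-time kernel estimates of Lemma \ref{small time estimate} show each has $L^2$ norm $O(n^{-2H})$, which $n^{2H}$ exactly absorbs.

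For (iii), set $Y_n:=n^{4H}\int_0^t\mathcal{A}^{n,k_1}_{2,s}\mathcal{A}^{n,k_2}_{2,s}\d s$. Fixing $s$ and observing that $u\mapsto\mathcal{A}^{n,k}_{2,s}(u)$, the partial stochastic integral in $u$ with $s$ frozen in the integrand, is a martingale, the It\^o product formula gives
\[
\mathcal{A}^{n,k_1}_{2,s}\mathcal{A}^{n,k_2}_{2,s}=\langle\mathcal{A}^{n,k_1}_{2,s},\mathcal{A}^{n,k_2}_{2,s}\rangle_s+(\text{two stochastic integrals in }u),
\]
the cross-variation being $\sum_{j}\int_0^s\Phi^{k_1,j}_s(u)\Phi^{k_2,j}_s(u)\d u$ with $\Phi^{k,j}_s$ the $\d W^j_u$-integrand of $\mathcal{A}^{n,k}_{2,s}$, so that the common outer index $j$ forced by the isometry yields the factor $\partial_l\sigma^{k_1}_j\,\partial_{l'}\sigma^{k_2}_j$. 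Since Term 5 of \eqref{different parts of Z} lives on $u\in[0,\frac{[ns]}{n}]$ and Term 6 on $u\in[\frac{[ns]}{n},s]$, their supports are disjoint and the Term 5 $\times$ Term 6 cross-variation vanishes, leaving only the $(\text{Term }5)^2$ and $(\text{Term }6)^2$ blocks. The cross-variation still contains the random products $M^{n,l}_u M^{n,l'}_u$; substituting $M^{n,l}=M^{1,n,l}+M^{2,n,l}$ from \eqref{M u 1}-\eqref{def of M u} (whose two pieces integrate over disjoint ranges, so $\langle M^1,M^2\rangle=0$) and applying a second product formula isolates the fully contracted term, in which the inner $M^2$-$M^2$ pairing produces, via its common index $i$, the factor $\sum_i\sigma^l_i\sigma^{l'}_i$ together with an explicit kernel variance; every term retaining an uncontracted stochastic integral is set aside as a remainder.

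The fully contracted term is a Lebesgue integral of the coefficients $\sigma(X^n_{\frac{[n\cdot]}{n}})$ and $\nabla\sigma(X^n_{\frac{[n\cdot]}{n}})$ against deterministic kernels. A crucial point is that both the outer kernel (in $u$) and the inner kernel (in $v$) concentrate within $O(1/n)$ of $u=s$ and of $v=u$, so that by Lemmas \ref{continuous of X n} and \ref{strong X-X n} the grid-point coefficients may be frozen at $X_s$ up to a vanishing $L^2$ error. The resulting deterministic kernel integrals are computed from the small-time scalings of Lemmas \ref{small time estimate} and \ref{2-small time estimate} for the $(\text{Term }6)^2$ block and from the infinite-range estimates of Lemmas \ref{small time estimate 2} and \ref{3-small time estimate} for the $(\text{Term }5)^2$ block; in either case they collapse, after the factor $n^{4H}$, to an explicit constant times a power of the fractional part $ns-[ns]$. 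Lemma \ref{limit distribution of stochastic integral}, applied with $g$ equal to that power, averages the fractional part and produces the $L^2$-limit $C_M^2\sum_{i,j}\sum_{l,l'}\int_0^t\sigma^l_i(X_s)\sigma^{l'}_i(X_s)\partial_l\sigma^{k_1}_j(X_s)\partial_{l'}\sigma^{k_2}_j(X_s)\d s$, where $C_M^2$ is the sum of the two constants from the two blocks.

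It remains to prove that all the remainder terms — the outer stochastic integrals of the first product formula and the partially contracted pieces (those retaining an un-paired $M^{1,n}$) of the second — vanish in $L^2$; this is where I expect the main difficulty to lie. I would estimate their second moments, which after a further isometry reduce to double integrals over two times $s_1,s_2$ of the cross-time covariances of the $M^{1,n}$- and $M^{2,n}$-integrals; these are exactly the kernels $A_n(v,s)$, $B_n(v,s)$ and the integrals of $D_1,D_2$ furnished by Lemmas \ref{esti of A}, \ref{esti of B}, \ref{esti of D 1} and \ref{esti of D 2}, which are uniformly bounded and vanish off the diagonal $s_1=s_2$, so dominated convergence annihilates the remainder. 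The two genuinely delicate steps are the precise evaluation of the Beta- and Gamma-type kernel integrals that combine into $C_M$ — the difficulty flagged in the introduction — and establishing the uniform domination needed for the off-diagonal second-moment limit.
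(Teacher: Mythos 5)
Your proposal follows essentially the same route as the paper: parts (i)--(ii) via Lemmas \ref{basic lemma}, \ref{small time estimate} and the bound $\E[|M^n_u|^p]\le Cn^{-Hp}$, and part (iii) via two nested applications of the integration-by-parts formula \eqref{dcp of stochastic integral}, keeping only the doubly contracted Lebesgue terms, freezing the coefficients at $X_s$, averaging the fractional-part kernels with Lemma \ref{limit distribution of stochastic integral}, and annihilating the remainders through Lemmas \ref{esti of A}, \ref{esti of B}, \ref{esti of D 1} and \ref{esti of D 2} together with the tower property. The one cosmetic imprecision is that within each of the two blocks both the $M^{1,n}$--$M^{1,n}$ and the $M^{2,n}$--$M^{2,n}$ inner brackets survive and contribute (yielding the four constants $C_G^{(1)},\dots,C_G^{(4)}$ whose sum is $C_M^2$), not only the $M^{2,n}$--$M^{2,n}$ pairing you single out.
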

	\begin{proof}
		Since the derivatives of $b$ and $\sigma$ are bounded, by   Assumption  \ref{assumption 2.2}, Lemmas \ref{basic lemma}, (iii),   \ref{bound of X n} and   \ref{bound of M n}, we have
		\begin{align}\label{est of A-1}
			&\E[|n^{2H}\mathcal{A}^{n,k}_{1,s}|^2]\nonumber\\&\leq n^{4H}\E\Big[\Big|\int_{0}^{\frac{[ns]}{n}}\Big(K(s-u)-K(\frac{[ns]}{n}-u)\Big)b^k(X^n_{\frac{[nu]}{n}})\d u+b^k(X^n_\frac{[ns]}{n})\int_{\frac{[ns]}{n}}^{s}K(s-u)\d u\nonumber\\&\quad+\sum_{l=1}^{d}\int_{0}^{\frac{[ns]}{n}}\Big(K(s-u)-K(\frac{[ns]}{n}-u)\Big)\partial_l b^k(X^n_{\frac{[nu]}{n}})M_u^{n,l}\d u\nonumber\\&\quad+\sum_{l=1}^{d}\partial_l b^k(X^n_\frac{[ns]}{n})\int_{\frac{[ns]}{n}}^{s}K(s-u)M_u^{n,l}\d u\Big|^2\Big]\nonumber\\&\leq Cn^{4H}n^{-2(H+1/2)}\sup_{s\in [0,T]}\E\Big[\big|b^k(X_{\frac{[ns]}{n}}^n)+\partial_l b^k(X^n_{\frac{[ns]}{n}})M_s^{n,l}\big|^2\Big]\nonumber\\&\leq Cn^{2H-1}\to 0\quad \text{as $n\to\infty$}.
		\end{align}
		Hence $\lim_{n\to\infty}\sup_{s\in[0,T]}\|n^{2H}\mathcal{A}^{n,k}_{1,s}\|_{L^2}=0.$
		
		Similarly,   we have
		\begin{align}\label{est of A-2}
			&\E[|n^{2H}\mathcal{A}^{n,k}_{2,s}|^2]\nonumber\\&\leq n^{4H}\E\Big[\Big|\sum_{j=1}^{m}\sum_{l=1}^{d}\int_{0}^{\frac{[ns]}{n}}\Big(K(s-u)-K(\frac{[ns]}{n}-u)\Big)\partial_l \sigma^k_j(X^n_{\frac{[nu]}{n}})M_u^{n,l}\d W_u^j\nonumber\\&\quad+\sum_{j=1}^{m}\sum_{l=1}^{d}\partial_l \sigma^k_j(X^n_\frac{[ns]}{n})\int_{\frac{[ns]}{n}}^{s}K(s-u)M_u^{n,l}\d W_u^j\Big|^2\Big]\nonumber\\&\leq Cn^{4H}n^{-2H}\sup_{s\in [0,T]}\E\Big[|\partial_l \sigma^k_j(X^n_{\frac{[ns]}{n}})M_s^{n,l}|^2\Big]\nonumber\\&\leq C\quad \text{as $n\to\infty$}.
		\end{align}
		Hence $\lim_{n\to\infty}\sup_{s\in[0,T]}\|n^{2H}\mathcal{A}^{n,k}_{2,s}\|_{L^2}<\infty.$
		
		The proof of Claim (iii) is much more complicated since we want to identity the  limit. 
		We decompose $\mathcal{A}^{n,k}_{2,s}$ as 
		\begin{align}
			\mathcal{A}^{n,k}_{2,s}=\sum_{j=1}^{m}\sum_{l=1}^{d}\int_{0}^{\frac{[ns]}{n}}f_{kjl}(s,u)M_u^{n,l}\d W_u^j+\sum_{j=1}^{m}\sum_{l=1}^{d}\partial_l \sigma^k_j(X^n_\frac{[ns]}{n})\int_{\frac{[ns]}{n}}^{s}K(s-u)M_u^{n,l}\d W_u^j
			,\nonumber\end{align}
		where
		\begin{align}\label{f kjl}
			f_{kjl}(s,u):=\Big(K(s-u)-K(\frac{[ns]}{n}-u)\Big)\partial_l \sigma^k_j(X^n_{\frac{[nu]}{n}}).
		\end{align}
		This yields 
		\begin{align}
			n^{4H}&\int_{0}^{t}\mathcal{A}^{n,k_1}_{2,s}\mathcal{A}^{n,k_2}_{2,s}ds\nonumber\\&=\sum_{j,j'=1}^{m}\sum_{l,l'=1}^{d}n^{4H}\int_{0}^{t}\Big(\int_{0}^{\frac{[ns]}{n}}f_{k_1jl}(s,u)M_u^{n,l}\d W_u^j\Big)\Big(\int_{0}^{\frac{[ns]}{n}}f_{k_2j'l'}(s,u)M_u^{n,l'}\d W_u^{j'}\Big)\d s\nonumber\\
			&\quad+\sum_{j,j'=1}^{m}\sum_{l,l'=1}^{d}n^{4H}\int_{0}^{t}\partial_{l'}\sigma^{k_2}_{j'}(X^n_\frac{[ns]}{n})\Big(\int_{0}^{\frac{[ns]}{n}}f_{k_1jl}(s,u)M_u^{n,l}\d W_u^j\Big)\Big(\int_{\frac{[ns]}{n}}^{s}K(s-u)M_u^{n,l'}\d W_u^{j'}\Big)\d s\nonumber\\
			&\quad+\sum_{j,j'=1}^{m}\sum_{l,l'=1}^{d}n^{4H}\int_{0}^{t}\partial_{l}\sigma^{k_1}_{j}(X^n_\frac{[ns]}{n})\Big(\int_{0}^{\frac{[ns]}{n}}f_{k_2j'l'}(s,u)M_u^{n,l'}\d W_u^{j'}\Big)\Big(\int_{\frac{[ns]}{n}}^{s}K(s-u)M_u^{n,l}\d W_u^{j}\Big)\d s\nonumber\\
			&\quad+\sum_{j,j'=1}^{m}\sum_{l,l'=1}^{d}n^{4H}\int_{0}^{t}\partial_{l}\sigma^{k_1}_{j}(X^n_\frac{[ns]}{n})\partial_{l'}\sigma^{k_2}_{j'}(X^n_\frac{[ns]}{n})\Big(\int_{\frac{[ns]}{n}}^{s}K(s-u)M_u^{n,l}\d W_u^{j}\Big)\Big(\int_{\frac{[ns]}{n}}^{s}K(s-u)M_u^{n,l'}\d W_u^{j'}\Big)\d s\nonumber\\
			&:=\sum_{j,j'=1}^{m}\sum_{l,l'=1}^{d}\big[I^{n,k_1,k_2}_{1,j,j',l,l'}+I^{n,k_1,k_2}_{2,j,j',l,l'}+I^{n,k_1,k_2}_{3,j,j',l,l'} +I^{n,k_1,k_2}_{4,j,j',l,l'}\big].\nonumber
		\end{align}
		In subsections  \ref{3.2.1}-\ref{3.2.3} below, we will show the following limits in $L^2$ as $n\to \infty$:  
		\[
		\begin{split} 
			I^{n,k_1,k_2}_{1,j,j',l,l'} \rightarrow & \begin{cases}\sum_{i=1}^{m}\big(C^{(1)}_G+C^{(2)}_G\big)\int_{0}^{t}\sigma^l_i(X_{s})\sigma^{l'}_{i}(X_{s})\partial_l\sigma^{k_1}_j(X_{s})\partial_{l'}\sigma^{k_2}_{j}(X_{s})\d s & \text { if } j=j', \\ 0 & \text { if } j \neq j',\end{cases} \\
			I^{n,k_1,k_2}_{2, j,j',l,l'} \rightarrow & 0 \\
			I^{n,k_1,k_2}_{3,j,j',l,l'}  \rightarrow&0\\ 
			I^{n,k_1,k_2}_{4,j,j',l,l'} \rightarrow & \begin{cases}\sum_{i=1}^{m}\big(C^{(3)}_G+C^{(4)}_G\big)\int_{0}^{t}\sigma^l_i(X_{s})\sigma^{l'}_{i}(X_{s})\partial_l\sigma^{k_1}_j(X_{s})\partial_{l'}\sigma^{k_2}_{j}(X_{s})\d s & \text { if } j=j', \\ 0& \text { if } j \neq j'.\end{cases} 
		\end{split}
		\]
		These limits  yield 
		\begin{align}
			n^{4H}&\int_{0}^{t}\mathcal{A}^{n,k_1}_{2,s}\mathcal{A}^{n,k_2}_{2,s}ds\xrightarrow[\text { in } L^2]{n \rightarrow \infty} C_M\sum_{i,j=1}^{m}\sum_{l,l'=1}^{d}\int_{0}^{t}\sigma^l_i(X_{s})\sigma^{l'}_{i}(X_{s})\partial_l\sigma^{k_1}_j(X_{s})\partial_{l'}\sigma^{k_2}_{j}(X_{s})\d s, 
		\end{align}
		where
		$C_M^2:=C^{(1)}_G+C^{(2)}_G+C^{(3)}_G+C^{(4)}_G $,  and the lemma is proved.  
	\end{proof}
	\subsubsection{The $L^2$-limit of $I^{n,k_1,k_2}_{1,j,j',l,l'}$}\label{3.2.1}
	
	We use the following integration by parts formula   for any progressively measurable square integrable  processes  $h_1,h_2 $: 
	\begin{align}\label{dcp of stochastic integral}
		\Big(\int_{s}^{t}h_1(u)&\d W_u^j\Big)\Big(\int_{s}^{t}h_2(u)\d W_u^{j'}\Big)=\int_{s}^{t}\Big(\int_{s}^{u}h_1(r)\d W_r^j\Big)h_2(u)\d W_u^{j'}\nonumber\\&\quad+\int_{s}^{t}\Big(\int_{s}^{u}h_2(r)\d W_r^{j'}\Big)h_1(u)\d W_u^{j}+\int_{s}^{t}h_1(u)h_2(u)\d \langle W^{j}, W^{j'} \rangle_u.
	\end{align}
Thus,  the term $I^{n,k_1,k_2}_{1,j,j',l,l'}$ can be rewritten as
	\begin{align}
		I^{n,k_1,k_2}_{1,j,j',l,l'}&=n^{4H}\int_{0}^{t}\int_{0}^{\frac{[ns]}{n}}\Big(\int_{0}^{u}f_{k_1jl}(s,r)M_r^{n,l}\d W_r^j\Big)f_{k_2j'l'}(s,u)M_u^{n,l'}\d W_u^{j'}\d s\nonumber\\&\quad+n^{4H}\int_{0}^{t}\int_{0}^{\frac{[ns]}{n}}\Big(\int_{0}^{u}f_{k_2j'l'}(s,r)M_r^{n,l'}\d W_r^{j'}\Big)f_{k_1jl}(s,u)M_u^{n,l}\d W_u^{j}\d s\nonumber\\&\quad+n^{4H}\int_{0}^{t}\int_{0}^{\frac{[ns]}{n}}f_{k_1jl}(s,u)f_{k_2j'l'}(s,u)M_u^{n,l}M_u^{n,l'}\d \langle W^{j}, W^{j'} \rangle_u\d s\nonumber\\&:=\mathcal{O}^n_1+\mathcal{O}^n_2+\mathcal{O}^n_3.\nonumber
	\end{align} 
	Note that the last term vanishes if $j\neq j'$, and if $j=j'$, we have
	\begin{align}
		\mathcal{O}^n_3&=n^{4H}\int_{0}^{t}\int_{0}^{\frac{[ns]}{n}}f_{k_1jl}(s,u)f_{k_2jl'}(s,u)M_u^{n,l}M_u^{n,l'}\d u\d s.\nonumber
	\end{align}
	Recalling the  expression of $M_u^{n,l}$, i.e. \eqref{def of M u}, and applying \eqref{dcp of stochastic integral} again, we have
	\begin{align}\label{dcp of M-2}
		&M_u^{n,l}M_u^{n,l'}\nonumber\\&=\sum_{i,i'=1}^{m}\Big(\int_{0}^{\frac{[nu]}{n}}g_{li}(u,v)\d W^i_v\Big)\Big(\int_{0}^{\frac{[nu]}{n}}g_{l'i'}(u,v)\d W^{i'}_v\Big)+M_u^{1,n,l}M_u^{2,n,l'}+M_u^{2,n,l}M_u^{1,n,l'}\nonumber\\&\quad+\sum_{i,i'=1}^{m}F^{i,i',l,l'}_u\Big(\int_{\frac{[nu]}{n}}^{u}K(u-v)\d W^i_v\Big)\Big(\int_{\frac{[nu]}{n}}^{u}K(u-v)\d W^{i'}_v\Big)\nonumber\\&=\sum_{i,i'=1}^{m}\int_{0}^{\frac{[nu]}{n}}\Big(\int_{0}^{v}g_{li}(u,w)\d W^i_{w}\Big)g_{l'i'}(u,v)\d W_v^{i'}+\sum_{i,i'=1}^{m}\int_{0}^{\frac{[nu]}{n}}\Big(\int_{0}^{v}g_{l'i'}(u,w)\d W^{i'}_{w}\Big)g_{li}(u,v)\d W_v^{i}\nonumber\\&\quad+\sum_{i,i'=1}^{m}\int_{0}^{\frac{[nu]}{n}}g_{li}(u,v)g_{l'i'}(u,v)\d \langle W^{i}, W^{i'} \rangle_v\nonumber\\&\quad+\sum_{i,i'=1}^{m}F^{i,i',l,l'}_u\int_{\frac{[nu]}{n}}^{u}\Big(\int_{\frac{[nu]}{n}}^{v}K(u-w)\d W^i_w\Big)K(u-v)\d W^{i'}_v\nonumber\\&\quad+\sum_{i,i'=1}^{m}F^{i,i',l,l'}_u\int_{\frac{[nu]}{n}}^{u}\Big(\int_{\frac{[nu]}{n}}^{v}K(u-w)\d W^{i'}_w\Big)K(u-v)\d W^{i}_v\nonumber\\&\quad+\sum_{i,i'=1}^{m}F^{i,i',l,l'}_u\int_{\frac{[nu]}{n}}^{u}K(u-v)^2\d \langle W^{i}, W^{i'} \rangle_v+M_u^{1,n,l}M_u^{2,n,l'}+M_u^{2,n,l}M_u^{1,n,l'},
	\end{align}
	where
	\begin{align}\label{g li}
		g_{li}(u,v):=\Big(K(u-v)-K(\frac{[nu]}{n}-v)\Big) \sigma^l_i(X^n_{\frac{[nv]}{n}})
	\end{align}
	and
	\begin{align}\label{def of F}
		F^{i,i',l,l'}_u:=\sigma^l_i(X^n_{\frac{[nu]}{n}})\sigma^{l'}_{i'}(X^n_{\frac{[nu]}{n}}).
	\end{align}
	Note that $ \langle W^{i}, W^{i'} \rangle_v=0$ if $i\neq i'$. Therefore, if $i=i'$, $\mathcal{O}^n_3$ can be decomposed further as 
	\begin{align}
		\mathcal{O}^n_3
		&=n^{4H} \sum_{i,i'=1}^{m}\int_{0}^{t}\int_{0}^{\frac{[ns]}{n}}f_{k_1jl}(s,u)f_{k_2jl'}(s,u)\int_{0}^{\frac{[nu]}{n}}\Big(\int_{0}^{v}g_{li}(u,w)\d W^i_{w}\Big)g_{l'i'}(u,v)\d W_v^{i'}\d u\d s\nonumber\\&\quad+n^{4H}\sum_{i,i'=1}^{m}\int_{0}^{t}\int_{0}^{\frac{[ns]}{n}}f_{k_1jl}(s,u)f_{k_2jl'}(s,u)\int_{0}^{\frac{[nu]}{n}}\Big(\int_{0}^{v}g_{l'i'}(u,w)\d W^{i'}_{w}\Big)g_{li}(u,v)\d W_v^{i}\d u\d s\nonumber\\&\quad+n^{4H}\sum_{i=1}^{m}\int_{0}^{t}\int_{0}^{\frac{[ns]}{n}}f_{k_1jl}(s,u)f_{k_2jl'}(s,u)\int_{0}^{\frac{[nu]}{n}}g_{li}(u,v)g_{l'i}(u,v)\d v\d u\d s\nonumber\\&\quad+n^{4H}\sum_{i,i'=1}^{m}\int_{0}^{t}\int_{0}^{\frac{[ns]}{n}}f_{k_1jl}(s,u)f_{k_2jl'}(s,u)F^{i,i',l,l'}_u\int_{\frac{[nu]}{n}}^{u}\Big(\int_{\frac{[nu]}{n}}^{v}K(u-w)\d W^i_w\Big)K(u-v)\d W^{i'}_v\d u\d s\nonumber\\&\quad+n^{4H}\sum_{i,i'=1}^{m}\int_{0}^{t}\int_{0}^{\frac{[ns]}{n}}f_{k_1jl}(s,u)f_{k_2jl'}(s,u)F^{i,i',l,l'}_u\int_{\frac{[nu]}{n}}^{u}\Big(\int_{\frac{[nu]}{n}}^{v}K(u-w)\d W^{i'}_w\Big)K(u-v)\d W^{i}_v\d u\d s\nonumber\\&\quad+n^{4H}\sum_{i=1}^{m}\int_{0}^{t}\int_{0}^{\frac{[ns]}{n}}f_{k_1jl}(s,u)f_{k_2jl'}(s,u)F^{i,i,l,l'}_u\int_{\frac{[nu]}{n}}^{u}K(u-v)^2\d v\d u\d s\nonumber\\&\quad+n^{4H}\int_{0}^{t}\int_{0}^{\frac{[ns]}{n}}f_{k_1jl}(s,u)f_{k_2jl'}(s,u)M_u^{1,n,l}M_u^{2,n,l'}\d u\d s\nonumber\\&\quad+n^{4H}\int_{0}^{t}\int_{0}^{\frac{[ns]}{n}}f_{k_1jl}(s,u)f_{k_2jl'}(s,u)M_u^{2,n,l}M_u^{1,n,l'}\d u\d s\nonumber\\&:=\mathcal{O}^n_{31}+\mathcal{O}^n_{32}+\sum_{i=1}^{m}\mathcal{O}^n_{33}+\mathcal{O}^n_{34}+\mathcal{O}^n_{35}+\sum_{i=1}^{m}\mathcal{O}^n_{36}+\mathcal{O}^n_{37}+\mathcal{O}^n_{38}.\nonumber
	\end{align}
	\begin{lem}\label{31 32}
		For any $i,i',j=1,\cdots,m;l,l'=1,\cdots,d,$
		\[
		\begin{split} 
			(i)\qquad &     \lim_{n\to\infty}\sup_{t\in[0,T]}\|\mathcal{O}^{n}_{31}\|_{L^2}=0,\\
			(ii)\qquad &  \lim_{n\to\infty}\sup_{t\in[0,T]}\|\mathcal{O}^{n}_{32}\|_{L^2}=0. 
		\end{split}
		\]
	\end{lem}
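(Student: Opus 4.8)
The plan is to prove both limits by first establishing a uniform $L^2$-bound, which will later serve as the domination in a dominated-convergence argument, and then extracting the genuine decay from an off-diagonal decorrelation effect. I shall treat $\mathcal{O}^n_{31}$ in detail; the term $\mathcal{O}^n_{32}$ has the identical structure, with the roles of the inner and outer stochastic integrations (equivalently, of the index triples $(l,i,j)$ and $(l',i',j')$) interchanged, so the same argument applies verbatim. Throughout I abbreviate the double stochastic integral by $Z^n_u:=\int_0^{\frac{[nu]}{n}}\big(\int_0^v g_{li}(u,w)\,\d W^i_w\big)g_{l'i'}(u,v)\,\d W^{i'}_v$ and write $\Pi(s,u):=f_{k_1jl}(s,u)f_{k_2jl'}(s,u)$, so that $\mathcal{O}^n_{31}=n^{4H}\int_0^t\int_0^{\frac{[ns]}{n}}\Pi(s,u)\,Z^n_u\,\d u\,\d s$.

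First I would record the crude estimate. Using that the derivatives $\partial_l\sigma^{k}_j$ are bounded, the Burkholder--Davis--Gundy inequality applied successively to the two nested stochastic integrals defining $Z^n_u$, together with Lemma \ref{small time estimate}(ii) and the uniform moment bounds of Lemmas \ref{bound of X n} and \ref{bound of M n}, yields $\|Z^n_u\|_{L^4}\le Cn^{-2H}$ uniformly in $u$ and $n$. Feeding this into Minkowski's integral inequality and Lemma \ref{small time estimate}(ii) once more gives $\sup_n\sup_{t\in[0,T]}\|\mathcal{O}^n_{31}\|_{L^2}\le C$. Here the prefactor $n^{4H}$ is balanced by one factor $n^{-2H}$ coming from $\|Z^n_u\|_{L^2}$ (carried by the two kernel differences inside $g_{li},g_{l'i'}$) and one factor $n^{-2H}$ coming from $\int_0^{\frac{[ns]}{n}}\big(K(s-u)-K(\tfrac{[ns]}{n}-u)\big)^2\,\d u$ (the two copies of $f$), so the balance is exact and the crude bound is only $O(1)$. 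This is precisely why a further cancellation must be exploited to reach $0$.

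The decay is drawn from the second-moment expansion
\[\|\mathcal{O}^n_{31}\|_{L^2}^2=n^{8H}\int_0^t\!\int_0^t\!\int_0^{\frac{[ns_1]}{n}}\!\int_0^{\frac{[ns_2]}{n}}\E\big[\,\Pi(s_1,u_1)\,\Pi(s_2,u_2)\,Z^n_{u_1}Z^n_{u_2}\big]\,\d u_1\,\d u_2\,\d s_1\,\d s_2,\]
in which I would apply the It\^o isometry to the two pairs of matched stochastic integrals inside $\E[Z^n_{u_1}Z^n_{u_2}]$ (first the outer $\d W^{i'}$-integrations at a common variable $v$, then the inner $\d W^{i}$-integrations at a common variable $w$). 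The outer matching produces the \emph{signed} kernel product $\int\big(K(u_1-v)-K(\tfrac{[nu_1]}{n}-v)\big)\big(K(u_2-v)-K(\tfrac{[nu_2]}{n}-v)\big)\,\d v$, i.e.\ a factor of the $A_n$-type of Lemma \ref{esti of A}, and likewise for the inner matching; the two remaining integrations of $\Pi(s_i,u_i)$ in $s_i$ produce the last two $n^{-2H}$-factors. The prefactor $n^{8H}$ is thereby fully absorbed, and one is left with an integral over the bounded region $\{(u_1,u_2)\in[0,t]^2\}$ of a quantity that is uniformly bounded (by the crude estimate, which provides the domination on the fixed square $[0,T]^2$, hence uniformly in $t$) and that, by Lemmas \ref{esti of A} and \ref{esti of B}, tends to $0$ for every $u_1\ne u_2$. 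Since the diagonal $\{u_1=u_2\}$ is Lebesgue-null, the dominated convergence theorem forces $\sup_{t\in[0,T]}\|\mathcal{O}^n_{31}\|_{L^2}^2\to0$.

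I expect the main obstacle to be the random, mutually correlated coefficients $\sigma^{l}_i(X^n_{\frac{[n\cdot]}{n}})$ sitting \emph{inside} the kernel integrals: one cannot simply pull them out in absolute value, since replacing the signed products of Lemma \ref{esti of A} by their moduli turns them into Cauchy--Schwarz bounds that are $O(n^{-2H})$ but do \emph{not} vanish off the diagonal, thereby destroying exactly the decorrelation on which the argument rests. To preserve the signed structure I would, after the two It\^o isometries, decompose each conditional bracket by the It\^o product formula into a diagonal part $\int_0^v g_{li}(u_1,w)g_{li}(u_2,w)\,\d w$ (which carries the $A_n$-decorrelation and is weighted by the bounded factor $\sigma^{l'}_{i'}(X^n_{\frac{[nv]}{n}})^2$) and martingale remainders, the latter being shown of strictly smaller order using the boundedness and Lipschitz property of the derivatives of $\sigma$, the Hölder continuity of $X^n$ from Lemma \ref{continuous of X n}, and the moment bounds of Lemmas \ref{bound of X n}--\ref{bound of M n}. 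Verifying that these remainders are genuinely negligible while the signed kernel products retain both their uniform boundedness and their pointwise off-diagonal vanishing is the technical heart of the proof.
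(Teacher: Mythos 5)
Your overall plan (a uniform $L^2$ bound for domination, then decay extracted from a second-moment expansion together with the off-diagonal vanishing of the signed kernel products in Lemma \ref{esti of A}) is not the route the paper takes, and it is left incomplete at exactly the point where the difficulty sits. The paper's proof is much shorter: by Fubini it rewrites $\mathcal{O}^n_{31}$ as $\sum_{i,i'}\int_0^t\big(n^{2H}\int_0^u f_{k_1jl}(s,u)f_{k_2jl'}(s,u)\,\d s\big)D^{i,i',l,l'}_{1,u}\,\d u$ minus a boundary correction, bounds the bracketed weights uniformly via Lemmas \ref{2-small time estimate} and \ref{3-small time estimate} and the boundedness of $\nabla\sigma$, and then invokes Lemma \ref{esti of D 1} (whose proof is imported from \cite{FU}, pp.\ 5086--5087), which asserts precisely that $\sup_{t}\|\int_0^t D^{i,i',l,l'}_{1,u}\,\d u\|_{L^2}\to0$. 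All of the genuinely hard probabilistic analysis is thus outsourced to that cited lemma. Your proposal never uses Lemma \ref{esti of D 1}; instead it attempts to reprove its content inline for the full quadruple integral, which is a substantially harder task.

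The gap is in the step you yourself call ``the technical heart.'' You cannot simply ``apply the It\^o isometry to the two pairs of matched stochastic integrals inside $\E[Z^n_{u_1}Z^n_{u_2}]$,'' because the quantity you must control is $\E[\Pi(s_1,u_1)\Pi(s_2,u_2)Z^n_{u_1}Z^n_{u_2}]$, and the factors $\partial_l\sigma^{k}_j(X^n_{\frac{[nu_i]}{n}})$ hidden in $\Pi(s_i,u_i)$ are $\mathcal{F}_{\frac{[nu_i]}{n}}$-measurable, i.e.\ measurable at the \emph{terminal} times of the stochastic integrals $Z^n_{u_i}$; they are neither deterministic nor independent of the $Z$'s, so the isometry does not yield the clean product of two signed $A_n$-type factors on which your decorrelation argument rests. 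Moreover, even for $\E[Z^n_{u_1}Z^n_{u_2}]$ alone, the inner It\^o product formula leaves martingale remainders multiplied by the random weights $g_{l'i'}(u_1,v)g_{l'i'}(u_2,v)$, whose expectations need not vanish; you assert these are ``of strictly smaller order'' but supply no estimate, and this bookkeeping is precisely the content of the proof of Lemma \ref{esti of D 1} in \cite{FU}. As written, your argument correctly establishes the uniform $O(1)$ bound but does not establish convergence to zero; either the missing estimates must be carried out, or the argument should be restructured along the paper's lines so that Lemma \ref{esti of D 1} can be invoked.
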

	\begin{proof}
		Denote 
		\begin{align}\label{def of D 1}
			D^{i ,i',l,l'}_{1,u}:=n^{2H}\int_{0}^{\frac{[nu]}{n}}\Big(\int_{0}^{v}g_{li}(u,w)\d W^i_{w}\Big)g_{l'i'}(u,v)\d W_v^{i'}.
		\end{align}
		By Fubini's theorem, we have
		\begin{align}
			\mathcal{O}^n_{31}&=\sum_{i,i'=1}^{m}\int_{0}^{t}n^{2H}\int_{0}^{s}f_{k_1jl}(s,u)f_{k_2jl'}(s,u)D^{i ,i',l,l'}_{1,u}\d u\d s\nonumber\\&\quad-\sum_{i,i'=1}^{m}\int_{0}^{t}n^{2H}\int_{\frac{[ns]}{n}}^{s}f_{k_1jl}(s,u)f_{k_2jl'}(s,u)D^{i ,i',l,l'}_{1,u}\d u\d s\nonumber\\&=\sum_{i,i'=1}^{m}\int_{0}^{t}n^{2H}\int_{0}^{u}f_{k_1jl}(s,u)f_{k_2jl'}(s,u)\d sD^{i ,i',l,l'}_{1,u}\d u\nonumber\\&\quad-\sum_{i,i'=1}^{m}\int_{0}^{t}n^{2H}\int_{\frac{[ns]}{n}}^{s}f_{k_1jl}(s,u)f_{k_2jl'}(s,u)\d uD^{i ,i',l,l'}_{1,s}\d s.\nonumber
		\end{align}
		Since the derivatives of $\sigma$ are bounded we see 
		\begin{align}
			\|\mathcal{O}^n_{31}\|_{L^2}
			&\leq \sum_{i,i'=1}^{m}\Big\|\int_{0}^{t}D^{i ,i',l,l'}_{1,u}\d u\Big\|_{L^2}\sup_{u\in[0,T]}n^{2H}\int_{0}^{u}\Big(K(s-u)-K(\frac{[ns]}{n}-u)\Big)^2\d s\nonumber\\
			&\quad+\sum_{i,i'=1}^{m}\Big\|\int_{0}^{t}D^{i ,i',l,l'}_{1,s}\d s\Big\|_{L^2}\sup_{s\in[0,T]}n^{2H}\int_{\frac{[ns]}{n}}^{s}\Big(K(s-u)-K(\frac{[ns]}{n}-u)\Big)^2\d u\nonumber\\
			&\leq C\sum_{i,i'=1}^{m}\Big\|\int_{0}^{t}D^{i ,i',l,l'}_{1,u}\d u\Big\|_{L^2} 
		\end{align}
		by \eqref{f kjl}, Minkowski's inequality, Fubini's theorem, Lemmas \ref{2-small time estimate},   \ref{3-small time estimate} and   \ref{esti of D 1}. Applying the dominated convergence theorem  with respect to $\d u\otimes \d s$, we have $\mathcal{O}^n_{31} \rightarrow  0$ \text { in } $L^2$ as ${n \rightarrow \infty}$.
		
		In a similar way we can prove  $\mathcal{O}^n_{32}\to 0$ in $L^2$.
	\end{proof}
	
	\begin{lem}\label{33}
		For any $i,j=1,\cdots,m;l,l'=1,\cdots,d,$                                                                                                                                                                  
		\begin{align}
			\mathcal{O}^n_{33}& \rightarrow 
			C^{(1)}_G\int_{0}^{t}\sigma^l_i(X_{s})\sigma^{l'}_{i}(X_{s})\partial_l\sigma^{k_1}_j(X_{s})\partial_{l'}\sigma^{k_2}_{j}(X_{s})\d s,\nonumber
		\end{align}
		where
		\begin{align}
			C_G^{(1)}&:=\frac{1}{G^2}\int_{0}^{\infty}\mu(r,1)^2\d r\Big(\int_{0}^{1}r^{2H}\int_{-\infty}^{r}\nu(z,r)^2\d z\d r-\frac{1}{(4H+1)}\Big)\,,\nonumber
		\end{align}
where  $\mu(r,1)$  is given in \eqref{basic types}  and $\nu(z,r)$ is given by  \eqref{def of nu}.
	\end{lem}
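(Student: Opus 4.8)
The plan is to separate the random diffusion coefficients from the purely deterministic kernel integrals, to show that the three integration variables $v\le u\le s$ all collapse onto the diagonal $v\approx u\approx s$ as $n\to\infty$, and then to extract $C^{(1)}_G$ by averaging the resulting kernel profile over the fractional part of $ns$. Concretely, after inserting \eqref{f kjl} and \eqref{g li} into $\mathcal{O}^n_{33}$, I would first replace every grid evaluation $\partial_l\sigma^{k_1}_j(X^n_{\frac{[nu]}{n}})$, $\partial_{l'}\sigma^{k_2}_j(X^n_{\frac{[nu]}{n}})$, $\sigma^l_i(X^n_{\frac{[nv]}{n}})$, $\sigma^{l'}_i(X^n_{\frac{[nv]}{n}})$ by its value at $X_s$. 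This replacement is legitimate in $L^2$: the singular-kernel moment bounds of Lemma \ref{basic lemma}, together with $\sup_n\sup_t\E[|X^n_t|^p]<\infty$ (Lemma \ref{bound of X n}), the strong rate of Lemmas \ref{rate X-X n} and \ref{strong X-X n}, the H\"older continuity of $X$ (Lemma \ref{continuous of X}), and the boundedness and Lipschitz continuity of $\nabla\sigma$ together control the error produced by moving each argument across a window of width $O(1/n)$ around $s$. After this reduction it remains to evaluate the deterministic double kernel integral
\begin{align}
\Theta_n(s):=n^{4H}\int_0^{\frac{[ns]}{n}}\Big(K(s-u)-K(\tfrac{[ns]}{n}-u)\Big)^2\Big[\int_0^{\frac{[nu]}{n}}\Big(K(u-v)-K(\tfrac{[nu]}{n}-v)\Big)^2\d v\Big]\d u\nonumber
\end{align}
and to prove that $\int_0^t\Psi(X_s)\Theta_n(s)\,\d s\to C^{(1)}_G\int_0^t\Psi(X_s)\,\d s$ in $L^2$, where $\Psi(X_s):=\sigma^l_i(X_s)\sigma^{l'}_i(X_s)\partial_l\sigma^{k_1}_j(X_s)\partial_{l'}\sigma^{k_2}_j(X_s)$.

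Second, I would compute the inner and outer kernel integrals by the changes of variables already used in Section \ref{sec 2}. For the inner integral, substituting $w=\tfrac{[nu]}{n}-v$ and then $w=\delta_{(n,u)}r$ turns it into $\tfrac{\delta_{(n,u)}^{2H}}{G}\int_0^{[nu]/(n\delta_{(n,u)})}\mu(r,1)^2\,\d r$, with $\mu$ as in \eqref{basic types}; since the integrand concentrates at the endpoint $w=0$ and $\int_0^\infty\mu(r,1)^2\,\d r<\infty$, the inner integral equals $\tfrac{n^{-2H}(nu-[nu])^{2H}}{G}\int_0^\infty\mu(r,1)^2\,\d r$ up to a negligible remainder. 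For the outer integral, the substitution $z=ns-[nu]$ of Lemmas \ref{3-small time estimate} and \ref{small time estimate 2} expresses $n^{2H}\int(K(s-u)-K(\tfrac{[ns]}{n}-u))^2\,\d u$ through $\nu(z,\cdot)$ of \eqref{def of nu}, the factor $(nu-[nu])^{2H}$ surviving as a weight attached to the common fractional part. Collecting the scalings, $\Theta_n(s)$ becomes, modulo lower-order terms, a bounded continuous function $g$ of the single fractional part $ns-[ns]$. The limit is then identified by the fractional-part averaging lemma (Lemma \ref{limit distribution of stochastic integral}): writing $\Theta_n(s)=g(ns-[ns])+o(1)$ and taking $H^{(n)}_s=\Psi(X^n_{\frac{[ns]}{n}})\to\Psi(X_s)$, one obtains $\int_0^t\Psi(X_s)\Theta_n(s)\,\d s\to\big(\int_0^1 g(r)\,\d r\big)\int_0^t\Psi(X_s)\,\d s$, and $\int_0^1 g(r)\,\d r$ is exactly $C^{(1)}_G$.

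The hard part will be pinning down the precise value of $C^{(1)}_G$, and in particular the correction $-\tfrac{1}{4H+1}$. The difficulty is that the two microscopic scales are coupled through the common point $u$: the inner weight $(nu-[nu])^{2H}$ and the outer profile $\nu(z,nu-[nu])$ carry the same fractional part $r=nu-[nu]$, so they cannot be averaged independently, and one must integrate the joint profile $r^{2H}\int_{-\infty}^r\nu(z,r)^2\,\d z$ over $r\in[0,1)$. The subtracted term $\tfrac{1}{4H+1}=\int_0^1 r^{4H}\,\d r$ is expected to arise not from any Itô correction (the double bracket producing $\mathcal{O}^n_{33}$ contains none) but from reconciling the $\mu$-representation of the inner integral with the $\nu$-representation of the outer one: re-indexing the unit cells $\tau\in[k,k+1)$ against the cells of $z$ leaves a single boundary cell whose rescaled contribution integrates to $\tfrac{1}{4H+1}$, which must be removed from the naive full-range integral. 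The second, more technical, obstacle is to justify rigorously that $\Theta_n(s)$ may be replaced by $g(ns-[ns])$ up to an $L^2$-negligible remainder uniform enough to meet the hypotheses of Lemma \ref{limit distribution of stochastic integral}: this involves sending the upper limits $[nu]/(n\delta_{(n,u)})$ and $[ns]$ to $\infty$, replacing $\delta_{(n,u)}^{2H}$ by $n^{-2H}(nu-[nu])^{2H}$, and using the vanishing of the far tail $\int_{-\infty}^{-[nu]}\nu(z,r)^2\,\d z$ (Lemma \ref{small time estimate 2}). Once these two points are settled, combining the inner factor $\tfrac{1}{G}\int_0^\infty\mu(r,1)^2\,\d r$ with the averaged outer factor $\tfrac{1}{G}\big(\int_0^1 r^{2H}\int_{-\infty}^r\nu(z,r)^2\,\d z\,\d r-\tfrac{1}{4H+1}\big)$ yields $C^{(1)}_G$ and the claimed limit.
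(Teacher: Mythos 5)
Your proposal is correct and rests on the same three pillars as the paper's own argument: the change of variables turning the inner $v$-integral into $|\delta_{(n,u)}|^{2H}$ times an average of $\mu(\cdot,1)^2$ (cf.\ \eqref{transfer of g}), the $L^2(\d u\otimes\d\PP)$ replacement of the grid-evaluated coefficients by their values on the limiting path $X$ (cf.\ \eqref{back need}, justified by exactly the lemmas you cite), and the fractional-part averaging Lemma \ref{limit distribution of stochastic integral}. The one genuine organizational difference is this: the paper uses Fubini to make $u$ the outer variable, which splits $\mathcal{O}^n_{33}$ into $\mathcal{O}^n_{331}$ (profile $r^{2H}\int_{-\infty}^{r}\nu(z,r)^2\d z$ in the fractional part $r=nu-[nu]$) plus the boundary strip $\mathcal{O}^n_{332}$ over $u\in(\frac{[ns]}{n},s)$ (profile $-r^{4H}$, whose average is $-\frac{1}{4H+1}$), and invokes the averaging lemma twice, each time on an exactly computed deterministic profile; you keep $s$ as the outer variable, collapse the entire double kernel integral into a single bounded profile of $ns-[ns]$, and apply the lemma once, at the cost of having to control an extra $o(1)$ remainder in the deterministic factor. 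Your diagnosis of the constant is the right one: the subtracted $\frac{1}{4H+1}=\int_0^1 r^{4H}\d r$ is precisely the contribution of the single cell $u\in(\frac{[ns]}{n},s)$ adjacent to the diagonal, which the paper isolates explicitly as $\mathcal{O}^n_{332}$ before averaging and which in your parametrization surfaces when re-expressing $\int_0^1 g(\rho)\,\d\rho$ in the $\nu$-notation; it is indeed not an It\^o correction. Either route closes; the paper's two-term bookkeeping trades your remainder estimate for a Fubini swap and a second application of Lemma \ref{limit distribution of stochastic integral}.
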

	\begin{proof}
		By   change of variables $\tilde{v}=\frac{[nu]}{n}-v$, $r=\frac{\tilde{v}}{\delta_{(n,u)}}$, \eqref{basic types} and \eqref{g li}, we have
		\begin{align}\label{transfer of g}
			&\int_{0}^{\frac{[nu]}{n}}g_{li}(u,v)g_{l'i}(u,v)\d v\nonumber\\&=\int_{0}^{\frac{[nu]}{n}}\Big(K(u-v)-K(\frac{[nu]}{n}-v)\Big)^2 \sigma^l_i(X^n_{\frac{[nv]}{n}})\sigma^{l'}_{i}(X^n_{\frac{[nv]}{n}})\d v\nonumber\\&=\int_{0}^{\frac{[nu]}{n}}\Big(K(\tilde{v}+\delta_{(n,u)})-K(\tilde{v})\Big)^2F^{i,i,l,l'}_{\frac{[nu]+[-n\tilde{v}]}{n}}\d \tilde{v}=|\delta_{(n,u)}|^{2H}\tilde{g}^n_u,
		\end{align}
		where
		\begin{align}\label{ti g}
			\tilde{g}^n_u:=\frac{1}{G}\int_{0}^{\frac{[nu]}{n\delta_{(n,u)}}}|\mu(r,1)|^2 F^{i,i,l,l'}_{\frac{[nu]+[-n\delta_{(n,u)}r]}{n}}\d r.
		\end{align}
		So by \eqref{f kjl},  \eqref{transfer of g} and Fubini's theorem we have
		\begin{align}
			\mathcal{O}^n_{33}&=n^{4H}\int_{0}^{t}\int_{0}^{\frac{[ns]}{n}}f_{k_1jl}(s,u)f_{k_2jl'}(s,u)\int_{0}^{\frac{[nu]}{n}}g_{li}(u,v)g_{l'i}(u,v)\d v\d u\d s\nonumber\\&=n^{4H}\int_{0}^{t}\int_{0}^{s}f_{k_1jl}(s,u)f_{k_2jl'}(s,u)\int_{0}^{\frac{[nu]}{n}}g_{li}(u,v)g_{l'i}(u,v)\d v\d u\d s\nonumber\\&\quad-n^{4H}\int_{0}^{t}\int_{\frac{[ns]}{n}}^{s}f_{k_1jl}(s,u)f_{k_2jl'}(s,u)\int_{0}^{\frac{[nu]}{n}}g_{li}(u,v)g_{l'i}(u,v)\d v\d u\d s\nonumber\\&=\frac{1}{2}n^{4H}\int_{0}^{t}\int_{0}^{t}f_{k_1jl}(s,u)f_{k_2jl'}(s,u)\int_{0}^{\frac{[nu]}{n}}g_{li}(u,v)g_{l'i}(u,v)\d v\d s\d u\nonumber\\&\quad-n^{4H}\int_{0}^{t}\int_{\frac{[ns]}{n}}^{s}f_{k_1jl}(s,u)f_{k_2jl'}(s,u)\d u\int_{0}^{\frac{[ns]}{n}}g_{li}(u,v)g_{l'i'}(u,v)\d v \d s\nonumber\\&=n^{4H}\int_{0}^{t}|\delta_{(n,u)}|^{2H}\tilde{g}^n_uE^{j,j,l,l'}_u\int_{0}^{u}\Big(K(s-u)-K(\frac{[ns]}{n}-u)\Big)^2\d s\d u\nonumber\\&\quad- n^{4H}\int_{0}^{t}|\delta_{(n,s)}|^{2H}\tilde{g}^n_sE^{j,j,l,l'}_s\int_{\frac{[ns]}{n}}^{s}\Big(K(s-u)-K(\frac{[ns]}{n}-u)\Big)^2\d u\d s\nonumber\\&:=\mathcal{O}^n_{331}+\mathcal{O}^n_{332},\nonumber
		\end{align}
		where
		\begin{align}\label{def of E}
			E^{j,j',l,l'}_u:=\partial_l \sigma^{k_1}_j(X^n_{\frac{[nu]}{n}})\partial_{l'}\sigma^{k_2}_{j'}(X^n_{\frac{[nu]}{n}}).
		\end{align}
		Therefore we can rewrite $\mathcal{O}^n_{331}$ as
		\begin{align}
			\mathcal{O}^n_{331}&=\frac{1}{G}\int_{0}^{t}|n\delta_{(n,u)}|^{2H}\tilde{g}^n_uE^{j,j,l,l'}_u\int_{-[nu]}^{nu-[nu]}\nu(z,nu-[nu])^2\d z\d u\nonumber\\&=\frac{1}{G}\int_{0}^{t}|nu-[nu]|^{2H}\int_{-\infty}^{nu-[nu]}\nu(z,nu-[nu])^2\d z\tilde{g}^n_uE^{j,j,l,l'}_u\d u\nonumber\\&\quad-\frac{1}{G}\int_{0}^{t}|nu-[nu]|^{2H}\int_{-\infty}^{-[nu]}\nu(z,nu-[nu])^2\d z\tilde{g}^n_uE^{j,j,l,l'}_u\d u\nonumber\\&:=\mathcal{O}^n_{3311}+\mathcal{O}^n_{3312}\nonumber.
		\end{align}
		For the term $\mathcal{O}^n_{3311}$, we shall  apply Lemma \ref{limit distribution of stochastic integral}.    To this end,  we first show  (as $n\to \infty$)
		\begin{align}\label{back need}
			\tilde{g}^n_uE^{j,j,l,l'}_u&\xrightarrow {  L^2(\d u\otimes \d P) } \frac{1}{G}\sigma^l_i(X_{u})\sigma^{l'}_{i}(X_{u})\int_{0}^{\infty}\mu(r,1)^2\d r\partial_l\sigma^{k_1}_j(X_{u})\partial_{l'}\sigma^{k_2}_{j}(X_{u}).
		\end{align}
		Note that the right hand side is a continuous function of $s$. It follows from \eqref{ti g}, Fubini's theorem and Minkowski's   inequality that
		\begin{align}
			\E&\Big[\int_{0}^{t}\Big|\tilde{g}^n_uE^{j,j,l,l'}_u-\frac{1}{G}\sigma^l_i(X_{u})\sigma^{l'}_{i}(X_{u})\int_{0}^{\infty}\mu(r,1)^2\d r\partial_l\sigma^{k_1}_j(X_{u})\partial_{l'}\sigma^{k_2}_j(X_{u})\Big|^2\d u\Big]\nonumber\\&\leq \E\Big[\int_{0}^{t}\Big|\tilde{g}^n_u-\frac{1}{G}\sigma^l_i(X_{u})\sigma^{l'}_{i}(X_{u})\int_{0}^{\infty}\mu(r,1)^2\d r\Big|^2\Big|E^{j,j,l,l'}_u\Big|^2\d u\Big]\nonumber\\&\quad+\E\Big[\int_{0}^{t}\Big|\frac{1}{G}\sigma^l_i(X_{u})\sigma^{l'}_{i}(X_{u})\int_{0}^{\infty}\mu(r,1)^2\d r\Big|^2\Big|E^{j,j,l,l'}_u-\partial_l\sigma^{k_1}_j(X_{u})\partial_{l'}\sigma^{k_2}_j(X_{u})\Big|^2\d u\Big]\nonumber
			\\&\leq C\E
			\Big[\int_{0}^{t}\Big|\int_{0}^{\infty}|\mu(r,1)|^2\Big(\mathbb{I}_{(0,\frac{[nu]}{n\delta_{(n,u)}})}(r)F^{i,i,l,l'}_{\frac{[nu]+[-n\delta_{(n,u)}r]}{n}}-\sigma^l_i(X_{u})\sigma^{l'}_{i}(X_{u})\Big) \Big|^2\d u\Big]\nonumber\\&\quad+C\E\Big[\int_{0}^{t}\Big|\sigma^l_i(X_{u})\sigma^{l'}_{i}(X_{u})\Big|^2\Big|E^{j,j,l,l'}_u-\partial_l\sigma^{k_1}_j(X_{u})\partial_{l'}\sigma^{k_2}_j(X_{u})\Big|^2\d u\Big]\nonumber\\&\leq C\int_{0}^{t}\Big(\int_{0}^{\infty}|\mu(r,1)|^2\Big\|\mathbb{I}_{(0,\frac{[nu]}{n\delta_{(n,u)}})}(r)F^{i,i,l,l'}_{\frac{[nu]+[-n\delta_{(n,u)}r]}{n}}-\sigma^l_i(X_{u})\sigma^{l'}_{i}(X_{u}) \Big\|_{L^2}\d r\Big)^2\d u\nonumber\\&\quad+C\int_{0}^{t}\E\Big[\Big|\sigma^l_i(X_{u})\sigma^{l'}_{i}(X_{u})\Big|^2\Big|E^{j,j,l,l'}_u-\partial_l\sigma^{k_1}_j(X_{u})\partial_{l'}\sigma^{k_2}_{j}(X_{u})\Big|^2\Big]\d u:=\Delta_1+\Delta_2.\nonumber
		\end{align}
		Since
		\begin{align}\label{sigma-1}
			&\Big\|\mathbb{I}_{(0,\frac{[nu]}{n\delta_{(n,u)}})}(r)F^{i,i,l,l'}_{\frac{[nu]+[-n\delta_{(n,u)}r]}{n}}-\sigma^l_i(X_{u})\sigma^{l'}_{i}(X_{u})\Big\|_{L^2}\nonumber\\&\leq \Big\|\sigma^l_i(X^n_{\frac{[nu]+[-n\delta_{(n,u)}r]}{n}})\sigma^{l'}_{i}(X^n_{\frac{[nu]+[-n\delta_{(n,u)}r]}{n}})-\sigma^l_i(X_{u})\sigma^{l'}_{i}(X_{u})\Big\|_{L^2}\mathbb{I}_{(0,\frac{[nu]}{n\delta_{(n,u)}})}(r)\nonumber\\&\quad+\|\sigma^l_i(X_{u})\sigma^{l'}_{i}(X_{u})\|_{L^2}\mathbb{I}_{(\frac{[nu]}{n\delta_{(n,u)}},\infty)}(r),
		\end{align}
		with the last term vanishing as $n\to\infty$. For the first term, by   Assumption \ref{assumption 2.2}, the Cauchy-Schwarz inequality, Lemmas \ref{bound of X},    \ref{bound of X n},   \ref{continuous of X n} and  \ref{rate X-X n}, we have
		\begin{align}\label{sigma-2}
			&\big\|\sigma^l_i(X^n_{\frac{[nu]+[-n\delta_{(n,u)}r]}{n}})\sigma^{l'}_{i}(X^n_{\frac{[nu]+[-n\delta_{(n,u)}r]}{n}})-\sigma^l_i(X_{u})\sigma^{l'}_{i}(X_{u})\big\|_{L^2}\nonumber\\&\leq \Big\|\big(\sigma^l_i(X^n_{\frac{[nu]+[-n\delta_{(n,u)}r]}{n}})-\sigma^l_i(X_{u})\big)\sigma^{l'}_{i}(X^n_{\frac{[nu]+[-n\delta_{(n,u)}r]}{n}})\Big\|_{L^2}\nonumber\\&\quad+\Big\|\big(\sigma^{l'}_{i}(X^n_{\frac{[nu]+[-n\delta_{(n,u)}r]}{n}})-\sigma^{l'}_{i}(X_{u})\big)\sigma^{l}_{i}(X_{u})\Big\|_{L^2}\nonumber\\&\leq \Big\|\big(\sigma^l_i(X^n_{\frac{[nu]+[-n\delta_{(n,u)}r]}{n}})-\sigma^l_i(X_{u})\big)\Big\|_{L^4}\|\sigma^{l'}_{i}(X^n_{\frac{[nu]+[-n\delta_{(n,u)}r]}{n}})\|_{L^4}\nonumber\\&\quad+\Big\|\big(\sigma^{l'}_{i}(X^n_{\frac{[nu]+[-n\delta_{(n,u)}r]}{n}})-\sigma^{l'}_{i}(X_{u})\big)\Big\|_{L^4}\|\sigma^{l}_{i}(X_{u})\|_{L^4}\nonumber\\&\leq C\Big\|X^n_{\frac{[nu]+[-n\delta_{(n,u)}r]}{n}}-X^n_u+X^n_u-X_u\Big\|_{L^4}\nonumber\\&\leq C\|X^n_{\frac{[nu]+[-n\delta_{(n,u)}r]}{n}}-X^n_u\|_{L^4}+C\|X^n_u-X_u\|_{L^4}\nonumber\\&\leq C\Big(\Big|\frac{[nu]+[-n\delta_{(n,u)}r]}{n}-u\Big|^H+ n^{-2H}\Big)\nonumber\\&\leq Cr^Hn^{-H}+Cn^{-2H}\to 0 \quad \text{as $n\to\infty$}.
		\end{align}
		Consequently $\Delta_1\to 0$ by applying the dominated convergence theorem  for the integral of $\d u$ and $\d r$ respectively.
		
		On the other hand, since the derivatives of $\sigma$ are bounded, by   Assumptions \ref{assumption 2.2} and  \ref{assumption 2.3}, the Cauchy-Schwarz inequality, Lemmas \ref{bound of X},   \ref{continuous of X},   \ref{bound of X n},  and  \ref{rate X-X n}, we have
		\begin{align}\label{sigma-3}
			\E\Big[&|\sigma^l_i(X_{u})\sigma^{l'}_{i}(X_{u})|^2\Big|\partial_l\sigma^{k_1}_j(X^n_{\frac{[nu]}{n}})\partial_{l'}\sigma^{k_2}_{j}(X^n_{\frac{[nu]}{n}})-\partial_l\sigma^{k_1}_j(X_{u})\partial_{l'}\sigma^{k_2}_{j}(X_{u})\Big|^2\Big]\nonumber\\&\leq C \|\sigma^l_i(X_{u})\sigma^{l'}_{i}(X_{u})\|^2_{L^4}\Big\|\partial_l\sigma^{k_1}_j(X^n_{\frac{[nu]}{n}})\partial_{l'}\sigma^{k_2}_{j}(X^n_{\frac{[nu]}{n}})-\partial_l\sigma^{k_1}_j(X_{u})\partial_{l'}\sigma^{k_2}_{j}(X_{u})\Big\|^2_{L^4}\nonumber\\&\leq C\Big\|\partial_l\sigma^{k_1}_j(X^n_{\frac{[nu]}{n}})\partial_{l'}\sigma^{k_2}_{j}(X^n_{\frac{[nu]}{n}})-\partial_l\sigma^{k_1}_j(X_{u})\partial_{l'}\sigma^{k_2}_{j}(X_{u})\Big\|^2_{L^4}\nonumber\\&\leq C\Big\|\partial_l\sigma^{k_1}_j(X^n_{\frac{[nu]}{n}})\partial_{l'}\sigma^{k_2}_{j}(X^n_{\frac{[nu]}{n}})-\partial_l\sigma^{k_1}_j(X^n_{\frac{[nu]}{n}})\partial_{l'}\sigma^{k_2}_{j}(X_{u})\Big\|^2_{L^4}\nonumber\\&\quad+C\Big\|\partial_l\sigma^{k_1}_j(X^n_{\frac{[nu]}{n}})\partial_{l'}\sigma^{k_2}_{j}(X_{u})-\partial_l\sigma^{k_1}_j(X_{u})\partial_{l'}\sigma^{k_2}_{j}(X_{u})\Big\|^2_{L^4}\nonumber\\&\leq C\Big\|\partial_{l'}\sigma^{k_2}_{j}X^n_{\frac{[nu]}{n}})-\partial_{l'}\sigma^{k_2}_{j}(X_{u})\Big\|^2_{L^4}+C\Big\|\partial_l\sigma^{k_1}_j(X^n_{\frac{[nu]}{n}})-\partial_l\sigma^{k_1}_j(X_{u})\Big\|^2_{L^4}\nonumber
			\\&\leq C\Big\|X^n_{\frac{[nu]}{n}}-X_{u}\Big\|^2_{L^4}\leq C\Big\|X^n_{\frac{[nu]}{n}}-X_{\frac{[nu]}{n}}\Big\|^2_{L^4}+C\Big\|X_{\frac{[nu]}{n}}-X_{u}\Big\|^2_{L^4}\nonumber\\&\leq C(n^{-4H}+n^{-2H})\,. 
		\end{align}
		Hence,  $\Delta_2\to 0$ by applying the dominated convergence theorem  for the integral of $\d u$. Then Lemma \ref{limit distribution of stochastic integral} gives that
		\begin{align}
			&\mathcal{O}^n_{3311}\xrightarrow {  L^2 }\nonumber\\& \frac{1}{G^2}\int_{0}^{\infty}\mu(r,1)^2\d r\int_{0}^{1}r^{2H}\int_{-\infty}^{r}v(z,r)^2\d z\d r\int_{0}^{t}\sigma^l_i(X_{u})\sigma^{l'}_{i}(X_{u})\partial_{l}\sigma^{k_1}_j(X_{u})\partial_{l'}\sigma^{k_2}_{j}(X_{u})\d u.\nonumber
		\end{align}
		For the term $\mathcal{O}^n_{3312}$, since the derivatives of $\sigma$ are bounded, applying Minkowski's   inequality, Lemma \ref{small time estimate 2} and \eqref{ti g}, we have
		\begin{align}
			\|\mathcal{O}^n_{3312}\|_{L^2}&\leq \Big\|\int_{0}^{t}\int_{-\infty}^{-[nu]}\nu(z,nu-[nu])^2\d z\cdot\tilde{g}^n_u\d u\Big\|_{L^2}\nonumber\\&\leq \int_{0}^{t}\Big(\int_{-\infty}^{-[nu]}\nu(z,nu-[nu])^2\d z\cdot\|\tilde{g}^n_u\|_{L^2}\Big)\d u\nonumber\\&\leq C\int_{0}^{t}\int_{-\infty}^{-[nu]}\nu(z,nu-[nu])^2\d z\d u\to 0\quad\text{as $n\to\infty$}.\nonumber
		\end{align}
		Since
		\begin{align}\label{est of ti g}
			\|\tilde{g}^n_u\|_{L^2}&\leq \frac{1}{G}\int_{0}^{\frac{[nu]}{n\delta_{(n,u)}}}|\mu(r,1)|^2 \|\sigma^l_i(X^n_{\frac{[nu]+[-n\delta_{(n,u)}r]}{n}})\|_{L^4}\|\sigma^{l'}_{i}(X^n_{\frac{[nu]+[-n\delta_{(n,u)}r]}{n}})\|_{L^4}\d r\nonumber\\&\leq C\int_{0}^{\infty}|\mu(r,1)|^2\d r<\infty,
		\end{align}
		where Minkowski's  inequality, Assumption \ref{assumption 2.2} and Lemma \ref{bound of X n} are used.
		For the term $\mathcal{O}^n_{332}$, by \eqref{transfer of K-2} we can rewrite $\mathcal{O}^n_{332}$ as
		\begin{align}
			\mathcal{O}^n_{332}&=-\frac{1}{G} \int_{0}^{t}|n\delta_{(n,s)}|^{4H}\tilde{g}^n_sE^{j,j,l,l'}_s\int_{0}^{1}|\mu(r,1)|^2\d r\d s\,. \nonumber
		\end{align}
		Thus  \eqref{back need}  and  Lemma \ref{limit distribution of stochastic integral} yield 
		\begin{align}
			\mathcal{O}^n_{332}\xrightarrow { L^2 } -\frac{1}{G^2(4H+1)}\int_{0}^{\infty}\mu(r,1)^2\d r\int_{0}^{t}\sigma^l_i(X_{u})\sigma^{l'}_{i}(X_{u})\partial_l\sigma^{k_1}_j(X_{u})\partial_{l'}\sigma^{k_2}_{j}(X_{u})\d u.\nonumber
		\end{align}
		The proof is complete.
	\end{proof}
	\begin{lem}\label{34 35}
		For any $i,i',j=1,\cdots,m;l,l'=1,\cdots,d,$
		\[
		\begin{split}
			(i)\qquad & \lim_{n\to\infty}\sup_{t\in[0,T]}\|\mathcal{O}^{n}_{34}\|_{L^2}=0,\\
			(ii)\qquad  &\lim_{n\to\infty}\sup_{t\in[0,T]}\|\mathcal{O}^{n}_{35}\|_{L^2}=0. 
		\end{split}
		\]
	\end{lem}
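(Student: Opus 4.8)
The plan is to handle $\mathcal{O}^n_{34}$ and $\mathcal{O}^n_{35}$ exactly as $\mathcal{O}^n_{31}$ and $\mathcal{O}^n_{32}$ were handled in Lemma \ref{31 32}, reducing each to the quantity that Lemma \ref{esti of D 2} forces to vanish. Both terms share the outer shape of $\mathcal{O}^n_{31}$; the only change is that the ``large-interval'' iterated integral producing $D^{i,i',l,l'}_{1,u}$ is replaced by the ``small-interval'' iterated integral over $[\frac{[nu]}{n},u]$. Recalling \eqref{def of D 2},
\begin{align}
D^{i,i',l,l'}_{2,u}=n^{2H}F^{i,i',l,l'}_u\int_{\frac{[nu]}{n}}^{u}\Big(\int_{\frac{[nu]}{n}}^{v}K(u-w)\d W^i_w\Big)K(u-v)\d W^{i'}_v,\nonumber
\end{align}
the inner factor of $\mathcal{O}^n_{34}$ is exactly $n^{-2H}D^{i,i',l,l'}_{2,u}$, so that
\begin{align}
\mathcal{O}^n_{34}=n^{2H}\sum_{i,i'=1}^{m}\int_{0}^{t}\int_{0}^{\frac{[ns]}{n}}f_{k_1jl}(s,u)f_{k_2jl'}(s,u)D^{i,i',l,l'}_{2,u}\d u\d s,\nonumber
\end{align}
which is formally identical to the expression obtained for $\mathcal{O}^n_{31}$.

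First I would repeat the Fubini manipulation of Lemma \ref{31 32}: split $\int_0^{[ns]/n}=\int_0^s-\int_{[ns]/n}^s$, exchange the $\d s$ and $\d u$ integrations in the first piece, and collect the $\d s$- and $\d u$-integrals of the deterministic kernel weight separately. Since the derivatives of $\sigma$ are bounded and $F^{i,i',l,l'}_u$ is bounded (by Lemma \ref{bound of X n} and Assumption \ref{assumption 2.2}), Minkowski's inequality and Fubini's theorem then give
\begin{align}
\|\mathcal{O}^n_{34}\|_{L^2}&\leq C\sum_{i,i'=1}^{m}\Big\|\int_{0}^{t}D^{i,i',l,l'}_{2,u}\d u\Big\|_{L^2}\Big(\sup_{u\in[0,T]}n^{2H}\int_{0}^{u}\big(K(s-u)-K(\tfrac{[ns]}{n}-u)\big)^2\d s\nonumber\\
&\qquad\qquad+\sup_{s\in[0,T]}n^{2H}\int_{\frac{[ns]}{n}}^{s}\big(K(s-u)-K(\tfrac{[ns]}{n}-u)\big)^2\d u\Big),\nonumber
\end{align}
and the two kernel suprema are finite by Lemmas \ref{3-small time estimate} and \ref{2-small time estimate}, respectively. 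Thus $\|\mathcal{O}^n_{34}\|_{L^2}\leq C\sum_{i,i'}\|\int_0^t D^{i,i',l,l'}_{2,u}\d u\|_{L^2}$, and claim (i) follows from Lemma \ref{esti of D 2}. For claim (ii), $\mathcal{O}^n_{35}$ differs from $\mathcal{O}^n_{34}$ only by interchanging $W^i$ and $W^{i'}$ in the inner iterated integral; since the sum runs over all $i,i'$, relabelling the indices reduces it to the same form (the rearranged bounded $\partial\sigma$-factors being irrelevant to the estimate), so the identical argument and Lemma \ref{esti of D 2} give $\sup_{t\in[0,T]}\|\mathcal{O}^n_{35}\|_{L^2}\to 0$.

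The genuine obstacle is not located in the present proof but is packaged inside Lemma \ref{esti of D 2}: the naive bound $\int_0^t\|D^{i,i',l,l'}_{2,u}\|_{L^2}\d u$ only yields $O(1)$, because $\|D^{i,i',l,l'}_{2,u}\|_{L^2}=O(1)$, so the required $\int_0^t D^{i,i',l,l'}_{2,u}\d u\to 0$ cannot come from absolute estimates. It rests instead on the orthogonality of the inner increments across distinct grid cells: for $u,u'$ in different cells the small-interval integrals defining $D^{i,i',l,l'}_{2,\cdot}$ are built from disjoint, conditionally centred Brownian increments, so the cross-covariances vanish and only the thin diagonal $\{[nu]=[nu']\}$ contributes. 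This cancellation is exactly what Lemma \ref{esti of D 2} (following \cite{FU}) encodes. Consequently, within this lemma the only point demanding care is the kernel bookkeeping above — matching each $K$-weight supremum to the correct small-time estimate while keeping the powers of $n^{2H}$ in balance so that the prefactor $n^{4H}$ is absorbed.
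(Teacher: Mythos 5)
Your proposal is correct and follows essentially the same route as the paper's proof: identify the inner factor of $\mathcal{O}^n_{34}$ as $n^{-2H}D^{i,i',l,l'}_{2,u}$, repeat the Fubini split used for $\mathcal{O}^n_{31}$, control the kernel weights via Lemmas \ref{2-small time estimate} and \ref{3-small time estimate}, and conclude with Lemma \ref{esti of D 2}. Your handling of $\mathcal{O}^n_{35}$ by relabelling $i\leftrightarrow i'$ corresponds to the paper's ``similarly'' step, and your closing remark correctly locates the real cancellation inside Lemma \ref{esti of D 2}.
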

	\begin{proof}
		Set
		\begin{align}\label{def of D 2}
			D^{i ,i',l,l'}_{2,u}:=n^{2H}F^{i,i',l,l'}_u\int_{\frac{[nu]}{n}}^{u}\Big(\int_{\frac{[nu]}{n}}^{v}K(u-w)\d W^{i'}_w\Big)K(u-v)\d W^{i}_v.
		\end{align}
		Similar to the term $\mathcal{O}^n_{31}$, since the derivatives of $\sigma$ are bounded, we have
		\begin{align}
			\|\mathcal{O}^n_{34}\|_{L^2}&\leq \sum_{i,i'=1}^{m}\Big\|\int_{0}^{t}D^{i ,i',l,l'}_{2,u}\d u\Big\|_{L^2}\sup_{u\in[0,T]}n^{2H}\int_{0}^{u}\Big(K(s-u)-K(\frac{[ns]}{n}-u)\Big)^2\d s\nonumber\\&\quad+\sum_{i,i'=1}^{m}\Big\|\int_{0}^{t}D^{i ,i',l,l'}_{2,s}\d s\Big\|_{L^2}\sup_{s\in[0,T]}n^{2H}\int_{\frac{[ns]}{n}}^{s}\Big(K(s-u)-K(\frac{[ns]}{n}-u)\Big)^2\d u\nonumber\\&\leq C\sum_{i,i'=1}^{m}\Big\|\int_{0}^{t}D^{i ,i',l,l'}_{2,u}\d u\Big\|_{L^2} 
		\end{align}
		by \eqref{f kjl}, Minkowski's inequality, Fubini's theorem, Lemmas \ref{2-small time estimate},   \ref{3-small time estimate} and  \ref{esti of D 2}. Applying the dominated convergence theorem  with respect to $\d u\otimes \d s$, we have $\mathcal{O}^n_{34} \xrightarrow[\text { in } L^2]{n \rightarrow \infty} 0$.
		
		Similarly  to $\mathcal{O}^n_{34}$, we have  $\mathcal{O}^n_{35}\to 0$ in $L^2$. 
	\end{proof}
	\begin{lem}\label{36}
		For any $i,j=1,\cdots,m;l,l'=1,\cdots,d,$
		\begin{align}
			\mathcal{O}^n_{36}&\rightarrow {L^2 } C^{(2)}_G\int_{0}^{t}\sigma^l_i(X_{s})\sigma^{l'}_{i}(X_{s})\partial_l\sigma^{k_1}_j(X_{s})\partial_{l'}\sigma^{k_2}_{j}(X_{s})\d s,\nonumber
		\end{align}
		where
		\begin{align}
			C_G^{(2)}&:=\frac{1}{2HG^2}\int_{0}^{\infty}\mu(r,1)^2\d r\Big(\int_{0}^{1}r^{2H}\int_{-\infty}^{r}\nu(z,r)^2\d z\d r\nonumber\\&-\int_{0}^{1}\int_{-r}^{0}\mu(z,r)^2(z+[-z])^{2H}\d z\d r\Big).\nonumber
		\end{align}
	\end{lem}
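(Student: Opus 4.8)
The plan is to exploit the close structural parallel between $\mathcal{O}^n_{36}$ and the term $\mathcal{O}^n_{33}$ already handled in Lemma \ref{33}. By construction
\[
\mathcal{O}^n_{36}=n^{4H}\int_0^t\int_0^{\frac{[ns]}{n}}f_{k_1jl}(s,u)f_{k_2jl'}(s,u)\,F^{i,i,l,l'}_u\int_{\frac{[nu]}{n}}^u K(u-v)^2\,\d v\,\d u\,\d s,
\]
and the sole difference from $\mathcal{O}^n_{33}$ is that the inner stochastic double integral $\int_0^{\frac{[nu]}{n}}g_{li}(u,v)g_{l'i}(u,v)\,\d v$ is now replaced by the bracket contribution $F^{i,i,l,l'}_u\int_{\frac{[nu]}{n}}^u K(u-v)^2\,\d v$. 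First I would evaluate this inner integral in closed form: Lemma \ref{small time estimate}(i) gives $\int_{\frac{[nu]}{n}}^u K(u-v)^2\,\d v=\frac{1}{2HG}\delta_{(n,u)}^{2H}$, so that, writing $f_{k_1jl}f_{k_2jl'}=(K(s-u)-K(\frac{[ns]}{n}-u))^2E^{j,j,l,l'}_u$ as in \eqref{def of E},
\[
\mathcal{O}^n_{36}=\frac{n^{4H}}{2HG}\int_0^t\int_0^{\frac{[ns]}{n}}\Big(K(s-u)-K(\tfrac{[ns]}{n}-u)\Big)^2\delta_{(n,u)}^{2H}\,F^{i,i,l,l'}_uE^{j,j,l,l'}_u\,\d u\,\d s.
\]
This is exactly the integrand of $\mathcal{O}^n_{33}$ with the factor $|\delta_{(n,u)}|^{2H}\tilde g^n_u$ replaced by $\frac{1}{2HG}\delta_{(n,u)}^{2H}F^{i,i,l,l'}_u$, so the whole machinery of Lemma \ref{33} can be reused.

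Following that proof, I would split $\int_0^{\frac{[ns]}{n}}=\int_0^s-\int_{\frac{[ns]}{n}}^s$ and apply Fubini, writing $\mathcal{O}^n_{36}=\mathcal{O}^n_{361}+\mathcal{O}^n_{362}$, where $\mathcal{O}^n_{361}$ carries the $\d s$-integral $\int_0^u(K(s-u)-K(\frac{[ns]}{n}-u))^2\,\d s$ and $\mathcal{O}^n_{362}$ is the correction over the short interval $(\frac{[ns]}{n},s)$. For $\mathcal{O}^n_{361}$ I would use the change of variables of Lemma \ref{3-small time estimate} (see \eqref{tranfer of K}) to turn $n^{2H}\int_0^u(K(s-u)-K(\frac{[ns]}{n}-u))^2\,\d s$ into $\frac1G\int_{-[nu]}^{nu-[nu]}\nu(z,nu-[nu])^2\,\d z$, discard the tail $\int_{-\infty}^{-[nu]}$ by Lemma \ref{small time estimate 2}, and observe that $n^{2H}\delta_{(n,u)}^{2H}=(nu-[nu])^{2H}$, so that the surviving weight $(nu-[nu])^{2H}\int_{-\infty}^{nu-[nu]}\nu(z,\cdot)^2\,\d z$ is a bounded function of the fractional part $nu-[nu]$, to which Lemma \ref{limit distribution of stochastic integral} applies.

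The one genuinely new analytic input is the $L^2(\d u\otimes\d\PP)$-convergence of the coefficient $F^{i,i,l,l'}_uE^{j,j,l,l'}_u\to\sigma^l_i(X_u)\sigma^{l'}_i(X_u)\partial_l\sigma^{k_1}_j(X_u)\partial_{l'}\sigma^{k_2}_j(X_u)$; this mirrors the estimates \eqref{sigma-2}--\eqref{sigma-3} and follows from the boundedness of $\nabla\sigma$, the Lipschitz bounds of Assumptions \ref{assumption 2.2}--\ref{assumption 2.3}, and the rates in Lemmas \ref{continuous of X n} and \ref{rate X-X n}. With this in hand, Lemma \ref{limit distribution of stochastic integral} converts $\mathcal{O}^n_{361}$ into a constant multiple of $\int_0^1 r^{2H}\int_{-\infty}^r\nu(z,r)^2\,\d z\,\d r$ times $\int_0^t\sigma^l_i\sigma^{l'}_i\partial_l\sigma^{k_1}_j\partial_{l'}\sigma^{k_2}_j(X_s)\,\d s$, producing the first constituent of $C_G^{(2)}$.

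The main obstacle is the correction term $\mathcal{O}^n_{362}$. On $u\in(\frac{[ns]}{n},s)$ one has $[nu]=[ns]$, so $F^{i,i,l,l'}_u$ and $E^{j,j,l,l'}_u$ are frozen at their values at $s$, but the weight $\delta_{(n,u)}^{2H}=(u-\frac{[ns]}{n})^{2H}$ coming from the inner bracket integral genuinely varies across this $O(1/n)$ window and must be carried inside; this is precisely what replaces the elementary constant $\frac{1}{4H+1}$ of $C_G^{(1)}$ by a non-elementary one here. The change of variables $z=[ns]-nu$ (so that $z$ ranges over $(-r,0)$ with $r=ns-[ns]$) sends $(K(s-u)-K(\frac{[ns]}{n}-u))^2$ to $\frac{n^{-(2H-1)}}{G}\mu(z,r)^2$ and $\delta_{(n,u)}^{2H}$ to the weight $(z+[-z])^{2H}$ appearing in $C_G^{(2)}$, so after collecting the powers of $n$ the correction reduces to $\int_{-r}^0\mu(z,r)^2(z+[-z])^{2H}\,\d z$ evaluated at the fractional part $r=ns-[ns]$. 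A second application of Lemma \ref{limit distribution of stochastic integral}, once the uniform-in-$n$ integrability needed for dominated convergence in $z$ and $r$ is checked (using $\int_0^1|\mu(r,1)|^2\,\d r<\infty$ together with Lemma \ref{2-small time estimate}), yields the second constituent $\int_0^1\int_{-r}^0\mu(z,r)^2(z+[-z])^{2H}\,\d z\,\d r$. Assembling the two pieces with the prefactor and the coefficient limit gives the stated constant $C_G^{(2)}$ and completes the proof.
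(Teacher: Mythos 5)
Your proposal follows essentially the same route as the paper's proof: the same evaluation of the inner bracket integral via Lemma \ref{small time estimate}, the same splitting into $\mathcal{O}^n_{361}+\mathcal{O}^n_{362}$ with the changes of variables \eqref{tranfer of K} and \eqref{K-2}, the tail discarded by Lemma \ref{small time estimate 2}, and the same two applications of Lemma \ref{limit distribution of stochastic integral} after establishing the $L^2(\d u\otimes\d\PP)$ convergence of the coefficient $E^{j,j,l,l'}_uF^{i,i,l,l'}_u$. The only caveat is cosmetic: this computation (yours and the paper's alike) produces the constant without the prefactor $\int_0^\infty\mu(r,1)^2\,\d r$ that appears in the displayed formula for $C_G^{(2)}$, so your closing claim that the two pieces assemble to ``the stated constant'' simply inherits that discrepancy from the lemma's statement.
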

	\begin{proof}
		By \eqref{f kjl}, \eqref{def of E}, Fubini's theorem and Lemma \ref{small time estimate}, we have that
		\begin{align}
			\mathcal{O}^n_{36}&=n^{4H}\int_{0}^{t}\int_{0}^{\frac{[ns]}{n}}f_{k_1jl}(s,u)f_{k_2jl'}(s,u)F^{i,i,l,l'}_u\int_{\frac{[nu]}{n}}^{u}K(u-v)^2\d v\d u\d s\nonumber\\&=\frac{n^{4H}}{2HG}\int_{0}^{t}\int_{0}^{\frac{[ns]}{n}}f_{k_1jl}(s,u)f_{k_2jl'}(s,u)F^{i,i,l,l'}_u|\delta_{(n,u)}|^2\d u\d s\nonumber\\&=\frac{n^{4H}}{2HG}\int_{0}^{t}\int_{0}^{s}f_{k_1jl}(s,u)f_{k_2jl'}(s,u)F^{i,i,l,l'}_u|\delta_{(n,u)}|^2\d u\d s\nonumber\\&\quad-\frac{n^{4H}}{2HG}\int_{0}^{t}\int_{\frac{[ns]}{n}}^{s}f_{k_1jl}(s,u)f_{k_2jl'}(s,u)F^{i,i,l,l'}_u|\delta_{(n,u)}|^2\d u\d s\nonumber\\&=\frac{n^{4H}}{2HG}\int_{0}^{t}|\delta_{(n,u)}|^{2H}E^{j,j,l,l'}_uF^{i,i,l,l'}_u\int_{0}^{u}\Big(K(s-u)-K(\frac{[ns]}{n}-u)\Big)^2\d s\d u\nonumber\\&\quad-\frac{n^{4H}}{2HG}\int_{0}^{t}E^{j,j,l,l'}_sF^{i,i,l,l'}_s\int_{\frac{[ns]}{n}}^{s}\Big(K(s-u)-K(\frac{[ns]}{n}-u)\Big)^2|\delta_{(n,u)}|^2\d u\d s\nonumber\\&:=\mathcal{O}^n_{361}+\mathcal{O}^n_{362}.\nonumber
		\end{align}
		By \eqref{tranfer of K} we have 
		\begin{align}
			\mathcal{O}^n_{361}&=\frac{1}{2HG^2}\int_{0}^{t}|n\delta_{(n,u)}|^{2H}E^{j,j,l,l'}_uF^{i,i,l,l'}_u \int_{-[nu]}^{nu-[nu]}\nu(z,nu-[nu])^2\d z\d u\nonumber\\&=\frac{1}{2HG^2}\int_{0}^{t}|n\delta_{(n,u)}|^{2H}E^{j,j,l,l'}_uF^{i,i,l,l'}_u \int_{-\infty}^{nu-[nu]}\nu(z,nu-[nu])^2\d z\d u\nonumber\\&\quad-\frac{1}{2HG^2}\int_{0}^{t}|n\delta_{(n,u)}|^{2H}E^{j,j,l,l'}_uF^{i,i,l,l'}_u \int_{-\infty}^{-[nu]}\nu(z,nu-[nu])^2\d z\d u\nonumber\\&:=\mathcal{O}^n_{3611}+\mathcal{O}^n_{3612}.\nonumber
		\end{align}
		For the term $\mathcal{O}^n_{3611}$, we shall use Lemma \ref{limit distribution of stochastic integral}  and we    show that
		\begin{align}\label{limit distribution-1}
			E^{j,j',l,l'}_uF^{i,i',l,l'}_u\xrightarrow { L^2(\d u\otimes \d P)} \sigma^l_i(X_{u})\sigma^{l'}_{i}(X_{u})\partial_l\sigma^{k_1}_j(X_{u})\partial_{l'}\sigma^{k_2}_{j}(X_{u}).
		\end{align}
		By Fubini's theorem, Minkowski's inequality, the Cauchy-Schwarz inequality, Assumption \ref{assumption 2.2}, Lemmas \ref{bound of X} and  \ref{continuous of X} and \eqref{sigma-3}, we have
		\begin{align}
			\E&\Big[\int_{0}^{t}\Big|\partial_l \sigma^{k_1}_j(X^n_{\frac{[ns]}{n}})\partial_{l'}\sigma^{k_2}_{j}(X^n_{\frac{[ns]}{n}})\sigma_i^l(X_{\frac{[ns]}{n}})\sigma_{i}^{l'}(X_{\frac{[ns]}{n}})-\sigma^l_i(X_{s})\sigma^{l'}_{i}(X_{s})\partial_l\sigma^{k_1}_j(X_{s})\partial_{l'}\sigma^{k_2}_{j}(X_{s})\Big|^2\d s\Big]\nonumber\\&\leq \E\Big[\int_{0}^{t}\Big|\sigma^l_i(X_{\frac{[ns]}{n}})\sigma^{l'}_{i}(X_{\frac{[ns]}{n}})-\sigma^l_i(X_{s})\sigma^{l'}_{i}(X_{s})\Big|^2\Big|\partial_l\sigma^{k_1}_j(X^n_{\frac{[ns]}{n}})\partial_{l'}\sigma^{k_2}_{j}(X^n_{\frac{[ns]}{n}})\Big|^2\d s\Big]\nonumber\\&\quad+\E\Big[\int_{0}^{t}|\sigma^l_i(X_{u})\sigma^{l'}_{i}(X_{u})|^2\Big|\partial_l\sigma^{k_1}_j(X^n_{\frac{[ns]}{n}})\partial_{l'}\sigma^{k_2}_{j}(X^n_{\frac{[ns]}{n}})-\partial_l\sigma^{k_1}_j(X_{s})\partial_{l'}\sigma^{k_2}_{j}(X_{s})\Big|^2\d s\Big]\nonumber
			\\&\leq C\int_{0}^{t}\E\Big[\Big|\sigma^l_i(X_{\frac{[ns]}{n}})\sigma^{l'}_{i}(X_{\frac{[ns]}{n}})-\sigma^l_i(X_{s})\sigma^{l'}_{i}(X_{s})\Big|^2\Big]\d s\nonumber\\&\quad+ \int_{0}^{t}\E\Big[|\sigma^l_i(X_{u})\sigma^{l'}_{i}(X_{u})|^2\Big|\partial_l\sigma^{k_1}_j(X^n_{\frac{[ns]}{n}})\partial_{l'}\sigma^{k_2}_{j}(X^n_{\frac{[ns]}{n}})-\partial_l\sigma^{k_1}_j(X_{s})\partial_{l'}\sigma^{k_2}_{j}(X_{s})\Big|^2\Big]\d s\nonumber\\&\leq C\big(\|\sigma^l_i(X_{\frac{[ns]}{n}})\|^2_{L^4}+\|\sigma^{l'}_{i}(X_{s})\|^2_{L^4}\big)\big\|X_{\frac{[ns]}{n}}-X_{s}\big\|^2_{L^4}\nonumber\\&\quad+C(n^{-4H}+n^{-2H})\leq C(n^{-4H}+n^{-2H})\to 0 \quad \text{as $n\to\infty$},\nonumber
		\end{align}
		where the boundness of the derivatives of $\sigma$ are used.  Then Lemma \ref{limit distribution of stochastic integral} yields
		\begin{align}
			&\mathcal{O}^n_{3611}\xrightarrow {  L^2}\nonumber\\& \frac{1}{2HG^2}\int_{0}^{1}\int_{-\infty}^{r}\nu(z,r)^2\d z\d r\int_{0}^{t}\sigma^l_i(X_{u})\sigma^{l'}_{i}(X_{u})\partial_l\sigma^{k_1}_j(X_{u})\partial_{l'}\sigma^{k_2}_{j}(X_{u})\d u.\nonumber
		\end{align}
		For the term $\mathcal{O}^n_{3612}$, Lemma \ref{small time estimate 2} shows that $\mathcal{O}^n_{361}\xrightarrow {  L^2}0$. 
		
		Using the change of variable again, $z=[ns]-nu$, we obtain
		\begin{align}\label{K-2}
			&n^{2H}\int_{\frac{[ns]}{n}}^{s}\Big(K(s-u)-K(\frac{[ns]}{n}-u)\Big)^2|n\delta_{(n,u)}|^{2H}\d u\nonumber\\&=\frac{1}{G}\int_{[ns]-ns}^{0}\mu(z,ns-[ns])^2(z+[-z])^{2H}\d z.
		\end{align}
		By Lemma \ref{limit distribution of stochastic integral}, \eqref{limit distribution-1} and \eqref{K-2} we have
		\begin{align}
			&\mathcal{O}^n_{362}\xrightarrow {  L^2 } -\frac{1}{2HG^2}\int_{0}^{1}\int_{-r}^{0}\mu(z,r)^2(z+[-z])^{2H}\d z\d r\int_{0}^{t}\sigma^l_i(X_{u})\sigma^{l'}_{i}(X_{u})\partial_l\sigma^{k_1}_j(X_{u})\partial_{l'}\sigma^{k_2}_{j}(X_{u})\d u.\nonumber
		\end{align}
		This concludes the proof of the lemma. 
	\end{proof}
	\begin{lem}\label{37}
		For any $i,i',j=1,\cdots,m;l,l'=1,\cdots,d,$
		$$\quad  \lim_{n\to\infty}\sup_{t\in[0,T]}\|\mathcal{O}^{n}_{37}\|_{L^2}=0,$$
		$$\quad  \lim_{n\to\infty}\sup_{t\in[0,T]}\|\mathcal{O}^{n}_{38}\|_{L^2}=0.$$
	\end{lem}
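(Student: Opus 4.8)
The plan is to mirror the treatment of the vanishing cross terms $\mathcal{O}^n_{31},\mathcal{O}^n_{32}$ in Lemma \ref{31 32} and $\mathcal{O}^n_{34},\mathcal{O}^n_{35}$ in Lemma \ref{34 35}, since $\mathcal{O}^n_{37}$ and $\mathcal{O}^n_{38}$ share exactly the same outer structure. First I would split the two factors according to their Brownian components, writing $M^{1,n,l}_u=\sum_{i=1}^m M^{1,n,l,i}_u$ and $M^{2,n,l'}_u=\sum_{i'=1}^m M^{2,n,l',i'}_u$, where $M^{1,n,l,i}_u$ and $M^{2,n,l',i'}_u$ denote the $i$-th and $i'$-th summands in \eqref{M u 1} and \eqref{M u 2}. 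Setting $D^{37,i,i'}_u:=n^{2H}M^{1,n,l,i}_u M^{2,n,l',i'}_u$ and applying Fubini together with the small-time estimates of Lemmas \ref{2-small time estimate} and \ref{3-small time estimate} to absorb the factors $f_{k_1jl}(s,u)f_{k_2jl'}(s,u)$ from \eqref{f kjl} exactly as in the reduction of $\mathcal{O}^n_{31}$, one obtains $\sup_{t\in[0,T]}\|\mathcal{O}^n_{37}\|_{L^2}\le C\sum_{i,i'}\sup_{t\in[0,T]}\|\int_0^t D^{37,i,i'}_u\d u\|_{L^2}$. It therefore suffices to prove $\sup_{t\in[0,T]}\|\int_0^t D^{37,i,i'}_u\d u\|_{L^2}\to 0$.

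For this I would compute the $L^2$-norm directly through
$$\E\Big[\Big(\int_0^t D^{37,i,i'}_u\d u\Big)^2\Big]=\int_0^t\int_0^t \E\big[D^{37,i,i'}_u\,D^{37,i,i'}_{u'}\big]\,\d u\,\d u'$$
and exploit the martingale structure of the second factor. The key observation is that $M^{2,n,l',i'}_{u'}=\sigma^{l'}_{i'}(X^n_{[nu']/n})\int_{[nu']/n}^{u'}K(u'-v)\d W^{i'}_v$ is a conditionally centered increment, i.e. $\E[M^{2,n,l',i'}_{u'}\mid\mathcal{F}_{[nu']/n}]=0$. Assuming $u\le u'$ and that $u,u'$ lie in different cells of the partition, one has $u<[nu']/n$, so each of the three remaining factors $M^{1,n,l,i}_u$, $M^{2,n,l',i'}_u$, $M^{1,n,l,i}_{u'}$ is built solely from stochastic integration over $[0,[nu']/n]$ and is hence $\mathcal{F}_{[nu']/n}$-measurable; conditioning on $\mathcal{F}_{[nu']/n}$ then forces $\E[D^{37,i,i'}_u D^{37,i,i'}_{u'}]=0$. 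Consequently only the diagonal region $\{(u,u'):[nu]=[nu']\}$ contributes to the double integral.

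On this diagonal region I would estimate the integrand crudely. Cauchy--Schwarz together with the bounds $\|M^{1,n,l,i}_u\|_{L^4}\le Cn^{-H}$ and $\|M^{2,n,l',i'}_u\|_{L^4}\le Cn^{-H}$, both immediate from Lemma \ref{basic lemma}(iv), the boundedness of $\nabla\sigma$ (hence the linear growth of $\sigma$) and Lemma \ref{bound of X n}, yield $|\E[D^{37,i,i'}_u D^{37,i,i'}_{u'}]|\le n^{4H}\cdot Cn^{-2H}\cdot Cn^{-2H}=C$ uniformly. Since the Lebesgue measure of $\{(u,u')\in[0,t]^2:[nu]=[nu']\}$ is at most $([nt]+1)n^{-2}\le C/n$, it follows that $\E[(\int_0^t D^{37,i,i'}_u\d u)^2]\le C/n\to 0$ uniformly in $t\in[0,T]$, completing the reduction and hence $\sup_t\|\mathcal{O}^n_{37}\|_{L^2}\to 0$. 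The term $\mathcal{O}^n_{38}$ is treated identically after interchanging $M^{1,n,\cdot}$ and $M^{2,n,\cdot}$.

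The step I expect to be the main obstacle is the measurability bookkeeping underlying the off-diagonal cancellation: one must verify carefully that, when $u$ and $u'$ fall in different partition cells, every factor except $M^{2,n,l',i'}_{u'}$ is adapted to $\mathcal{F}_{[nu']/n}$ — in particular that $M^{1,n,l,i}_{u'}$, despite depending on $u'$ through the kernel difference $K(u'-v)-K([nu']/n-v)$, integrates only over $[0,[nu']/n]$ and is thus $\mathcal{F}_{[nu']/n}$-measurable — so that the conditional expectation genuinely annihilates the off-diagonal contribution and only the asymptotically negligible diagonal survives.
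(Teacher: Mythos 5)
Your proposal is correct and follows essentially the same route as the paper: after isolating the per-component quantity $n^{2H}M^{1,n,l,i}_uM^{2,n,l',i'}_u$ (the paper's $D^{i,i',l,l'}_{3,u}$), the paper likewise kills the off-diagonal part of the double time integral by conditioning on $\mathcal{F}_{[nu']/n}$ so that the conditionally centered increment $\int_{[nu']/n}^{u'}K(u'-v)\,\d W^{i'}_v$ annihilates the expectation (cf.\ \eqref{est of 3721}), and controls the remaining same-cell region, of measure $O(1/n)$, via the uniform bound $\E[|D^{i,i',l,l'}_{3,u}|^2]\le C$. Your derivation of that uniform bound from the $L^4$ estimates of Lemma \ref{bound of M n} is in fact a cleaner route to \eqref{bound of D-3}, and your measurability bookkeeping for the off-diagonal cancellation matches the paper's.
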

	\begin{proof}
		Let
		\begin{align}\label{D-3}
			D^{i ,i',l,l'}_{3,u}:=n^{2H}\Big(\int_{0}^{\frac{[nu]}{n}}g_{li}(u,v)\d W^i_v\Big)\sigma^{l'}_{i'}(X^n_{\frac{[nu]}{n}})\int_{\frac{[nu]}{n}}^{u}K(u-v)\d W^{i'}_v.
		\end{align}
		By   \eqref{M u 2}, \eqref{g li} and Fubini's theorem we have
		\begin{align}
			&\mathcal{O}^n_{37}= n^{4H}\int_{0}^{t}\int_{0}^{\frac{[ns]}{n}}f_{k_1jl}(s,u)f_{k_2jl'}(s,u)M_u^{1,n,l}M_u^{2,n,l'}\d u\d s\nonumber\\&\quad= \sum_{i,i'=1}^{m}n^{4H}\int_{0}^{t}\int_{0}^{\frac{[ns]}{n}}f_{k_1jl'}(s,u)f_{k_2jl'}(s,u)\Big(\int_{0}^{\frac{[nu]}{n}}g_{li}(u,v)\d W^i_v\Big)\sigma^{l'}_{i'}(X^n_{\frac{[nu]}{n}})\int_{\frac{[nu]}{n}}^{u}K(u-v)\d W^{i'}_v\d u\d s\nonumber\\&\quad=\sum_{i,i'=1}^{m}n^{2H}\int_{0}^{t}\int_{0}^{\frac{[ns]}{n}}f_{k_1jl}(s,u)f_{k_2jl'}(s,u)D^{i ,i',l,l'}_{3,u}\d u\d s\nonumber\\&\quad=\sum_{i,i'=1}^{m}n^{2H}\int_{0}^{t}\int_{0}^{s}f_{k_1jl}(s,u)f_{k_2jl'}(s,u)D^{i, i',l,l'}_{3,u}\d u\d s-\sum_{i,i'=1}^{m}n^{2H}\int_{0}^{t}\int_{\frac{[ns]}{n}}^{s}f_{k_1jl}(s,u)f_{k_2jl'}(s,u)D^{i, i',l,l'}_{3,u}\d u\d s\nonumber\\&\quad=\sum_{i,i'=1}^{m}n^{2H}\int_{0}^{t}D^{i, i',l,l'}_{3,u}\int_{0}^{u}f_{k_1jl}(s,u)f_{k_2jl'}(s,u)\d s\d u-\sum_{i,i'=1}^{m}n^{2H}\int_{0}^{t}\int_{\frac{[ns]}{n}}^{s}f_{k_1jl}(s,u)f_{k_2jl'}(s,u)D^{i ,i',l,l'}_{3,u}\d u\d s\nonumber\\&\quad:=\mathcal{O}^n_{371}+\mathcal{O}^n_{372}.\nonumber
		\end{align}
		By \eqref{dcp of stochastic integral}, \eqref{g li}, \eqref{def of D 1}, \eqref{def of D 2}, the linear growth properties of $\sigma$, the Cauchy-Schwarz inequality, Minkowski's   inequality, Lemmas \ref{bound of X n},  \ref{small time estimate},   \ref{esti of D 1},  and  \ref{esti of D 2}, we have
		\begin{align}\label{bound of D-3}
			&\quad \E[|D^{i ,i',l,l'}_{3,u}|^2]\nonumber\\&=n^{4H}\E\Big[\Big(\int_{0}^{\frac{[nu]}{n}}g_{li}(u,v)\d W^i_v\Big)^2\Big( \int_{\frac{[nu]}{n}}^{u}K(u-v)\sigma^{l'}_{i'}(X^n_{\frac{[nv]}{n}})\d W^{i'}_v\Big)^2\Big]\nonumber\\&=n^{4H}\E\Big[\Big(\int_{0}^{\frac{[nu]}{n}}\int_{0}^{v}g_{li}(u,r)\d W_r^{i}g_{li}(u,v)\d W^i_v+\int_{0}^{\frac{[nu]}{n}}\int_{0}^{r}g_{li}(u,v)\d W_v^{i}g_{li}(u,r)\d W^i_r\nonumber\\&\quad+\int_{0}^{\frac{[nu]}{n}}|g_{li}(u,v)|^2\d v\Big)\Big(\int_{\frac{[nu]}{n}}^{u}\int_{\frac{[nu]}{n}}^{v}K(u-r)\sigma^{l'}_{i'}(X^n_{\frac{[nr]}{n}})\d W^{i'}_rK(u-v)\sigma^{l'}_{i'}(X^n_{\frac{[nv]}{n}})\d W^{i'}_v\nonumber\\&\quad+\int_{\frac{[nu]}{n}}^{u}\int_{\frac{[nu]}{n}}^{r}K(u-v)\sigma^{l'}_{i'}(X^n_{\frac{[nv]}{n}})\d W^{i'}_vK(u-r)\sigma^{l'}_{i'}(X^n_{\frac{[nr]}{n}})\d W^{i'}_r+\int_{\frac{[nu]}{n}}^{u}K(u-v)^2|\sigma^{l'}_{i'}(X^n_{\frac{[nv]}{n}})|^2\d v\Big)\Big]\nonumber\\&\leq C\E\Big[\Big(D^{i, i,l,l'}_{1,u}+n^{2H}\int_{0}^{\frac{[nu]}{n}}|g_{li}(u,v)|^2\d v\Big)\Big(D^{i, i,l,l'}_{2,u}+n^{2H}\int_{\frac{[nu]}{n}}^{u}K(u-v)^2|\sigma^{l'}_{i'}(X^n_{\frac{[nv]}{n}})|^2\d v\Big)\Big]\nonumber\\&\leq C\Big(\|D^{i ,i,l,l'}_{1,u}\|_{L^2}\|D^{i ,i,l,l'}_{2,u}\|_{L^2}+n^{2H}\int_{\frac{[nu]}{n}}^{u}K(u-v)^2\|\sigma^{l'}_{i'}(X^n_{\frac{[nv]}{n}})\|^2_{L^4}\d v\|D^{i, i,l,l'}_{1,u}\|_{L^2}\nonumber\\&\quad+\int_{0}^{\frac{[nu]}{n}}\Big(K(u-v)-K(\frac{[nu]}{n}-v)\Big)^2 \|\sigma^{l'}_{i'}(X^n_{\frac{[nv]}{n}})\|^2_{L^4}\d v\|D^{i, i,l,l'}_{1,u}\|_{L^2}\nonumber\\&\quad+\int_{0}^{\frac{[nu]}{n}}\Big(K(u-v)-K(\frac{[nu]}{n}-v)\Big)^2 \|\sigma^{l'}_{i'}(X^n_{\frac{[nv]}{n}})\|^2_{L^4}\d v\int_{\frac{[nv]}{n}}^{u}K(u-v)^2\|\sigma^{l'}_{i'}(X^n_{\frac{[nv]}{n}})\|^2_{L^4}\d v\Big)\nonumber\\&\leq C,
		\end{align}
		with $C$ independent of $n$.
		
		For the term $\mathcal{O}^n_{371}$, by \eqref{f kjl}, the boundness of the derivatives of $\sigma$, Fubini's theorem, Lemmas \ref{small time estimate} and   \ref{2-small time estimate}, we have
		\begin{align}
			\E[|\mathcal{O}^n_{371}|^2]&\leq C \int_{0}^{t}\int_{0}^{s}\E[ D^{i ,i',l,l'}_{3,s} D^{i ,i',l,l'}_{3,z}]\d z \d s\nonumber\\&\leq C\int_{0}^{t}\int_{0}^{\frac{[ns]}{n}}\E[ D^{i ,i',l,l'}_{3,s} D^{i ,i',l,l'}_{3,z}]\d z \d s+C\int_{0}^{t}\int_{\frac{[ns]}{n}}^{s}\E[ D^{i ,i',l,l'}_{3,s} D^{i, i',l,l'}_{3,z}]\d z \d s\nonumber\\&:=\mathcal{O}^n_{3711}+\mathcal{O}^n_{3712}.\nonumber
		\end{align}
		For the term $\mathcal{O}^n_{3711}$, by \eqref{D-3}, \eqref{bound of D-3} 
		and the tower property (see, e.g. \cite{BD}) we have
		\begin{align}
			\mathcal{O}^n_{3711}&= C\int_{0}^{t}\int_{0}^{\frac{[ns]}{n}}\E[ D^{i ,i',l,l'}_{3,s} D^{i, i',l,l'}_{3,z}]\d z \d s\nonumber\\&
		 =C\int_{0}^{t}\int_{0}^{\frac{[ns]}{n}}\E\Big[ n^{2H}\Big(\int_{0}^{\frac{[ns]}{n}}g_{li}(s,v)\d W^i_v\Big)\sigma^{l'}_{i'}(X^n_{\frac{[ns]}{n}}) n^{2H}\Big(\int_{0}^{\frac{[nz]}{n}}g_{li}(z,v')\d W^i_{v'}\Big) \nonumber\\
			&\quad  \cdot\sigma^{l'}_{i'}(X^n_{\frac{[nz]}{n}})\int_{\frac{[nz]}{n}}^{z}K(z-v')\d W^{i'}_{v'}\E\Big[\int_{\frac{[ns]}{n}}^{s}K(s-v)\d W^{i'}_v|\mathcal{F}_{\frac{[ns]}{n}}\Big]\Big]\d z\d s  \nonumber\\&
		  =0 .\nonumber
		\end{align} 
		For the term $\mathcal{O}^n_{3712}$, by \eqref{bound of D-3}, the Cauchy-Schwarz inequality and the dominated convergence theorem  we have
		\begin{align}
			\lim_{n\to\infty}\mathcal{O}^n_{3712}&\leq C\lim_{n\to\infty} \int_{0}^{t}\int_{0}^{t}\mathbb{I}_{( \frac{[ns]}{n},s)}(z)\d z \d s=0.\nonumber
		\end{align}
		Since the derivatives of $\sigma$ are bounded, by Fubini's theorem and \eqref{f kjl} we have
		\begin{align}
			\E[&|\mathcal{O}^n_{372}|^2] \nonumber\\ &\leq C n^{4H}\int_{0}^{t}\int_{0}^{s}\E\Big[\int_{\frac{[ns]}{n}}^{s}\Big(K(s-u)-K(\frac{[ns]}{n}-u)\Big)^2 D^{i ,i',l,l'}_{3,u}\d u\nonumber\\&\quad\cdot \int_{\frac{[nz]}{n}}^{z}\Big(K(z-w)-K(\frac{[nz]}{n}-w)\Big)^2 D^{i, i',l,l'}_{3,w}\d w\Big]\d z \d s\nonumber\\&\leq C \int_{0}^{t}\int_{0}^{\frac{[ns]}{n}}n^{4H}\int_{\frac{[ns]}{n}}^{s}\int_{\frac{[nz]}{n}}^{z}\Big(K(s-u)-K(\frac{[ns]}{n}-u)\Big)^2\nonumber\\&\quad\cdot \Big(K(z-w)-K(\frac{[nz]}{n}-w)\Big)^2 \E\Big[ D^{i ,i',l,l'}_{3,u}  D^{i ,i',l,l'}_{3,w}\Big]\d w\d u\d z \d s\nonumber\\&\quad+C \int_{0}^{t}\int_{\frac{[ns]}{n}}^{s}n^{4H}\int_{\frac{[ns]}{n}}^{s}\int_{\frac{[nz]}{n}}^{z}\Big(K(s-u)-K(\frac{[ns]}{n}-u)\Big)^2\nonumber\\&\quad\cdot \Big(K(z-w)-K(\frac{[nz]}{n}-w)\Big)^2 \E\Big[ D^{i, i',l,l'}_{3,u}  D^{i, i',l,l'}_{3,w}\Big]\d w\d u\d z \d s\nonumber\\&:=\mathcal{O}^n_{3721}+\mathcal{O}^n_{3722}.\nonumber
		\end{align}
		For the term $\mathcal{O}^n_{3721}$, note that $w\leq z\leq \frac{[ns]}{n}\leq u$, the tower property, \eqref{D-3} and \eqref{bound of D-3} provide
		\begin{align}\label{est of 3721}
			&\E\Big[D^{i ,i',l,l'}_{3,u}  D^{i, i',l,l'}_{3,w}\Big]=\E\Big[ n^{2H}\Big(\int_{0}^{\frac{[nu]}{n}}g_{li}(u,v)\d W^i_v\Big)\sigma^{l'}_{i'}(X^n_{\frac{[nu]}{n}})\int_{\frac{[nu]}{n}}^{u}K(u-v)\d W^{i'}_v\nonumber\\&\quad\quad\cdot n^{2H}\Big(\int_{0}^{\frac{[nw]}{n}}g_{li}(w,v')\d W^i_{v'}\Big)\sigma^{l'}_{i'}(X^n_{\frac{[nw]}{n}})\int_{\frac{[nw]}{n}}^{w}K(w-v')\d W^{i'}_{v'}\Big]\nonumber\\&\quad=\E\Big[ n^{2H}\Big(\int_{0}^{\frac{[nu]}{n}}g_{li}(u,v)\d W^i_v\Big)\sigma^{l'}_{i'}(X^n_{\frac{[nu]}{n}}) n^{2H}\Big(\int_{0}^{\frac{[nw]}{n}}g_{li}(w,v')\d W^i_{v'}\Big)\nonumber\\&\quad\quad\cdot\sigma^{l'}_{i'}(X^n_{\frac{[nw]}{n}})\int_{\frac{[nw]}{n}}^{w}K(w-v')\d W^{i'}_{v'}\E\Big[\int_{\frac{[nu]}{n}}^{u}K(u-v)\d W^{i'}_v|\mathcal{F}_{\frac{[nu]}{n}}\Big]\Big]\nonumber\\&\quad=0,
		\end{align}
		then Lemma \ref{2-small time estimate} gives that
		$$\mathcal{O}^n_{3721}\to0\quad \text{as $n\to\infty$}.$$
		For the term $\mathcal{O}^n_{3722}$, by \eqref{bound of D-3}, the Cauchy-Schwarz inequality, Lemma \ref{2-small time estimate} and the dominated convergence theorem, we obtain
		\begin{align}
			\lim_{n\to\infty}\mathcal{O}^n_{3722}&\leq C\lim_{n\to\infty} \int_{0}^{t}\int_{\frac{[ns]}{n}}^{s}n^{4H}\int_{\frac{[ns]}{n}}^{s}\int_{\frac{[nz]}{n}}^{z}\Big(K(s-u)-K(\frac{[ns]}{n}-u)\Big)^2\nonumber\\&\quad\cdot \Big(K(z-w)-K(\frac{[nz]}{n}-w)\Big)^2 \d w\d u\d z \d s\nonumber\\&\leq C\int_{0}^{t} \int_{0}^{s}\lim_{n\to\infty}\mathbb{I}_{( \frac{[ns]}{n},s)}(z)\d z \d s=0.\nonumber
		\end{align}
		From these computations  it follows  that $\lim_{n\to\infty}\|\mathcal{O}^n_{37}\|_{L^2}=0$.
		Similar to $\mathcal{O}^n_{37}$, it holds that $\mathcal{O}^n_{38}\to 0$ in $L^2$.
	\end{proof}
	
	\begin{lem}\label{1,2}
		For any $j,j'=1,\cdots,m;l,l'=1,\cdots,d,$
		\[
		\begin{split}
			\lim_{n\to\infty}\sup_{t\in[0,T]}\|\mathcal{O}^{n}_{1}\|_{L^2}=&0,\\
			\lim_{n\to\infty}\sup_{t\in[0,T]}\|\mathcal{O}^{n}_{2}\|_{L^2}=&0. 
		\end{split}
		\]
	\end{lem}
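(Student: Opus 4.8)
The plan is to prove both limits by symmetry: $\mathcal{O}^n_2$ is obtained from $\mathcal{O}^n_1$ by interchanging the roles of $(k_1,j,l)$ and $(k_2,j',l')$, so it suffices to treat $\mathcal{O}^n_1$ and the same argument applies verbatim. Writing
\[
\mathcal{O}^n_1=n^{4H}\int_{0}^{t}J_s\,\d s,\qquad J_s:=\int_{0}^{\frac{[ns]}{n}}N^{(s)}_u\, f_{k_2j'l'}(s,u)M_u^{n,l'}\,\d W_u^{j'},\qquad N^{(s)}_u:=\int_{0}^{u}f_{k_1jl}(s,r)M_r^{n,l}\,\d W_r^j,
\]
the first thing I would note is that Minkowski's integral inequality is too lossy here: applying it in the $\d s$ variable, together with the It\^o isometry and Lemmas \ref{bound of M n} and \ref{small time estimate}, only yields $\|\mathcal{O}^n_1\|_{L^2}\le CT$, i.e.\ boundedness but no decay. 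Instead I would expand the second moment directly. Since $J_s$ and $J_{s'}$ are It\^o integrals against the same $W^{j'}$ with deterministic upper limits, the isometry gives, for $s'\le s$,
\[
\E\big[(\mathcal{O}^n_1)^2\big]=n^{8H}\int_0^t\!\int_0^t \E[J_sJ_{s'}]\,\d s\,\d s',\qquad
\E[J_sJ_{s'}]=\E\int_0^{\frac{[ns']}{n}}N^{(s)}_uN^{(s')}_u\, f_{k_2j'l'}(s,u)f_{k_2j'l'}(s',u)\,(M_u^{n,l'})^2\,\d u.
\]

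The core of the argument is to show that the integrand $n^{8H}\E[J_sJ_{s'}]$ is uniformly bounded and tends to $0$ for every off-diagonal pair $s\neq s'$; the conclusion then follows from dominated convergence on $[0,T]^2$, since the diagonal has Lebesgue measure zero and all bounds are uniform in $t$. For the uniform bound I would apply the BDG inequality to $N^{(s)}_u$, use boundedness of $\nabla\sigma$, the moment estimate $\sup_u\|M^n_u\|_{L^p}\le Cn^{-H}$ (Lemma \ref{bound of M n}) and the kernel bound $\int_0^{[ns]/n}(K(s-u)-K(\tfrac{[ns]}{n}-u))^2\,\d u\le Cn^{-2H}$ (Lemma \ref{small time estimate}), so that $\|N^{(s)}_u\|_{L^4}\le Cn^{-2H}$ and $\|M^{n,l'}_u\|_{L^4}^2\le Cn^{-2H}$; combining these through H\"older's inequality (four $L^4$ factors) and Cauchy--Schwarz in $u$ gives $|\E[J_sJ_{s'}]|\le Cn^{-8H}$, hence $n^{8H}|\E[J_sJ_{s'}]|\le C$ uniformly in $s,s',n,t$.

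For the off-diagonal decay, which is the crux, I would exploit that when $s'<s$ the whole integration range $u\le\tfrac{[ns']}{n}$ stays away from the singularity of the $s$-labelled kernel: for such $u$ both $s-u$ and $\tfrac{[ns]}{n}-u$ exceed $(s-s')/2$ for large $n$, so by the mean value theorem and $s-\tfrac{[ns]}{n}\le n^{-1}$,
\[
\Big|K(s-u)-K(\tfrac{[ns]}{n}-u)\Big|\le C\,|s-s'|^{H-3/2}\,n^{-1}\qquad\big(u\le\tfrac{[ns']}{n}\big).
\]
Inserting this improved estimate into the $s$-labelled factors $f_{k_2j'l'}(s,u)$ and $N^{(s)}_u$ (while retaining the full $L^2$-bound $\int(K(s'-u)-K(\tfrac{[ns']}{n}-u))^2\,\d u\le Cn^{-2H}$ for the $s'$-labelled factors, which sit at their own singularity) upgrades the previous estimate to
\[
n^{8H}\big|\E[J_sJ_{s'}]\big|\le C\,|s-s'|^{2H-3}\,n^{2H-2},
\]
which tends to $0$ for each fixed $s\neq s'$ because $2H-2<0$. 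Using the constant $C$ from the uniform bound as the dominating function, dominated convergence then gives $\sup_{t\in[0,T]}\|\mathcal{O}^n_1\|_{L^2}\to 0$, and identically for $\mathcal{O}^n_2$. I expect the off-diagonal estimate to be the main obstacle: naive applications of BDG or Minkowski only produce boundedness, and the genuine cancellation has to be extracted from the separation of the two kernel singularities at $\tfrac{[ns]}{n}$ and $\tfrac{[ns']}{n}$, a smallness that must be tracked carefully through the nested It\^o integrals $N^{(s)}_u$.
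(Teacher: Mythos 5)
Your proposal is correct and follows essentially the same route as the paper: the paper likewise reduces $\E[|\mathcal{O}^n_1|^2]$ to a double integral of covariances $\E[D^{j,j',l,l'}_{4,s}D^{j,j',l,l'}_{4,v}]$, kills the cross terms by the tower property (your polarized It\^o isometry is the same step), proves a uniform bound on the $n^{8H}$-scaled integrand via the $L^4$ estimates $\|N^{(s)}_u\|_{L^4}\le Cn^{-2H}$, $\|M^n_u\|_{L^4}\le Cn^{-H}$, and concludes by off-diagonal pointwise decay plus dominated convergence. The only cosmetic difference is that the paper obtains the off-diagonal decay by citing Lemma \ref{esti of A} for the vanishing of $n^{2H}\int_0^{[nv]/n}\bigl(K(s-u)-K(\tfrac{[ns]}{n}-u)\bigr)\bigl(K(v-u)-K(\tfrac{[nv]}{n}-u)\bigr)\,\d u$, whereas you prove the same smallness directly with the mean value theorem applied to the kernel increment away from its singularity.
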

	\begin{proof}
		Denote 
		\begin{align}\label{D-4}
			D^{j ,j',l,l'}_{4,s}:=n^{4H}\int_{0}^{\frac{[ns]}{n}}\Big(\int_{0}^{u}f_{k_1jl}(s,r)M_r^{n,l}\d W_r^j\Big)f_{k_2j'l'}(s,u)M_u^{n,l'}\d W_u^{j'},
		\end{align}
		and by Fubini's theorem we see  that
		$$\E[|\mathcal{O}^n_1|^2]=\E\Big[\int_{0}^{t}\int_{0}^{t}D^{j, j',l,l'}_{4,s}D^{j, j',l,l'}_{4,v}\d v \d s\Big]=2\int_{0}^{t}\int_{0}^{s}\E[D^{j, j',l,l'}_{4,s}D^{j, j',l,l'}_{4,v}]\d v \d s.$$
		By \eqref{f kjl}, \eqref{dcp of stochastic integral}, Fubini's theorem and  the tower property, we have
		\begin{align}\label{def of double D-4}
			&\E[D^{j, j',l,l'}_{4,s}D^{j, j',l,l'}_{4,v}]\nonumber\\&=n^{8H}\E\Big[\int_{\frac{[nv]}{n}}^{\frac{[ns]}{n}}\Big(\int_{0}^{u}f_{k_1jl}(s,r)M_r^{n,l}\d W_r^j\Big) f_{k_2j'l'}(s,u)M_u^{n,l'}\d W_u^{j'}\nonumber\\&\quad\cdot \int_{0}^{\frac{[nv]}{n}}\Big(\int_{0}^{u}f_{k_1jl}(v,r)M_r^{n,l}\d W_r^j\Big) f_{k_2j'l'}(v,u)M_u^{n,l'}\d W_u^{j'}\Big]\nonumber\\&\quad+n^{8H}\E\Big[\int_{0}^{\frac{[nv]}{n}}\Big(\int_{0}^{u}f_{k_1jl}(s,r)M_r^{n,l}\d W_r^j\Big)\Big(\int_{0}^{u}f_{k_1jl}(v,r)M_r^{n,l}\d W_r^j\Big)\nonumber\\&\quad\cdot  f_{k_2j'l'}(s,u)f_{k_2j'l'}(v,u)|M_u^{n,l'}|^2\d u\Big]\nonumber\\&\leq n^{8H}\E\Big[\int_{0}^{\frac{[nv]}{n}}\Big(\int_{0}^{u}f_{k_1jl}(v,r)M_r^{n,l}\d W_r^j\Big) f_{k_2j'l'}(v,u)M_u^{n,l'}\d W_u^{j'}\nonumber\\&\quad\cdot \E\Big[\int_{\frac{[nv]}{n}}^{\frac{[ns]}{n}}\Big(\int_{0}^{u}f_{k_1jl}(s,r)M_r^{n,l}\d W_r^j\Big) f_{k_2j'l'}(s,u)M_u^{n,l'}\d W_u^{j'}|\mathcal{F}_{\frac{[nv]}{n}}\Big]\Big]\nonumber\\&\quad+n^{2H}\int_{0}^{\frac{[nv]}{n}}\Big(K(s-u)-K(\frac{[ns]}{n}-u)\Big)\Big(K(v-u)-K(\frac{[nv]}{n}-u)\Big)\E[D^{j,l,l'}_{5,u}]\d u\nonumber\\&=n^{2H}\int_{0}^{\frac{[nv]}{n}}\Big(K(s-u)-K(\frac{[ns]}{n}-u)\Big)\Big(K(v-u)-K(\frac{[nv]}{n}-u)\Big)\E[D^{j,l,l'}_{5,u}]\d u,
		\end{align}
		where
		\begin{align}\label{D-5}
			D^{j,l,l'}_{5,u}:=n^{6H}\Big(\int_{0}^{u}f_{k_1jl}(s,r)M_r^{n,l}\d W_r^j\Big)\Big(\int_{0}^{u}f_{k_1jl}(v,r)M_r^{n,l}\d W_r^j\Big)|M_u^{n,l'}|^2.
		\end{align}
		Since for any $m\geq 2, u\leq \frac{[ns]}{n}$ and $s\leq T$, it holds
		\begin{align}\label{esti of stochastic integral}
			\Big\|\int_{0}^{u}f_{k_1jl}(s,r)M_r^{n,l}\d W_r^j\Big\|_{L^m}&\leq C \Big\|\int_{0}^{u}|f_{k_1jl}(s,r)M_r^{n,l}|^2\d r\Big\|^{\frac{1}{2}}_{L^{m/2}}\nonumber\\&\leq C\Big\|\int_{0}^{\frac{[ns]}{n}}\Big(K(s-u)-K(\frac{[ns]}{n}-u)\Big)^2|M_r^{n,l}|^2\d r\Big\|^{\frac{1}{2}}_{L^{m/2}}\nonumber\\&\leq C\Big(\int_{0}^{\frac{[ns]}{n}}\Big(K(s-u)-K(\frac{[ns]}{n}-u)\Big)^2\||M_r^{n,l}|^2\|_{L^{m/2}}\d r\Big)^{1/2}\nonumber\\&\leq Cn^{-2H},
		\end{align}
		where \eqref{f kjl}, the boundness of the derivatives of $\sigma$, BDG's inequality, Minkowski's   inequality and Lemma \ref{bound of M n} are used.
		
		By the Cauchy-Schwarz inequality, Lemma \ref{bound of M n} and \eqref{esti of stochastic integral} we have
		\begin{align}\label{est of D-5}
			\E[D^{j,l,l'}_{5,u}]&\leq n^{6H}\Big\|\int_{0}^{u}f_{k_1jl}(s,r)M_r^{n,l}\d W_r^j\Big\|_{L^4}\Big\| \int_{0}^{u}f_{k_1jl}(v,r)M_r^{n,l}\d W_r^j\Big\|_{L^4}\|M_u^{n,l'}\|^2_{L^4}\nonumber\\&\leq C.
		\end{align}
		By Lemma \ref{esti of A} and \eqref{def of double D-4}  this implies 
		$$|\E[D^{j, j',l,l'}_{4,s}D^{j, j',l,l'}_{4,v}]|\leq C n^{2H}\int_{0}^{\frac{[nv]}{n}}\Big(K(s-u)-K(\frac{[ns]}{n}-u)\Big)\Big(K(v-u)-K(\frac{[nv]}{n}-u)\Big)\d u=0.$$
		Applying the dominated convergence theorem  with respect to $\d v\otimes \d s$, we have $$\mathcal{O}^n_{1}\xrightarrow {  L^2} 0.$$
		Similar to $\mathcal{O}^n_{1}$, it holds that $\mathcal{O}^n_{2}\to 0$ in $L^2$.
	\end{proof}
	\subsubsection{The $L^2$-limits of $I^{n,k_1,k_2}_{2,j,j',l,l'}$ and $I^{n,k_1,k_2}_{3,j,j',l,l'} $}\label{3.2.2}
	\begin{lem}\label{II, III}
		For any $j,j'=1,\cdots,m;l,l'=1,\cdots,d,$
		$$\quad  \lim_{n\to\infty}\sup_{t\in[0,T]}\|I^{n,k_1,k_2}_{2,j,j',l,l'}\|_{L^2}=0,$$
		$$\quad  \lim_{n\to\infty}\sup_{t\in[0,T]}\|I^{n,k_1,k_2}_{3,j,j',l,l'} \|_{L^2}=0.$$
	\end{lem}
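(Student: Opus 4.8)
The plan is to exploit the martingale structure of the small-time stochastic integral: its conditional mean vanishes, so the off-diagonal part of the second moment of $I^{n,k_1,k_2}_{2,j,j',l,l'}$ disappears after conditioning, and only a diagonal region of area $O(1/n)$ survives. First I would note that $I^{n,k_1,k_2}_{3,j,j',l,l'}$ is obtained from $I^{n,k_1,k_2}_{2,j,j',l,l'}$ by interchanging the index triples $(k_1,j,l)$ and $(k_2,j',l')$, so it suffices to treat $I^{n,k_1,k_2}_{2,j,j',l,l'}$ and the bound for $I_3$ will follow verbatim. Writing
\begin{align}
	P_s:=n^{2H}\int_{0}^{\frac{[ns]}{n}}f_{k_1jl}(s,u)M_u^{n,l}\d W_u^j,\qquad Q_s:=n^{2H}\int_{\frac{[ns]}{n}}^{s}K(s-u)M_u^{n,l'}\d W_u^{j'},\nonumber
\end{align}
we have $I^{n,k_1,k_2}_{2,j,j',l,l'}=\int_0^t\partial_{l'}\sigma^{k_2}_{j'}(X^n_{\frac{[ns]}{n}})P_sQ_s\d s$, so by Fubini's theorem
\begin{align}
	\big\|I^{n,k_1,k_2}_{2,j,j',l,l'}\big\|_{L^2}^2=2\int_0^t\int_0^s\E\Big[\partial_{l'}\sigma^{k_2}_{j'}(X^n_{\frac{[ns]}{n}})\,\partial_{l'}\sigma^{k_2}_{j'}(X^n_{\frac{[nv]}{n}})\,P_sQ_sP_vQ_v\Big]\d v\,\d s.\nonumber
\end{align}

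Next I would establish the uniform bounds $\sup_n\sup_{s\in[0,T]}\|P_s\|_{L^4}\le C$ and $\sup_n\sup_{s\in[0,T]}\|Q_s\|_{L^4}\le C$. These follow from the BDG and Minkowski integral inequalities together with the boundedness of the derivatives of $\sigma$ (so that $|f_{k_1jl}(s,u)|\le C|K(s-u)-K(\frac{[ns]}{n}-u)|$), Lemma \ref{bound of M n} (which gives $\|M_u^{n,l}\|_{L^4}\le Cn^{-H}$) and Lemma \ref{small time estimate}. Concretely, for $P_s$,
\begin{align}
	\|P_s\|_{L^4}^2\le Cn^{4H}\int_0^{\frac{[ns]}{n}}\Big(K(s-u)-K(\tfrac{[ns]}{n}-u)\Big)^2\|M_u^{n,l}\|_{L^4}^2\,\d u\le Cn^{4H}\cdot n^{-2H}\cdot n^{-2H}=C,\nonumber
\end{align}
using Lemma \ref{small time estimate}(ii), and the estimate for $Q_s$ is identical using Lemma \ref{small time estimate}(i).

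The decisive step is to split the region $\{0\le v\le s\le t\}$ according to whether $\frac{[nv]}{n}<\frac{[ns]}{n}$ or $\frac{[nv]}{n}=\frac{[ns]}{n}$. On the off-diagonal set $\frac{[nv]}{n}<\frac{[ns]}{n}$ one has $v\le\frac{[ns]}{n}$, so $P_s$, $P_v$, $Q_v$ and both derivative factors are $\mathcal{F}_{\frac{[ns]}{n}}$-measurable, whereas $Q_s$ is a martingale increment over $[\frac{[ns]}{n},s]$ with $\E[Q_s\,|\,\mathcal{F}_{\frac{[ns]}{n}}]=0$; the tower property (exactly as in the treatment of $\mathcal{O}^n_{3711}$ and $\mathcal{O}^n_{3721}$ in Lemma \ref{37}) then forces the corresponding expectation to vanish. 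On the diagonal set $\frac{[nv]}{n}=\frac{[ns]}{n}$, the Cauchy--Schwarz inequality and the $L^4$ bounds of the previous step give $|\E[\,\cdots]|\le C$, while the area of $\{(v,s):0\le v\le s\le t,\ \frac{[nv]}{n}=\frac{[ns]}{n}\}$ equals $\int_0^t\delta_{(n,s)}\,\d s\le T/n$. Combining the two contributions yields
\begin{align}
	\sup_{t\in[0,T]}\big\|I^{n,k_1,k_2}_{2,j,j',l,l'}\big\|_{L^2}^2\le \frac{CT}{n}\to 0\quad\text{as }n\to\infty,\nonumber
\end{align}
and the same bound for $I^{n,k_1,k_2}_{3,j,j',l,l'}$ follows by the symmetry noted above.

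The main obstacle I anticipate is the measurability bookkeeping that makes the tower-property cancellation rigorous: one must verify that \emph{every} factor other than $Q_s$ is $\mathcal{F}_{\frac{[ns]}{n}}$-measurable on the off-diagonal set, that $Q_s$ genuinely has vanishing conditional mean there, and separately confirm that the surviving diagonal region has area of exact order $1/n$ rather than $O(1)$. Once these points are settled, the remaining estimates are routine applications of the lemmas already established.
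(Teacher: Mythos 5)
Your proof is correct and follows essentially the same route as the paper's: the off-diagonal part of the second moment is annihilated by conditioning on $\mathcal{F}_{\frac{[ns]}{n}}$ (the paper's term $\mathcal{P}^n_1$), while the diagonal strip $\{\frac{[nv]}{n}=\frac{[ns]}{n}\}$ contributes $O(1/n)$ because the integrand is uniformly bounded (the paper's $\mathcal{P}^n_2$, handled there by dominated convergence rather than by your explicit area count). The only real difference is in how the uniform bound $\E[|P_sQ_s|^2]\le C$ is obtained --- you use Cauchy--Schwarz together with $L^4$-estimates from the BDG and Minkowski inequalities, whereas the paper expands the product via its integration-by-parts identity into the terms $D_4$ and $D_7$ --- and both arguments are valid.
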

	\begin{proof}
		Recall that
		\begin{align}
			I^{n,k_1,k_2}_{2,j,j',l,l'}=\int_{0}^{t}\partial_{l'}\sigma^{k_2}_{j'}(X^n_\frac{[ns]}{n})D^{j,j',l,l'}_{6,s}\d s,\nonumber
		\end{align}
		where
		$$D^{j,j',l,l'}_{6,s}=n^{4H}\Big(\int_{0}^{\frac{[ns]}{n}}f_{k_1jl}(s,u)M_u^{n,l}\d W_u^j\Big)\Big(\int_{\frac{[ns]}{n}}^{s}K(s-u)M_u^{n,l'}\d W_u^{j'}\Big).$$
		We will show $I^{n,k_1,k_2}_{2,j,j',l,l'}\xrightarrow[\text { in } L^2]{n \rightarrow \infty} 0$ for all $j,j',l,l'$. By Fubini's theorem we have
		\begin{align}
			&\E\Big[|I^{n,k_1,k_2}_{2,j,j',l,l'}|^2\Big]\nonumber\\&=\E\Big[\int_{0}^{t}\int_{0}^{t}\partial_{l'}\sigma^{k_2}_{j'}(X^n_\frac{[ns]}{n})\partial_{l'}\sigma^{k_2}_{j'}(X^n_\frac{[nv]}{n})D^{j,j',l,l'}_{6,s}D^{j,j',l,l'}_{6,v}\d v\d s\Big]\nonumber\\&=2\E\Big[\int_{0}^{t}\int_{0}^{\frac{[ns]}{n}}\partial_{l'}\sigma^{k_2}_{j'}(X^n_\frac{[ns]}{n})\partial_{l'}\sigma^{k_2}_{j'}(X^n_\frac{[nv]}{n})D^{j,j',l,l'}_{6,s}D^{j,j',l,l'}_{6,v}\d v\d s\Big]\nonumber\\&\quad+2\E\Big[\int_{0}^{t}\int_{\frac{[ns]}{n}}^{s}\partial_{l'}\sigma^{k_2}_{j'}(X^n_\frac{[ns]}{n})\partial_{l'}\sigma^{k_2}_{j'}(X^n_\frac{[nv]}{n})D^{j,j',l,l'}_{6,s}D^{j,j',l,l'}_{6,v}\d v\d s\Big]\nonumber\\&:=\mathcal{P}^n_1+\mathcal{P}^n_2.\nonumber
		\end{align}
		Let 
		\begin{align}\label{D-7}
			D^{j,j',l}_{7,s}:=n^{4H}\int_{\frac{[ns]}{n}}^{s}\int_{\frac{[ns]}{n}}^{u}f_{k_1jl}(s,r)M_r^{n,l}\d W^{j'}_rf_{k_1jl}(s,u)M_u^{n,l}\d W^{j'}_u.
		\end{align}
		Since the derivatives of $\sigma$ are bounded, by \eqref{f kjl}, \eqref{dcp of stochastic integral}, Fubini's theorem,\eqref{D-4}, \eqref{D-5}, \eqref{est of D-5} and Lemma \ref{2-small time estimate},  we have
		\begin{align}\label{est of D-7}
			&\E[|D^{j, j',l}_{7,s}|^2]\nonumber\\&=n^{8H}\E\Big[\int_{\frac{[ns]}{n}}^{s}\Big(\int_{\frac{[ns]}{n}}^{u}f_{k_1jl}(s,r)M_r^{n,l}\d W_r^{j'}\Big)^2  |f_{k_1jl}(s,u)|^2|M_u^{n,l}|^2\d u\Big]\nonumber\\&\leq n^{8H}\int_{\frac{[ns]}{n}}^{s}\Big(K(s-u)-K(\frac{[ns]}{n}-u)\Big)^2\E\Big[\Big(\int_{\frac{[ns]}{n}}^{u}f_{k_1jl}(s,r)M_r^{n,l}\d W_r^{j'}\Big)^2|M_u^{n,l}|^2\Big]\d u\nonumber\\&=n^{2H}\int_{\frac{[ns]}{n}}^{s}\Big(K(s-u)-K(\frac{[ns]}{n}-u)\Big)^2\E[D^{j,l,l'}_{5,u}]\d u\leq C.
		\end{align}
		By \eqref{f kjl}, \eqref{dcp of stochastic integral}, \eqref{D-4}, the linear growth properties of $\sigma$, the Cauchy-Schwarz inequality, Minkowski's inequality, Lemmas \ref{bound of X n},  \ref{bound of M n}, \ref{small time estimate} and   \ref{2-small time estimate}, we have
		\begin{align}\label{bound of D-6}
			&\E[|D^{j,j',l,l'}_{6,s}|^2]\nonumber\\&=n^{8H}\E\Big[\Big(\int_{0}^{\frac{[ns]}{n}}f_{k_1jl}(s,u)M_u^{n,l}\d W_u^j\Big)^2\Big( \int_{\frac{[ns]}{n}}^{s}K(s-u)M_u^{n,l'}\d W_u^{j'}\Big)^2\Big]\nonumber\\&=n^{8H}\E\Big[\Big(\int_{0}^{\frac{[ns]}{n}}\int_{0}^{u}f_{k_1jl}(s,r)M_r^{n,l}\d W_r^{j}f_{k_1jl}(s,u)M_u^{n,l}\d W^j_u\nonumber\\&\quad+\int_{0}^{\frac{[ns]}{n}}\int_{0}^{r}f_{k_1jl}(s,u)M_u^{n,l}\d W_u^{j}f_{k_1jl}(s,r)M_r^{n,l}\d W^j_r\nonumber\\&\quad+\int_{0}^{\frac{[ns]}{n}}|f_{k_1 jl}(s,u)M_u^{n,l}|^2\d u\Big)\Big(\int_{\frac{[ns]}{n}}^{s}\int_{\frac{[ns]}{n}}^{u}f_{k_1jl}(s,r)M_r^{n,l}\d W^{j'}_rf_{k_1jl}(s,u)M_u^{n,l}\d W^{j'}_u\nonumber\\&\quad+\int_{\frac{[ns]}{n}}^{s}\int_{\frac{[ns]}{n}}^{r}f_{k_1jl}(s,u)M_u^{n,l}\d W^{j'}_uf_{k_1jl}(s,r)M_r^{n,l}\d W^{j'}_r+\int_{\frac{[ns]}{n}}^{s}|f_{k_1jl}(s,u)M_u^{n,l}|^2\d u\Big)\Big]\nonumber\\&\leq C\E\Big[\Big(D^{j,j',l,l'}_{4,s}\Big\vert^{k_1=k_2}_{j=j',l=l'}+n^{4H}\int_{0}^{\frac{[ns]}{n}}|f_{k_1 jl}(s,u)M_u^{n,l}|^2\d u\Big)\Big(D^{j,j',l}_{7,s}+n^{4H}\int_{\frac{[ns]}{n}}^{s}|f_{k_1jl}(s,u)M_u^{n,l}|^2\d u\Big)\Big]\nonumber\\&\leq C+Cn^{4	H}\int_{0}^{\frac{[ns]}{n}}\Big(K(s-u)-K(\frac{[ns]}{n}-u)\Big)^2\E[|M_u^{n,l}|^2]\d u\nonumber\\&\quad+Cn^{4	H}\int_{\frac{[ns]}{n}}^{s}\Big(K(s-u)-K(\frac{[ns]}{n}-u)\Big)^2\E[|M_u^{n,l}|^2]\d u\leq C,
		\end{align}
		where $C$ is a positive constant which does not depend on $n$.
		The inequalities  \eqref{D-4}, \eqref{def of double D-4}, \eqref{est of D-5} and Lemma \ref{esti of A} yield
		\begin{align}
			\E[|D_{4,s}^{j,j',l,l'}|^2]\leq C.\nonumber
		\end{align}
	The inequality  \eqref{bound of D-6} gives
		$$\E[D^{j,j',l,l'}_{6,s}|\mathcal{F}_{\frac{[ns]}{n}}]=n^{4H}\Big(\int_{0}^{\frac{[ns]}{n}}f_{k_1jl}(s,u)M_u^{n,l}\d W_u^j\Big)\E\Big[\int_{\frac{[ns]}{n}}^{s}K(s-u)M_u^{n,l'}\d W_u^{j'}|\mathcal{F}_{\frac{[ns]}{n}}\Big]=0.$$
		Now the tower property and Fubini's theorem provide
		$$\mathcal{P}^n_1=\int_{0}^{t}\int_{0}^{\frac{[ns]}{n}}\E\Big[\partial_{l'}\sigma^{k_2}_{j'}(X^n_\frac{[ns]}{n})\partial_{l'}\sigma^{k_2}_{j'}(X^n_\frac{[nv]}{n})D^{j,j',l,l'}_{6,v}\E[D^{j,j',l,l'}_{6,s}|\mathcal{F}_{\frac{[ns]}{n}}]\Big]\d v\d s=0.$$
		For the term $\mathcal{P}^n_2$, the boundedness of the derivatives of $\sigma$, Fubini's theorem, \eqref{bound of D-6} and the dominated convergence theorem  yield that
		\begin{align}
			\lim_{n\to\infty}\mathcal{P}^n_2=\int_{0}^{t}\int_{0}^{t}\mathbb{I}_{(\frac{[ns]}{n},s)}(v)E\Big[\partial_{l'}\sigma^{k_2}_{j'}(X^n_\frac{[ns]}{n})\partial_{l'}\sigma^{k_2}_{j'}(X^n_\frac{[nv]}{n})D^{j,j',l,l'}_{6,v}D^{j,j',l,l'}_{6,s}\Big]\d v\d s=0.\nonumber
		\end{align}
		Therefore $I^{n,k_1,k_2}_{2,j,j',l,l'}\to 0$ in $L^2$. Similar to $I^{n,k_1,k_2}_{2,j,j',l,l'}$, it holds that $I^{n,k_1,k_2}_{3,j,j',l,l'} \to 0$ in $L^2$.
	\end{proof}
	\subsubsection{The $L^2$-limit of $I^{n,k_1,k_2}_{4,j,j',l,l'}$}\label{3.2.3}
	According to \eqref{dcp of stochastic integral}, by \eqref{def of E} we have
	\begin{align}
		I^{n,k_1,k_2}_{4,j,j',l,l'}&=n^{4H}\int_{0}^{t}\Big(\int_{\frac{[ns]}{n}}^{s}K(s-u)M_u^{n,l}\d W_u^{j}\Big)\Big(\int_{\frac{[ns]}{n}}^{s}K(s-u)M_u^{n,l'}\d W_u^{j'}\Big)\d s\nonumber\\&=n^{4H}\int_{0}^{t}E^{j,j',l,l'}_s\Big(\int_{\frac{[ns]}{n}}^{s}\Big(\int_{\frac{[ns]}{n}}^{u}K(s-r)M_r^{n,l'}\d W_r^{j'}\Big)K(s-u)M_u^{n,l}\d W_u^{j}\Big)\d s\nonumber\\&\quad+n^{4H}\int_{0}^{t}E^{j,j',l,l'}_s\Big(\int_{\frac{[ns]}{n}}^{s}\Big(\int_{\frac{[ns]}{n}}^{u}K(s-r)M_r^{n,l}\d W_r^{j}\Big)K(s-u)M_u^{n,l'}\d W_u^{j'}\Big)\d s\nonumber\\&\quad+n^{4H}\int_{0}^{t}E^{j,j',l,l'}_s\int_{\frac{[ns]}{n}}^{s}|K(s-u)|^2M_u^{n,l}M_u^{n,l'}\d \langle W^{j}, W^{j'} \rangle_u\d s\nonumber\\&:=\mathcal{Q}_1+\mathcal{Q}_2+\mathcal{Q}_3.\nonumber
	\end{align}
	Note that the last term vanishes if $j\neq j'$, and if $j=j'$, by \eqref{dcp of M-2}, \eqref{def of F} and \eqref{def of D 1} we have
	\begin{align}
		\mathcal{Q}^n_3&=n^{4H}\int_{0}^{t}E^{j,j,l,l'}_s\int_{\frac{[ns]}{n}}^{s}|K(s-u)|^2M_u^{n,l}M_u^{n,l'}\d u\d s\nonumber\\&=n^{2H}\sum_{i,i'=1}^{m}\int_{0}^{t}E^{j,j,l,l'}_s\int_{\frac{[ns]}{n}}^{s}|K(s-u)|^2D^{i ,i',l,l'}_{1,u}\d u\d s\nonumber\\&\quad+n^{2H}\sum_{i,i'=1}^{m}\int_{0}^{t}E^{j,j,l,l'}_s\int_{\frac{[ns]}{n}}^{s}|K(s-u)|^2D^{i ',i,l',l}_{1,u}\d u\d s\nonumber\\&\quad+n^{4H}\sum_{i=1}^{m}\int_{0}^{t}E^{j,j,l,l'}_s\int_{\frac{[ns]}{n}}^{s}|K(s-u)|^2\int_{0}^{\frac{[nu]}{n}}g_{li}(u,v)g_{l'i}(u,v)\d v\d u\d s\nonumber\\&\quad+n^{2H}\sum_{i,i'=1}^{m}\int_{0}^{t}E^{j,j,l,l'}_s\int_{\frac{[ns]}{n}}^{s}|K(s-u)|^2D^{i ,i',l,l'}_{2,u}\d u\d s\nonumber\\&\quad+n^{2H}\sum_{i,i'=1}^{m}\int_{0}^{t}E^{j,j,l,l'}_s\int_{\frac{[ns]}{n}}^{s}|K(s-u)|^2D^{i ',i,l,l'}_{2,u}\d u\d s\nonumber\\&\quad+n^{4H}\sum_{i=1}^{m}\int_{0}^{t}E^{j,j,l,l'}_s\int_{\frac{[ns]}{n}}^{s}|K(s-u)|^2F^{i,i',l,l'}_u\int_{\frac{[nu]}{n}}^{u}K(u-v)^2\d v\d u\d s\nonumber\\&\quad+n^{4H}\int_{0}^{t}E^{j,j,l,l'}_s\int_{\frac{[ns]}{n}}^{s}|K(s-u)|^2M_u^{1,n,l}M_u^{2,n,l'}\d u\d s\nonumber\\&\quad+n^{4H}\int_{0}^{t}E^{j,j,l,l'}_s\int_{\frac{[ns]}{n}}^{s}|K(s-u)|^2M_u^{2,n,l}M_u^{1,n,l'}\d u\d s\nonumber\\&:=\mathcal{Q}^n_{31}+\mathcal{Q}^n_{32}+\sum_{i=1}^{m}\mathcal{Q}^n_{33}+\mathcal{Q}^n_{34}+\mathcal{Q}^n_{35}+\sum_{i=1}^{m}\mathcal{Q}^n_{36}+\mathcal{Q}^n_{37}+\mathcal{Q}^n_{38}.\nonumber
	\end{align}
	\begin{lem}\label{Q-31, 32}
		For any $i,i',j=1,\cdots,m;l,l'=1,\cdots,d,$
		$$(i)\quad  \lim_{n\to\infty}\sup_{t\in[0,T]}\|\mathcal{Q}^{n}_{31}\|_{L^2}=0,$$
		$$(ii)\quad  \lim_{n\to\infty}\sup_{t\in[0,T]}\|\mathcal{Q}^{n}_{32}\|_{L^2}=0.$$
	\end{lem}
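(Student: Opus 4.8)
The plan is to observe that, despite resembling the terms $\mathcal{O}^n_{31},\mathcal{O}^n_{32}$ handled in Lemma \ref{31 32}, the present terms are substantially easier: in $\mathcal{Q}^n_{31}$ and $\mathcal{Q}^n_{32}$ the inner variable $u$ runs only over the short interval $[\frac{[ns]}{n},s]$, of length $\delta_{(n,s)}<1/n$, and is weighted by $|K(s-u)|^2$. In particular I do not expect to need the delicate cancellation of Lemma \ref{esti of D 1}; it should suffice to combine a crude uniform $L^2$-bound on $D^{i,i',l,l'}_{1,u}$ with the small-time estimate of Lemma \ref{small time estimate}(i).

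First I would record the uniform bound $\sup_n\sup_{u\in[0,T]}\|D^{i,i',l,l'}_{1,u}\|_{L^2}\le C$. Starting from the definition \eqref{def of D 1}, the It\^o isometry, the Cauchy--Schwarz and Minkowski inequalities, the linear growth of $\sigma$ with Lemma \ref{bound of X n}, and Lemma \ref{small time estimate}(ii), one checks that the inner Wiener integral $\int_0^v g_{li}(u,w)\,\d W^i_w$ has $L^4$-norm of order $n^{-H}$ while $\int_0^{\frac{[nu]}{n}}\|g_{l'i'}(u,v)\|_{L^4}^2\,\d v$ is of order $n^{-2H}$, so that the squared prefactor $n^{4H}$ in the second moment exactly cancels the resulting $n^{-4H}$. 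This is the computation already implicit in \eqref{bound of D-3}, which I would simply isolate as a standalone bound.

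Next, absorbing the outer factor $n^{2H}$ into $|K(s-u)|^2$ and applying Minkowski's integral inequality, and using that $E^{j,j,l,l'}_s$ is bounded almost surely (the derivatives of $\sigma$ are bounded) together with the bound just established, I obtain
\[
\big\|\mathcal{Q}^n_{31}\big\|_{L^2}\le C\sum_{i,i'=1}^m\int_0^t\Big(n^{2H}\!\int_{\frac{[ns]}{n}}^s |K(s-u)|^2\,\d u\Big)\,\d s .
\]
By Lemma \ref{small time estimate}(i) the inner integral equals $\frac{1}{2HG}\delta_{(n,s)}^{2H}\le\frac{1}{2HG}\,n^{-2H}$, so integrating over $s\in[0,t]\subseteq[0,T]$ gives $\sup_{t\in[0,T]}\|\mathcal{Q}^n_{31}\|_{L^2}\le C\,T\,n^{-2H}\to 0$, which is claim (i).

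Finally, since $\mathcal{Q}^n_{32}$ is obtained from $\mathcal{Q}^n_{31}$ only by the relabelling $(i,l)\leftrightarrow(i',l')$ inside $D_{1,u}$, and the bound of the second paragraph is symmetric in these indices, the identical estimate yields $\sup_{t\in[0,T]}\|\mathcal{Q}^n_{32}\|_{L^2}\le C\,n^{-2H}\to 0$, which is claim (ii). The one mildly technical point, which I expect to be the main obstacle, is the uniform $L^2$-bound on $D_{1,u}$; everything else is short, the key simplification being that the weight $n^{2H}\int_{\frac{[ns]}{n}}^s|K(s-u)|^2\,\d u$ is itself $O(n^{-2H})$, so no oscillation argument is required here.
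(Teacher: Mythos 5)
There is a genuine gap, and it sits exactly where you predicted the argument would be easy. You bound the weight $n^{2H}\int_{\frac{[ns]}{n}}^{s}|K(s-u)|^{2}\,\d u$ by $\tfrac{1}{2HG}n^{-2H}$, reading the displayed identity in Lemma \ref{small time estimate}(i) literally. That identity contains a typo: a direct computation gives $\int_{\frac{[ns]}{n}}^{s}K(s-u)^{2}\,\d u=\tfrac{1}{2HG}\,\delta_{(n,s)}^{2H}$ \emph{without} the factor $n^{2H}$, so that
\begin{equation}
n^{2H}\int_{\frac{[ns]}{n}}^{s}|K(s-u)|^{2}\,\d u=\frac{1}{2HG}\,\bigl(n\delta_{(n,s)}\bigr)^{2H}=\frac{1}{2HG}\,(ns-[ns])^{2H},\nonumber
\end{equation}
which is consistent with how the paper uses this quantity elsewhere (e.g.\ the factor $|n\delta_{(n,s)}|^{4H}$ in the proof of Lemma \ref{Q-33}). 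This expression is $O(1)$ but emphatically not $O(n^{-2H})$: for typical $s$ it is of order one, and $\int_{0}^{t}(ns-[ns])^{2H}\,\d s\to \tfrac{t}{2H+1}>0$. Since $\sup_{u}\|D^{i,i',l,l'}_{1,u}\|_{L^{2}}$ is itself only $O(1)$ (and does not tend to zero), your estimate delivers $\sup_{t}\|\mathcal{Q}^{n}_{31}\|_{L^{2}}\le C$, i.e.\ boundedness, not convergence to zero. There is no spare power of $n$ anywhere in this term: the prefactor $n^{2H}$ is exactly cancelled by the smallness of the time interval $[\frac{[ns]}{n},s]$.

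Consequently the "delicate cancellation" you hoped to avoid is indispensable. The paper's proof pulls the $O(1)$ weight out via $\sup_{s}n^{2H}\int_{\frac{[ns]}{n}}^{s}K(s-u)^{2}\,\d u\le C$ and then invokes Lemma \ref{esti of D 1}, namely $\sup_{t}\|\int_{0}^{t}D^{i,i',l,l'}_{1,u}\,\d u\|_{L^{2}}\to 0$, which encodes the decorrelation of $D_{1,u}$ across distinct values of $u$ (in the spirit of Lemma \ref{esti of A}); that decorrelation, not pointwise smallness, is the source of the decay. Your uniform $L^{2}$ bound on $D_{1,u}$ (correctly argued, along the lines of \eqref{bound of D-3}) is a useful ingredient, but by itself it cannot close the proof.
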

	\begin{proof}
		Since the derivatives of $\sigma$ are bounded, we have that
		\begin{align}
			\|\mathcal{Q}^n_{31}\|_{L^2}&\leq \sum_{i,i'=1}^{m}\Big\|\int_{0}^{t}D^{i ,i',l,l'}_{1,s}\d s\Big\|_{L^2}\sup_{s\in[0,T]}n^{2H}\int_{\frac{[ns]}{n}}^{s}K(s-u)^2\d u\nonumber\\&\leq C\sum_{i,i'=1}^{m}\Big\|\int_{0}^{t}D^{i ,i',l,l'}_{1,s}\d s\Big\|_{L^2}\nonumber\\&\leq 0 \quad \text{as $n\to\infty$},\nonumber
		\end{align}
		by \eqref{def of E}, Minkowski's inequality, Fubini's theorem, Lemmas \ref{small time estimate} and   \ref{esti of D 1}. Applying the dominated convergence theorem  with respect to $\d u\otimes \d s$, we have $\mathcal{Q}^n_{31} \xrightarrow[\text { in } L^2]{n \rightarrow \infty} 0$.
		
		Similar to $\mathcal{Q}^n_{31}$, it holds that $\mathcal{Q}^n_{32}\to 0$ in $L^2$.	
	\end{proof}
	\begin{lem}\label{Q-33}
		For any $i,j=1,\cdots,m;l,l'=1,\cdots,d,$  we have 
		\begin{align}
			\mathcal{Q}^n_{33}&\xrightarrow[\text { in } L^2]{n \rightarrow \infty} C_G^{(3)}\int_{0}^{t}\sigma^l_i(X_{s})\sigma^{l'}_{i}(X_{s})\partial_l\sigma^{k_1}_j(X_{s})\partial_{l'}\sigma^{k_2}_{j}(X_{s})\d s,\nonumber
		\end{align}
		where
		\begin{align}
			C_G^{(3)}&:=\frac{1}{2HG^2(4H+1)}\int_{0}^{\infty}\mu(r,1)^2\d r.\nonumber
		\end{align}
	\end{lem}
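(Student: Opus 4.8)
The plan is to follow the template of Lemmas \ref{33} and \ref{36}: substitute the closed form of the innermost $v$-integral, reduce $\mathcal{Q}^n_{33}$ to an integral whose only surviving $n$-dependence is carried by a fractional part, and then apply Lemma \ref{limit distribution of stochastic integral}. First I would observe that for $u$ in the inner range $[\frac{[ns]}{n},s]$ one has $[nu]=[ns]$, hence $\delta_{(n,u)}=u-\frac{[ns]}{n}$, and by \eqref{transfer of g},
$$\int_0^{\frac{[nu]}{n}}g_{li}(u,v)g_{l'i}(u,v)\,\d v=\delta_{(n,u)}^{2H}\,\tilde g^n_u,$$
with $\tilde g^n_u$ given by \eqref{ti g}. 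Substituting this into the definition of $\mathcal{Q}^n_{33}$ yields
$$\mathcal{Q}^n_{33}=n^{4H}\int_0^t E^{j,j,l,l'}_s\int_{\frac{[ns]}{n}}^{s}|K(s-u)|^2\,\delta_{(n,u)}^{2H}\,\tilde g^n_u\,\d u\,\d s.$$

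Next I would treat the inner $u$-integral. The kernel $|K(s-u)|^2=(s-u)^{2H-1}/G$ is singular at $u=s$, so after the change of variable $u=\frac{[ns]}{n}+\delta_{(n,s)}\rho$, $\rho\in[0,1]$, the two small-time factors $(s-u)^{2H-1}$ and $\delta_{(n,u)}^{2H}$ combine, with Lemma \ref{small time estimate}(i) controlling the local contribution, to produce the scaling factor $(n\delta_{(n,s)})^{4H}=(ns-[ns])^{4H}$ times a purely numerical constant, the coefficient process being supplied by $E^{j,j,l,l'}_s$ and $\tilde g^n_u$. This puts $\mathcal{Q}^n_{33}$ into the form $\int_0^t H^{(n)}_s\,g(ns-[ns])\,\d s$ required by Lemma \ref{limit distribution of stochastic integral}, with $g(r)=r^{4H}$ and $H^{(n)}_s$ a multiple of $E^{j,j,l,l'}_s\tilde g^n_s$.

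I would then check the hypotheses of Lemma \ref{limit distribution of stochastic integral}. The $L^2(\d s\otimes\d P)$ convergence $\tilde g^n_s\to\frac{1}{G}\sigma^l_i(X_s)\sigma^{l'}_i(X_s)\int_0^\infty\mu(r,1)^2\,\d r$ is exactly \eqref{back need}, while $E^{j,j,l,l'}_s\to\partial_l\sigma^{k_1}_j(X_s)\partial_{l'}\sigma^{k_2}_j(X_s)$ in $L^2(\d s\otimes\d P)$ follows from the estimate \eqref{sigma-3} together with Lemmas \ref{continuous of X n} and \ref{rate X-X n}; the limiting integrand is almost surely continuous. Since $g\in L^2(0,1)$ with $\int_0^1 r^{4H}\,\d r=\frac{1}{4H+1}$, Lemma \ref{limit distribution of stochastic integral} delivers
$$\mathcal{Q}^n_{33}\xrightarrow[\text{ in }L^2]{n\to\infty}C_G^{(3)}\int_0^t\sigma^l_i(X_s)\sigma^{l'}_i(X_s)\partial_l\sigma^{k_1}_j(X_s)\partial_{l'}\sigma^{k_2}_j(X_s)\,\d s,$$
with $C_G^{(3)}$ assembled from the factor $\frac{1}{4H+1}$, the limit $\frac{1}{G}\int_0^\infty\mu(r,1)^2\,\d r$ of $\tilde g^n$, and the local small-time normalization $\frac{1}{2HG}$ coming from Lemma \ref{small time estimate}(i).

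The main obstacle is precisely the assembly of the numerical constant: one has to track how the innermost $v$-integral, which scales like $\delta_{(n,u)}^{2H}$ and varies across the short interval $[\frac{[ns]}{n},s]$, interacts with the simultaneously singular local kernel $|K(s-u)|^2$, so as to isolate the clean factor $(ns-[ns])^{4H}$ and identify the constant as $C_G^{(3)}=\frac{1}{2H(4H+1)G^2}\int_0^\infty\mu(r,1)^2\,\d r$. All approximations must be justified in $L^2$ uniformly in $s$, for which the uniform bound \eqref{est of ti g} on $\tilde g^n$ and the H\"older/rate estimates of Lemmas \ref{continuous of X n} and \ref{rate X-X n} are the key inputs; the remaining stochastic-calculus bookkeeping is identical to that already carried out in Lemmas \ref{33} and \ref{36}.
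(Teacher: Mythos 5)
Your overall strategy is the same as the paper's: rewrite the innermost $v$-integral via \eqref{transfer of g} as $\delta_{(n,u)}^{2H}\tilde g^n_u$, reduce $\mathcal{Q}^n_{33}$ to the form $\int_0^t H^{(n)}_s\,g(ns-[ns])\,\d s$ with $g(r)=r^{4H}$, and conclude with Lemma \ref{limit distribution of stochastic integral} together with the $L^2(\d u\otimes\d P)$ convergence \eqref{back need}. The paper follows exactly this route, except that it does not perform your change of variable: it factors the $v$-integral out of the $u$-integral, evaluates it at $u=s$ to get $\delta_{(n,s)}^{2H}\tilde g^n_s$, and multiplies by $\int_{[ns]/n}^{s}|K(s-u)|^2\,\d u=\tfrac{1}{2HG}\delta_{(n,s)}^{2H}$ from Lemma \ref{small time estimate}(i).

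The gap is in your assembly of the numerical constant, and it is not cosmetic. For $u\in(\tfrac{[ns]}{n},s)$ one has $\delta_{(n,u)}=u-\tfrac{[ns]}{n}$, which varies over the block, so the exact inner integral is
\[
\int_{\frac{[ns]}{n}}^{s}|K(s-u)|^2\,\delta_{(n,u)}^{2H}\,\tilde g^n_u\,\d u
=\frac{1}{G}\int_{\frac{[ns]}{n}}^{s}(s-u)^{2H-1}\Big(u-\tfrac{[ns]}{n}\Big)^{2H}\tilde g^n_u\,\d u .
\]
Under your substitution $u=\tfrac{[ns]}{n}+\delta_{(n,s)}\rho$ this equals $\tfrac{1}{G}\delta_{(n,s)}^{4H}\int_0^1(1-\rho)^{2H-1}\rho^{2H}\,\tilde g^n_{\frac{[ns]}{n}+\delta_{(n,s)}\rho}\,\d\rho$, so the purely numerical factor is the Beta integral $\int_0^1(1-\rho)^{2H-1}\rho^{2H}\,\d\rho=\Gamma(2H)\Gamma(2H+1)/\Gamma(4H+1)$, not $\int_0^1(1-\rho)^{2H-1}\,\d\rho=\tfrac{1}{2H}$. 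Lemma \ref{small time estimate}(i) evaluates $\int_{[ns]/n}^{s}|K(s-u)|^2\,\d u$ alone and therefore cannot supply the factor $\tfrac{1}{2HG}$ here: invoking it silently discards the weight $\rho^{2H}$ contributed by $\delta_{(n,u)}^{2H}$, and the discrepancy is a fixed multiplicative constant (at $H=1/4$ the two candidate factors are $\pi/2$ versus $2$), not an error term. As written, your derivation and your claimed value of $C_G^{(3)}$ are mutually inconsistent. You would also need to justify replacing $\tilde g^n_u$ by $\tilde g^n_s$ inside the $\rho$-integral (routine, using \eqref{est of ti g} and the estimates behind \eqref{back need}). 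Be aware that the paper's own proof obtains $\tfrac{1}{2HG}$ precisely by treating the $v$-integral as constant in $u$ and equal to its value at $u=s$; your change of variable is the computation that tests that step and it does not reproduce that constant, so this point must be resolved explicitly rather than attributed to Lemma \ref{small time estimate}(i).
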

	\begin{proof}
		By \eqref{transfer of g}, Lemma \ref{small time estimate} and Fubini's theorem, we have
		\begin{align}
			\mathcal{Q}^n_{33}&=n^{4H}\int_{0}^{t}E^{j,j,l,l'}_s\int_{\frac{[ns]}{n}}^{s}|K(s-u)|^2\int_{0}^{\frac{[nu]}{n}}g_{li}(u,v)g_{l'i}(u,v)\d v\d u\d s\nonumber\\&=n^{4H}\int_{0}^{t}E^{j,j,l,l'}_s\int_{0}^{\frac{[ns]}{n}}g_{li}(u,v)g_{l'i}(u,v)\d v\int_{\frac{[ns]}{n}}^{s}|K(s-u)|^2\d u\d s\nonumber\\&=\frac{1}{2HG}n^{4H}\int_{0}^{t}|\delta_{(n,s)}|^{4H}E^{j,j,l,l'}_s\tilde{g}^n_s\d u\d s.\nonumber
		\end{align}
		Lemma \ref{limit distribution of stochastic integral} is applied here. 	By \eqref{back need}, it is clear that 
		\begin{align}
			\tilde{g}^n_sE^{j,j,l,l'}_s&\xrightarrow[\text { in } L^2(\d u\otimes \d P)]{n \rightarrow \infty} \frac{1}{G}\sigma^l_i(X_{s})\sigma^{l'}_{i}(X_{s})\int_{0}^{\infty}\mu(r,1)^2\d r\partial_l\sigma^{k_1}_j(X_{s})\partial_{l'}\sigma^{k_2}_{j}(X_{s})\,. 
		\end{align}
	  Then,   the  assumptions  in  Lemma \ref{limit distribution of stochastic integral}   are satisfied and an application of this lemma yields
		\begin{align}
			\mathcal{Q}^n_{33}\xrightarrow[\text { in } L^2]{n \rightarrow \infty} \frac{1}{2HG^2(4H+1)}\int_{0}^{\infty}\mu(r,1)^2\d r\int_{0}^{t}\sigma^l_i(X_{s})\sigma^{l'}_{i}(X_{s})\partial_l\sigma^{k_1}_j(X_{s})\partial_{l'}\sigma^{k_2}_{j}(X_{s})\d s.\nonumber
		\end{align}
		The proof is complete.
	\end{proof}
	\begin{lem}\label{Q-34 35}
		For any $i,i',j=1,\cdots,m;l,l'=1,\cdots,d,$
		$$(i)\quad  \lim_{n\to\infty}\sup_{t\in[0,T]}\|\mathcal{Q}^{n}_{34}\|_{L^2}=0,$$
		$$(ii)\quad  \lim_{n\to\infty}\sup_{t\in[0,T]}\|\mathcal{Q}^{n}_{35}\|_{L^2}=0.$$
	\end{lem}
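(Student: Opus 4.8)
The plan is to handle $\mathcal{Q}^n_{34}$ and $\mathcal{Q}^n_{35}$ by the same mechanism already used for $\mathcal{Q}^n_{31}$ and $\mathcal{Q}^n_{32}$ in Lemma \ref{Q-31, 32}: the only structural difference is that the process $D^{i,i',l,l'}_{1,u}$ there is replaced here by $D^{i,i',l,l'}_{2,u}$ of \eqref{def of D 2}. Recall that
\begin{align}
\mathcal{Q}^n_{34}=n^{2H}\sum_{i,i'=1}^{m}\int_{0}^{t}E^{j,j,l,l'}_s\int_{\frac{[ns]}{n}}^{s}|K(s-u)|^2 D^{i,i',l,l'}_{2,u}\d u\,\d s,
\end{align}
so the goal is to show this tends to $0$ in $L^2$, uniformly in $t\in[0,T]$.

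First I would use the boundedness of the derivatives of $\sigma$, so that $E^{j,j,l,l'}_s$ is bounded uniformly. Then, by Minkowski's integral inequality together with Fubini's theorem, I separate the deterministic kernel from the stochastic factor: on the region $\frac{[ns]}{n}\le u\le s$ one has $[nu]=[ns]$, and the $s$-integral of $|K(s-u)|^2$ over a single mesh interval is controlled, uniformly in $n$, by Lemma \ref{small time estimate}, (i), namely $\sup_{s\in[0,T]}n^{2H}\int_{\frac{[ns]}{n}}^{s}|K(s-u)|^2\d u=\tfrac{1}{2HG}\sup_{s}\delta_{(n,s)}^{2H}\le C$. This produces the bound
\begin{align}
\|\mathcal{Q}^n_{34}\|_{L^2}\le \sum_{i,i'=1}^{m}\Big\|\int_0^t D^{i,i',l,l'}_{2,s}\,\d s\Big\|_{L^2}\cdot\sup_{s\in[0,T]}n^{2H}\int_{\frac{[ns]}{n}}^{s}|K(s-u)|^2\d u\le C\sum_{i,i'=1}^{m}\Big\|\int_0^t D^{i,i',l,l'}_{2,s}\,\d s\Big\|_{L^2}.
\end{align}
The convergence $\lim_{n\to\infty}\sup_{t\in[0,T]}\|\mathcal{Q}^n_{34}\|_{L^2}=0$ then follows at once from Lemma \ref{esti of D 2}, which asserts precisely that $\sup_{t\in[0,T]}\|\int_0^t D^{i,i',l,l'}_{2,s}\,\d s\|_{L^2}\to 0$. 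For $\mathcal{Q}^n_{35}$ the expression is identical up to interchanging the indices $i,i'$ inside $D_2$; since Lemma \ref{esti of D 2} is stated for all $i,i'$, the same three lines give $\mathcal{Q}^n_{35}\to 0$ in $L^2$ as well.

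I expect the only delicate point to be the Fubini rearrangement that allows $\int_0^t D^{i,i',l,l'}_{2,s}\,\d s$ to be pulled out of the double integral while the remaining kernel factor stays uniformly bounded after multiplication by $n^{2H}$; this is exactly the step already carried out for $\mathcal{Q}^n_{31}$ and it rests on Lemma \ref{small time estimate}. All the substantive analysis—the cancellation inside the iterated It\^o integral defining $D_2$ that forces $\int_0^t D_{2,s}\,\d s\to 0$—has been isolated in Lemma \ref{esti of D 2} (taken from \cite{FU}), so no estimate beyond those already established is required and the proof reduces to this routine reduction.
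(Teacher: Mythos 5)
Your argument is correct and follows essentially the same route as the paper: bound $E^{j,j,l,l'}_s$ by the boundedness of the derivatives of $\sigma$, use Minkowski's inequality, Fubini's theorem and Lemma \ref{small time estimate}(i) to extract the uniformly bounded factor $\sup_{s}n^{2H}\int_{[ns]/n}^{s}K(s-u)^2\,\d u$, and then conclude via Lemma \ref{esti of D 2}; the case of $\mathcal{Q}^n_{35}$ is handled identically. The paper's proof is exactly this three-line reduction, so no further comparison is needed.
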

	\begin{proof}
		Since the derivatives of $\sigma$ are bounded, we see 
		\begin{align}
			\|\mathcal{Q}^n_{34}\|_{L^2}&\leq \sum_{i,i'=1}^{m}\Big\|\int_{0}^{t}D^{i ,i',l,l'}_{2,s}\d s\Big\|_{L^2}\sup_{s\in[0,T]}n^{2H}\int_{\frac{[ns]}{n}}^{s}K(s-u)^2\d u\nonumber\\&\leq C\sum_{i,i'=1}^{m}\Big\|\int_{0}^{t}D^{i ,i',l,l'}_{2,s}\d s\Big\|_{L^2}\nonumber\\&\leq 0 \quad \text{as $n\to\infty$},\nonumber
		\end{align}
		by \eqref{def of E}, Minkowski's inequality, Fubini's theorem, Lemmas  \ref{small time estimate} and   \ref{esti of D 2}. 
		
		Similar to $\mathcal{Q}^n_{34}$, it holds that $\mathcal{Q}^n_{35}\to 0$ in $L^2$. 
	\end{proof}
	\begin{lem}\label{Q-36}
		For any $i,j=1,\cdots,m;l,l'=1,\cdots,d,$  we have 
		\begin{align}
			\mathcal{Q}^n_{36}&\xrightarrow[\text { in } L^2]{n \rightarrow \infty} C^{(4)}_G\int_{0}^{t}\sigma^l_i(X_{s})\sigma^{l'}_{i}(X_{s})\partial_l\sigma^{k_1}_j(X_{s})\partial_{l'}\sigma^{k_2}_{j}(X_{s})\d s,\nonumber
		\end{align}
		where
		\begin{align}
			C_G^{(4)}&:=\frac{1}{2HG^2}\int_{0}^{1}\int_{-r}^{0}(z+r)^{2H-1}(z+[-z])^{2H}\d z\d r.\nonumber
		\end{align}
	\end{lem}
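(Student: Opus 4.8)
The plan is to follow the same strategy already used for $\mathcal{O}^n_{36}$ in Lemma \ref{36} and for $\mathcal{Q}^n_{33}$ in Lemma \ref{Q-33}: collapse the triple integral into a single integral over $s$ against a bounded function of the fractional part $ns-[ns]$, and then invoke the fractional-parts limit theorem (Lemma \ref{limit distribution of stochastic integral}). First I would evaluate the innermost integral. Recalling
\[
\mathcal{Q}^n_{36}=n^{4H}\int_{0}^{t}E^{j,j,l,l'}_s\int_{\frac{[ns]}{n}}^{s}K(s-u)^2 F^{i,i,l,l'}_u\int_{\frac{[nu]}{n}}^{u}K(u-v)^2\d v\,\d u\,\d s,
\]
I note that $K(x)^2=x^{2H-1}/G$, so a direct computation (equivalently Lemma \ref{small time estimate}, $(i)$) gives $\int_{\frac{[nu]}{n}}^{u}K(u-v)^2\d v=\frac{1}{2HG}\delta_{(n,u)}^{2H}$. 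The key observation is that for $u$ in the range $[\frac{[ns]}{n},s]$ one has $[nu]=[ns]$, hence $F^{i,i,l,l'}_u=F^{i,i,l,l'}_s$ and $\delta_{(n,u)}=u-\frac{[ns]}{n}$; this allows me to pull $E^{j,j,l,l'}_s F^{i,i,l,l'}_s$ out of the $u$-integral and reduce $\mathcal{Q}^n_{36}$ to
\[
\frac{1}{2HG}\int_{0}^{t}E^{j,j,l,l'}_s F^{i,i,l,l'}_s\Big[n^{4H}\int_{\frac{[ns]}{n}}^{s}K(s-u)^2\big(u-\tfrac{[ns]}{n}\big)^{2H}\d u\Big]\d s.
\]

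Next I would turn the bracketed quantity into a function of the fractional part. Making the change of variable $z=[ns]-nu$, exactly as in \eqref{tranfer of K} and \eqref{K-2}, and writing $r=ns-[ns]$, the bracket becomes $\frac{1}{G}\int_{-r}^{0}(z+r)^{2H-1}(z+[-z])^{2H}\d z=\frac{1}{G}\,g(ns-[ns])$, where $g(r):=\int_{-r}^{0}(z+r)^{2H-1}(z+[-z])^{2H}\d z$ is a bounded (Beta-type) function of $r$, so in particular $g\in L^2(0,1)$. Thus
\[
\mathcal{Q}^n_{36}=\frac{1}{2HG^2}\int_{0}^{t}g(ns-[ns])\,E^{j,j,l,l'}_s F^{i,i,l,l'}_s\,\d s.
\]

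Finally I would pass to the limit. The product $E^{j,j,l,l'}_s F^{i,i,l,l'}_s$ converges to the almost surely continuous process $\sigma^l_i(X_s)\sigma^{l'}_i(X_s)\partial_l\sigma^{k_1}_j(X_s)\partial_{l'}\sigma^{k_2}_j(X_s)$ in $L^2(\d s\otimes\d\PP)$; this is precisely \eqref{limit distribution-1}, which was already established in the proof of Lemma \ref{36}. Applying Lemma \ref{limit distribution of stochastic integral} with this $g$ and $H^{(n)}_s=E^{j,j,l,l'}_s F^{i,i,l,l'}_s$ then yields
\[
\mathcal{Q}^n_{36}\xrightarrow[\text{in }L^2]{n\to\infty}\frac{1}{2HG^2}\Big(\int_0^1 g(r)\,\d r\Big)\int_{0}^{t}\sigma^l_i(X_s)\sigma^{l'}_i(X_s)\partial_l\sigma^{k_1}_j(X_s)\partial_{l'}\sigma^{k_2}_j(X_s)\,\d s,
\]
and the prefactor $\frac{1}{2HG^2}\int_0^1 g(r)\,\d r$ is exactly the constant $C^{(4)}_G$.

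The main obstacle is the second step: carrying the change of variables cleanly through the floor functions so that the inner $u$-integral is identified \emph{exactly} with the fractional-part function $g(ns-[ns])$ whose average defines $C^{(4)}_G$, together with checking the hypotheses of Lemma \ref{limit distribution of stochastic integral} (that $g\in L^2(0,1)$ and that the $L^2(\d s\otimes\d\PP)$-convergence holds with an almost surely continuous limit). Once the integrand is written in the form $g(ns-[ns])\,H^{(n)}_s$, the conclusion follows immediately from the cited lemmas, and the identification of $C^{(4)}_G$ is a routine evaluation of $\int_0^1 g(r)\,\d r$.
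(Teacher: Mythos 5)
Your proposal is correct and follows essentially the same route as the paper: pull $F^{i,i,l,l'}_u=F^{i,i,l,l'}_s$ out of the inner integral (since $[nu]=[ns]$ there), evaluate $\int_{[nu]/n}^{u}K(u-v)^2\,\d v=\frac{1}{2HG}\delta_{(n,u)}^{2H}$ via Lemma \ref{small time estimate}, change variables $z=[ns]-nu$ to exhibit the bracket as a bounded function of the fractional part $ns-[ns]$ (the paper's \eqref{K-3}), and conclude with \eqref{limit distribution-1} and Lemma \ref{limit distribution of stochastic integral}. The identification of $\int_0^1 g(r)\,\d r$ with the double integral defining $C^{(4)}_G$ is exactly as in the paper.
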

	\begin{proof}
		By Lemma \ref{small time estimate} we have
		\begin{align}
			\mathcal{Q}^n_{33}&=n^{4H}\int_{0}^{t}E^{j,j,l,l'}_s\int_{\frac{[ns]}{n}}^{s}|K(s-u)|^2F^{i,i,l,l'}_u\int_{\frac{[nu]}{n}}^{u}K(u-v)^2\d v\d u\d s\nonumber\\&=n^{4H}\int_{0}^{t}E^{j,j,l,l'}_sF^{i,i,l,l'}_s\int_{\frac{[ns]}{n}}^{s}|K(s-u)|^2\int_{\frac{[nu]}{n}}^{u}K(u-v)^2\d v\d u\d s\nonumber\\&=\frac{1}{2HG}n^{4H}\int_{0}^{t}E^{j,j,l,l'}_sF^{i,i,l,l'}_s\int_{\frac{[ns]}{n}}^{s}|K(s-u)|^2|\delta_{(n,u)}|^{2H}\d u\d s.\nonumber
		\end{align}
		Using the change of variable $z=[ns]-nu$, we obtain
		\begin{align}\label{K-3}
			n^{2H}\int_{\frac{[ns]}{n}}^{s}K(s-u)^2|n\delta_{(n,u)}|^{2H}\d u=\frac{1}{G}\int_{[ns]-ns}^{0}(z+ns-[ns])^{2H-1}(z+[-z])^{2H}\d z.
		\end{align}
		By  \eqref{limit distribution-1},  \eqref{K-3},  and Lemma \ref{limit distribution of stochastic integral}  we have
		\begin{align}
			&\mathcal{O}^n_{362}\xrightarrow[\text { in } L^2]{n \rightarrow \infty} \frac{1}{2HG^2}\int_{0}^{1}\int_{-r}^{0}(z+r)^{2H-1}(z+[-z])^{2H}\d z\d r\int_{0}^{t}\sigma^l_i(X_{s})\sigma^{l'}_{i}(X_{s})\partial_l\sigma^{k_1}_j(X_{s})\partial_{l'}\sigma^{k_2}_{j}(X_{s})\d s.\nonumber
		\end{align}
		The proof is complete.
	\end{proof}
	\begin{lem}\label{Q-37}
		For any $j=1,\cdots,m;l,l'=1,\cdots,d,$
		$$\quad  \lim_{n\to\infty}\sup_{t\in[0,T]}\|\mathcal{Q}^{n}_{37}\|_{L^2}=0,$$
		$$\quad  \lim_{n\to\infty}\sup_{t\in[0,T]}\|\mathcal{Q}^{n}_{38}\|_{L^2}=0.$$
	\end{lem}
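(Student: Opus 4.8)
The plan is to dispose of $\mathcal{Q}^n_{37}$ and $\mathcal{Q}^n_{38}$ by reusing the $L^2$-bound on the quantity $D^{i,i',l,l'}_{3,u}$ already obtained in \eqref{bound of D-3}, while exploiting that---unlike for $\mathcal{O}^n_{37}$ in Lemma \ref{37}---the inner integral here runs over the short interval $[\frac{[ns]}{n},s]$ against $|K(s-u)|^2$, so that the relevant small-time estimate carries a genuinely \emph{decaying} factor. Consequently a direct Minkowski estimate already suffices and no tower-property cancellation is needed.

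First I would rewrite the product $M^{1,n,l}_uM^{2,n,l'}_u$ in terms of $D^{i,i',l,l'}_{3,u}$. Recalling \eqref{M u 1}, \eqref{M u 2} and \eqref{g li}, summation over the driving components gives
\begin{align*}
n^{2H}M_u^{1,n,l}M_u^{2,n,l'}=\sum_{i,i'=1}^{m}D^{i,i',l,l'}_{3,u},
\end{align*}
so that, by the definition of $\mathcal{Q}^n_{37}$ and \eqref{def of E},
\begin{align*}
\mathcal{Q}^n_{37}=n^{2H}\sum_{i,i'=1}^{m}\int_{0}^{t}E^{j,j,l,l'}_s\int_{\frac{[ns]}{n}}^{s}|K(s-u)|^2\,D^{i,i',l,l'}_{3,u}\,\d u\,\d s.
\end{align*}

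Next I would apply Minkowski's integral inequality over the region $\{(s,u):0\le s\le t,\ \frac{[ns]}{n}\le u\le s\}$. Since the derivatives of $\sigma$ are bounded we have $|E^{j,j,l,l'}_s|\le C$, and the uniform estimate $\E[|D^{i,i',l,l'}_{3,u}|^2]\le C$ from \eqref{bound of D-3} then yields
\begin{align*}
\|\mathcal{Q}^n_{37}\|_{L^2}\le C\,n^{2H}\int_{0}^{t}\int_{\frac{[ns]}{n}}^{s}|K(s-u)|^2\,\d u\,\d s.
\end{align*}
The decisive ingredient is Lemma \ref{small time estimate}, (i), by which $n^{2H}\int_{\frac{[ns]}{n}}^{s}|K(s-u)|^2\,\d u=\frac{1}{2HG}\delta_{(n,s)}^{2H}\le\frac{1}{2HG}n^{-2H}$; integrating in $s$ gives $\|\mathcal{Q}^n_{37}\|_{L^2}\le C\,n^{-2H}T$, a bound uniform in $t\in[0,T]$, whence $\sup_{t\in[0,T]}\|\mathcal{Q}^n_{37}\|_{L^2}\to0$. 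The term $\mathcal{Q}^n_{38}$ is treated identically after swapping $(M^{1,n,l},M^{2,n,l'})$ for $(M^{2,n,l},M^{1,n,l'})$, i.e. interchanging $l\leftrightarrow l'$ and $i\leftrightarrow i'$, which produces the same $D_{3}$-type factor and hence the same bound.

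I do not anticipate a real obstacle here; the single point to watch is that the small-time estimate over $[\frac{[ns]}{n},s]$ supplies the extra gain $\delta_{(n,s)}^{2H}\le n^{-2H}$, in contrast with the merely $O(1)$ estimate over $[0,\frac{[ns]}{n}]$ (Lemma \ref{small time estimate}, (ii)) that forced the delicate martingale argument for $\mathcal{O}^n_{37}$ in Lemma \ref{37}. Recognizing this distinction is what reduces the present lemma to a one-line Minkowski bound.
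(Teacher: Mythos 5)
Your proof is correct, but it takes a genuinely different and substantially shorter route than the paper's. The paper also rewrites $n^{2H}M^{1,n,l}_uM^{2,n,l'}_u=\sum_{i,i'}D^{i,i',l,l'}_{3,u}$ and re-derives the uniform bound $\E[|D^{i,i',l,l'}_{3,u}|^2]\le C$ (as \eqref{bound of D-3-2}), but then it estimates the second moment $\E[|\mathcal{Q}^n_{37}|^2]$ as a double integral in $(s,z)$, splits according to whether $z\le \frac{[ns]}{n}$ or $z>\frac{[ns]}{n}$, kills the first region by the tower-property orthogonality $\E[D_{3,u}D_{3,w}]=0$ (the analogue of \eqref{est of 3721}), and handles the second by Cauchy--Schwarz together with the vanishing Lebesgue measure of $\{(s,z): \frac{[ns]}{n}<z<s\}$ — i.e., it only ever uses the $O(1)$ bound $n^{2H}\int_{[ns]/n}^sK(s-u)^2\,\d u\le C$ from Lemma \ref{small time estimate}(i). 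You instead observe that this same identity actually gives the quantitative decay $n^{2H}\int_{[ns]/n}^{s}K(s-u)^2\,\d u=\frac{1}{2HG}\delta_{(n,s)}^{2H}\le Cn^{-2H}$, so a single application of Minkowski's integral inequality combined with $\|E^{j,j,l,l'}_sD^{i,i',l,l'}_{3,u}\|_{L^2}\le C$ (boundedness of $\nabla\sigma$ plus \eqref{bound of D-3}) yields $\sup_{t\le T}\|\mathcal{Q}^n_{37}\|_{L^2}\le CTn^{-2H}\to0$ with no martingale cancellation and no splitting of the integration region. Your diagnosis of why this shortcut is unavailable for $\mathcal{O}^n_{37}$ in Lemma \ref{37} — there the inner kernel integral over $[0,\frac{[ns]}{n}]$ is only $O(1)$ by Lemma \ref{3-small time estimate}, so the orthogonality argument is genuinely needed — is exactly right, and the symmetric treatment of $\mathcal{Q}^n_{38}$ is unproblematic since the bound on $D_3$ is uniform in all indices. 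Your argument even gives an explicit rate $n^{-2H}$, which the paper's qualitative dominated-convergence step does not.
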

	\begin{proof}
		By \eqref{M u 1}, \eqref{M u 2}, \eqref{g li}, \eqref{D-3} and Fubini's theorem we have
		\begin{align}
			\mathcal{Q}^n_{37}&= n^{4H}\int_{0}^{t}E^{j,j,l,l'}_s\int_{\frac{[ns]}{n}}^{s}K(s-u)^2M_u^{1,n,l}M_u^{2,n,l'}\d u\d s\nonumber\\&=\sum_{i,i'=1}^{m}n^{4H}\int_{0}^{t}E^{j,j,l,l'}_s\int_{\frac{[ns]}{n}}^{s}K(s-u)^2\Big(\int_{0}^{\frac{[nu]}{n}}g_{li}(u,v)\d W^i_v\Big)\sigma^{l'}_{i'}(X^n_{\frac{[nu]}{n}})\int_{\frac{[nu]}{n}}^{u}K(u-v)\d W^{i'}_v\d u\d s\nonumber\\&=\sum_{i,i'=1}^{m}n^{2H}\int_{0}^{t}E^{j,j,l,l'}_s\int_{\frac{[ns]}{n}}^{s}K(s-u)^2D^{i,i',l,l'}_{3,u}\d u\d s.\nonumber
		\end{align}
		By \eqref{dcp of stochastic integral}, \eqref{g li}, \eqref{def of D 1}, \eqref{def of D 2}, the linear growth properties of $\sigma$, the Cauchy-Schwarz inequality, Minkowski's inequality, Lemmas \ref{bound of X n},  \ref{small time estimate},   \ref{esti of D 1}, and  \ref{esti of D 2}, we have
		\begin{align}\label{bound of D-3-2}
			\E[&|D^{i ,i',l,l'}_{3,u}|^2]\nonumber\\&=n^{4H}\E\Big[\Big(\int_{0}^{\frac{[nu]}{n}}g_{li}(u,v)\d W^i_v\Big)^2\Big( \int_{\frac{[nu]}{n}}^{u}K(u-v)\sigma^{l'}_{i'}(X^n_{\frac{[nv]}{n}})\d W^{i'}_v\Big)^2\Big]\nonumber\\&=n^{4H}\E\Big[\Big(\int_{0}^{\frac{[nu]}{n}}\int_{0}^{v}g_{li}(u,r)\d W_r^{i}g_{li}(u,v)\d W^i_v+\int_{0}^{\frac{[nu]}{n}}\int_{0}^{r}g_{li}(u,v)\d W_v^{i}g_{li}(u,r)\d W^i_r\nonumber\\&\quad+\int_{0}^{\frac{[nu]}{n}}|g_{li}(u,v)|^2\d v\Big)\Big(\int_{\frac{[nu]}{n}}^{u}\int_{\frac{[nu]}{n}}^{v}K(u-r)\sigma^{l'}_{i'}(X^n_{\frac{[nr]}{n}})\d W^{i'}_rK(u-v)\sigma^{l'}_{i'}(X^n_{\frac{[nv]}{n}})\d W^{i'}_v\nonumber\\&\quad+\int_{\frac{[nu]}{n}}^{u}\int_{\frac{[nu]}{n}}^{r}K(u-v)\sigma^{l'}_{i'}(X^n_{\frac{[nv]}{n}})\d W^{i'}_vK(u-r)\sigma^{l'}_{i'}(X^n_{\frac{[nr]}{n}})\d W^{i'}_r+\int_{\frac{[nu]}{n}}^{u}K(u-v)^2|\sigma^{l'}_{i'}(X^n_{\frac{[nv]}{n}})|^2\d v\Big)\Big]\nonumber\\&\leq C\E\Big[\Big(D^{i, i,l,l'}_{1,u}+n^{2H}\int_{0}^{\frac{[nu]}{n}}|g_{li}(u,v)|^2\d v\Big)\Big(D^{i, i,l,l'}_{2,u}+n^{2H}\int_{\frac{[nu]}{n}}^{u}K(u-v)^2|\sigma^{l'}_{i'}(X^n_{\frac{[nv]}{n}})|^2\d v\Big)\Big]\nonumber\\&\leq C\Big(\|D^{i ,i,l,l'}_{1,u}\|_{L^2}\|D^{i ,i,l,l'}_{2,u}\|_{L^2}+n^{2H}\int_{\frac{[nu]}{n}}^{u}K(u-v)^2\|\sigma^{l'}_{i'}(X^n_{\frac{[nv]}{n}})\|^2_{L^4}\d v\|D^{i, i,l,l'}_{1,u}\|_{L^2}\nonumber\\&\quad+\int_{0}^{\frac{[nu]}{n}}\Big(K(u-v)-K(\frac{[nu]}{n}-v)\Big)^2 \|\sigma^{l'}_{i'}(X^n_{\frac{[nv]}{n}})\|^2_{L^4}\d v\|D^{i, i,l,l'}_{1,u}\|_{L^2}\nonumber\\&\quad+\int_{0}^{\frac{[nu]}{n}}\Big(K(u-v)-K(\frac{[nu]}{n}-v)\Big)^2 \|\sigma^{l'}_{i'}(X^n_{\frac{[nv]}{n}})\|^2_{L^4}\d v\int_{\frac{[nv]}{n}}^{u}K(u-v)^2\|\sigma^{l'}_{i'}(X^n_{\frac{[nv]}{n}})\|^2_{L^4}\d v\Big)\nonumber\\&\leq C,
		\end{align}
		with $C$ being independent of $n$.
		
		Since the derivatives of $\sigma$ are bounded, by Fubini's theorem we have
		\begin{align}
			\E[&|\mathcal{Q}^n_{37}|^2] \nonumber\\ &\leq C n^{4H}\int_{0}^{t}\int_{0}^{s}E\Big[\int_{\frac{[ns]}{n}}^{s}K(s-u)^2 D^{i ,i',l,l'}_{3,u}\d u \int_{\frac{[nz]}{n}}^{z}K(z-w)^2 D^{i, i',l,l'}_{3,w}\d w\Big]\d z \d s\nonumber\\&\leq C \int_{0}^{t}\int_{0}^{\frac{[ns]}{n}}n^{4H}\int_{\frac{[ns]}{n}}^{s}\int_{\frac{[nz]}{n}}^{z}K(s-u)^2 K(z-w)^2 \E\Big[ D^{i ,i',l,l'}_{3,u}  D^{i ,i',l,l'}_{3,w}\Big]\d w\d u\d z \d s\nonumber\\&\quad+C \int_{0}^{t}\int_{\frac{[ns]}{n}}^{s}n^{4H}\int_{\frac{[ns]}{n}}^{s}\int_{\frac{[nz]}{n}}^{z}K(s-u)^2 K(z-w)^2 \E\Big[ D^{i ,i',l,l'}_{3,u}  D^{i ,i',l,l'}_{3,w}\Big]\d w\d u\d z \d s\nonumber\\&:=\mathcal{Q}^n_{371}+\mathcal{O}^n_{372}.\nonumber
		\end{align}
		For the term $\mathcal{Q}^n_{371}$, Lemma \ref{small time estimate} and \eqref{est of 3721} yield that
		$$\mathcal{Q}^n_{371}\to0\quad \text{as $n\to\infty$}.$$
		For the term $\mathcal{Q}^n_{372}$, by \eqref{bound of D-3-2}, the Cauchy-Schwarz inequality, Lemma \ref{small time estimate} and the dominated convergence theorem, we obtain
		\begin{align}
			\lim_{n\to\infty}\mathcal{O}^n_{372}&\leq C\lim_{n\to\infty} \int_{0}^{t}\int_{\frac{[ns]}{n}}^{s}n^{4H}\int_{\frac{[ns]}{n}}^{s}\int_{\frac{[nz]}{n}}^{z}K(s-u)^2K(z-w)^2 \d w\d u\d z \d s\nonumber\\&\leq C\int_{0}^{t} \int_{0}^{s}\lim_{n\to\infty}\mathbb{I}_{( \frac{[ns]}{n},s)}(z)\d z \d s=0.\nonumber
		\end{align}
		Therefore we conclude that $\lim_{n\to\infty}\|\mathcal{Q}^n_{37}\|_{L^2}=0$.
		Similar to $\mathcal{Q}^n_{37}$, it holds that $\mathcal{Q}^n_{38}\to 0$ in $L^2$.
	\end{proof}
	\begin{lem}\label{Q-1,2}
		For any $j,j'=1,\cdots,m;l,l'=1,\cdots,d,$
		\[
		\begin{split} 
			&\lim_{n\to\infty}\sup_{t\in[0,T]}\|\mathcal{Q}^{n}_{1}\|_{L^2}=0,\\
			&\lim_{n\to\infty}\sup_{t\in[0,T]}\|\mathcal{Q}^{n}_{2}\|_{L^2}=0.\\
		\end{split}
		\]
	\end{lem}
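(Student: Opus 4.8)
The plan is to treat $\mathcal{Q}^n_1$ and $\mathcal{Q}^n_2$ exactly as the earlier double stochastic integral terms handled in the proofs of Lemmas \ref{37} and \ref{1,2}, relying on the fact that the relevant inner object is a martingale increment over the short block $[\frac{[ns]}{n},s]$. First I would set
\begin{align}
	D^{j,j',l,l'}_{8,s}:=n^{4H}\int_{\frac{[ns]}{n}}^{s}\Big(\int_{\frac{[ns]}{n}}^{u}K(s-r)M_r^{n,l'}\d W_r^{j'}\Big)K(s-u)M_u^{n,l}\d W_u^{j},\nonumber
\end{align}
so that $\mathcal{Q}^n_1=\int_0^t E^{j,j',l,l'}_s D^{j,j',l,l'}_{8,s}\d s$, and the first objective is the uniform moment bound $\sup_n\sup_{s\in[0,T]}\E[|D^{j,j',l,l'}_{8,s}|^2]\le C$.

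To obtain this bound I would apply the BDG inequality to the outer $\d W^j_u$-integral and then Cauchy--Schwarz in the probability space, giving
\begin{align}
	\E[|D^{j,j',l,l'}_{8,s}|^2]\le Cn^{8H}\int_{\frac{[ns]}{n}}^{s}K(s-u)^2\Big\|\int_{\frac{[ns]}{n}}^{u}K(s-r)M_r^{n,l'}\d W_r^{j'}\Big\|_{L^4}^2\,\|M_u^{n,l}\|_{L^4}^2\,\d u.\nonumber
\end{align}
By the BDG and Minkowski inequalities together with Lemma \ref{bound of M n}, which gives $\|M_r^{n}\|_{L^4}^2\le Cn^{-2H}$, and Lemma \ref{small time estimate}(i), which gives $\int_{\frac{[ns]}{n}}^{s}K(s-r)^2\d r\le Cn^{-2H}$, the inner $L^4$-norm is of order $n^{-2H}$ and $\|M_u^{n,l}\|_{L^4}^2$ is of order $n^{-2H}$; substituting and again using $n^{2H}\int_{\frac{[ns]}{n}}^{s}K(s-u)^2\d u\le C$ collapses the powers of $n$ to $n^{8H}\cdot n^{-4H}\cdot n^{-2H}\cdot n^{-2H}=1$, which yields the claimed uniform bound.

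With this bound secured, the convergence would follow from the martingale structure. By Fubini's theorem and symmetry in $(s,v)$,
\begin{align}
	\E[|\mathcal{Q}^n_1|^2]=2\int_0^t\int_0^s \E\big[E^{j,j',l,l'}_s E^{j,j',l,l'}_v D^{j,j',l,l'}_{8,s}D^{j,j',l,l'}_{8,v}\big]\,\d v\,\d s,\nonumber
\end{align}
and I would split the inner integral at $\frac{[ns]}{n}$. On $\{0\le v\le\frac{[ns]}{n}\}$ the factors $E^{j,j',l,l'}_s$, $E^{j,j',l,l'}_v$ and $D^{j,j',l,l'}_{8,v}$ are all $\mathcal{F}_{\frac{[ns]}{n}}$-measurable, while $D^{j,j',l,l'}_{8,s}$ is a stochastic integral over $[\frac{[ns]}{n},s]$ with adapted integrand, so $\E[D^{j,j',l,l'}_{8,s}\,|\,\mathcal{F}_{\frac{[ns]}{n}}]=0$ and the tower property forces this part to vanish identically. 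On the complementary region $\{\frac{[ns]}{n}\le v\le s\}$, the boundedness of the derivatives of $\sigma$ (so that $|E^{j,j',l,l'}_s|,|E^{j,j',l,l'}_v|\le C$), the Cauchy--Schwarz inequality and the uniform bound $\E[|D^{j,j',l,l'}_{8,s}|^2]\le C$ yield
\begin{align}
	\Big|\int_0^t\int_{\frac{[ns]}{n}}^{s}\E\big[E^{j,j',l,l'}_s E^{j,j',l,l'}_v D^{j,j',l,l'}_{8,s}D^{j,j',l,l'}_{8,v}\big]\,\d v\,\d s\Big|\le C\int_0^t\Big(s-\frac{[ns]}{n}\Big)\d s\le \frac{CT}{n}\to 0.\nonumber
\end{align}

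Hence $\sup_{t\in[0,T]}\E[|\mathcal{Q}^n_1|^2]\to 0$, that is, $\mathcal{Q}^n_1\to 0$ in $L^2$. Since $\mathcal{Q}^n_2$ is obtained from $\mathcal{Q}^n_1$ merely by interchanging the pairs $(j,l)$ and $(j',l')$ in the iterated integral, the identical argument gives $\mathcal{Q}^n_2\to 0$ in $L^2$ as well. I expect the main obstacle to be the uniform $L^2$-bound on $D^{j,j',l,l'}_{8,s}$: one must verify that the $n^{4H}$ prefactor of this iterated short-block integral is absorbed exactly by the two factors of $M^n$ (each of size $n^{-H}$ in $L^4$) and the two short-time kernel integrals (each of size $n^{-2H}$), after which the remaining estimate is the familiar tower-property-plus-shrinking-support argument used repeatedly in Section \ref{sec 3}.
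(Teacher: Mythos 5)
Your argument is correct, and while its skeleton matches the paper's (Fubini plus symmetry, then the tower property with $\E[D^{j,j',l,l'}_{8,s}\,|\,\mathcal{F}_{\frac{[ns]}{n}}]=0$ to annihilate the region $v\le\frac{[ns]}{n}$), you handle the near-diagonal region $\frac{[ns]}{n}\le v\le s$ by a genuinely different and more elementary route. The paper expands $\E[D^{j,j',l,l'}_{8,s}D^{j,j',l,l'}_{8,v}]$ exactly via the product formula \eqref{dcp of stochastic integral}, kills two of the three resulting terms by further conditioning, and reduces the survivor to $n^{2H}\int_{\frac{[ns]}{n}}^{v}K(s-u)K(v-u)\E[D^{l,l'}_{9,u}]\,\d u$, which it controls through the bound $\E[D^{l,l'}_{9,u}]\le C$ and the pointwise decay of $B_n(v,s)$ from Lemma \ref{esti of B}, finishing with dominated convergence. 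You instead establish the uniform second-moment bound $\sup_n\sup_s\E[|D^{j,j',l,l'}_{8,s}|^2]\le C$ directly (your power count $n^{8H}\cdot n^{-4H}\cdot n^{-2H}\cdot n^{-2H}=1$ is the same arithmetic the paper uses for $\E[D^{l,l'}_{9,u}]$ in \eqref{est of D-9}), then apply Cauchy--Schwarz and observe that the region $\{\frac{[ns]}{n}\le v\le s\}$ has Lebesgue measure $O(1/n)$. This avoids Lemma \ref{esti of B} and the $D_9$ computation entirely, yields the explicit rate $\sup_t\|\mathcal{Q}^n_1\|_{L^2}=O(n^{-1/2})$, and is in fact the same shrinking-support device the paper uses for $\mathcal{O}^n_{3712}$, $\mathcal{O}^n_{3722}$ and $\mathcal{P}^n_2$; the paper's longer route buys nothing extra here beyond the exact identity for the cross-moment. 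Your reduction of $\mathcal{Q}^n_2$ to $\mathcal{Q}^n_1$ by swapping $(j,l)$ and $(j',l')$ is also exactly what the paper does.
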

	\begin{proof}
		Denote
		\begin{align}\label{D-8}
			D^{j ,j',l,l'}_{8,s}:=n^{4H}\int_{\frac{[ns]}{n}}^{s}\Big(\int_{\frac{[ns]}{n}}^{u}K(s-r)M_r^{n,l'}\d W_r^{j'}\Big)K(s-u)M_u^{n,l}\d W_u^{j}.
		\end{align}
		By Fubini's theorem we  see  that
		$$\E[|\mathcal{Q}^n_1|^2]=\E\Big[\int_{0}^{t}\int_{0}^{t}D^{j, j',l,l'}_{8,s}D^{j, j',l,l'}_{8,v}\d v \d s\Big]=2\int_{0}^{t}\int_{0}^{s}\E[D^{j, j',l,l'}_{8,s}D^{j, j',l,l'}_{8,v}]\d v \d s.$$
		Note that $v<s$   implies that $\frac{[nv]}{n}<\frac{[ns]}{n}$, and if $v\leq \frac{[ns]}{n}$ by the tower property we have
		\begin{align}
			\E[D^{j, j',l,l'}_{8,s}D^{j, j',l,l'}_{8,v}]=\E[\E[D^{j, j',l,l'}_{8,s}|\mathcal{F}_{\frac{[ns]}{n}}]D^{j, j',l,l'}_{8,v}]=0.\nonumber
		\end{align}
		For $v>\frac{[ns]}{n}$, by \eqref{dcp of stochastic integral}, Fubini's theorem and the tower property we have
		\begin{align}\label{def of double D-8}
			&\E[D^{j, j',l,l'}_{8,s}D^{j, j',l,l'}_{8,v}]\nonumber\\&=n^{8H}\E\Big[\int_{\frac{[ns]}{n}}^{s}\Big(\int_{\frac{[ns]}{n}}^{u}K(s-r)M_r^{n,l}\d W_r^j\Big) K(s-u)M_u^{n,l'}\d W_u^{j'}\nonumber\\&\quad\cdot \int_{\frac{[nv]}{n}}^{\frac{[ns]}{n}}\Big(\int_{\frac{[nv]}{n}}^{u}K(v-r)M_r^{n,l}\d W_r^j\Big) K(v-u)M_u^{n,l'}\d W_u^{j'}\Big]\nonumber\\&\quad+n^{8H}\E\Big[\int_{\frac{[ns]}{n}}^{v}\Big(\int_{\frac{[ns]}{n}}^{u}K(s-r)M_r^{n,l}\d W_r^j\Big) K(s-u)M_u^{n,l'}\d W_u^{j'}\nonumber\\&\quad\cdot \int_{\frac{[ns]}{n}}^{v}\Big(\int_{\frac{[nv]}{n}}^{u}K(v-r)M_r^{n,l}\d W_r^j\Big) K(v-u)M_u^{n,l'}\d W_u^{j'}\Big]\nonumber\\&\quad+n^{8H}\E\Big[\int_{\frac{[ns]}{n}}^{v}\Big(\int_{\frac{[ns]}{n}}^{u}K(s-r)M_r^{n,l}\d W_r^j\Big) K(s-u)M_u^{n,l'}\d W_u^{j'}\nonumber\\&\quad\cdot \int_{v}^{s}\Big(\int_{\frac{[nv]}{n}}^{u}K(v-r)M_r^{n,l}\d W_r^j\Big) K(v-u)M_u^{n,l'}\d W_u^{j'}\Big]\nonumber\\&=n^{8H}\E\Big[\E\Big[\int_{\frac{[ns]}{n}}^{s}\Big(\int_{\frac{[ns]}{n}}^{u}K(s-r)M_r^{n,l}\d W_r^j\Big) K(s-u)M_u^{n,l'}\d W_u^{j'}|\mathcal{F}_{\frac{[ns]}{n}}\Big]\nonumber\\&\quad\cdot \int_{\frac{[nv]}{n}}^{\frac{[ns]}{n}}\Big(\int_{\frac{[nv]}{n}}^{u}K(v-r)M_r^{n,l}\d W_r^j\Big) K(v-u)M_u^{n,l'}\d W_u^{j'}\Big]\nonumber\\&\quad+n^{8H}\E\Big[\int_{\frac{[ns]}{n}}^{v}\Big(\int_{\frac{[ns]}{n}}^{u}K(s-r)M_r^{n,l}\d W_r^j\Big) \Big(\int_{\frac{[nv]}{n}}^{u}K(v-r)M_r^{n,l}\d W_r^j\Big)\nonumber\\&\quad\cdot K(s-u)K(v-u)|M_u^{n,l'}|^2\d u\Big]\nonumber\\&\quad+n^{8H}\E\Big[\int_{\frac{[ns]}{n}}^{v}\Big(\int_{\frac{[ns]}{n}}^{u}K(s-r)M_r^{n,l}\d W_r^j\Big) K(s-u)M_u^{n,l'}\d W_u^{j'}\nonumber\\&\quad\cdot \E\Big[\int_{v}^{s}\Big(\int_{\frac{[nv]}{n}}^{u}K(v-r)M_r^{n,l}\d W_r^j\Big) K(v-u)M_u^{n,l'}\d W_u^{j'}|\mathcal{F}_v\Big]\Big]\nonumber\\&=n^{2H}\int_{\frac{[ns]}{n}}^{v}K(s-u)K(v-u)\E[D^{l,l'}_{9,u}]\d u,
		\end{align}
		where
		\begin{align}\label{D-9}
			D^{l,l'}_{9,u}:=n^{6H}\Big(\int_{\frac{[ns]}{n}}^{u}K(s-r)M_r^{n,l}\d W_r^j\Big) \Big(\int_{\frac{[nv]}{n}}^{u}K(v-r)M_r^{n,l}\d W_r^j\Big)|M_u^{n,l'}|^2.
		\end{align}
		For any $m\geq 2, u\leq s$ and $s\leq T$, it holds
		\begin{align}\label{esti of stochastic integral-2}
			\Big\|\int_{\frac{[ns]}{n}}^{u}K(s-r)M_r^{n,l}\d W_r^j\Big\|_{L^m}&\leq C \Big\|\int_{\frac{[ns]}{n}}^{u}|K(s-r)M_r^{n,l}|^2\d r\Big\|^{\frac{1}{2}}_{L^{m/2}}\nonumber\\&\leq C\Big\|\int_{\frac{[ns]}{n}}^{s}K(s-r)^2|M_r^{n,l}|^2\d r\Big\|^{\frac{1}{2}}_{L^{m/2}}\nonumber\\&\leq C\Big(\int_{\frac{[ns]}{n}}^{s}K(s-r)^2\||M_r^{n,l}|^2\|_{L^{m/2}}\d r\Big)^{1/2}\nonumber\\&\leq Cn^{-2H},
		\end{align}
		where BDG's inequality, Minkowski's    ininequality, Lemmas \ref{bound of M n} and   \ref{small time estimate} are used.
		
		By the Cauchy-Schwarz inequality, Lemma \ref{bound of M n} and \eqref{esti of stochastic integral-2} we have
		\begin{align}\label{est of D-9}
			E[D^{l,l'}_{9,u}]&\leq n^{6H}\Big\|\int_{\frac{[ns]}{n}}^{u}K(s-r)M_r^{n,l}\d W_r^j\Big\|_{L^4}\Big\| \int_{\frac{[nv]}{n}}^{u}K(v-r)M_r^{n,l}\d W_r^j\Big\|\|M_u^{n,l'}\|^2_{L^4}\nonumber\\&\leq C.
		\end{align}
		This implies by Lemma \ref{esti of B}, \eqref{def of double D-8} and \eqref{est of D-9} that
		$$|\E[D^{j, j',l,l'}_{8,s}D^{j, j',l,l'}_{8,v}]|\leq Cn^{2H}\int_{\frac{[ns]}{n}}^{v}K(s-u)K(v-u)\d u\to 0.$$
		Applying the dominated convergence theorem  with respect to $\d v\otimes \d s$, we have $$\mathcal{Q}^n_{1}\xrightarrow {  L^2} 0.$$
		Similar to $\mathcal{Q}^n_{1}$, it holds that $\mathcal{Q}^n_{2}\to 0$ in $L^2$.
	\end{proof}
	From \eqref{cov of V} and Lemma \ref{est of A-1,2}, we quickly get that
	\begin{lem}\label{est of V-1}
		For all $t\in [0,T], k_1,k_2\in\{1,\cdots,d\}, 1\leq j\leq m,$
		$$\langle V^{n,k_1,j},  V^{n,k_2,j}\rangle_t\xrightarrow {  L^2} C_M\sum_{i,i'=1}^{m}\sum_{l,l'=1}^{d}\int_{0}^{t}\sigma^l_i(X_{s})\sigma^{l'}_{i}(X_{s})\partial_l\sigma^{k_1}_{i'}(X_{s})\partial_{l'}\sigma^{k_2}_{i'}(X_{s})\d s.$$
	\end{lem}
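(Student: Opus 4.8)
The plan is to read the result off the decomposition \eqref{cov of V} together with Lemma \ref{est of A-1,2}. By \eqref{cov of V} the bracket is the sum of the four integrals $\int_0^t n^{4H}\mathcal{A}^{n,k_1}_{1,s}\mathcal{A}^{n,k_2}_{1,s}\,\d s$, $\int_0^t n^{4H}\mathcal{A}^{n,k_1}_{1,s}\mathcal{A}^{n,k_2}_{2,s}\,\d s$, $\int_0^t n^{4H}\mathcal{A}^{n,k_1}_{2,s}\mathcal{A}^{n,k_2}_{1,s}\,\d s$ and $\int_0^t n^{4H}\mathcal{A}^{n,k_1}_{2,s}\mathcal{A}^{n,k_2}_{2,s}\,\d s$. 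Since $\langle W^j,W^j\rangle_s=s$ and the processes $\mathcal{A}^{n,k}_{1,s},\mathcal{A}^{n,k}_{2,s}$ do not involve the external label $j$, the bracket is in fact independent of $j$, consistently with the $j$-free right-hand side of the statement. The last of the four integrals is precisely the quantity whose $L^2$-limit is identified in Lemma \ref{est of A-1,2}(iii); after renaming the summation index called $j$ there to $i'$, that limit equals the asserted expression $C_M\sum_{i,i'=1}^{m}\sum_{l,l'=1}^{d}\int_0^t\sigma^l_i(X_s)\sigma^{l'}_i(X_s)\partial_l\sigma^{k_1}_{i'}(X_s)\partial_{l'}\sigma^{k_2}_{i'}(X_s)\,\d s$. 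It therefore suffices to show that the first three integrals tend to $0$ in $L^2$ as $n\to\infty$.

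First I would treat the three remaining integrals by Minkowski's integral inequality followed by the Cauchy--Schwarz inequality on the probability space, for example
\[
\Big\|\int_0^t n^{4H}\mathcal{A}^{n,k_1}_{1,s}\mathcal{A}^{n,k_2}_{2,s}\,\d s\Big\|_{L^2}\le \int_0^t \big\|n^{2H}\mathcal{A}^{n,k_1}_{1,s}\big\|_{L^4}\,\big\|n^{2H}\mathcal{A}^{n,k_2}_{2,s}\big\|_{L^4}\,\d s,
\]
and analogously for the other two (where in the first integral both factors are of type $\mathcal{A}_1$). The mechanism is that every one of these three integrands carries at least one factor $n^{2H}\mathcal{A}^{n,k}_{1,s}$, which is vanishing, multiplied by either another vanishing factor or the uniformly bounded factor $n^{2H}\mathcal{A}^{n,k}_{2,s}$; hence each integrand is controlled by (vanishing)$\times$(bounded) and the $\d s$-integral tends to $0$.

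The one technical point, and the only real obstacle, is that Cauchy--Schwarz forces me to control the factors in $L^4$ rather than $L^2$, whereas Lemma \ref{est of A-1,2}(i)--(ii) is stated in $L^2$. This is not a genuine difficulty: the proofs of (i) and (ii), namely the estimates \eqref{est of A-1}--\eqref{est of A-2}, use only the boundedness of the derivatives of $b$ and $\sigma$, Lemma \ref{basic lemma}(iii)--(iv), and the moment bounds of Lemmas \ref{bound of X n} and \ref{bound of M n}, all of which hold for every $p\ge 1$. Rerunning those estimates with the fourth moment in place of the second yields $\lim_{n\to\infty}\sup_{s\in[0,T]}\|n^{2H}\mathcal{A}^{n,k}_{1,s}\|_{L^4}=0$ and $\sup_{n}\sup_{s\in[0,T]}\|n^{2H}\mathcal{A}^{n,k}_{2,s}\|_{L^4}<\infty$, which is exactly what the displayed bound requires. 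Combining the vanishing of the three cross integrals with the limit of the fourth integral from Lemma \ref{est of A-1,2}(iii) gives the claimed $L^2$-convergence, completing the proof.
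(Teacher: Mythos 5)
Your proposal is correct and follows essentially the same route as the paper, which simply reads the limit off the decomposition \eqref{cov of V} together with Lemma \ref{est of A-1,2} without writing out the details. Your observation that the cross terms require $L^4$ versions of the bounds in Lemma \ref{est of A-1,2}(i)--(ii), obtainable by rerunning \eqref{est of A-1}--\eqref{est of A-2} with $p=4$, is a legitimate filling-in of a step the paper leaves implicit.
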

	\subsection{The limit of covariation of $V^n$ and $W$}
	\begin{lem}\label{est of V-2}
		For all $t\in [0,T], k\in\{1,\cdots,d\}, 1\leq j\leq m,$   we have 
		$$\langle V^{n,k,j}, W^{j} \rangle_t\xrightarrow { L^1} 0.$$
	\end{lem}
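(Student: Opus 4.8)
The plan is to begin by computing the covariation explicitly. By \eqref{def of V} the process $V^{n,k,j}$ is the It\^o integral $V^{n,k,j}_t=n^{2H}\int_0^t\big[X^n_s-X^n_{\frac{[ns]}{n}}-M^n_s\big]^k\,\d W^j_s$, and since $W^j$ is a standard Brownian motion with $\langle W^j\rangle_s=s$, the covariation reduces to the Lebesgue integral
$$\langle V^{n,k,j},W^j\rangle_t=n^{2H}\int_0^t\big[X^n_s-X^n_{\frac{[ns]}{n}}-M^n_s\big]^k\,\d s.$$
I would then substitute the decomposition \eqref{different parts of Z}, writing the integrand as $\mathcal{A}^{n,k}_{1,s}+\mathcal{A}^{n,k}_{2,s}$, so that it is enough to show that $n^{2H}\int_0^t\mathcal{A}^{n,k}_{1,s}\,\d s\to0$ and $n^{2H}\int_0^t\mathcal{A}^{n,k}_{2,s}\,\d s\to0$ in $L^2$, which then gives the claimed $L^1$ convergence.

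The first term is immediate. By Minkowski's integral inequality and Lemma \ref{est of A-1,2}(i),
$$\Big\|n^{2H}\int_0^t\mathcal{A}^{n,k}_{1,s}\,\d s\Big\|_{L^2}\le\int_0^T\big\|n^{2H}\mathcal{A}^{n,k}_{1,s}\big\|_{L^2}\,\d s\le T\sup_{s\in[0,T]}\big\|n^{2H}\mathcal{A}^{n,k}_{1,s}\big\|_{L^2}\xrightarrow{n\to\infty}0.$$

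The second term carries the essential difficulty: by Lemma \ref{est of A-1,2}(ii) the factor $n^{2H}\mathcal{A}^{n,k}_{2,s}$ is only bounded in $L^2$, so the convergence to $0$ must come from the \emph{martingale structure} of $\mathcal{A}^{n,k}_{2,s}$, which is a sum of It\^o integrals against $W^{j'}$ (see \eqref{f kjl}). I would split $\mathcal{A}^{n,k}_{2,s}=\mathcal{A}^{(a)}_{2,s}+\mathcal{A}^{(b)}_{2,s}$, where $\mathcal{A}^{(a)}_{2,s}$ collects the integrals over $[0,\frac{[ns]}{n}]$ and $\mathcal{A}^{(b)}_{2,s}$ the integrals over $[\frac{[ns]}{n},s]$ carrying the $\mathcal{F}_{[ns]/n}$-measurable prefactor $\partial_l\sigma^k_{j'}(X^n_{[ns]/n})$. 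For the first part I would apply the stochastic Fubini theorem to exchange $\d s$ and $\d W^{j'}_u$, so that $n^{2H}\int_0^t\mathcal{A}^{(a)}_{2,s}\,\d s$ becomes a single stochastic integral $\sum_{j',l}n^{2H}\int_0^t M^{n,l}_u\,\partial_l\sigma^k_{j'}(X^n_{[nu]/n})\,\Phi_n(u)\,\d W^{j'}_u$ with the deterministic weight $\Phi_n(u)=\int_{\{s\le t:\,[ns]/n\ge u\}}\big(K(s-u)-K(\tfrac{[ns]}{n}-u)\big)\,\d s$. It\^o's isometry, the bound $\|M^{n,l}_u\|_{L^2}\le Cn^{-H}$ of Lemma \ref{bound of M n}, and the monotonicity of $K$ on each partition cell (giving $|\Phi_n(u)|\le Cn^{-1}K(\tfrac{\lceil nu\rceil}{n}-u)$) then yield
$$\Big\|n^{2H}\int_0^t\mathcal{A}^{(a)}_{2,s}\,\d s\Big\|_{L^2}^2\le Cn^{2H}\int_0^t|\Phi_n(u)|^2\,\d u\le Cn^{2H-2}\int_0^tK\big(\tfrac{\lceil nu\rceil}{n}-u\big)^2\,\d u\le Cn^{-1},$$
where the last step uses $\int_0^tK(\tfrac{\lceil nu\rceil}{n}-u)^2\,\d u\le Cn^{1-2H}$.

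The main obstacle is the term $\mathcal{A}^{(b)}_{2,s}$, whose $\mathcal{F}_{[ns]/n}$-measurable prefactor prevents the clean Fubini reduction; here I would estimate the second moment directly through
$$\Big\|n^{2H}\int_0^t\mathcal{A}^{(b)}_{2,s}\,\d s\Big\|_{L^2}^2=n^{4H}\int_0^t\int_0^t\E\big[\mathcal{A}^{(b)}_{2,s}\mathcal{A}^{(b)}_{2,v}\big]\,\d v\,\d s,$$
and exploit the tower property: if $v\le\frac{[ns]}{n}$, then $\mathcal{A}^{(b)}_{2,v}$ is $\mathcal{F}_{[ns]/n}$-measurable while the increment in $\mathcal{A}^{(b)}_{2,s}$ over $[\frac{[ns]}{n},s]$ has zero conditional mean given $\mathcal{F}_{[ns]/n}$, so $\E[\mathcal{A}^{(b)}_{2,s}\mathcal{A}^{(b)}_{2,v}]=0$; the covariance therefore survives only when $v$ and $s$ lie in the same cell $[\frac{[ns]}{n},\frac{[ns]+1}{n})$, a region of two-dimensional measure $O(n^{-1})$ on which Lemma \ref{est of A-1,2}(ii) and the Cauchy--Schwarz inequality give $|\E[\mathcal{A}^{(b)}_{2,s}\mathcal{A}^{(b)}_{2,v}]|\le Cn^{-4H}$, whence the double integral is $O(n^{4H}\cdot n^{-4H}\cdot n^{-1})=O(n^{-1})$. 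The remaining cross term $n^{4H}\int_0^t\int_0^t\E[\mathcal{A}^{(a)}_{2,s}\mathcal{A}^{(b)}_{2,v}]$ is treated by the same conditioning, which kills all contributions except those supported on the overlap $[\frac{[nv]}{n},v]$ (forcing $u$ and $v$ into a common cell); there the kernel estimates of Section \ref{sec 2} together with $\|M^n_u\|_{L^4}\le Cn^{-H}$ again produce a bound of order $n^{-1}$. Collecting the three estimates shows $n^{2H}\int_0^t\mathcal{A}^{n,k}_{2,s}\,\d s\to0$ in $L^2$, and hence $\langle V^{n,k,j},W^j\rangle_t\to0$ in $L^1$, as claimed.
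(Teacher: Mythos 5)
Your proposal is correct, and it reaches the conclusion by a genuinely different mechanism than the paper in the key step. Both proofs start identically (the covariation is the Lebesgue integral of $n^{2H}[X^n_s-X^n_{[ns]/n}-M^n_s]^k$, the $\mathcal{A}^{n,k}_{1}$ contribution dies by Lemma \ref{est of A-1,2}(i), and the tower property at $\mathcal{F}_{[ns]/n}$ is the engine for the $\mathcal{A}^{n,k}_{2}$ part). But the paper then works pointwise in $(s,v)$: after a uniform $L^2$ bound it reduces by dominated convergence to showing $n^{4H}\E\big[[X^n_s-\cdots]^k[X^n_v-\cdots]^k\big]\to 0$ for each fixed $v<s$, expands into the four terms $\mathcal{M}_1,\dots,\mathcal{M}_4$, and disposes of $\mathcal{M}_3,\mathcal{M}_4$ via the bounded-plus-pointwise-vanishing kernel Lemma \ref{esti of A} and another dominated convergence pass, yielding no rate. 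You instead split $\mathcal{A}^{n,k}_{2}$ by the time-support of the stochastic integrals: the $[0,[ns]/n]$ piece is collapsed by stochastic Fubini into a single It\^o integral against the averaged kernel $\Phi_n$, and your telescoping bound $|\Phi_n(u)|\le Cn^{-1}K(\lceil nu\rceil/n-u)$ together with It\^o isometry and Lemma \ref{bound of M n} gives an explicit $O(n^{-1/2})$ rate in $L^2$; the $[[ns]/n,s]$ piece is killed off the diagonal by conditioning, leaving a strip of measure $O(n^{-1})$ on which the uniform $L^2$ bound suffices. This buys a quantitative rate and avoids Lemma \ref{esti of A} entirely. One caveat: your justification of the cross term $\E[\mathcal{A}^{(a)}_{2,s}\mathcal{A}^{(b)}_{2,v}]$ is the weakest link — the conditioning confines only the inner variable $u$ to the cell of $v$, not the pair $(s,v)$ to a small region, so the stated $O(n^{-1})$ does not follow merely from "the kernel estimates of Section \ref{sec 2}" (a crude Cauchy--Schwarz there only gives $O(1)$; one needs a mean-value estimate on $K(s-u)-K([ns]/n-u)$ away from the diagonal, or a dominated convergence argument as in the paper's $\mathcal{M}_4$). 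Fortunately this step is dispensable: since you prove that both $n^{2H}\int_0^t\mathcal{A}^{(a)}_{2,s}\,\d s$ and $n^{2H}\int_0^t\mathcal{A}^{(b)}_{2,s}\,\d s$ tend to $0$ in $L^2$ individually, the triangle inequality already gives the conclusion without any cross-term analysis.
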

	\begin{proof}
		Recall the expression of \eqref{def of V}.  We shall  compute the $L^1$-limit of 
		$$\langle V^{n,k,j}, W^{j} \rangle_t=n^{2H}\int_{0}^{t}\big[X^n_s-X^n_{\frac{[ns]}{n}}-M_s^{n}\big]^{k}\d s.$$
		By   Fubini's theorem, the Cauchy-Schwarz inequality, Minkowski's   inequality and Lemma \ref{est of A-1,2}, we have
		\begin{align}
			&\E\Big[n^{4H}\Big|\int_{0}^{t}\big[X^n_s-X^n_{\frac{[ns]}{n}}-M_s^{n}\big]^{k}\d s\Big|^2\Big]\nonumber\\&=2\E\Big[n^{4H}\int_{0}^{t}\int_{0}^{s}\big[X^n_s-X^n_{\frac{[ns]}{n}}-M_s^{n}\big]^{k}\big[X^n_v-X^n_{\frac{[nv]}{n}}-M_v^{n}\big]^{k}\d v \d s\Big]\nonumber\\&=2\int_{0}^{t}\int_{0}^{s}n^{4H}\E\Big[\big[X^n_s-X^n_{\frac{[ns]}{n}}-M_s^{n}\big]^{k}\big[X^n_v-X^n_{\frac{[nv]}{n}}-M_v^{n}\big]^{k}\Big]\d v \d s\nonumber\\&\leq C\int_{0}^{t}\int_{0}^{s}\Big[ \|n^{2H}\mathcal{A}^{n,k}_{1,s}\|_{L^2}+\|n^{2H}\mathcal{A}^{n,k}_{2,s}\|_{L^2}\Big]\Big[ \|n^{2H}\mathcal{A}^{n,k}_{1,v}\|_{L^2}+\|n^{2H}\mathcal{A}^{n,k}_{2,v}\|_{L^2}\Big]\d v\d s\nonumber\\&\leq C.\nonumber
		\end{align}
		Therefore, in light of the dominated convergence theorem, it suffices to show $$ n^{4H}\E\Big[\big[X^n_s-X^n_{\frac{[ns]}{n}}-M_s^{n}\big]^{k}\big[X^n_v-X^n_{\frac{[nv]}{n}}-M_v^{n}\big]^{k}\Big]\to 0$$ 
		for each $s,v$ with $v<s$. We only need to consider the case $v<\frac{[ns]}{n}$.
		In this case by \eqref{different parts of Z} and the tower property we have
		\begin{align}
			&\E\Big[\big[X^n_s-X^n_{\frac{[ns]}{n}}-M_s^{n}\big]^{k}\big[X^n_v-X^n_{\frac{[nv]}{n}}-M_v^{n}\big]^{k}\Big]\nonumber\\&=\E\Big[\E\Big[\big[X^n_s-X^n_{\frac{[ns]}{n}}-M_s^{n}\big]^{k}|\mathcal{F}_{\frac{[ns]}{n}}\Big]\big[X^n_v-X^n_{\frac{[nv]}{n}}-M_v^{n}\big]^{k}\Big]\nonumber\\&=\E\Big[\Big(\mathcal{A}^{n,k}_{1,s}+\sum_{j=1}^{m}\sum_{l=1}^{d}\int_{0}^{\frac{[ns]}{n}}\Big(K(s-u)-K(\frac{[ns]}{n}-u)\Big)\partial_l \sigma^k_j(X^n_{\frac{[nu]}{n}})M_u^{n,l}\d W_u^j\Big)\big[X^n_v-X^n_{\frac{[nv]}{n}}-M_v^{n}\big]^{k}\Big]\nonumber\\&=\E\Big[\mathcal{A}^{n,k}_{1,s}\big[X^n_v-X^n_{\frac{[nv]}{n}}-M_v^{n}\big]^{k}\Big]\nonumber\\&\quad+\sum_{j=1}^{m}\sum_{l=1}^{d}\E\Big[\int_{0}^{\frac{[ns]}{n}}\Big(K(s-u)-K(\frac{[ns]}{n}-u)\Big)\partial_l \sigma^k_j(X^n_{\frac{[nu]}{n}})M_u^{n,l}\d W_u^j\cdot\mathcal{A}^{n,k}_{1,v}\Big]\nonumber\\&\quad+\sum_{j,j'=1}^{m}\sum_{l,l'=1}^{d}\E\Big[\int_{0}^{\frac{[ns]}{n}}\Big(K(s-u)-K(\frac{[ns]}{n}-u)\Big)\partial_l \sigma^k_j(X^n_{\frac{[nu]}{n}})M_u^{n,l}\d W_u^j\nonumber\\&\quad\cdot\int_{0}^{\frac{[nv]}{n}}\Big(K(v-u)-K(\frac{[nv]}{n}-u)\Big)\partial_{l'} \sigma^k_{j'}(X^n_{\frac{[nu]}{n}})M_u^{n,l'}\d W_u^{j'}\Big]\nonumber\\&\quad+\sum_{j,j'=1}^{m}\sum_{l,l'=1}^{d}\E\Big[\int_{0}^{\frac{[ns]}{n}}\Big(K(s-u)-K(\frac{[ns]}{n}-u)\Big)\partial_l \sigma^k_j(X^n_{\frac{[nu]}{n}})M_u^{n,l}\d W_u^j\nonumber\\&\quad\cdot\partial_{l'} \sigma^k_{j'}(X^n_\frac{[nv]}{n})\int_{\frac{[nv]}{n}}^{v}K(v-u)M_u^{n,l'}\d W_u^{j'}\Big]\nonumber\\&:=\mathcal{M}_1+\sum_{j=1}^{m}\sum_{l=1}^{d}\mathcal{M}_2+\sum_{j,j'=1}^{m}\sum_{l,l'=1}^{d}\mathcal{M}_3+\sum_{j,j'=1}^{m}\sum_{l,l'=1}^{d}\mathcal{M}_4.\nonumber
		\end{align}
		For the term $\mathcal{M}_1$, by Fubini's theorem, Minkowski's inequality, Lemma \ref{basic lemma}-(iii),  the fact that  the derivatives of  $b$ and $\sigma$ are bounded, \eqref{est of A-1}, \eqref{est of A-2} and Lemma \ref{bound of M n}, we have
		\begin{align}
			\mathcal{M}_1&=\E\Big[\mathcal{A}^{n,k}_{1,s}\big[X^n_v-X^n_{\frac{[nv]}{n}}-M_v^{n}\big]^{k}\Big]\nonumber\\&\leq \int_{0}^{\frac{[ns]}{n}}\Big(K(s-u)-K(\frac{[ns]}{n}-u)\Big)\E\Big[b^k(X^n_{\frac{[nu]}{n}})\big[X^n_v-X^n_{\frac{[nv]}{n}}-M_v^{n}\big]^{k}\Big]\d u\nonumber\\&\quad+\int_{\frac{[ns]}{n}}^{s}K(s-u)\E\Big[b^k(X^n_\frac{[ns]}{n})\big[X^n_v-X^n_{\frac{[nv]}{n}}-M_v^{n}\big]^{k}\Big]\d u\nonumber\\&\quad+C\sum_{l=1}^{d}\int_{0}^{\frac{[ns]}{n}}\Big(K(s-u)-K(\frac{[ns]}{n}-u)\Big)\E\Big[M_u^{n,l}\big[X^n_v-X^n_{\frac{[nv]}{n}}-M_v^{n}\big]^{k}\Big]\d u\nonumber\\&\quad+C\sum_{l=1}^{d}\int_{\frac{[ns]}{n}}^{s}K(s-u)\E\Big[M_u^{n,l}\big[X^n_v-X^n_{\frac{[nv]}{n}}-M_v^{n}\big]^{k}\Big]\d u\nonumber\\&\leq \int_{0}^{\frac{[ns]}{n}}\Big(K(s-u)-K(\frac{[ns]}{n}-u)\Big)\|b^k(X^n_{\frac{[nu]}{n}})\|_{L^2}\Big\|\big[X^n_v-X^n_{\frac{[nv]}{n}}-M_v^{n}\big]^{k}\Big\|_{L^2}\d u\nonumber\\&\quad+\int_{\frac{[ns]}{n}}^{s}K(s-u)\|b^k(X^n_{\frac{[ns]}{n}})\|_{L^2}\Big\|\big[X^n_v-X^n_{\frac{[nv]}{n}}-M_v^{n}\big]^{k}\Big\|_{L^2}\d u\nonumber\\&\quad+C\sum_{l=1}^{d}\int_{0}^{\frac{[ns]}{n}}\Big(K(s-u)-K(\frac{[ns]}{n}-u)\Big)\|M^{n,l}_u\|_{L^2}\Big\|\big[X^n_v-X^n_{\frac{[nv]}{n}}-M_v^{n}\big]^{k}\Big\|_{L^2}\d u\nonumber\\&\quad+C\sum_{l=1}^{d}\int_{\frac{[ns]}{n}}^{s}K(s-u)\|M^{n,l}_u\|_{L^2}\Big\|\big[X^n_v-X^n_{\frac{[nv]}{n}}-M_v^{n}\big]^{k}\Big\|_{L^2}\d u\nonumber\\&\leq Cn^{-2H}\Big[\int_{0}^{\frac{[ns]}{n}}\Big(K(s-u)-K(\frac{[ns]}{n}-u)\Big)\d u+\int_{\frac{[ns]}{n}}^{s}K(s-u)\d u\Big]\leq Cn^{-(3H+1/2)}.\nonumber
		\end{align}
		Hence, $n^{4H}\mathcal{M}_1\leq Cn^{H-1/2}\to 0.$ 
		
		For the term $\mathcal{M}_2$, by the Cauchy-Schwarz inequality, \eqref{est of A-1} and \eqref{esti of stochastic integral} we have
		\begin{align}
			\mathcal{M}_2&=\E\Big[\int_{0}^{\frac{[ns]}{n}}\Big(K(s-u)-K(\frac{[ns]}{n}-u)\Big)\partial_l \sigma^k_j(X^n_{\frac{[nu]}{n}})M_u^{n,l}\d W_u^j\cdot\mathcal{A}^{n,k}_{1,v}\Big]\nonumber\\&\leq\Big\|\int_{0}^{\frac{[ns]}{n}}\Big(K(s-u)-K(\frac{[ns]}{n}-u)\Big)\partial_l \sigma^k_j(X^n_{\frac{[nu]}{n}})M_u^{n,l}\d W_u^j\Big\|_{L^2}\|\mathcal{A}^{n,k}_{1,v}\|_{L^2}\nonumber\\&\leq Cn^{-(3H+1/2)},\nonumber
		\end{align}
		i.e.  $n^{4H}\mathcal{M}_2\leq Cn^{H-1/2}\to 0.$
		
		For the term $\mathcal{M}_3$, notice  that $v<\frac{[ns]}{n}$,  by \eqref{dcp of stochastic integral}, Fubini's theorem, Lemma \ref{esti of A} and the Cauchy-Schwarz inequality, we have
		\begin{align}
			n^{4H}\mathcal{M}_3&=n^{4H}\E\Big[\int_{0}^{\frac{[ns]}{n}}\Big(K(s-u)-K(\frac{[ns]}{n}-u)\Big)\partial_l \sigma^k_j(X^n_{\frac{[nu]}{n}})M_u^{n,l}\d W_u^j\nonumber\\&\quad\cdot\partial_{l'} \sigma^k_{j'}(X^n_\frac{[nv]}{n})\int_{\frac{[nv]}{n}}^{v}\Big(K(v-u)-K(\frac{[nv]}{n}-u)\Big)M_u^{n,l'}\d W_u^{j'}\Big]\nonumber\\&=n^{4H}\E\Big[\int_{0}^{\frac{[ns]}{n}}\Big(K(s-u)-K(\frac{[ns]}{n}-u)\Big)\partial_l \sigma^k_j(X^n_{\frac{[nu]}{n}})M_u^{n,l}\d W_u^j\nonumber\\&\quad\cdot\int_{0}^{\frac{[ns]}{n}}\mathbb{I}_{(\frac{[nv]}{n},v)}(u)\Big(K(v-u)-K(\frac{[nv]}{n}-u)\Big)\partial_{l'} \sigma^k_{j'}(X^n_\frac{[nu]}{n})M_u^{n,l'}\d W_u^{j'}\Big]\nonumber\\&\leq C n^{4H} \E\Big[\int_{\frac{[nv]}{n}}^{v}\Big(K(s-u)-K(\frac{[ns]}{n}-u)\Big)\Big(K(v-u)-K(\frac{[nv]}{n}-u)\Big)M_u^{n,l}M_u^{n,l'}\d u\Big]\nonumber\\&\leq Cn^{2H}\int_{\frac{[nv]}{n}}^{v}\Big(K(s-u)-K(\frac{[ns]}{n}-u)\Big)\Big(K(v-u)-K(\frac{[nv]}{n}-u)\Big)\d u\to 0\quad \text{as $n\to\infty$}.\nonumber
		\end{align}
		
		For the term $\mathcal{M}_4$, by \eqref{dcp of stochastic integral}, Fubini's theorem and the Cauchy-Schwarz inequality we have
		\begin{align}
			\mathcal{M}_4&=\E\Big[\int_{0}^{\frac{[ns]}{n}}\Big(K(s-u)-K(\frac{[ns]}{n}-u)\Big)\partial_l \sigma^k_j(X^n_{\frac{[nu]}{n}})M_u^{n,l}\d W_u^j\nonumber\\&\quad\cdot\partial_{l'} \sigma^k_{j'}(X^n_\frac{[nv]}{n})\int_{\frac{[nv]}{n}}^{v}K(v-u)M_u^{n,l'}\d W_u^{j'}\Big]\nonumber\\&=\E\Big[\int_{0}^{\frac{[ns]}{n}}\Big(K(s-u)-K(\frac{[ns]}{n}-u)\Big)\partial_l \sigma^k_j(X^n_{\frac{[nu]}{n}})M_u^{n,l}\d W_u^j\nonumber\\&\quad\cdot\int_{0}^{\frac{[ns]}{n}}\mathbb{I}_{(\frac{[nv]}{n},v)}(u)K(v-u)\partial_{l'} \sigma^k_{j'}(X^n_\frac{[nu]}{n})M_u^{n,l'}\d W_u^{j'}\Big]\nonumber\\&\leq C \E\Big[\int_{\frac{[nv]}{n}}^{v}\Big(K(s-u)-K(\frac{[ns]}{n}-u)\Big)K(v-u)M_u^{n,l}M_u^{n,l'}\d u\Big]\nonumber\\&\leq C \int_{\frac{[nv]}{n}}^{v}\Big(K(s-u)-K(\frac{[ns]}{n}-u)\Big)K(v-u)\|M_u^{n,l}\|_{L^2}\|M_u^{n,l'}\|_{L^2}\d u\nonumber\\&\leq Cn^{-4H}\int_{0}^{nv-[nv]}\Big|(z+ns-nv)^{H-1/2}-(z+[ns]-nv)^{H-1/2}\Big|z^{H-1/2}\d z\nonumber\\&\leq Cn^{-4H}\int_{0}^{1}\Big|(z+ns-nv)^{H-1/2}-(z+[ns]-nv)^{H-1/2}\Big|z^{H-1/2}\d z,\nonumber
		\end{align}
		where the change of variable $z=n(v-u)$ is used. And for $v<s$, $\Big|(z+ns-nv)^{H-1/2}-(z+[ns]-nv)^{H-1/2}\Big|\to 0.$   Thus,  $n^{4H}\mathcal{M}_4\leq Cn^{H-1/2}\to 0.$
	\end{proof}
	\begin{lem}\label{charact of V}
		The process $V^n$ converges stably in law in $\mathcal{C}_0$
		to the following  continuous process
		$$
		V^{k, j}=\sum_{i=1}^{m}\sum_{i'=1}^{m}\sum_{l=1}^dC_M \int_0^{\cdot} \sigma_i^l\left(X_s\right)\partial_l\sigma^{k}_{i'}(X_{s}) \mathrm{d} B_s^{i,i',j},
		$$
		where $\mathcal{C}_0$ denotes the set of all  $\mathbb{R}^d$-valued continuous functions on $[0,T]$ vanishing at $t=0$, $B$ is $m^3$-dimensional standard Brownian motion, independent of the original Brownian  motion $W$. 
	\end{lem}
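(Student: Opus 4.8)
The plan is to deduce the lemma from the stable central limit theorem for continuous martingales, the same device employed for the Euler scheme in \cite{FU,NS}. For each fixed $n$ the process $V^n=\{V^{n,k,j}\}_{1\le k\le d,\,1\le j\le m}$ defined in \eqref{def of V} is a continuous $\mathbb{R}^{d\times m}$-valued $(\mathcal{F}_t)$-martingale with $V^n_0=0$, being the It\^o integral $n^{2H}\int_0^\cdot[X^n_s-X^n_{\frac{[ns]}{n}}-M^n_s]^k\,\d W^j_s$ of a square-integrable adapted integrand (integrability guaranteed by Lemmas \ref{bound of X n} and \ref{bound of M n}). The abstract theorem asserts that if a sequence of such martingales has predictable quadratic (co)variations converging in probability to continuous adapted limits and is asymptotically orthogonal to the driving Brownian motion $W$, then it converges stably in law to a process that, conditionally on $\mathcal{F}$, is a centered Gaussian martingale with the prescribed conditional covariance and is independent of $W$.

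First I would record that the two structural inputs of the theorem are already in hand. For each $j$ and each $t$, Lemma \ref{est of V-1} gives the $L^2$-convergence (hence convergence in probability) of $\langle V^{n,k_1,j},V^{n,k_2,j}\rangle_t$ to a continuous adapted limit, while for $j\neq j'$ the bracket $\langle V^{n,k_1,j},V^{n,k_2,j'}\rangle_t$ vanishes identically because the two integrals are taken against the independent components $W^j,W^{j'}$; thus all pairs of indices are covered. Lemma \ref{est of V-2} supplies the asymptotic orthogonality $\langle V^{n,k,j},W^{j'}\rangle_t\to0$ in $L^1$. Since $\mathcal{F}$ is the augmented filtration of $W$, every bounded $(\mathcal{F}_t)$-martingale is a stochastic integral against $W$, so orthogonality to each $W^{j'}$ upgrades automatically to orthogonality to all bounded martingales, exactly the hypothesis the theorem requires. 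The remaining ingredient, C-tightness of $(V^n)$ in $\mathcal{C}_0$, follows from the uniform bound $\sup_n\sup_s\|n^{2H}\mathcal{A}^{n,k}_{2,s}\|_{L^2}<\infty$ of Lemma \ref{est of A-1,2}(ii): its $L^p$-analogue (the moment estimates of that lemma hold for every $p$) yields $\E[|V^{n,k,j}_t-V^{n,k,j}_s|^p]\le C|t-s|^{p/2}$ through the Burkholder--Davis--Gundy inequality, so Kolmogorov's criterion gives tightness and every limit point is continuous.

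It then remains to identify the limit, which the theorem describes as a continuous centered Gaussian martingale $V$, conditionally on $\mathcal{F}$, whose conditional covariance is the limit furnished by Lemma \ref{est of V-1} and which is independent of $W$; such a law is completely determined by this covariance together with the independence. I would exhibit one explicit realization: introduce an $m^3$-dimensional standard Brownian motion $B=\{B^{i,i',j}\}$ independent of $W$ and set
\begin{align}
V^{k,j}_t=C_M\sum_{i=1}^m\sum_{i'=1}^m\sum_{l=1}^d\int_0^t \sigma^l_i(X_s)\partial_l\sigma^{k}_{i'}(X_s)\,\d B^{i,i',j}_s.\nonumber
\end{align}
A direct computation of $\langle V^{k_1,j},V^{k_2,j}\rangle_t$ using $\d\langle B^{i_1,i_1',j},B^{i_2,i_2',j}\rangle_s=\delta_{i_1i_2}\delta_{i_1'i_2'}\,\d s$ reproduces precisely the covariance of Lemma \ref{est of V-1}, and $\langle V^{k_1,j_1},V^{k_2,j_2}\rangle\equiv0$ for $j_1\neq j_2$ since the families $B^{\cdot,\cdot,j_1}$ and $B^{\cdot,\cdot,j_2}$ are independent. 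As a continuous conditionally Gaussian martingale is characterized by its conditional covariance and its independence from $W$, this process has the same law (stably, given $\mathcal{F}$) as the abstract limit, proving the claim.

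The main obstacle is not the computation but the correct invocation of the stable limit theorem: one must verify that the convergence in probability of the brackets together with the vanishing covariation against $W$ are exactly the conditions forcing both the stable (rather than merely weak) mode of convergence and the conditional independence of the limit from $W$. C-tightness and the exclusion of discontinuous limit points form the only step where an additional estimate (the uniform moment bound on the increments of $V^n$) is genuinely needed; all the analytic content of identifying the limiting quadratic variation has already been absorbed into Lemmas \ref{est of V-1} and \ref{est of V-2}.
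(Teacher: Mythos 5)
Your proposal is correct and follows essentially the same route as the paper: both invoke the stable central limit theorem for continuous martingales (Jacod's Theorem 4-1) with Lemmas \ref{est of V-1} and \ref{est of V-2} supplying the two bracket conditions, and then identify the limit as the stochastic integral against an independent $m^3$-dimensional Brownian motion (the paper cites Jacod's Proposition 1-4 where you verify the representation by a direct covariance computation). The only cosmetic difference is your explicit tightness/continuity check, which the paper leaves implicit in the citation.
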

	\begin{proof}
		By Lemmas \ref{est of V-1},    \ref{est of V-2},  and    \cite[Theorem 4-1]{Ja1}  we  see that $V^n$  converges stably   in law in $\mathcal{C}_0$ to a conditional  Gaussian martingale $V=\{V^{k, j}\}$ with
		$$
		\begin{aligned}
			\langle V^{k_1, j_1}, V^{k_2, j_2}\rangle_t & = \begin{cases}C_M^2\sum_{i,i'=1}^{m}\sum_{l,l'=1}^{d}\int_{0}^{t}\sigma^l_i(X_{s})\sigma^{l'}_{i}(X_{s})\partial_l\sigma^{k_1}_{i'}(X_{s})\partial_{l'}\sigma^{k_2}_{i'}(X_{s})\d s & \text { if } j_1=j_2, \\
				0 & \text { if } j_1 \neq j_2,\end{cases} \\
			\langle V^{k, i}, W^j\rangle_t & =0, \quad{\forall} k \in\{1, \ldots, d\},{ \forall}(i, j) \in\{1, \ldots, m\}^2 .
		\end{aligned}
		$$
		
		Furthermore, since $\mathrm{d}\langle V, V\rangle_t \ll \mathrm{d} t$,  an application of  \cite[Proposition 1-4]{Ja1} yields that $V$ can be  represented by 
		$$
		V^{k, j}=\sum_{i=1}^{m}\sum_{i'=1}^{m}\sum_{l=1}^dC_M \int_0^{\cdot} \sigma_i^l\left(X_s\right)\partial_l\sigma^{k}_{i'}(X_{s}) \mathrm{d} B_s^{i,i',j},
		$$
		where $B$ is $m^3
		$-dimensional standard Brownian motion  independent of $W$. This concludes the proof.
	\end{proof}
	\subsection{The tightness of normalized error process $U^n$}
	\begin{lem}\label{nabla b}
		For all $i\in\{1,\cdots,d\}$ and any $\epsilon\in (0,H)$   we have 
		$$ \int_{0}^{t}K(t-s) n^{2H}\nabla b^i(X^n_{\frac{[ns]}{n}})\Big(X^n_s-X^n_{\frac{[ns]}{n}}-M_s^{n}\Big)\d s
		\xrightarrow{ \text { in } \mathcal{C}_0^{H-\epsilon}} 0\quad \hbox{in probability} .$$
	\end{lem}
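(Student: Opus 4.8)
The plan is to abbreviate the process in question by
$$\Phi^{i,n}_t := \int_0^t K(t-s)\, n^{2H}\nabla b^i(X^n_{\frac{[ns]}{n}})\cdot\big(X^n_s - X^n_{\frac{[ns]}{n}} - M^n_s\big)\,\d s,$$
and to split it according to the decomposition $(X^n_s - X^n_{\frac{[ns]}{n}}-M^n_s)^k = \mathcal{A}^{n,k}_{1,s}+\mathcal{A}^{n,k}_{2,s}$ of \eqref{different parts of Z}, writing $\Phi^{i,n}=\Phi^{i,n,1}+\Phi^{i,n,2}$ where $\Phi^{i,n,\iota}$ carries $\mathcal{A}^{n}_{\iota}$, $\iota=1,2$. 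The first piece is easy: since $\nabla b$ is bounded and, by Lemma \ref{est of A-1,2}(i) (whose proof in \eqref{est of A-1} in fact delivers the $L^p$ rate $\|n^{2H}\mathcal{A}^{n,k}_{1,s}\|_{L^p}\le Cn^{H-1/2}$ for every $p$), the convolution estimates Lemma \ref{basic lemma}(i) and (iii) give
$$\|\Phi^{i,n,1}_t\|_{L^p}\le Cn^{H-1/2}, \qquad \|\Phi^{i,n,1}_t-\Phi^{i,n,1}_v\|_{L^p}\le C|t-v|^{H+1/2}n^{H-1/2}.$$
By Kolmogorov's criterion this already yields $\E[\|\Phi^{i,n,1}\|_{\mathcal{C}_0^{H-\epsilon}}^p]\to 0$, hence $\Phi^{i,n,1}\to 0$ in $\mathcal{C}_0^{H-\epsilon}$ in probability.

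The hard part is $\Phi^{i,n,2}$, because $n^{2H}\mathcal{A}^{n,k}_{2,s}$ is only bounded in $L^p$ (Lemma \ref{est of A-1,2}(ii)) and does not vanish; a naive use of Lemma \ref{basic lemma}(i) would merely give $\|\Phi^{i,n,2}_t\|_{L^2}\le C$, which is useless. The point is that $\mathcal{A}^{n,k}_{2,s}$ is built from stochastic integrals against $W$, so integrating it in $\d s$ against a deterministic kernel produces strong cancellation. I will exploit this by computing the second moment directly, writing
$$\E[|\Phi^{i,n,2}_t|^2]=n^{4H}\int_0^t\!\!\int_0^t K(t-s)K(t-v)\,\Psi_n(s,v)\,\d v\,\d s,$$
with $\Psi_n(s,v):=\E[(\nabla b^i(X^n_{\frac{[ns]}{n}})\cdot\mathcal{A}^{n}_{2,s})\,(\nabla b^i(X^n_{\frac{[nv]}{n}})\cdot\mathcal{A}^{n}_{2,v})]$. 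By the Cauchy--Schwarz inequality and Lemma \ref{est of A-1,2}(ii) one has $n^{4H}|\Psi_n(s,v)|\le C$ uniformly; since $K(t-s)K(t-v)\,\d v\,\d s$ is a finite measure, it suffices by dominated convergence to prove $n^{4H}\Psi_n(s,v)\to 0$ for each fixed pair $v<s$. This is the main obstacle, and it is established exactly along the lines of Lemma \ref{est of V-2}: one splits $\mathcal{A}^{n,k}_{2,s}$ into its sum over $[0,\frac{[ns]}{n}]$ and its boundary sum over $[\frac{[ns]}{n},s]$, conditions on $\mathcal{F}_{\frac{[ns]}{n}}$ so that the boundary part has conditional mean zero, applies the integration-by-parts formula \eqref{dcp of stochastic integral}, and reduces the surviving cross terms to the kernel quantities $A_n(v,s)$ and $B_n(v,s)$ of Lemmas \ref{esti of A} and \ref{esti of B}, both of which tend to $0$ for $v<s$. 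Thus $\E[|\Phi^{i,n,2}_t|^2]\to 0$ for every fixed $t$, so all finite-dimensional marginals of $\Phi^{i,n,2}$ converge to $0$ in probability.

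To upgrade this to convergence in $\mathcal{C}_0^{H-\epsilon}$ I will prove tightness in a slightly better Hölder space. Since $\sup_{s}\|n^{2H}\mathcal{A}^{n}_{2,s}\|_{L^p}\le C$ for every $p\ge 2$ (the $L^p$ analogue of \eqref{est of A-2}) and $\nabla b$ is bounded, Lemma \ref{basic lemma}(i) and (iii) furnish the uniform-in-$n$ bounds
$$\sup_{t\in[0,T]}\E[|\Phi^{i,n,2}_t|^p]\le C,\qquad \E[|\Phi^{i,n,2}_{t}-\Phi^{i,n,2}_{v}|^p]\le C|t-v|^{(H+1/2)p}.$$
By Kolmogorov's criterion these give $\sup_n\E[\|\Phi^{i,n,2}\|_{\mathcal{C}_0^{\beta}}^p]<\infty$ for any $\beta<H+1/2$; choosing $\beta\in(H-\epsilon,H+1/2)$ and using the compact embedding $\mathcal{C}_0^{\beta}\hookrightarrow\mathcal{C}_0^{H-\epsilon}$ yields tightness of $\{\Phi^{i,n,2}\}$ in $\mathcal{C}_0^{H-\epsilon}$. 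Tightness together with the finite-dimensional convergence to $0$ forces every subsequential weak limit to have all marginals equal to $0$, hence to be the null process; therefore $\Phi^{i,n,2}$ converges in law, and so (the limit being deterministic) in probability, to $0$ in $\mathcal{C}_0^{H-\epsilon}$. Combining with the first paragraph gives $\Phi^{i,n}\to 0$ in $\mathcal{C}_0^{H-\epsilon}$ in probability, which is the assertion. The delicate step, as noted, is the pointwise estimate $n^{4H}\Psi_n(s,v)\to 0$, where the martingale structure of $\mathcal{A}^n_2$ and the decay of $A_n,B_n$ are indispensable.
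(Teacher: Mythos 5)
Your overall architecture is coherent and genuinely different from the paper's: you split off $\mathcal{A}^{n}_{1}$ (which carries an extra factor $n^{H-1/2}$ and is harmless), and then try to kill $\Phi^{i,n,2}$ by a direct second-moment computation plus tightness, whereas the paper rewrites the whole expression as $\sum_k\int_0^t K(t-s)\,\partial_k b^i(X^n_{\frac{[ns]}{n}})\,\d\langle V^{n,k,j},W^j\rangle_s$ and invokes the Kurtz--Protter uniform-tightness theorem \cite[Theorem 7.10]{KP} together with Lemma \ref{est of V-2} and the fractional-integral continuity \cite[Lemma A.3]{FU}. The first paragraph of your argument and the tightness paragraph are fine (Lemma \ref{basic lemma}(iii) with the adapted, uniformly $L^p$-bounded integrand $\nabla b^i(X^n_{\frac{[ns]}{n}})\,n^{2H}\mathcal{A}^{n}_{2,s}$ does give increments of order $|t-v|^{H+1/2}$ in $L^p$, hence tightness in $\mathcal{C}_0^{H-\epsilon}$).

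The gap is in the central step, the claim that $n^{4H}\Psi_n(s,v)\to 0$ for fixed $v<s$ ``exactly along the lines of Lemma \ref{est of V-2}.'' Lemma \ref{est of V-2} treats $n^{4H}\E\big[[X^n_s-X^n_{\frac{[ns]}{n}}-M^n_s]^{k}[X^n_v-X^n_{\frac{[nv]}{n}}-M^n_v]^{k}\big]$ \emph{without} prefactors, and its proof lives or dies by tower-property cancellations: conditioning on $\mathcal{F}_{\frac{[ns]}{n}}$ annihilates the boundary stochastic integral in $\mathcal{A}^{n}_{2,s}$, and the surviving products of It\^o integrals are reduced to the kernel quantities $A_n,B_n$ by the isometry. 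In your $\Psi_n$ the factor $\nabla b^i(X^n_{\frac{[ns]}{n}})$ sits in front of $\mathcal{A}^{n}_{2,s}$. Conditioning on $\mathcal{F}_{\frac{[ns]}{n}}$ still kills the boundary piece $\sum\partial_l\sigma^k_j(X^n_{\frac{[ns]}{n}})\int_{\frac{[ns]}{n}}^{s}K(s-u)M^{n,l}_u\,\d W^j_u$, but what remains is $\E\big[G\cdot J_s\cdot\mathcal{A}^{n}_{2,v}\big]$ with $G=\nabla b^i(X^n_{\frac{[ns]}{n}})\nabla b^i(X^n_{\frac{[nv]}{n}})$ bounded but \emph{anticipating} relative to the integration range of $J_s=\int_0^{\frac{[ns]}{n}}f(s,u)M^{n}_u\,\d W_u$. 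Applying \eqref{dcp of stochastic integral} to $J_s\,\mathcal{A}^n_{2,v}$, the Lebesgue (bracket) term is indeed $O(n^{-4H})\cdot A_n(v,s)\to o(n^{-4H})$, but the two iterated-integral terms are martingales up to time $\frac{[ns]}{n}$ whose expectation against $G$ no longer vanishes by the tower property, and Cauchy--Schwarz only bounds them by $C n^{-4H}$ — exactly the order you need to beat. So the terms analogous to $\mathcal{M}_3,\mathcal{M}_4$ do not ``reduce to $A_n$ and $B_n$'' once the prefactor is present; showing that $n^{4H}$ times these iterated integrals is asymptotically orthogonal to the bounded macroscopic functional $G$ is a genuine additional argument (essentially a piece of the stable-convergence statement), which you have not supplied. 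This decoupling of integrand from integrator is precisely what the paper's use of the Kurtz--Protter UT machinery buys, and is why the paper does not take the route you propose.
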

	\begin{proof}
		First,  we have 
		\begin{equation}
			\int_0^t K(t-s) n^{2H} \nabla b(X^n_{\frac{[ns]}{n}})\big(X^n_s-X^n_{\frac{[ns]}{n}}-M_s^{n}\big) \mathrm{d} s=\sum_{i=1}^m\sum_{k=1}^d \int_0^t K(t-s) \partial_k b^i(X^{n}_{\frac{[ns]}{n}}) \mathrm{d}\langle V^{n, k, j}, W^j\rangle_s \nonumber.
		\end{equation}
		Since $\left\langle V^{n, k, j}, W^j\right\rangle_t$ tends to zero in $L_1$ for all $t \in[0, T]$ by Lemma \ref{est of V-2}, we will prove desired result  by using    Kurtz and Protter \cite[Theorem 7.10]{KP} and   \cite[Lemma A.3]{FU}. We start with showing the tightness in $\mathcal{C}_0$ of $\left\{\left\langle V^{n, k, j}, W^j\right\rangle\right\}_{n \in \mathbb{N}}$. By Minkowski's inequality, \eqref{est of A-1} and \eqref{est of A-2}, we have for $0 \leq s<t \leq T$,
		$$
		\begin{aligned}
			\E\left[\left|\langle V^{n, k, j}, W^j\rangle_t-\langle V^{n, k, j}, W^j\rangle_s\right|^p\right] & \leq n^{2H p}\Big(\int_s^t \Big\|\big[X^n_s-X^n_{\frac{[ns]}{n}}-M_s^{n}\big]^{k}\Big\|_{L^p} \d u\Big)^p\\
			& \leq C n^{2H p}\Big(\int_{s}^{t}\frac{1}{n^{2Hp}}\d u\Big)^p \leq C|t-s|^p .
		\end{aligned}
		$$
		
		Therefore, Kolmogorov's continuity criterion implies $\mathbb{E}\left[\left\|\left\langle V^{n, k, j}, W^j\right\rangle\right\|_{\mathcal{C}_0^\rho}\right] \leq C_\rho$ for any $\rho \in(0,1)$ { with $C_\rho$ being independent of $n$,}  and then, $\left\{\left\langle V^{n, k, j}, W^j\right\rangle\right\}_{n \in \mathbb{N}}$ is tight in $\mathcal{C}_0^\rho$ for any $\rho \in(0,1)$ by     \cite[Theorem B.2]{FU}. Since any subsequence $\left\{\left\langle V^{n_q, k, j}, W^j\right\rangle\right\}_{q \in \mathbb{N}}$ of $\left\{\left\langle V^{n, k, j}, W\right\rangle\right\}_{n \in \mathbb{N}}$ is tight in $\mathcal{C}_0^\rho$ too, Prokhorov's theorem yields that there exists a further subsequence $\left\{\left\langle V^{n_q^{\prime}, k, j}, W^j\right\rangle\right\}_{q \in \mathbb{N}}$ of $\left\{\left\langle V^{n_q, k, j}, W^j\right\rangle\right\}_{q \in \mathbb{N}}$ which is convergent in $\mathcal{C}_0^\rho$. The limit process has to be 0 because  by Lemma \ref{est of V-2}  $\left\langle V^{n_q^{\prime}, k, j}, W^j\right\rangle_t$ tends to zero in $L_1$   for all $t$. Thus, $\left\langle V^{n, k, j}, W^j\right\rangle \xrightarrow{n \rightarrow \infty} 0$ in law both  in $\mathcal{C}_0^\rho$ and in $\mathcal{C}_0$ as well. This implies $\left\langle V^{n, k, j}, W^j\right\rangle$ tends to zero also in probability in $\mathcal{C}_0$. Let $\left\{Y^n\right\}_{n \in \mathbb{N}}$ be the set of simple predictable processes almost surely bounded by one. By \eqref{est of A-1} and \eqref{est of A-2}, we have
		$$
		\E\left[\left|\int_0^t Y_{s-}^n \mathrm{~d}\left\langle V^{n, k, j}, W^j\right\rangle_s\right|\right] \leq \int_0^t n^{2H} \E\Big[\Big|\big[X^n_s-X^n_{\frac{[ns]}{n}}-M_s^{n}\big]^{k}\Big|\Big] \mathrm{d} s \leq C,
		$$
		which implies $\left\langle V^{n, k, j}, W^j\right\rangle$ is uniformly tight in the sense of      Kurtz and Protter \cite[Definition 7.4]{KP}. Noting also that  by Lemma \ref{strong X-X n},  $X^n-X \rightarrow 0$ in probability in $\mathcal{C}_0$,   \cite[Theorem 7.10]{KP} yields
		$$
		\left(\partial_k b^i(X^n),\left\langle V^{n, k, j}, W^j\right\rangle, \int_0^{\cdot} \partial_k b^i\left(X^n_{\frac{[ns]}{n}}\right) \mathrm{d}\left(V^{n, k, j}, W^j\right\rangle_s\right) \xrightarrow {\mathcal{D}_{3 m d^2}}\left(\partial_k b^i(X), 0,0\right).
		$$
		
		Next,  we argue that $\int_0^j \partial_k b^i\left(X^n_{\frac{[ns]}{n}}\right) \mathrm{d}\left\langle V^{n, k, j}, W^j\right\rangle_s$ converges to zero in probability not only in $\mathcal{C}_0$ but also in $\mathcal{C}_0^\rho$. We have for $0 \leq s<t\leq T$,
		$$
		\begin{aligned}
			& \E\left[\left|\sum_{k=1}^d\left(\int_0^t \partial_k b^i\left(X^n_{\frac{[nu]}{n}}\right) \mathrm{d}\left\langle V^{n, k, j}, W^j\right\rangle_u-\int_0^s \partial_k b^i\left(X^n_{\frac{[nu]}{n}}\right) \mathrm{d}\left(V^{n, k, j}, W^j\right\rangle_u\right)\right|^p\right] \\
			& \quad \leq\left(\int_s^t n^{2H} \sum_{k=1}^d \E\left[\left|\partial_k b^i\left(X^n_{\frac{[nu]}{n}}\right)\big[X^n_s-X^n_{\frac{[ns]}{n}}-M_s^{n}\big]^{k}\right|^p\right]^{\frac{1}{p}} \mathrm{~d} u\right)^p \\
			& \quad \leq C|t-s|^p
		\end{aligned}
		$$
		by Minkowski's inequality, \eqref{est of A-1}, \eqref{est of A-2}, and the boundedness of the derivatives of $b$. Thus, Kolmogorov's continuity criterion yields
		$$
		\E\left[\left\|\sum_{k=1}^d \int_0^{\cdot} \partial_k b^i\left(X^n_{\frac{[ns]}{n}}\right) \mathrm{d}\left(V^{n, k, j}, W^j\right\rangle_s\right\|_{\mathcal{C}_0^\rho}\right] \leq C
		$$
		with $C$ being independent of $n$ for any $\rho \in(0,1)$. Consequently, by \cite[Corollary B.3]{FU}, we have the process $\sum_k \int_0^{\cdot} \partial_k b^i\left(X^n_{\frac{[ns]}{n}}\right) \mathrm{d}\left\langle V^{n, k, j}, W^j\right\rangle_s$ converges in $\mathcal{C}_0^{1 / 2-\epsilon}$ to zero, and the desired result follows  by    \cite[Lemma A.3 ]{FU}.
	\end{proof}
	\begin{lem}\label{remainder term}
		$\|\mathcal{R}^n\|_{C_0^{\gamma}}$ tends to zero in $L^p$ for any $\gamma\in (0,H)$ and any $p\geq 1.$
	\end{lem}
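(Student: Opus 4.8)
The plan is to estimate each of the four pieces $\mathcal{R}^{n,1}_b,\mathcal{R}^{n,2}_b,\mathcal{R}^{n,1}_\sigma,\mathcal{R}^{n,2}_\sigma$ constituting $\mathcal{R}^n$ (their explicit forms being \eqref{e.r.1}--\eqref{e.r.4}) in two stages. For a fixed piece $\mathcal{R}$ I would first prove a pointwise bound $\sup_{t\in[0,T]}\E[|\mathcal{R}_t|^p]\le Ca_n$ with $a_n\to0$, and then a matching increment bound $\E[|\mathcal{R}_t-\mathcal{R}_{t'}|^p]\le Ca_n|t-t'|^{\lambda p}$ for $0\le t'\le t\le T$, where $\lambda=H+\tfrac12$ for the drift pieces and $\lambda=H$ for the diffusion pieces. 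Choosing $p$ large and applying Kolmogorov's continuity criterion exactly as in Lemma \ref{continuous of X n} then yields $\E[\|\mathcal{R}\|_{\mathcal{C}^\gamma_0}^p]\le Ca_n\to0$ for every $\gamma<H$, and the case of general $p\ge1$ follows by Jensen's inequality. The quantitative inputs are $\sup_t\E[|X_t-X^n_t|^p]\le Cn^{-2Hp}$ (Lemma \ref{rate X-X n}), $\E[|X^n_s-X^n_{\frac{[ns]}{n}}|^p]\le Cn^{-Hp}$ (Lemma \ref{continuous of X n}), $\sup_t\E[|M^n_t|^p]\le Cn^{-Hp}$ (Lemma \ref{bound of M n}), and the kernel estimates of Lemmas \ref{basic lemma} and \ref{basic lemma-2}.

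First I would dispose of $\mathcal{R}^{n,1}_b$ and $\mathcal{R}^{n,1}_\sigma$, the remainders of the Taylor expansions of $b$ and $\sigma$ at $X_s$ around $X^n_s$. As the second derivatives of $b$ and $\sigma$ are bounded, Taylor's formula bounds their integrands by $Cn^{2H}|X_s-X^n_s|^2$. Applying Lemma \ref{basic lemma}-(i) to the drift remainder, Lemma \ref{basic lemma}-(ii) (BDG) to the stochastic remainder, and using $\E[|X_s-X^n_s|^{2p}]\le Cn^{-4Hp}$, I obtain $\sup_t\E[|\mathcal{R}^{n,1}_b|^p]+\sup_t\E[|\mathcal{R}^{n,1}_\sigma|^p]\le Cn^{2Hp}n^{-4Hp}=Cn^{-2Hp}$; the increment bounds are identical after substituting Lemma \ref{basic lemma}-(iii),(iv) (or Lemma \ref{basic lemma-2}). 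Thus these two pieces are already negligible in $\mathcal{C}^\gamma_0$ with $a_n=n^{-2Hp}$.

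The core difficulty lies in $\mathcal{R}^{n,2}_b$ and $\mathcal{R}^{n,2}_\sigma$, the second order Taylor remainders of $b$ and $\sigma$ between $X^n_{\frac{[ns]}{n}}$ and $X^n_s$. Now the integrand is only bounded by $Cn^{2H}|X^n_s-X^n_{\frac{[ns]}{n}}|^2$, which is a priori $O(1)$, since the local increment is of size $n^{-H}$ rather than $n^{-2H}$; defeating this borderline scaling is the main obstacle. A pathwise Hölder bound cannot suffice, so my plan is to exploit that the dangerous quadratic is generated by the martingale increment $M^n_s$ (recall $X^n_s-X^n_{\frac{[ns]}{n}}-M^n_s=O(n^{-2H})$ by Lemma \ref{est of A-1,2}). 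I would substitute the explicit $M^n_s$ from \eqref{def of M u}, expand the resulting products of Itô integrals via the integration by parts formula \eqref{dcp of stochastic integral}, and cancel the leading blocks using the tower property together with Lemmas \ref{esti of A}--\ref{esti of B} — the very mechanism that annihilates $\mathcal{O}^n_{37},\mathcal{O}^n_{38}$ and $\mathcal{Q}^n_{37},\mathcal{Q}^n_{38}$ in the quadratic variation computation. I anticipate that the $O(1)$ content of the quadratic is exactly what has already been isolated into the explicitly retained $V^n$-term of \eqref{dcp of U n}, so that after this cancellation the residual remainder becomes genuinely $o(1)$; making this rigorous is the crux of the whole lemma.

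Assembling the four pointwise and increment estimates and running the Kolmogorov argument then gives $\|\mathcal{R}^n\|_{\mathcal{C}^\gamma_0}\to0$ in $L^p$ for all $\gamma\in(0,H)$ and $p\ge1$. The decisive, and only nontrivial, step is the treatment of the $\mathcal{R}^{n,2}$ terms: the borderline $O(1)$ estimate has to be improved by genuinely invoking the martingale structure of $M^n$ and the conditional-expectation cancellations of Lemmas \ref{esti of A}--\ref{esti of B}, in sharp contrast with the elementary Taylor-plus-rate argument that settles $\mathcal{R}^{n,1}_b$ and $\mathcal{R}^{n,1}_\sigma$.
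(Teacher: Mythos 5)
Your treatment of $\mathcal{R}^{n,1}_b$ and $\mathcal{R}^{n,1}_\sigma$ coincides with the paper's: Taylor's theorem together with $\E[|X_s-X^n_s|^{2p}]\le Cn^{-4Hp}$ from Lemma \ref{rate X-X n} gives a prefactor $n^{2Hp}\cdot n^{-4Hp}=n^{-2Hp}$, and the increment bounds from Lemma \ref{basic lemma}-(iii),(iv) feed into Kolmogorov's criterion. That part is complete and correct.

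The genuine gap is exactly where you place it, namely $\mathcal{R}^{n,2}_b$ and $\mathcal{R}^{n,2}_\sigma$, but the repair you sketch cannot work as described. You propose to substitute $M^n_s$, expand via \eqref{dcp of stochastic integral}, and kill the leading blocks by conditioning as in Lemmas \ref{esti of A} and \ref{esti of B}. The obstruction, however, is not a martingale fluctuation that the tower property can annihilate: the conditional expectation $\E\big[(X^n_s-X^n_{\frac{[ns]}{n}})^{\otimes 2}\mid\mathcal{F}_{\frac{[ns]}{n}}\big]$ is itself of exact order $n^{-2H}$, its leading part being $\sigma\sigma^{\top}(X^n_{\frac{[ns]}{n}})$ times $\int_{\frac{[ns]}{n}}^{s}K(s-u)^2\,\d u+\int_{0}^{\frac{[ns]}{n}}\big(K(s-u)-K(\frac{[ns]}{n}-u)\big)^2\,\d u\asymp n^{-2H}$ by Lemmas \ref{small time estimate} and \ref{2-small time estimate}. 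Hence after multiplication by $n^{2H}$ the second-order Taylor remainder contributes an $O(1)$ quantity proportional to the \emph{second} derivatives of $b$ (resp.\ $\sigma$); it is not the object isolated in the retained $V^n$-term of \eqref{dcp of U n}, which carries only $\nabla\sigma\cdot(X^n_s-X^n_{\frac{[ns]}{n}}-M^n_s)$. Concretely, for $d=m=1$ and $b(x)=x^2/2$ one has $\mathcal{R}^{n,2}_{b,t}=\tfrac{n^{2H}}{2}\int_0^tK(t-s)(X^n_s-X^n_{\frac{[ns]}{n}})^2\,\d s$, whose expectation converges to a strictly positive limit, so no cancellation argument can make this term vanish. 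You should also be aware that the paper's own proof does not close this gap: it establishes $\sup_s\E[|\psi_a(s,X^n_s,X^n_{\frac{[ns]}{n}})|^p]\le Cn^{-2Hp}$, which against the prefactor $n^{2Hp}$ yields only the $n$-uniform increment bound $Ch^{Hp}$, and the final display obtains decay in $n$ only by silently replacing $\psi_a(s,X^n_s,X^n_{\frac{[ns]}{n}})$ with $\psi_a(s,X_s,X^n_s)$. So your identification of the crux is correct, but neither your sketch nor the paper's argument actually proves that $\mathcal{R}^{n,2}_b$ and $\mathcal{R}^{n,2}_\sigma$ tend to zero; as stated the claim seems to require vanishing second derivatives or an additional correction term in the limit equation.
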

	\begin{proof}
		Recall the decomposition of $U^n$, i.e. \eqref{dcp of U n}. By Taylor's theorem we have
		$$\mathcal{R}_t^{n}=\mathcal{R}^{n,1}_{b,t}+\mathcal{R}^{n,2}_{b,t}+\mathcal{R}^{n,1}_{\sigma,t}+\mathcal{R}^{n,2}_{\sigma,t},$$
		where
		\begin{align}
			\mathcal{R}^{n,1}_{b,t}&=n^{2H}\int_{0}^{t}K(t-s)\Big[\big(b(X_s)-b(X^n_s)\big)-\nabla b(X^n_s)\big(X_s-X^n_s\big)\Big]\d s\nonumber\\&=n^{2H}\int_{0}^{t}K(t-s)\int_{0}^{1}\big(\nabla b(X^n_s+r\big(X_s-X^n_s\big))-\nabla b(X^n_s)\big)\d r\big(X_s-X^n_s\big)\d s\label{e.r.1} \\
			\mathcal{R}^{n,1}_{\sigma,t}&=n^{2H}\int_{0}^{t}K(t-s)\Big[\big(\sigma(X_s)-\sigma(X^n_s)\big)-\nabla \sigma(X^n_s)\big(X_s-X^n_s\big)\Big]\d W_s\nonumber\\&=n^{2H}\int_{0}^{t}K(t-s)\int_{0}^{1}\big(\nabla \sigma(X^n_s+r\big(X_s-X^n_s\big))-\nabla \sigma(X^n_s)\big)\d r\big(X_s-X^n_s\big)\d W_s\label{e.r.2}\\
			\mathcal{R}^{n,2}_{b,t}&=n^{2H}\int_{0}^{t}K(t-s)\Big[\big(b(X^n_s)-b(X^n_{\frac{[ns]}{n}})\big)-\nabla b(X^n_{\frac{[ns]}{n}})(X^n_s-X^n_{\frac{[ns]}{n}})\Big]\d s\nonumber\\&=n^{2H}\int_{0}^{t}K(t-s)\int_{0}^{1}\big(\nabla b(X^n_s+r(X^n_s-X^n_{\frac{[ns]}{n}}))-\nabla b(X^n_s)\big)\d r(X^n_s-X^n_{\frac{[ns]}{n}})\d s\label{e.r.3}\\
			\mathcal{R}^{n,2}_{\sigma,t}&=n^{2H}\int_{0}^{t}K(t-s)\Big[\big(\sigma(X^n_s)-\sigma(X^n_{\frac{[ns]}{n}})\big)-\nabla \sigma(X^n_{\frac{[ns]}{n}})(X^n_s-X^n_{\frac{[ns]}{n}})\Big]\d s\nonumber\\&=n^{2H}\int_{0}^{t}K(t-s)\int_{0}^{1}\big(\nabla \sigma(X^n_s+r(X^n_s-X^n_{\frac{[ns]}{n}}))-\nabla \sigma(X^n_s)\big)\d r(X^n_s-X^n_{\frac{[ns]}{n}})\d s\,. \label{e.r.4}
		\end{align}
		For the term $\mathcal{R}^{n,1}_{b,t}$, let
		$$\psi_a(s,x,y)=(x_s-y_s)\int_{0}^{1}\big(\nabla a(y_s+r\big(x_s-y_s\big))-\nabla a(y_s)\big)\d r,$$
		for $a=b$ or $a=\sigma$ and for any $\mathbb{R}^d$-valued adapted process. By Lemma \ref{basic lemma}, (iii) we have
		\begin{align}
			\E[&|\mathcal{R}^{n,1}_{b,t+h}-\mathcal{R}^{n,1}_{b,t}|^p]
			\nonumber\\&\leq 2^{p-1}n^{2Hp}\E\Big[\Big|\int_{0}^{t}\Big(K(t+h-s)-K(t-s)\Big)\psi_b(s,X_s,X^n_s)\d s\Big|^p\Big]\nonumber\\&\quad+2^{p-1}n^{2Hp}\E\Big[\Big|\int_{t}^{t+h}K(t+h-s)\psi_b(s,X_s,X^n_s)\big(X_s-X^n_s\big)\d s\Big|^p\Big]\nonumber\\&\leq Ch^{(H+1/2)p}n^{2Hp}\sup_{s\in[0,T]}\E[|\psi_b(s,X_s,X^n_s)|^p].
		\end{align}
		Similarly, we have
		\begin{align}
			\E[|\mathcal{R}^{n,1}_{\sigma,t+h}-\mathcal{R}^{n,1}_{\sigma,t}|^p]&\leq Ch^{(H+1/2)p}n^{2Hp}\sup_{s\in[0,T]}\E[|\psi_\sigma(s,X_s,X^n_s)|^p],\nonumber\\\E[|\mathcal{R}^{n,2}_{b,t+h}-\mathcal{R}^{n,2}_{b,t}|^p]&\leq Ch^{Hp}n^{2Hp}\sup_{s\in[0,T]}\E[|\psi_b(s,X^n_s,X^n_{\frac{[ns]}{n}})|^p],\nonumber\\\E[|\mathcal{R}^{n,2}_{\sigma,t+h}-\mathcal{R}^{n,2}_{\sigma,t}|^p]&\leq Ch^{Hp}n^{2Hp}\sup_{s\in[0,T]}\E[|\psi_\sigma(s,X_s,X^n_s)|^p].\nonumber
		\end{align}
		By the Cauchy-Schwarz inequality, Assumption \ref{assumption 2.3}, Lemmas  \ref{continuous of X n} and   \ref{rate X-X n} we have 
		\begin{align}
			\E[|\psi_a(s,X_s,X^n_s)|^p]&\leq \E[|X_s-X^n_s|^{2p}]^{1/2}\E\Big[\Big|\int_{0}^{1}\big(\nabla a(X^n_s+r\big(X_s-X^n_s\big))-\nabla a(X^n_s)\big)\d r\Big|^{2p}\Big]^{1/2}\nonumber\\&\leq Cn^{-4Hp},\nonumber
		\end{align}
		and
		\begin{align}
			\E[|\psi_a(s,X^n_s,X^n_{\frac{[ns]}{n}})|^p]&\leq \E[|X^n_s-X^n_{\frac{[ns]}{n}}|^{2p}]^{1/2}\E\Big[\Big|\int_{0}^{1}\big(\nabla a(X^n_{\frac{[ns]}{n}}+r\big(X^n_s-X^n_{\frac{[ns]}{n}}\big))-\nabla a(X^n_{\frac{[ns]}{n}})\big)\d r\Big|^{2p}\Big]^{1/2}\nonumber\\&\leq Cn^{-2Hp},\nonumber
		\end{align}
		for $a=b,\sigma.$
		
		Therefore
		\begin{align}
			\E[&|\mathcal{R}^{n}_{t+h}-\mathcal{R}^{n}_{t}|^p]\nonumber\\&\leq Ch^{(H+1/2)p}n^{2Hp}\Big[\sup_{s\in[0,T]}\E[|\psi_{b}(s,X_s,X^n_s)|^p]+\sup_{s\in[0,T]}\E[|\psi_{\sigma}(s,X_s,X^n_s)|^p]\Big]\nonumber\\&\quad+Ch^{Hp}n^{2Hp}\Big[\sup_{s\in[0,T]}\E[|\psi_{b}(s,X_s,X^n_s)|^p]+\sup_{s\in[0,T]}\E[|\psi_{\sigma}(s,X_s,X^n_s)|^p]\Big]\nonumber\\&\leq Ch^{Hp}\,, \nonumber
		\end{align}
		where $C$ is independent of $n,t$ and $h$. Letting  $t=0$ and noticing  that $\mathcal{R}^{n}_{0}=0$, by Kolmogorov's continuity criterion we obtain that
		$$\|\mathcal{R}^n\|_{C_0^{\gamma}}\to 0,$$
		for all $\gamma\in(0,H)$.
	\end{proof}
	\begin{lem}\label{limit of U}
		If the sequence
		$$
		\left(U^n, V^n,\left\{\nabla b^i(X^n)\right\}_i,\left\{\partial_k \sigma_j^i(X^n)\right\}_{i j k}\right)
		$$
		converges in law in $\mathcal{C}_0^{2H-\epsilon} \times \mathcal{C}_0 \times \mathcal{D}_{d^2} \times \mathcal{D}_{d^2 m}$ to
		$$
		\left(U, V,\left\{\nabla b^i(X)\right\}_i,\left\{\partial_k \sigma_j^i(X)\right\}_{i j k}\right),
		$$
		then $U$ is the  solution of \eqref{limit Volterra eq}.
	\end{lem}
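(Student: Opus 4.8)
The plan is to start from the decomposition \eqref{dcp of U n} of the normalized error process and to pass to the limit in each of its five terms, exploiting the assumed joint convergence in law together with the stable convergence of $V^n$ established in Lemma \ref{charact of V}. First I would invoke the Skorokhod representation theorem: since the quadruple $(U^n, V^n, \{\nabla b^i(X^n)\}_i, \{\partial_k\sigma_j^i(X^n)\}_{ijk})$ converges in law in $\mathcal{C}_0^{2H-\epsilon}\times\mathcal{C}_0\times\mathcal{D}_{d^2}\times\mathcal{D}_{d^2 m}$, on a suitable common probability space I may assume the convergence holds almost surely in the respective topologies, with limit $(U, V, \{\nabla b^i(X)\}_i, \{\partial_k\sigma_j^i(X)\}_{ijk})$. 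Because the convergence of $V^n$ is stable with respect to the driving noise (Lemma \ref{charact of V}), the Brownian motion $W$ and the generating $\sigma$-field are preserved in the limit, which is what will let me pass to the limit inside integrals against $W$. The goal is then to identify the almost sure limit of each term of \eqref{dcp of U n} with the corresponding term of \eqref{limit Volterra eq}.

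Two of the five terms are already known to vanish: the drift-type term $n^{2H}\int_0^t K(t-s)\nabla b(X^n_{\frac{[ns]}{n}})\big(X^n_s - X^n_{\frac{[ns]}{n}}-M^n_s\big)\,\d s$ tends to zero in $\mathcal{C}_0^{H-\epsilon}$ in probability by Lemma \ref{nabla b}, and the Taylor remainder $\mathcal{R}^n$ tends to zero by Lemma \ref{remainder term}. For the two first-order terms $\int_0^t K(t-s)\nabla b(X^n_s)U^n_s\,\d s$ and $\int_0^t K(t-s)\nabla\sigma(X^n_s)U^n_s\,\d W_s$, the a.s. convergence $X^n\to X$ and $U^n\to U$, together with the uniform moment bounds of Lemmas \ref{bound of X n} and \ref{continuous of X n} and the boundedness of the derivatives of $b$ and $\sigma$, let me pass to the limit in the Lebesgue integral directly, and in the It\^o integral by an application of the Kurtz--Protter stability theorem \cite[Theorem 7.10]{KP}, the integrand converging in the required sense with $W$ a fixed good integrator. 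The resulting limits are precisely the first two terms of \eqref{limit Volterra eq}.

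The main obstacle is the term $\int_0^t K(t-s)\nabla\sigma(X^n_{\frac{[ns]}{n}})\,\d V^n_s$, a stochastic integral against the sequence $V^n$ driven by the singular kernel $K$. Here I would combine the stable convergence of $V^n$ to the conditionally Gaussian martingale $V$ of Lemma \ref{charact of V} with the uniform tightness of $\{V^n\}$, which follows from the covariation bounds of Lemma \ref{est of V-1} and ensures that $\{V^n\}$ is a good sequence of semimartingales in the sense of Kurtz--Protter. A continuous-mapping argument via \cite[Theorem 7.10]{KP}, applied to the jointly converging integrand $\nabla\sigma(X^n_{\frac{[ns]}{n}})\to\nabla\sigma(X_s)$ and integrator $V^n\to V$, then yields convergence of this term to $\int_0^t K(t-s)\nabla\sigma(X_s)\,\d V_s$. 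The delicacy is that $K(t-s)$ is unbounded as $s\uparrow t$, so the Kurtz--Protter machinery cannot be applied off the shelf; I would control this by the $L^2$-type kernel estimates of Lemma \ref{basic lemma} together with the H\"older continuity of the integrand, truncating $K$ near the diagonal and passing to the limit in the truncation. Substituting the explicit representation $V^{k,j}=C_M\sum_{i,i',l}\int_0^\cdot\sigma_i^l(X_s)\partial_l\sigma^k_{i'}(X_s)\,\d B_s^{i,i',j}$ from Lemma \ref{charact of V} converts this term into exactly the $C_M$-weighted $\d B$-term of \eqref{limit Volterra eq}.

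Collecting the four limits shows that the almost sure limit $U$ satisfies \eqref{limit Volterra eq}. Since the coefficients $\nabla b(X)$ and $\nabla\sigma(X)$ of the limit equation are bounded adapted processes, the linear Volterra equation \eqref{limit Volterra eq} admits a unique solution, established separately in this section; this uniqueness identifies the weak limit of $U^n$ with that solution and completes the proof.
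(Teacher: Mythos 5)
Your proposal has the right raw ingredients (the vanishing of the drift-correction term and of $\mathcal{R}^n$ via Lemmas \ref{nabla b} and \ref{remainder term}, uniform tightness of $V^n$, and Kurtz--Protter for the stochastic-integral limits), but it misses the structural device on which the paper's proof actually rests, and the step you substitute for it is left as a sketch at exactly the point where the difficulty sits. The Kurtz--Protter theorem \cite[Theorem 7.10]{KP} governs limits of genuine semimartingale integrals $\int_0^t H^n_s\,\d Z^n_s$; the terms you want to pass to the limit in, $\int_0^t K(t-s)\nabla\sigma(X^n_s)U^n_s\,\d W_s$ and $\int_0^t K(t-s)\nabla\sigma(X^n_{[ns]/n})\,\d V^n_s$, are Volterra integrals whose integrand depends on the upper limit $t$ through the singular factor $K(t-s)$, so the theorem does not apply to them as written. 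You acknowledge this and propose truncating $K$ near the diagonal, but that is precisely where all the mass of the problem lives (the $n^{2H}$ normalization is generated by the behaviour of $K$ on $[\frac{[ns]}{n},s]$), and making the truncation error uniform in $n$ and compatible with convergence in the H\"older topology $\mathcal{C}_0^{H-\epsilon}$ would require a separate, nontrivial estimate that you do not supply. The paper avoids this entirely by a factorization: it collects the three kernel-free It\^o integrals into $\Phi^n_t=\sum_k\int_0^t\partial_k b^i(X^n_s)U^{n,k}_s\,\d s+\sum_{j,k}\int_0^t\partial_k\sigma^i_j(X^n_s)U^{n,k}_s\,\d W^j_s+\sum_{j,k}\int_0^t\partial_k\sigma^i_j(X^n_{[ns]/n})\,\d V^{n,k,j}_s$, applies Kurtz--Protter only to these (where it is legitimate), proves $\Phi^n$ is tight in $\mathcal{C}_0^{1/2-\epsilon}$, and then recovers the kernel by applying the deterministic fractional-integral operator $\mathcal{J}^\alpha$ with $\alpha=\frac12-H$, which by \cite[Lemma A.3]{FU} is a continuous linear map $\mathcal{C}_0^{1/2-\epsilon}\to\mathcal{C}_0^{H-\epsilon}$; the continuous mapping theorem then gives $U-\mathcal{J}^\alpha\Phi=0$, which is the limit equation. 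Without this (or an equivalent) device, your term-by-term passage to the limit is not a complete proof.

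Two smaller points. First, the Skorokhod representation step is both unnecessary and delicate here: after changing the probability space you must also transport $W$ and re-identify the stochastic integrals, which is exactly the issue Kurtz--Protter is designed to handle without almost-sure representations; the paper works directly with convergence in law. Second, the uniqueness of solutions to \eqref{limit Volterra eq} that you invoke at the end is not needed for this lemma (whose conclusion is only that the limit $U$ satisfies the equation); it is the separate Lemma \ref{unqueness of U}, used later in the proof of Theorem \ref{t.main}.
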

	\begin{proof}
		First we show that $V^n$ is uniformly tight in the sense of     \cite[Definition 7.4]{KP}. Let $\left\{Y^n\right\}_{n \in \mathbb{N}}$ be a sequence of simple predictable processes almost surely bounded by one. Then, for all $t \in[0, T]$, it follows from the It\^o  isometry,
		$$
		\E\Big[\Big|\int_0^t Y_{s-}^n \mathrm{~d} V_s^{n, k, j}\Big|^2\Big]=\E\left[\int_0^t\left|Y_{s-}^n\right|^2 \mathrm{d}\langle V^{n, k, j}\rangle_s\right] \leq \sup _n \E\Big[\langle V^{n, k, j}\rangle_t\Big]<\infty,
		$$
		where the last bound follows from the fact that $\left\{\langle V^{n, k, j}\rangle_t\right\}_{n \in \mathbb{N}}$ is convergent in $L_1$ by Lemma \ref{est of V-1}.
		Now, define $\Phi^n=\left(\Phi^{n, 1}, \ldots, \Phi^{n, d}\right)$ by
		$$
		\begin{aligned}
			\Phi_t^{n, i}&=\sum_{k=1}^d \int_0^t \partial_k b^i\left(X^n_s\right) U_s^{n, k} \mathrm{d} s+\sum_{j=1}^m \sum_{k=1}^d \int_0^t \partial_k \sigma_j^i\left(X^n_s\right) U_s^{n, k} \mathrm{d} W_s^j \\
			&\quad+\sum_{j=1}^m \sum_{k=1}^d \int_0^l \partial_k \sigma_j^i\left(X^n_{\frac{[ns]}{n}}\right) \mathrm{d} V_s^{n, k, j} .
		\end{aligned}
		$$
		
		By the uniform tightness of $V^n$,      \cite[Theorem 7.10]{KP}  implies $\left(U^n, \Phi^n\right) \rightarrow$ $(U, \Phi)$ in law, where $\Phi=\left(\Phi^1, \ldots, \Phi^d\right)$ are defined by
		$$
		\Phi^i=\sum_{k=1}^d \int_0^t \partial_k b^i\left(X_s\right) U_s^k \mathrm{d} s+\sum_{j=1}^m \sum_{k=1}^d \int_0^t \partial_k \sigma_j^i\left(X_s\right) U_s^k \mathrm{d} W_s^j+\sum_{j=1}^m \sum_{k=1}^d \int_0^{.} \partial_k \sigma_j^i\left(X_s\right) \mathrm{d} V_s^{k, j} .
		$$
		
		We can also show $\Phi^n$ is tight as a $\mathcal{C}_0^{1 / 2-\epsilon}$-valued sequence for any $\epsilon \in(0,1 / 2)$. Indeed, let
		\begin{align}\label{def of hat V}
			\hat{V}^{n, i}_{\cdot}=\sum_{j=1}^m \sum_{k=1}^d \int_0^{\cdot} \partial_k \sigma_j^i(X^n_{\frac{[ns]}{n}}) \mathrm{d} V_s^{n, k, j}.
		\end{align}
		By BDG's inequality, the boundness of the derivatives $\sigma$, \eqref{est of A-1} and \eqref{est of A-2}, we have for any $p>2$ and $0 \leq s<t \leq T$
		$$
		\begin{aligned}
			\E\Big[\big|\hat{V}_t^{n, i}-\hat{V}_s^{n, i}\big|^p\Big] & \leq C \sum_{j=1}^m \sum_{k=1}^d \E\Big[\Big|\int_s^{t} \partial_k \sigma_j^i(X^n_{\frac{[nu]}{n}})n^{2H}\big[X^n_u-X^n_{\frac{[nu]}{n}}-M_u^{n}\big]^{k}\d W^j_u\Big|^p\Big]\nonumber\\&\leq C\sum_{j=1}^m \sum_{k=1}^d\E\Big[\Big(\int_s^{t}\Big|\partial_k \sigma_j^i(X^n_{\frac{[nu]}{n}})n^{2H}\big[X^n_u-X^n_{\frac{[nu]}{n}}-M_u^{n}\big]^{k}\Big|^2\Big)^{p/2}\Big]\nonumber\\&\leq C\sum_{j=1}^m \sum_{k=1}^d\E\Big[\Big(\int_s^{t}\Big|n^{2H}\big[X^n_u-X^n_{\frac{[nu]}{n}}-M_u^{n}\big]^{k}\Big|^2\Big)^{p/2}\Big]\nonumber\\&\leq C|t-s|^{p/2}.\nonumber
		\end{aligned}
		$$
		Then for $t+h, t \in[0, T], h>0$ we obtain
		$$
		\begin{aligned}
			\left\|\Phi_{t+h}^{n, i}-\Phi_t^{n, i}\right\|_{L^p}&\leq  \sum_{k=1}^d\Big\|\int_t^{t+h} \partial_k b^i(X^n_s) U_s^{n, k} \mathrm{d} s\Big\|_{L^p} \\
			&\quad+\sum_{j=1}^m \sum_{k=1}^d\Big\|\int_t^{t+h} \partial_k \sigma_j^i\left(\hat{X}_s\right) U_s^{n, k} \mathrm{d} W_s^j\Big\|_{L^p} +\big\|\hat{V}_{t+h}^{n, i}-\hat{V}_t^{n, i}\big\|_{L^p}\\&\leq C\sum_{k=1}^d\int_t^{t+h}\| U_s^{n, k}\|_{L^p} \mathrm{d} s+C\sum_{j=1}^m \sum_{k=1}^d\Big(\int_t^{t+h}  \|U_s^{n, k}\|^2_{L^p} \mathrm{d} s \Big)^{\frac{1}{2}}\nonumber\\&\quad+Ch^{1/2}\leq Ch^{1/2},\nonumber
		\end{aligned}
		$$
	  where the BDG's inequality, Minkowski's inequality, Lemmas \ref{rate X-X n}, \ref{nabla b} and \ref{remainder term}, and the boundedness of the derivatives of $b,\sigma$ are used.  Hence, Kolmogorov's continuity criterion yields $\E\left[\left\|\Phi^{n, i}\right\|_{ \mathcal{C}_0^{1 / 2-\epsilon}} \right]$
	  $\leq C $ uniformly in $n$, and therefore, by   \cite[Corollary B.3]{FU}, $\Phi^n$ converges in law in $\mathcal{C}_0^{1 / 2-\epsilon}$ to $\Phi$.
		
		Let $\alpha=\frac{1}{2}-H$ and $\gamma=\frac{1}{2}-\epsilon$. By   \cite[Lemma A.3]{FU}, the operator $\mathcal{J}^\alpha$ is continuous from $\mathcal{C}_0^\gamma$ into $\mathcal{C}_0^{\gamma-\alpha}$. Since $\left(U^n, \Phi^n\right)$ converges in law to $(U, \Phi)$ in $\mathcal{C}_0^{\gamma-\alpha} \times \mathcal{C}_0^\gamma$, the continuous mapping theorem implies that $U^n-\mathcal{J}^\alpha \Phi^n$ converges in law to $U-\mathcal{J}^\alpha \Phi$ in $\mathcal{C}_0^{\gamma-\alpha}=\mathcal{C}_0^{H-\epsilon}$. On the other hand, Lemmas \ref{nabla b} and \ref{remainder term} imply $U^n-\mathcal{J}^\alpha \Phi^n$ converges in law to zero. Consequently $U-\mathcal{J}^\alpha \Phi=0$, which is equivalent to \eqref{limit Volterra eq}.
	\end{proof}
	\begin{lem}\label{tight of U n}
		The sequence $U^n$ is tight in $\mathcal{C}_0^{H-\epsilon}$ for any $\epsilon\in (0,H)$.
	\end{lem}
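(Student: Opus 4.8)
The plan is to prove tightness by establishing a uniform-in-$n$ Hölder moment bound for $U^n$ and then invoking Kolmogorov's continuity criterion together with the compactness of the embedding $\mathcal{C}_0^{\alpha}\hookrightarrow \mathcal{C}_0^{H-\epsilon}$, which holds for any $\alpha\in(H-\epsilon,H)$. The starting point is the decomposition \eqref{dcp of U n}, which exhibits $U^n$ as a sum of five terms: the two feedback integrals $\int_{0}^{t}K(t-s)\nabla b(X^n_s)U^n_s\,\d s$ and $\int_{0}^{t}K(t-s)\nabla\sigma(X^n_s)U^n_s\,\d W_s$; the drift-type term $n^{2H}\int_{0}^{t}K(t-s)\nabla b(X^n_{\frac{[ns]}{n}})(X^n_s-X^n_{\frac{[ns]}{n}}-M^n_s)\,\d s$; the stochastic term $\int_{0}^{t}K(t-s)\nabla\sigma(X^n_{\frac{[ns]}{n}})\,\d V^n_s$; and the Taylor remainder $\mathcal{R}^n$. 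Throughout I fix a large $p>1/H$.

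First I would record the uniform bound
\[
\sup_n\sup_{s\in[0,T]}\Big\|n^{2H}\big[X^n_s-X^n_{\frac{[ns]}{n}}-M^n_s\big]\Big\|_{L^p}\le C,
\]
which is the $L^p$-version of Lemma \ref{est of A-1,2}(i)--(ii): the estimates \eqref{est of A-1} and \eqref{est of A-2} were written for $p=2$, but their proofs (BDG combined with Lemma \ref{basic lemma}(iii)--(iv) and Lemma \ref{bound of M n}) go through verbatim for every $p\ge2$. Since $\d V^{n,k,j}_s=n^{2H}[X^n_s-X^n_{\frac{[ns]}{n}}-M^n_s]^k\,\d W^j_s$, the same bound also controls the integrand of the stochastic term $\int_{0}^{t}K(t-s)\nabla\sigma(X^n_{\frac{[ns]}{n}})\,\d V^n_s$.

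Next I would establish the uniform moment bound $\sup_n\sup_{t\in[0,T]}\E[|U^n_t|^p]<\infty$. Applying Lemma \ref{basic lemma}(i)--(ii) and the boundedness of $\nabla b,\nabla\sigma$ to the two feedback terms produces the Gronwall feedback $C\int_{0}^{t}\big(K(t-s)+K(t-s)^2\big)\E[|U^n_s|^p]\,\d s$; applying Lemma \ref{basic lemma}(i) to the drift-type term and Lemma \ref{basic lemma}(ii) to the stochastic term, together with the uniform bound of the first step, shows that these two contribute a constant independent of $n$; and Lemma \ref{remainder term} gives $\sup_t\E[|\mathcal{R}^n_t|^p]\le C$. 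Hence $f^n(t):=\E[|U^n_t|^p]$ satisfies $f^n(t)\le C+\int_{0}^{t}\hat K(t-s)f^n(s)\,\d s$ with $\hat K(u)=C\big(K(u)+K(u)^2\big)$, and the Volterra Gronwall inequality (\cite[Theorem 2.2]{ZX2}) already used in Lemma \ref{bound of X n} yields the claimed uniform bound.

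Finally I would prove the Hölder increment estimate
\[
\sup_n\E\big[|U^n_{t+h}-U^n_t|^p\big]\le C\,h^{\gamma p},\qquad 0\le t<t+h\le T,
\]
for some $\gamma<H$ that may be chosen arbitrarily close to $H$. Writing each of the four non-remainder terms as an increment and splitting it into a $\int_{0}^{t}\big(K(t+h-s)-K(t-s)\big)(\cdots)$ piece and a $\int_{t}^{t+h}K(t+h-s)(\cdots)$ piece, exactly as in the proof of Lemma \ref{continuous of X n}, I would apply Lemma \ref{basic lemma}(iii) (order $H+\tfrac12$) to the two drift-type terms and Lemma \ref{basic lemma}(iv) (order $H$) to the two stochastic terms; since the integrands are bounded in $L^p$ uniformly in $n$ by the two previous steps, each contributes at most $C\,h^{Hp}$. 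The remainder increment is handled by Lemma \ref{remainder term}, which gives $\E[|\mathcal{R}^n_{t+h}-\mathcal{R}^n_t|^p]\le \E\big[\|\mathcal{R}^n\|_{\mathcal{C}_0^\gamma}^p\big]\,h^{\gamma p}\le C\,h^{\gamma p}$. Combining these and using $U^n_0=0$, Kolmogorov's continuity criterion gives $\sup_n\E\big[\|U^n\|_{\mathcal{C}_0^{\alpha}}^p\big]\le C$ for every $\alpha<\gamma-1/p$; choosing $\gamma$ close to $H$ and $p$ large makes $\gamma-1/p$ approach $H$, so the admissible exponents cover some $\alpha\in(H-\epsilon,H)$, and the compactness of $\mathcal{C}_0^{\alpha}\hookrightarrow \mathcal{C}_0^{H-\epsilon}$ (e.g. \cite[Theorem B.2]{FU}) then yields tightness of $U^n$ in $\mathcal{C}_0^{H-\epsilon}$. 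The main obstacle is closing the Gronwall estimate uniformly in $n$: one must verify that every non-feedback contribution—above all the stochastic term driven by $V^n$ and the Taylor remainder—is bounded in $L^p$ uniformly in $n$, which hinges on the $L^p$-strengthening of Lemma \ref{est of A-1,2} carried out in the first step.
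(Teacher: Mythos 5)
Your proof is correct and follows the same skeleton as the paper's: decompose $U^n$ via \eqref{dcp of U n}, establish uniform-in-$n$ moment bounds on the increments $U^n_{t+h}-U^n_t$ of order $h^{Hp}$ using Lemma \ref{basic lemma}(iii)--(iv), and conclude via Kolmogorov's continuity criterion and the compact embedding of \cite[Theorem B.2]{FU}. The one substantive difference is how you obtain the uniform bound $\sup_n\sup_t\E[|U^n_t|^p]<\infty$: you run a Volterra--Gronwall argument on the decomposition, feeding in the $L^p$-strengthening of \eqref{est of A-1}--\eqref{est of A-2} to control the non-feedback terms, whereas the paper gets this bound in one line from Lemma \ref{rate X-X n}, since $\sup_t\E[|U^n_t|^p]=n^{2Hp}\sup_t\E[|X_t-X^n_t|^p]\le C$ is exactly the already-established strong convergence rate of the Milstein scheme. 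Your Gronwall detour is valid (the finiteness of $\E[|U^n_t|^p]$ for each fixed $n$ needed to close it follows from Lemmas \ref{bound of X} and \ref{bound of X n}, and the $p\ge 2$ extension of \eqref{est of A-1}--\eqref{est of A-2} is indeed immediate from their proofs; the paper itself uses that extension when bounding the increments of $\hat V^{n,i}$), but it re-proves something the paper already has on hand, at the cost of extra work. The other minor divergence is cosmetic: the paper disposes of the drift-type term and the remainder by citing Lemmas \ref{nabla b} and \ref{remainder term} (convergence to zero in H\"older norm, hence tightness of those pieces), while you re-estimate their increments directly; both are fine.
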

	\begin{proof}
		Define $\hat{U}^{n, i}$ by
		$$
		\hat{U}_t^{n, i}=\int_0^t K(t-s) \nabla b^i(X^n_s) U_s^{n, i} \mathrm{d} s+\sum_{k=1}^d \sum_{j=1}^m \int_0^t K(t-s) U_s^{n, i} \partial_k \sigma_j^i(X^n_s) \mathrm{d} W_s^j .
		$$
		
		In order to show the tightness of $\{\hat{U}^{n, i}\}$ in $\mathcal{C}_0^{H-\epsilon}$. By Theorem B.2 in \cite{FU}, it suffices to show that there exists a uniform bound for $\EE[\|\hat{U}_t^{n, i}\|_{\mathcal{C}_0^{H-\epsilon^{\prime}}}]$ for $\epsilon^{\prime} \in(0, \epsilon)$. Since the derivatives of $b$ and $\sigma$ are bounded, by Lemma \ref{basic lemma}-(iii), (iv) and Lemma \ref{rate X-X n} we have for $p \geq 2$
		\begin{align}
			\EE\Big[&\big|\hat{U}_{t+h}^{n, i}-\hat{U}_t^{n, i}\big|^p\Big]\nonumber\\ &\leq 4^{p-1}\E\Big[\Big|\int_0^t\Big(K(t+h-s)-K(t-s) \Big)\nabla b^i(X^n_s) U_s^{n, i} \mathrm{d} s\Big|^p\Big]\nonumber\\&\quad+4^{p-1}\E\Big[\Big|\int_t^{t+h}K(t+h-s)\nabla b^i(X^n_s) U_s^{n, i} \mathrm{d} s\Big|^p\Big]\nonumber\\&\quad+4^{p-1}\E\Big[\Big|\int_0^t\Big(K(t+h-s)-K(t-s) \Big)\nabla \partial_k \sigma_j^i(X^n_s) U_s^{n, i} \mathrm{d} W^j_s\Big|^p\Big]\nonumber\\&\quad+4^{p-1}\E\Big[\Big|\int_t^{t+h}K(t+h-s)\partial_k \sigma_j^i(X^n_s) U_s^{n, i} \mathrm{d} W^j_s\Big|^p\Big]\nonumber\\&\leq Ch^{Hp}\sup_{t \in[0, T]}\E[|U^{n,i}_t|^p]\leq Ch^{Hp}, \nonumber
		\end{align}
		where $C$ is independent of $t$ and $n$, and hence, Kolmogorov's continuity criterion and Theorem B.2 of \cite{FU} lead to the tightness in $\mathcal{C}_0^{H-\epsilon}$ of $\{\hat{U}^{n, i}\}$.
		
		In light of Lemmas \ref{nabla b} and \ref{remainder term}, it   remains  to show that $\{\int_0^{\cdot} K(\cdot-s) \mathrm{d} \hat{V}_s^{n, i}\}_{n \in \mathbb{N}}$ is tight, where $\hat{V}^{n, i}$ is defined by \eqref{def of hat V}. Similar to the above, by the boundness of the derivatives of $\sigma$, \eqref{def of V}, \eqref{est of A-1}, \eqref{est of A-2} and Lemma \ref{basic lemma}-(iv) we have for any $t+h, t \in[0, T], h>0, p>2$
		\begin{align}
			&\E\Big[\Big|\int_0^{t+h} K(t+h-s) \mathrm{d} \hat{V}_s^{n, i}-\int_0^{t} K(t-s) \mathrm{d} \hat{V}_s^{n, i}\Big|^p\Big]\nonumber\\ &\leq C\E\Big[\Big|\int_{0}^{t}\Big(K(t+h-s)-K(t-s)\Big)n^{2H}\big[X^n_s-X^n_{\frac{[ns]}{n}}-M_s^{n}\big]^{k}\d W_s^j\Big|^p\Big]\nonumber\\&\quad+C\E\Big[\Big|\int_{t}^{t+h}K(t+h-s)n^{2H}\big[X^n_s-X^n_{\frac{[ns]}{n}}-M_s^{n}\big]^{k}\d W_s^j\Big|^p\Big]\nonumber\\&\leq Ch^{Hp}n^{2Hp}\sup_{s\in[0,T]}\E\Big[\Big|\big[X^n_s-X^n_{\frac{[ns]}{n}}-M_s^{n}\big]^{k}\Big|^p\Big]\nonumber\\&\leq Ch^{Hp}\,. 
		\end{align}
		By Kormogorov's continuity criterion and Theorem B.2 of \cite{FU}, we obtain the tightness in $\mathcal{C}_0^{H-\varepsilon}$ of $\{\int_0^{\cdot} K(\cdot-s) \mathrm{d} \hat{V}_s^{n, i}\}_{n \in \mathbb{N}}$. So the tightness of $\mathcal{C}_0^{H-\epsilon}$ of $\{U^{n,i}\}$ is verified.
	\end{proof}
	\begin{lem}\label{unqueness of U}
		The uniqueness in law holds for continuous solution \eqref{limit Volterra eq}.
	\end{lem}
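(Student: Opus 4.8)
The plan is to establish the stronger property of pathwise (strong) uniqueness, from which uniqueness in law follows by the Yamada--Watanabe principle together with the existence already provided by Lemma \ref{limit of U}. The essential observation is that \eqref{limit Volterra eq} is \emph{linear} in the unknown $U$: both the drift and the $\d W$-integral depend linearly on $U$ through the bounded coefficients $\nabla b(X_s)$ and $\nabla \sigma(X_s)$, whereas the $C_M$-weighted $\d B$-integral is a purely additive input not involving $U$. Hence, if $U$ and $\tilde U$ are two continuous solutions on the same filtered probability space carrying $W$ and the independent $B$, their difference $D:=U-\tilde U$ satisfies the \emph{homogeneous} linear Volterra equation
\begin{align}
D_t^i=\sum_{k=1}^d\int_0^t K(t-s)\partial_k b^i(X_s)D_s^k\,\d s+\sum_{j=1}^m\sum_{k=1}^d\int_0^t K(t-s)\partial_k\sigma_j^i(X_s)D_s^k\,\d W_s^j,\nonumber
\end{align}
in which the additive $C_M$-term has cancelled.

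First I would reduce the problem to a Volterra--Gronwall estimate for $\E[|D_t|^p]$ with $p\ge 2$. Since the derivatives of $b$ and $\sigma$ are bounded under the hypotheses of Theorem \ref{t.main}, Lemma \ref{basic lemma}, (i) and (ii), yield
\begin{align}
\E[|D_t|^p]\le C\int_0^t K(t-s)\E[|D_s|^p]\,\d s+C\int_0^t K(t-s)^2\E[|D_s|^p]\,\d s=\int_0^t\hat K(t-s)\E[|D_s|^p]\,\d s,\nonumber
\end{align}
with $\hat K(u):=C\big(K(u)+K(u)^2\big)$, precisely the kernel appearing in the proof of Lemma \ref{bound of X n}. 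To ensure the quantities are finite at the outset, I would localize with the stopping times $\tau_m=\inf\{t\ge 0:|D_t|\ge m\}\wedge T$ and work with $f_m(t):=\sup_{r\in[0,t]}\E[|D_r|^p\mathbb{I}_{\{r<\tau_m\}}]$, repeating the localization scheme of Lemma \ref{bound of X n} verbatim; because the equation for $D$ is homogeneous, no constant term survives and one arrives at $f_m(t)\le\int_0^t\hat K(t-s)f_m(s)\,\d s$.

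Applying the Volterra-type Gronwall inequality (\cite[Theorem 2.2]{ZX2}) with vanishing constant term then forces $f_m\equiv 0$ for every $m$. Since $D$ is path-continuous we have $\tau_m\uparrow T$ almost surely, so Fatou's lemma gives $\E[|D_t|^p]\le\liminf_{m\to\infty}\E[|D_t|^p\mathbb{I}_{\{t<\tau_m\}}]\le\liminf_{m\to\infty}f_m(t)=0$ for every $t\in[0,T]$. Consequently $D_t=0$ a.s. for all $t$, and by continuity $U$ and $\tilde U$ are indistinguishable, which is pathwise uniqueness. Because the joint law of the inputs $(X,W,B)$ is fixed---$X$ being the unique solution of \eqref{Volterra eq}, a measurable functional of $W$ with the moment and regularity bounds of Lemmas \ref{bound of X} and \ref{continuous of X}, and $B$ an independent Brownian motion---pathwise uniqueness upgrades to uniqueness in law.

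The main obstacle I anticipate is not the Gronwall step itself but the bookkeeping that makes it legitimate: one must secure finite $p$-th moments of the stopped difference before the kernel estimates of Lemma \ref{basic lemma} can be invoked, carefully threading the localizing indicators through the stochastic integral exactly as in Lemma \ref{bound of X n}, and one must confirm that the singular kernel $\hat K$ remains within the scope of the Volterra--Gronwall lemma of \cite{ZX2} (its resolvent $\hat R$ satisfying the boundedness of $\hat R(u)\,u$ already exploited there). Once this localization is in place, the linearity of \eqref{limit Volterra eq} makes the cancellation of the additive $B$-term automatic and the uniqueness conclusion immediate.
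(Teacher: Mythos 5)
Your proposal is correct and follows the same strategy the paper intends: reduce to pathwise uniqueness via Yamada--Watanabe, exploit the linearity of \eqref{limit Volterra eq} so that the difference of two solutions solves the homogeneous equation (the additive $C_M$-term cancels), localize with stopping times, and close with the Volterra--Gronwall inequality of \cite[Theorem 2.2]{ZX2} for the kernel $\hat K(u)=C\big(K(u)+K(u)^2\big)$, exactly as in Lemma \ref{bound of X n}. In fact your write-up is more complete than the paper's: the printed proof of Lemma \ref{unqueness of U} only carries out the a priori $L^p$-bound for a single solution (localization, the inhomogeneous Gronwall estimate, Fatou) and stops there, never actually subtracting two solutions and running the homogeneous Gronwall step that forces $D\equiv 0$ --- which is precisely the step you supply. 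The one point worth tightening is the passage from the pointwise bound $\E[|D_r|^p\mathbb{I}_{\{r<\tau_m\}}]\le\int_0^r\hat K(r-s)f_m(s)\,\d s$ to the inequality for $f_m(t)=\sup_{r\le t}\E[|D_r|^p\mathbb{I}_{\{r<\tau_m\}}]$, which uses the monotonicity of $s\mapsto f_m(s)$ and of $r\mapsto\int_0^r\hat K$; the paper glosses over the same point in Lemma \ref{bound of X n}, so this is consistent with the level of rigor there.
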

	\begin{proof}
		By the Yamada-Watanabe argument, it suffices to show that the pathwise uniqueness holds for continuous solutions of \eqref{limit Volterra eq}.
		
		We first show the $L^p$ integrability of the SVE \eqref{limit Volterra eq}.
		Let $\tau_n=\inf \{t||U_t| \geq n\}$. Using the local property of stochastic integration, Lemma \ref{basic lemma}-(i), (ii), Lemma \ref{bound of X}, Fubini's theorem and the boundedness of the derivatives of $\sigma$, we have that
		\begin{align}
			&\E\big[|U_t|^p\mathbb{I}_{\{t<\tau_n\}}\big]\nonumber\\&\leq C\E\Big[\Big|\int_0^t K(t-s) U^k_s\mathbb{I}_{\{s<\tau_n\}} \mathrm{d} s\Big|^p\Big]+C\E\Big[\Big|\int_0^t K(t-s) U^k_s\mathbb{I}_{\{s<\tau_n\}} \mathrm{d} W^j_s\Big|^p\Big]\nonumber\\&\quad+C\E\Big[\Big|\int_0^t K(t-s) \sigma_i^l(X_s) \mathrm{d} B^{i,i',j}_s\Big|^p\Big]\nonumber\\&\leq C_1+C_2\int_{0}^{t}\E\big[|U_s|^p\mathbb{I}_{\{s<\tau_n\}}\big]\d s\nonumber
		\end{align}
		for some $C_1, C_2>0$, independent of $t$. Therefore, Gronwall's lemma yields
		$$
		\E\Big[|U_t|^p \mathbb{I}_{\{t<\tau_n\}}\Big] \leq C_1 e^{C_2 t} \leq C_1 e^{C_2 T},
		$$
		and thus, by Fatou's lemma, we see
		$$
		\E[|U_t|^p]=\E\Big[\liminf _{n \rightarrow \infty}|U_t|^p \mathbb{I}_{\{t<\tau_n\}}\Big] \leq \liminf _{n \rightarrow \infty} \E[|U_t|^p \mathbb{I}_{\{t<\tau_n\}}] \leq C_1 e^{C_2 T},
		$$
		that is, $U_t$ is in $L^p$ for any $t \in[0, T]$.
	\end{proof}
	We now prove  the main theorem.
	\begin{proof}[Proof of Theorem \ref{t.main}]
		By Lemma \ref{tight of U n} and Lemma B.1 in \cite{FU}, or more directly by Lemma \ref{rate X-X n}, $X^n \rightarrow X$ in probability in the uniform topology. Therefore,
		$$
		\left(\left\{\nabla b^i(X^n)\right\}_i,\left\{\partial_k \sigma_j^i(X^n)\right\}_{i j k}\right) \rightarrow\left(\left\{\nabla b^i(X)\right\}_i,\left\{\partial_k \sigma_j^i(X)\right\}_{i j k}\right)
		$$
		in probability in the uniform topology as well. Together with Lemmas \ref{charact of V} and \ref{tight of U n}, we conclude that
		$$
		\left(U^n, V^n,\left\{\nabla b^i(X^n)\right\}_i,\left\{\partial_k \sigma_j^i(X^n)\right\}_{i j k}, Y\right)
		$$
		is tight in $\mathcal{C}_0^{H-\epsilon} \times \mathcal{C}_0 \times \mathcal{D}_{d^2} \times \mathcal{D}_{d^2 m} \times \mathbb{R}$ for any random variable $Y$ on $(\Omega, \mathscr{F}, \PP)$. For any subsequence of this tight sequence, there exists a further subsequence which converges by Prokhorov's theorem (see, e.g., Theorem 5.1 of Billingsley \cite{Bil} for a nonseparable case). Lemmas \ref{limit of U} and \ref{unqueness of U} imply the uniqueness of the limit. Therefore the original sequence itself has to converge. The limit $U$ of $U^n$ is characterized by \eqref{limit Volterra eq} again by Lemma \ref{limit of U}. The convergence of $U^n$ is stable because $Y$ is arbitrary.
	\end{proof}
	\section*{Conflicts of interests}
	The authors declare no conflict of interests.

	{\bf Acknowledgement} S. Liu was supported by China Scholarship Council. Y. Hu was supported by an  NSERC discovery fund and a Centennial  fund of University of Alberta. H. Gao was supported  in part by the NSFC Grant Nos. 12171084, the Jiangsu Provincial Scientific Research Center of Applied Mathematics under Grant No. BK20233002 and the fundamental Research Funds for the Central Universities No. RF1028623037.

	
 	\bibliographystyle{amsplain}
	
	\bibliography{ref}

\providecommand{\bysame}{\leavevmode\hbox to3em{\hrulefill}\thinspace}
\providecommand{\MR}{\relax\ifhmode\unskip\space\fi MR }
\providecommand{\MRhref}[2]{%
  \href{http://www.ams.org/mathscinet-getitem?mr=#1}{#2}
}
\providecommand{\href}[2]{#2}
\begin{thebibliography}{10}

\bibitem{ALP}
E.~Abi~Jaber, M.~Larsson, and S.~Pulido, \emph{Affine {V}olterra processes},
  Ann. Appl. Probab. \textbf{29} (2019), no.~5, 3155--3200.

\bibitem{BD}
D.~Bartl, \emph{Conditional nonlinear expectations}, Stochastic Process. Appl.
  \textbf{130} (2020), no.~2, 785--805.

\bibitem{Bil}
P.~Billingsley, \emph{Convergence of probability measures}, second ed., Wiley
  Series in Probability and Statistics: Probability and Statistics, John Wiley
  \& Sons, Inc., New York, 1999, A Wiley-Interscience Publication.

\bibitem{DNT}
A.~Deya, A.~Neuenkirch, and S.~Tindel, \emph{A {M}ilstein-type scheme without
  {L}\'{e}vy area terms for {SDE}s driven by fractional {B}rownian motion},
  Ann. Inst. Henri Poincar\'{e} Probab. Stat. \textbf{48} (2012), no.~2,
  518--550.

\bibitem{FR}
P.~Friz and S.~Riedel, \emph{Convergence rates for the full {G}aussian rough
  paths}, Ann. Inst. Henri Poincar\'{e} Probab. Stat. \textbf{50} (2014),
  no.~1, 154--194.

\bibitem{FU}
M.~Fukasawa and T.~Ugai, \emph{Limit distributions for the discretization error
  of stochastic {V}olterra equations with fractional kernel}, Ann. Appl.
  Probab. \textbf{33} (2023), no.~6B, 5071--5110.

\bibitem{HLN}
Y.~Hu, Y.~Liu, and D.~Nualart, \emph{Rate of convergence and asymptotic error
  distribution of {E}uler approximation schemes for fractional diffusions},
  Ann. Appl. Probab. \textbf{26} (2016), no.~2, 1147--1207.

\bibitem{HLN2}
\bysame, \emph{Taylor schemes for rough differential equations and fractional
  diffusions}, Discrete Contin. Dyn. Syst. Ser. B \textbf{21} (2016), no.~9,
  3115--3162.

\bibitem{Ja1}
J.~Jacod, \emph{On continuous conditional {G}aussian martingales and stable
  convergence in law}, S\'{e}minaire de {P}robabilit\'{e}s, {XXXI}, Lecture
  Notes in Math., vol. 1655, Springer, Berlin, 1997, pp.~232--246.

\bibitem{Ja2}
J.~Jacod and P.~Protter, \emph{Asymptotic error distributions for the {E}uler
  method for stochastic differential equations}, Ann. Probab. \textbf{26}
  (1998), no.~1, 267--307.

\bibitem{kilbas}
A.~A. Kilbas, H.~M. Srivastava, and J.~J. Trujillo, \emph{Theory and
  applications of fractional differential equations}, North-Holland Mathematics
  Studies, vol. 204, Elsevier Science B.V., Amsterdam, 2006.

\bibitem{KP0}
T.~G. Kurtz and P.~Protter, \emph{Wong-{Z}akai corrections, random evolutions,
  and simulation schemes for {SDE}s}, Stochastic analysis, Academic Press,
  Boston, MA, 1991, pp.~331--346.

\bibitem{KP}
T.~G. Kurtz and P.~E. Protter, \emph{Weak convergence of stochastic integrals
  and differential equations}, Probabilistic models for nonlinear partial
  differential equations ({M}ontecatini {T}erme, 1995), Lecture Notes in Math.,
  vol. 1627, Springer, Berlin, 1996, pp.~1--41.

\bibitem{LHH2}
M.~Li, C.~Huang, and Y.~Hu, \emph{Numerical methods for stochastic {V}olterra
  integral equations with weakly singular kernels}, IMA J. Numer. Anal.
  \textbf{42} (2022), no.~3, 2656--2683.

\bibitem{LHG1}
S.~Liu and H.~Gao, \emph{{A}symptotic behaviors for {V}olterra type
  {M}c{K}ean-{V}lasov {SDE}s with small noise}, arXiv preprint arXiv:2410.05516
  (2024).

\bibitem{LT}
Y.~Liu and S.~Tindel, \emph{First-order {E}uler scheme for {SDE}s driven by
  fractional {B}rownian motions: the rough case}, Ann. Appl. Probab.
  \textbf{29} (2019), no.~2, 758--826.

\bibitem{Mis}
Yuliya~S. Mishura, \emph{Stochastic calculus for fractional {B}rownian motion
  and related processes}, Lecture Notes in Mathematics, vol. 1929,
  Springer-Verlag, Berlin, 2008.

\bibitem{NS}
D.~Nualart and B.~Saikia, \emph{Error distribution of the {E}uler approximation
  scheme for stochastic {V}olterra equations}, J. Theoret. Probab. \textbf{36}
  (2023), no.~3, 1829--1876.

\bibitem{RY}
D.~Revuz and M.~Yor, \emph{Continuous martingales and {B}rownian motion}, third
  ed., Grundlehren der mathematischen Wissenschaften [Fundamental Principles of
  Mathematical Sciences], vol. 293, Springer-Verlag, Berlin, 1999.

\bibitem{RTY}
A.~Richard, X.~Tan, and F.~Yang, \emph{Discrete-time simulation of stochastic
  {V}olterra equations}, Stochastic Process. Appl. \textbf{141} (2021),
  109--138.

\bibitem{Yan}
L.~Yan, \emph{Asymptotic error for the {M}ilstein scheme for {SDE}s driven by
  continuous semimartingales}, Ann. Appl. Probab. \textbf{15} (2005), no.~4,
  2706--2738.

\bibitem{ZX1}
X.~Zhang, \emph{Euler schemes and large deviations for stochastic {V}olterra
  equations with singular kernels}, J. Differential Equations \textbf{244}
  (2008), no.~9, 2226--2250.

\bibitem{ZX2}
\bysame, \emph{Stochastic {V}olterra equations in {B}anach spaces and
  stochastic partial differential equation}, J. Funct. Anal. \textbf{258}
  (2010), no.~4, 1361--1425.

\end{thebibliography}
	
\end{document}